\let\oldtocsection=\tocsection
\let\oldtocsubsection=\tocsubsection
\renewcommand{\tocsection}[2]{\hspace{0em}\oldtocsection{#1}{#2}}
\renewcommand{\tocsubsection}[2]{\hspace{1em}\oldtocsubsection{#1}{#2}}
\newtheorem{theorem}[equation]{Theorem}
\newtheorem{lemma}[equation]{Lemma}
\newtheorem{proposition}[equation]{Proposition}
\newtheorem{corollary}[equation]{Corollary}
\newtheorem{definition-lemma}[equation]{Definition-Lemma}
\theoremstyle{definition}
\newtheorem{definition}[equation]{Definition}
\newtheorem{example}[equation]{Example}
\newtheorem{expectation}[equation]{Expectation}
\theoremstyle{remark}
\newtheorem{remark}[equation]{Remark}
\numberwithin{equation}{section}
\numberwithin{figure}{section}
\newcommand{\bZ} {\mathbb{Z}}
\newcommand{\bR} {\mathbb{R}}
\newcommand{\bC} {\mathbb{C}}
\newcommand{\bP} {\mathbb{P}}
\newcommand{\bF} {\mathbb{F}}
\newcommand {\cE}  {\mathcal{E}}
\newcommand {\cO}  {\mathcal{O}}
\newcommand {\oD} {\bar{D}}
\renewcommand {\ker} {\operatorname{ker}}
\newcommand {\Spec} {\operatorname{Spec}}
\newcommand {\Hom}  {\operatorname{Hom}}
\newcommand {\Pic}  {\operatorname{Pic}}
\DeclareMathOperator{\Real}{Re}
\DeclareMathOperator{\Imag}{Im}
\newcommand{\cHom}{\mathcal{H}om}
\DeclareMathOperator{\Ext}{Ext}
\DeclareMathOperator{\Coh}{Coh}
\DeclareMathOperator{\Perf}{Perf}
\DeclareMathOperator{\PD}{PD}
\newcommand{\ev}{\mathrm{ev}}
\newcommand{\Fuk}{\mathcal{F}\text{uk}}
\newcommand{\twvect}{\text{tw vect}}
\newcommand{\W}{\mathcal{W}}
\newcommand{\n}{\natural}
\newcommand{\D}{\bar{D}}
\newcommand{\m}{{\text{min}}}
\newcommand{\AF}{\mathcal{A}_{\mathcal{F}}}
\newcommand{\BF}{\mathcal{B}_{\mathcal{F}}}
\newcommand{\AC}{\mathcal{A}_{C}}
\newcommand{\BC}{\mathcal{B}_{C}}
\newcommand{\perf}{\text{perf}}
\newcommand{\A}{\mathcal{A}}
\newcommand{\B}{\mathcal{B}}
\numberwithin{equation}{section}
\let\@wraptoccontribs\wraptoccontribs
\def\mydate{\ifcase\month \or January\or February\or March\or
April\or May\or June\or July\or August\or September\or October\or 
November\or December\fi \space\number\day,\space\number\year}
\begin{document}

\title{Homological mirror symmetry for log Calabi--Yau surfaces}
\author{Paul Hacking}
\address{Department of Mathematics and Statistics, Lederle Graduate Research Tower, University of Massachusetts, Amherst, MA 01003-9305}
\email{hacking@math.umass.edu}
\author{Ailsa Keating}
\address{Department of Pure Mathematics and Mathematical Statistics, Centre for Mathematical Sciences, University of Cambridge, Wilberforce Road, Cambridge, CB3 0WB}
\email{amk50@cam.ac.uk}

\contrib[with an appendix by]{Wendelin Lutz}

\address{Department of Mathematics\\
Imperial College London\\
180 Queen's Gate\\
London SW7 2AZ
\\UK}
\email{wl4714@imperial.ac.uk}


\begin{abstract}
Given a log Calabi--Yau surface $Y$ with maximal boundary $D$ and distinguished complex structure, we explain how to construct a mirror Lefschetz fibration $w: M \to \bC$, where $M$ is a Weinstein four-manifold, such that the directed Fukaya category of $w$ is isomorphic to $D^b \Coh(Y)$, and  the wrapped Fukaya category $D^b \mathcal{W} (M)$ is isomorphic to $D^b \Coh(Y \backslash D)$. We construct an explicit isomorphism between $M$ and the total space of the almost-toric fibration arising in \cite{GHK1}; when $D$ is negative definite this is expected to be the  Milnor fibre of a smoothing of the dual cusp of $D$. We also match our mirror potential $w$ with existing constructions for a range of special cases of $(Y,D)$, notably in \cite{AKO_weighted} and \cite{Abouzaid_toric2}. 
\end{abstract}

\maketitle

\tableofcontents

\section{Introduction}

Let $(Y,D)$ be a log Calabi--Yau surface with maximal boundary: $Y$ is a smooth rational projective surface over $\bC$, and $D \in |-K_Y|$  a singular nodal curve. The purpose of this article is to study homological mirror symmetry for $(Y,D)$. 
The high-level expectation,  in the wake of e.g.~\cite{Givental, Hori, Auroux_tduality, Auroux_survey}, is well-understood:
$Y \backslash D$, a Calabi--Yau surface, should be mirror to another Calabi--Yau surface, say $M$; and the compactification given by adding back $D$ should be mirror to equipping $M$ with a superpotential encoding counts of holomorphic discs through $D$. Deforming the complex structure on $(Y,D)$ should be mirror to deforming the symplectic form on $M$, allowing $B$ fields; we'll see that in each moduli space there is a distinguished complex structure mirror to an exact form. We prove the following.

\begin{theorem}\label{thm:hms} (Theorems \ref{thm:mirror_line_bundles} and \ref{thm:wrapped_iso}.)
Suppose $(Y,D)$ is a log Calabi--Yau surface with maximal boundary, and distinguished complex structure. Then there exists a four-dimensional Weinstein domain $M$ and a Lefschetz fibration $w: M \to \bC$, with fibre $\Sigma$, such that:
\begin{itemize}
\item $\Sigma$ is a $k$--punctured elliptic curve, where $k$ is the number of irreducible components of $D$; there is a quasi-equivalence $D^\pi \Fuk(\Sigma) \simeq \Perf(D)$, due to Lekili--Polishchuk \cite{Lekili-Polishchuk}, where $\Fuk(\Sigma)$ is the Fukaya category of $\Sigma$, with objects compact Lagrangian branes;

\item $D^b \Fuk^{\to} (w) \simeq D^b \Coh(Y)$, where $\Fuk^{\to} (w) $ is the directed Fukaya category of $w$;

\item $D^b \W(M) \simeq D^b \Coh(Y \backslash D)$, where $\W(M)$ is the wrapped Fukaya category of $M$.

\end{itemize}
\end{theorem}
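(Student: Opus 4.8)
The plan is to build the mirror $(M,w)$ directly from the combinatorial data of $(Y,D)$---a toric model $(\oW,\oD)$ together with the sequence of interior (non-toric) blow-ups producing $(Y,D)$, as encoded in the almost-toric fibration of \cite{GHK1}---and then to prove the three bullets in sequence, using the fibre equivalence of Lekili--Polishchuk \cite{Lekili-Polishchuk} as the base case. Concretely, $w\colon M\to\bC$ is assembled so that its regular fibre $\Sigma$ is the $k$-punctured elliptic curve mirror to the cycle $D$ of $k$ rational curves, and so that its vanishing cycles $V_1,\dots,V_n$ are an explicit collection of embedded curves on $\Sigma$ read off from the blow-up data. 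The first bullet is then exactly \cite{Lekili-Polishchuk}, which I would take as input. The second and third bullets reduce, respectively, to (i) computing the directed $A_\infty$-category generated by the thimbles and matching it with a tilting object on $Y$, and (ii) passing from the directed category to the wrapped category by a stop-removal, matched on the $B$-side with the open restriction $\Coh(Y)\to\Coh(Y\backslash D)$.

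For the second bullet I would use Seidel's formalism for directed Fukaya categories of Lefschetz fibrations: $\Fuk^{\to}(w)$ is the directed $A_\infty$-category whose objects are the thimbles $\Delta_1,\dots,\Delta_n$, with morphism spaces the Floer complexes $CF^*(V_i,V_j)$ of the vanishing cycles in the fibre and products computed by counting holomorphic polygons in $\Sigma$. The strategy is to show that, under the equivalence $D^\pi\Fuk(\Sigma)\simeq\Perf(D)$, the curves $V_i$ go to the restrictions to $D$ of an explicit full strong exceptional collection of line bundles $L_1,\dots,L_n$ on $Y$ adapted to the toric model and the blow-ups. Since the $\Ext$-algebra of such a collection on a rational surface is computed by cohomology of line bundles and is concentrated in a single degree, the comparison amounts to matching the zeroth-order products (a lattice computation governed by the intersection theory of $Y$) and then checking that all higher $A_\infty$-products agree; formality of the relevant subalgebra, or a direct polygon count, should close this. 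Fullness follows because the thimbles generate $\Fuk^{\to}(w)$ by construction while $L_1,\dots,L_n$ generate $D^b\Coh(Y)$, so the induced functor is a quasi-equivalence onto its image, hence an equivalence after passing to derived split-closed categories.

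For the third bullet I would realise $\W(M)$ as a localisation of the directed category. Removing the fibre stop at infinity, in the sense of Ganatra--Pardon--Shende stop-removal, turns the Lefschetz thimbles into the wrapped objects generating $\W(M)$; on the mirror this is precisely the Verdier quotient of $D^b\Coh(Y)$ by the subcategory of complexes supported on $D$, which is $D^b\Coh(Y\backslash D)$ by the localisation sequence for the open inclusion $Y\backslash D\hookrightarrow Y$. The equivalence of the second bullet then descends to the quotients, giving $D^b\W(M)\simeq D^b\Coh(Y\backslash D)$; generation of $\W(M)$ by the thimbles can be supplied by Abouzaid's generation criterion, or the Ganatra--Pardon--Shende generation results, using that the thimbles are the cocores of the Weinstein handle decomposition of $M$.

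The main obstacle I expect is the explicit identification of the directed category in the second bullet: pinning down the vanishing cycles $V_i$ on $\Sigma$ from the almost-toric blow-up data, computing their Floer products (including orientations and signs), and showing these agree with the exceptional collection on $Y$ at the full $A_\infty$-level rather than merely on cohomology. Controlling this requires careful bookkeeping of the monodromy contributed by each interior blow-up and a robust argument---formality, or a matching of polygon counts with the theta-function combinatorics of \cite{GHK1}---forcing the higher products. The reduction to the distinguished complex structure (mirror to an exact symplectic form) is what makes these counts tractable, and verifying that this is the correct normalisation on both sides is a delicate but essential preliminary.
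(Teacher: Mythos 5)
Your overall architecture (fibre equivalence from \cite{Lekili-Polishchuk} as input; vanishing cycles read off from intersection numbers of a full exceptional collection of line bundles with the components of $D$; stop-removal \`a la \cite{GPS_sectorial} matched against the Verdier quotient by complexes supported on $D$ for the wrapped statement) agrees with the paper for the first and third bullets. The genuine gap is in your plan for the second bullet. You propose to compute the directed Floer $A_\infty$-category of the thimbles and match it with the $\Ext$-algebra of a \emph{full strong} exceptional collection of line bundles, closing the higher products by formality "since the $\Ext$-algebra \ldots is concentrated in a single degree." This is false in general: already for $\bF_a$ with $a \geq 3$ one has $\Ext^1(\cO,\cO(A+B)) \cong H^1(\bP^1,\cO(1-a)) \neq 0$ for the collection of Lemma \ref{lem:exc_coll_minimal}, and the geometrically adapted collections of Corollary \ref{cor:full_exc_coll} are likewise not strong in general; for some rational surfaces no full strong exceptional collection of line bundles exists at all (Hille--Perling). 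So formality is unavailable, your fallback "direct polygon count" at the full $A_\infty$-level is precisely the intractable step, and non-formality is essential here (the $A_\infty$-structure on the Fukaya category of the punctured elliptic curve is famously non-trivial). The paper avoids all of this with a structural observation you are missing: Lemma \ref{lem:restricting_exceptionals} shows that because $D$ is \emph{anticanonical}, Serre duality kills the connecting maps in the restriction long exact sequence, so for any exceptional pair $(E,F)$ on $Y$ the restriction $\Hom^i(E,F) \to \Hom^i(E|_D,F|_D)$ is an isomorphism compatible with products. Thus $D^b\Coh(Y)$ is realised as a "directed subcategory" of $\Perf(D)$ in exact mirror parallel to Seidel's realisation of $\Fuk^{\to}(w)$ as a non-full subcategory of $\Fuk(\Sigma)$ with induced operations; the distinguished complex structure enters only through Lemma \ref{lem:restricted_sheaves} ($E|_D \cong \cO_D(\sum d_i p_i)$, so objects match via the twist/$\otimes\cO(p_i)$ correspondence of Theorem \ref{thm:Lekili-Polishchuk}(c)). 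The Lekili--Polishchuk functor then restricts to a quasi-isomorphism of the two directed subcategories, transporting the entire $A_\infty$-structure with no Floer computation, no signs, and no strongness hypothesis (Theorem \ref{thm:mirror_line_bundles}).

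This gap propagates to your third bullet. You assert that "the equivalence of the second bullet then descends to the quotients," but the localisation argument needs more than an abstract equivalence of directed categories: one must identify the subcategories being quotiented, i.e.\ match the linking discs (equivalently the images $S \times u$ of fibre Lagrangians, or the cones on the Serre-to-identity maps $T_K$) with the generators of the kernel of $\Coh(Y) \to \Coh(Y\setminus D)$. This is exactly the statement that the pairs $(\AF,\BF)$ and $(\AC,\BC)$ are equivalent \emph{as bimodules} -- which the paper's construction yields for free, since both directed categories sit inside the two sides of one and the same fibre equivalence -- whereas a formality-based matching of the directed categories alone would not supply it. Repair your bullet two along the lines of Lemma \ref{lem:restricting_exceptionals} and the descent in Theorem \ref{thm:wrapped_iso} goes through as you describe.
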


(The case $k=3$ was studied in \cite{Keating}.)

\subsection{Relation with almost-toric fibrations}
 Gross, Hacking and Keel implemented part of the Gross--Siebert mirror symmetry program (see e.g.~\cite{GS1, GS2, GS3}) to construct a mirror family  to $(Y,D)$ as the spectrum of an algebra with canonical basis the theta functions associated to $(Y,D)$ \cite{GHK1}; intuitively, this is  based on tropicalising an SYZ picture; the mirror space is the general fibre of this family. 
While the family is typically only formal, it should still make sense to speak about the symplectic topology of its general fiber, by considering an analytic family over a disc which approximates the restriction of the  family in \cite{GHK1} to a generic formal arc  $\Spec \bC[[t]]$ to sufficiently high order; moreover, implementing this should realise the fibre as the  total space of an almost-toric fibration, the integral affine base of which already appears explicitly in \cite{GHK1}.

For background on almost-toric fibrations, see \cite{Symington}. We use the following property of log Calabi--Yau pairs: possibly after blowing up $Y$ at nodes of $D$ to get a log Calabi--Yau pair $(\tilde{Y}, \tilde{D})$, there exists a smooth toric pair $(\bar{Y}, \bar{D})$ and a birational map $(\tilde{Y}, \tilde{D}) \to (\bar{Y}, \bar{D})$ given by blowing up interior points of components of $\D$. Varying the blow-up locus within the interior of each component deforms the complex structure; for the distinguished one, we blow up a single favourite point on each component of $\D$.  Say $\D = \D_1 + \ldots + \D_k$, and let $v_i$ be the primitive vector for the ray associated to $\D_i$ in the fan of $\bar{Y}$. 
The almost-toric fibration  associated to $(Y,D)$, say  $(\star)$, has a two-dimensional integral affine base, smooth fibres Lagrangian two-tori, and a nodal fibre for each of the interior blow-ups on $\D_i$, with invariant line in direction $v_i$; in the exact case, all invariant lines are concurrent. (See Section \ref{sec:Symington}.)

\begin{theorem}\label{thm:ATstructure}(Theorem \ref{thm:torus} and Proposition \ref{prop:thimble_gluing}.)
Let $(Y,D)$ be a log Calabi--Yau surface with maximal boundary and distinguished complex structure. Then $M$, the mirror space given in Theorem \ref{thm:hms}, is Weinstein deformation equivalent to the total space of the almost-toric fibration $(\star)$ (formally, restrict the latter to a large compact subset). 
\end{theorem}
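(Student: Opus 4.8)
\emph{Proof proposal.} The plan is to present the total space of the almost-toric fibration $(\star)$ as a Weinstein Lefschetz fibration, and then to match its defining data with that of $w\colon M\to\bC$ from Theorem~\ref{thm:hms}. Throughout I use the standard fact that a Weinstein Lefschetz fibration is determined, up to Weinstein deformation equivalence, by its regular fibre as a Weinstein domain together with the cyclically ordered tuple of vanishing cycles, taken up to Hurwitz moves and Legendrian isotopy of the attaching spheres. Thus it suffices to realise both sides as Lefschetz fibrations with the same fibre $\Sigma$ and the same vanishing-cycle data; the conclusion is then Weinstein deformation equivalence of the two total spaces, after restricting the almost-toric one to a large compact sublevel set as in the statement.

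First I would fix a convenient model for the total space of $(\star)$. Write $\pi$ for the almost-toric projection and let $p_0$ be the common point of the invariant lines in the exact case. Using standard base-diagram manipulations (see \cite{Symington}), cutting the integral affine base along the nodal rays issuing from $p_0$ yields a simply connected fan-shaped region with no affine monodromy, over which $\pi$ is the restriction of a genuine toric moment-map fibration; the total space is then recovered by regluing across the cuts by the shear transvections $T_{v_i}$ determined by the invariant directions $v_i$. This produces an honest $T^2$-action on the complement of the nodal fibres, and near each node the standard focus--focus model $(z_1,z_2)\mapsto(\Imag(z_1 z_2),\,|z_1|^2-|z_2|^2)$. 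Identifying the resulting torus fibration, with the appropriate ends, is the content I would place in Theorem~\ref{thm:torus}.

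The core step converts this torus-fibred picture into a Lefschetz fibration over $\bC$. Here I would exploit the local identity between the focus--focus singularity and the Lefschetz singularity $z_1 z_2$: the orbit direction of the $S^1$ rotating around $p_0$ supplies the circle factor of the fibre, while the transverse radial data assembles into the holomorphic base coordinate, so that the nodes of $(\star)$ become the critical points of the Lefschetz fibration. I would then verify that the regular fibre is exactly the $k$-punctured elliptic curve predicted by Theorem~\ref{thm:hms}: the $k$ punctures correspond to the $k$ asymptotic boundary rays in the directions $v_i$, and the genus-one behaviour reflects the total affine monodromy $T_{v_1}\cdots T_{v_k}$ around the full cycle of rays, consistent with $\Sigma$ being mirror to the anticanonical cycle $D$. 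Finally I would match vanishing cycles node by node: the vanishing cycle of the node in direction $v_i$ should be the embedded curve on $\Sigma$ in the primitive class dual to $v_i$, and the concurrency of the invariant lines pins down exactly the angular order in which these thimbles are attached, matching the ordering of the critical values of $w$.

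The hard part will be this last globalisation. The focus--focus/Lefschetz correspondence is transparent in the local model, but tracking how the successive shears $T_{v_i}$ assemble the fibre $\Sigma$, how each Lagrangian thimble is glued in, and in what cyclic order---so that the resulting ordered tuple of vanishing cycles agrees with that of $w$ up to Hurwitz moves and Legendrian isotopy---is delicate; this is precisely what I would isolate as Proposition~\ref{prop:thimble_gluing}. Granting the matching of fibre and vanishing-cycle data, the classification of Weinstein Lefschetz fibrations recalled above then yields that $M$ and the total space of $(\star)$ are Weinstein deformation equivalent.
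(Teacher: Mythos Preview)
Your strategy is the reverse of the paper's, and as stated it has a genuine gap.

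The paper does \emph{not} try to exhibit the almost-toric total space as a Lefschetz fibration and then match fibres and vanishing cycles with $w$. Instead it uses Weinstein handle decompositions as the common currency. The almost-toric space $(\star)$ is, essentially by inspection of its skeleton, $D^\ast T^2$ with one Weinstein $2$-handle attached for each node, along the conormal lift of the circle $S^1_{v_i^\perp}\subset T^2$ with co-orientation $v_i$ (this is Expectation~\ref{exp:weinstein_handles}). On the other side, $M$ is built by attaching $k+\sum m_i$ two-handles to $\Sigma\times B$. Theorem~\ref{thm:torus} (together with Proposition~\ref{prop:Cstar2}) shows that the $k$ handles coming from $V_0,\ldots,V_{k-1}$ already assemble $\Sigma\times B$ into $D^\ast T^2$, and exhibits the zero-section torus explicitly as an iterated Polterovich surgery on the dual thimbles. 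Proposition~\ref{prop:thimble_gluing} then identifies the remaining $\sum m_i$ meridional thimbles $\vartheta_{i,j}$ with the conormal discs to $S^1_{v_i^\perp}$ inside this $D^\ast T^2$. Both arguments proceed by toric MMP: base cases $\bP^2$ and $\bF_a$ checked by hand, inductive step via the stabilisation of Proposition~\ref{prop:stabilisation}.

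Your route runs into two concrete problems. First, a counting mismatch: the almost-toric fibration has $\sum m_i$ focus--focus points, so any Lefschetz fibration you extract from it by the local correspondence will have $\sum m_i$ critical points, whereas $w$ has $k+\sum m_i$. You cannot match vanishing-cycle tuples of different lengths without stabilisations, and producing those stabilisations is exactly the content you are trying to avoid. Second, the globalisation you flag as ``the hard part'' is not just delicate but ill-posed as written: the focus--focus/Lefschetz identification uses a Hamiltonian $S^1$-action to pass from the $2$-dimensional Lagrangian base to a complex $1$-dimensional base, but on $(\star)$ there is no global such action once the invariant directions $v_i$ are not all parallel (the nodal monodromies do not commute). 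Your description of how the fibre becomes the $k$-punctured elliptic curve, with ``the $S^1$ rotating around $p_0$'' supplying a circle factor, does not survive this obstruction. The paper sidesteps all of this by never leaving the handle picture.
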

In particular, we get well-behaved, explicit Lagrangian skeleta for all our mirror spaces $M$. 
Another immediate by-product is that all the Weinstein handlebodies studied in \cite{STW} (see Definition 1.3 therein) arise as mirrors to log CY surfaces; and the cluster structure which  \cite{STW}  exhibit on the moduli space of exact Lagrangian tori with flat local systems precisely agrees with the cluster structure on $Y \backslash D$ from \cite{GHK_cluster}.

\subsection{Milnor fibres}

\subsubsection{Negative definite case}
In the case where the intersection form for $D$ is negative definite, it is the exceptional cycle of a cusp singularity. A notable application of Gross--Hacking--Keel's construction is their proof of Looijenga's conjecture \cite{GHK1}:  a two-dimensional cusp singularity is smoothable if and only if there is a smooth projective rational surface $Y$ with an anti-canonical cycle $D$ which is the exceptional cycle of the dual cusp. The fibre of the GHK mirror family is the Milnor fibre of a smoothing of the dual cusp. 
There is a widespread expectation that smoothings of a cusp should be in one-to-one correspondence with deformation types of pairs $(Y,D)$, where $D$ is the exceptional cycle of the dual cusp; this  would mean that all possible Milnor fibres of a smoothable cusp arise from this construction. (See discussion in \cite{Engel-Friedman}. In the hypersurface case, the unique smoothing is the Milnor fibre studied in \cite{Keating}.)
While the symplectic topology of Milnor fibres of hypersurface singularities has been the object of extensive study, little is known in the general case, and the explicit descriptions of our spaces (both the Lefschetz fibrations and the Lagrangian skeleta) may be of independent interest.

\subsubsection{Negative semi-definite case}

Suppose we are given a  log CY pair $(Y,D)$ with  distinguished complex structure such that the intersection form for $D$ is strictly negative semi-definite, i.e.~a cycle of $k$ self-intersection $(-2)$ curves, where automatically $k \leq 9$. 
We show that our mirror space $M$ is Weinstein deformation equivalent to a del Pezzo surface of degree $k$ with a smooth anticanonical elliptic curve removed, and the restriction of a Kaehler form from $M$. In other words,  $M$ is the Milnor fibre of a smoothing of a simple elliptic singularity of degree $k$ (Section \ref{sec:simple_elliptic}). Swapping the roles of the A and B sides, this is the setting considered in \cite{AKO_delPezzo}.

 In support of the folk expectation mentioned above, notice that there are two deformation classes of pairs $(Y,D)$ for $k=8$, and one otherwise \cite[Section 9]{Friedman}; this precisely matches the classification of del Pezzos on the mirror side.
These smoothings are also in one-to-one correspondence with  strong symplectic fillings  of links of simple elliptic singularities with $c_1=0$, by Ohta--Ono \cite{Ohta-Ono}. This suggests an extension of the folk belief above: we expect smoothings of a cusp to be in one-to-one correspondence with $c_1=0$ strong symplectic fillings of the link of this cusp.  

\begin{remark}
In this semi-definite case, the mirror space $M$ is an affine variety (as opposed to merely a Stein manifold), and has a prefered compactification. Neither is true in general: our mirror spaces are typically Stein but not affine, and do not have favoured compactifications.
\end{remark}

\begin{remark}
Start with a log CY pair $(Y,D)$. Let $M$ be the mirror we construct. Assuming this is a smoothing of the dual cusp, it can be globalised on the Inoue surface (described in \cite{Looijenga}) so that the general fibre of the family of deformations of that surface is a log CY pair $(Y',D')$ such that $D'$ contracts to the dual cusp.  We expect $(Y', D')$ to be deformation equivalent to our original pair $(Y,D)$.
On the other hand, $M$ is now realised as an open analytic subset of $U' = Y' \setminus D'$; in fact it is a deformation retract. However, the exact symplectic form on $M$ \emph{does not} extend to a Kaehler form on $Y'$: if it did, then the Kaehler form would also be exact on $U'$, because $M \subset U'$ is a homotopy equivalence. However, the kernel of the map $H^2(Y') \to H^2(U')$ is the subspace generated by the components $D'_i$ of $D'$; this has negative definite intersection product, so cannot contain a Kaehler class. This means that although $M$ and $U'$ are similar from a number of perspectives, they are quite different from a symplectic point of view.
\end{remark}

\subsection{Construction of the mirror Lefschetz fibration}
The construction of the Lefschetz fibration $w: M \to \bC$ is guided by homological mirror symmetry. Starting with a full exceptional collection of line bundles $E_0, \ldots, E_n$ on $(Y,D)$, we describe $w: M \to \bC$ as the total space of an abstract Weinstein Lefschetz fibration (see \cite{Giroux-Pardon}) with central fibre $\Sigma$, and a distinguished collection of vanishing cycles $L_0, \ldots, L_n \subset \Sigma$ defined using the restrictions of the $E_i$ to $D$. 
Concretely, there is a distinguished curve on $\Sigma$, say $V_0$, which is mirror $\cO_D \in \Perf(D)$. ($V_0$ can be regarded as a choice of reference section for the SYZ fibration on $\Sigma$  dual to the SYZ fibration on $D$: both $\Sigma$ and $D$ fibre over the circle with general fibre $S^1$ and degenerate fibres $\bR$ or a point
 respectively, cf.~\cite[Section 5.1]{Auroux_survey}.) Think of $V_0$ as a distinguished choice of longitude on $\Sigma$. Say $D$ has irreducible components $D_1 + \ldots + D_k$. There is a cyclically symmetric collection of $k$ meridiens, say $W_j$, 
which, equipped with their $\bC^\times$'s worth of local systems, are mirror 
to skyscraper sheaves $\cO_{p_j}$, where $p_j \in D_j \backslash \cup_{i \neq j} D_i   \simeq  \bC^\times  $ \cite{Lekili-Perutz, Lekili-Polishchuk}. We set  $L_i =\prod \tau_{W_j}^{E_i \cdot D_j} V_0 $ (Definition \ref{def:construction_general}). 

Our proof of Theorem \ref{thm:hms} uses localisation techniques to relate the two triples of categories in Theorem \ref{thm:hms}; this was first understood by Abouzaid--Seidel \cite{Abouzaid-Seidel}; we use a result of Ganatra--Pardon--Shende \cite{GPS_sectorial}. This approach  builds on extensive foundational work of Seidel, notably \cite{Seidel_Lefschetz_I, Seidel_Lefschetz_II, Seidel_subalgebras}. See Section \ref{sec:proof_hms}.

\subsubsection{Uniqueness}

The flip-side of letting HMS principles guide the construction of $w:M \to \bC$ is that a priori, the Lefschetz fibration that we get depends on the choice of a full exceptional collection of line bundles for $D^b \Coh(Y)$. While such collections are not classified, we show that up to suitable equivalence one can make a canonical choice. 
This has an easy algorithmic description in the case where $(Y,D)$ is given directly by interior blowups on a toric pair $(\bar{Y}, \bar{D})$, using the full exceptional collection of line bundles
\begin{equation}
\cO, 
\cO(\Gamma_{ k m_k}), \ldots \cO(\Gamma_{k1}), \ldots, \cO(\Gamma_{1m_1}),\ldots,\cO (\Gamma_{11}), 
 \pi^\ast \cO(\D_1),\ldots, \pi^\ast\cO(\D_1+ \ldots + \D_{k-1})  \tag{$\dagger$}
\end{equation}
where $\Gamma_{ij}$ is the pullback of the $j$th exceptional curve over $\bar{D}_i$. In the case where the toric model for $(Y,D)$ also involves corner blow ups, its mirror Lefschetz fibration can be described from one of the previous ones using an iteration of stabilisations or destabilisations.

Mutation equivalent collections of line bundles give equivalent fibrations. Moreover, we don't believe we are `missing'  further full exceptional collections of line bundles which might give a different fibration: we expect instead that any full exceptional collection of line bundles on $Y$ is a deformation of a standard full exceptional collection on a smooth toric variety $\check{Y}$, via a degeneration of $Y$ to $\check{Y}$; the techniques developped in this paper should then apply to show that the Lefschetz fibration associated to $(Y,D)$ does not depend on the choice of full exceptional collection of line bundles on $Y$.

\subsubsection{Visualising exact tori in mirror Lefschetz fibrations}

A recurring tension when studying mirror symmetry is that the spaces involved are often the total space of both a (typically singular) SYZ fibration and of a Lefschetz fibration given by e.g.~a Landau--Ginzburg type superpotential; Theorem \ref{thm:ATstructure} bridges between the two; we single out an ingredient of its proof which may be of independent interest. 
Suppose $(\tilde{Y}, \tilde{D})$, with distinguished complex structure, is described by interior blow-ups on a toric pair $(\bar{Y}, \bar{D})$; the toric chart $(\bC^\times)^2 \simeq \bar{Y} \backslash \bar{D}$ in $\tilde{Y} \backslash \tilde{D}$ should correspond to an exact Lagrangian torus in $M$ (together with its $(\bC^\times)^2$'s worth of flat local systems). We can describe it explicitly, as follows.

\begin{theorem}\label{thm:torus_intro} (see Theorem \ref{thm:torus}.) Start with the distinguished collection of vanishing cycles $(\dagger)$. Let $L^\ast_n, \ldots, L_0^\ast$ be the dual distinguished collection (Definition \ref{def:dual_collection}); in particular, essentially, $L_{n}^\ast, \ldots, L_{n-k+1}^\ast$ is associated to the dual exceptional  collection $\cO(\D_1 + \ldots + \D_{k-1})^\ast, \ldots, \cO(\D_1)^\ast, \cO^\ast$. 
Let $\theta_{n}^\ast, \ldots, \theta_{n-k+1}^\ast$ be the Lagrangian thimbles corresponding to the $L_i^\ast$. These can be iteratively glued together in $M$, by Polterovich surgeries respecting the cyclic ordering, to give an exact Lagrangian torus.
\end{theorem}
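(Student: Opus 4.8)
The plan is to reduce the statement to an explicit combinatorial picture on $\Sigma$ and then feed it into the local Polterovich surgery model. First I would make the relevant dual vanishing cycles explicit. Using Definition~\ref{def:dual_collection} together with the construction $L_i = \prod_j \tau_{W_j}^{E_i \cdot D_j} V_0$ of Definition~\ref{def:construction_general}, I would write each of $L_n^\ast, \ldots, L_{n-k+1}^\ast$ as a word in the meridian Dehn twists $\tau_{W_j}$ applied to the reference longitude $V_0$. The key structural input is that the associated line bundles $\cO, \cO(\D_1), \ldots, \cO(\D_1 + \cdots + \D_{k-1})$ form a chain whose consecutive terms differ by a single boundary divisor $\D_j$; consequently the consecutive dual cycles differ by a single meridian twist. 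From this I would deduce, after an exact isotopy on $\Sigma$, that each $L_i^\ast$ meets its two cyclic neighbours transversally in exactly one point, is disjoint from the remaining cycles, and that the $k$ intersection points are arranged cyclically along $V_0$. Establishing this minimal, cyclically symmetric intersection pattern is the combinatorial heart of the argument.

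Next I would pass to $M$. Realising the thimbles $\theta_i^\ast$ over a spoke-like system of vanishing paths, chosen so that they are pairwise disjoint in the interior of $M$ and meet only along their boundary vanishing cycles in the central fibre $\Sigma$, I would arrange that the only intersections occur at the $k$ points found above. At each such point I would perform the Polterovich surgery of Proposition~\ref{prop:thimble_gluing}, resolving the transverse boundary crossing of $\theta_i^\ast$ and $\theta_{i+1}^\ast$ by a single embedded Lagrangian handle. The cyclic ordering fixes both the sequence of the surgeries and the local branch data, and I would use it, together with the local model, to ensure that each surgery acts on currently embedded pieces and creates no new intersections away from the $k$ surgery points.

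It then remains to identify the output and to verify exactness. I would track the boundary arcs through the surgeries and check that the cyclic pattern of reconnections matches up the free boundary of the pieces, so that after the final (closing) surgery no boundary survives; since the thimbles and the surgery handles are oriented compatibly, the result is then a closed, connected, orientable surface, and an Euler-characteristic computation gives $\chi = 0$, so it is a two-torus. For exactness I would glue the canonical Liouville primitives of the $\theta_i^\ast$ (each vanishing near its critical point) across the surgery necks, reducing exactness to the vanishing of the two period integrals of the glued primitive over a basis of $H^1(T^2; \bR)$.

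I expect this last step to be the main obstacle. Polterovich surgery does not preserve the Liouville primitive: each neck contributes a correction to the action, and one must show that these corrections cancel around the full cycle. I would compute the total period as a sum of local neck contributions and show it vanishes using the cyclic symmetry among the meridians $W_j$, together with the constraint from mirror symmetry that the resulting object support a full $(\bC^\times)^2$-family of rank-one local systems (mirror to the skyscrapers $\cO_p$, with $p$ in the open orbit $(\bC^\times)^2 \subset Y \backslash D$), which forces the torus to be exact. A secondary difficulty is maintaining embeddedness through the iteration; both are controlled by the local surgery model of Proposition~\ref{prop:thimble_gluing} and the cyclic symmetry of the configuration.
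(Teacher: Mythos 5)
Your central combinatorial claim is false, and the argument collapses at that point. You assert that consecutive dual cycles $L_i^\ast$, $L_{i+1}^\ast$ meet transversally in exactly one point, so that the torus is assembled from $k$ discs cyclically joined by $k$ single necks. But consecutive line bundles in the chain $\cO, \cO(\D_1), \ldots, \cO(\D_1+\cdots+\D_{k-1})$ differ by $\cO(\D_j)$, whose restriction to $D$ has degree $\D_j \cdot \D_i$ on \emph{every} component, so the dual cycles differ by the word $\tau_{W_{j-1}}\tau_{W_j}^{n_j}\tau_{W_{j+1}}$ of meridional twists, not a single one. Concretely, already for $\bP^2$ the relevant pair ($V_2^\ast$ and $V_1^\ast$ in the paper's notation) meets in \emph{three} points, and for $\bF_a$ a pair meets in two; Definition \ref{def:surgery} deliberately performs surgery at \emph{all} intersection points. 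Your picture also fails on Euler-characteristic grounds: $k$ thimble discs joined cyclically by $k$ single bands give $\chi = 0$ but with nonempty boundary — an annulus-like surface, never a closed torus — and there is no such thing as a ``final closing surgery'' that caps a boundary circle; Polterovich surgery only resolves transverse intersection points. The mechanism you are missing is the actual heart of the paper's proof of Theorem \ref{thm:torus}: with equal surgery parameters, the iterated surgery of the dual cycles $V_{k-1}^\ast, \ldots, V_1^\ast$ is \emph{Hamiltonian isotopic to the remaining dual cycle} $V_0^\ast$ (e.g.\ $V_2^\ast \# V_1^\ast \simeq V_0^\ast$ for $\bP^2$, producing two discs joined by three bands — the trefoil Seifert surface — with $\chi = -1$ and a single boundary circle), so the boundary is capped by the thimble of $V_0^\ast$, yielding $\chi = 0$ and a closed torus. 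The paper establishes this not by a one-shot combinatorial analysis but by induction along the toric MMP: explicit base cases $\bP^2$ and $\bF_a$, a stabilisation step via Proposition \ref{prop:stabilisation} for corner blow-ups, and a cycling step via Proposition \ref{prop:cyclic_reordering}.

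Two further problems. First, your exactness argument is circular where it matters: you correctly identify that each surgery neck contributes an action defect, but then propose to conclude cancellation from the requirement that the mirror object carry a $(\bC^\times)^2$ of local systems — homological mirror symmetry cannot force a geometrically constructed Lagrangian to be exact; exactness must be verified, and the paper does so directly by taking all surgery parameters $\epsilon$ equal, so that an explicit basis $\gamma_1, \gamma_2$ of $H_1(T^2,\bZ)$, realised by arcs running between consecutive surgery points along the two surgered curves, has vanishing periods. Second, you cite Proposition \ref{prop:thimble_gluing} as the local surgery model, but that proposition concerns realising the boundaries of the \emph{meridional} thimbles $\vartheta_{i,j}$ on the already-constructed torus (for the almost-toric comparison) and logically depends on Theorem \ref{thm:torus}; the surgery model you need is Definition \ref{def:surgery}, following Biran--Cornea, together with the trace-of-surgery cobordism formalism in $\Sigma \times \bC$ that the paper uses to organise the iteration and the concatenation in the inductive step.
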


\subsection{Further relations with existing constructions}
Homological mirror symmetry is a mature field, and we build on ideas from more works than it is feasible to credit; in several cases our  examples overlap with earlier results; whenever possible, we have checked that they agree.

\subsubsection*{Fano case} In a limited number of examples, $Y$ is Fano. We check that we recover, inter alia, the Lefschetz fibrations studied in \cite{AKO_weighted} ($\bP^2$, $\bP^1 \times \bP^1$, $\bF_1$, and $\bF_2$, all with toric divisor); in \cite{Cho-Oh} (toric Fanos); and in \cite{Pascaleff_thesis} ($\bP^2$, with $D$ the union of a line and a conic). The reader may also be interested in \cite{Futaki-Ueda}, which proves a version of HMS for two-dimensional toric stacks by building a mirror Lefschetz fibration from a collection of line bundles\footnote{We thank the referee for bringing this to our attention.}.

\subsubsection*{Hirzebruch surfaces}
For a generally Hirzebruch surface $\bF_a$, we carefully check that our Lefschetz fibration agrees with the one given in \cite{AKO_weighted}, which is obtained by restricting the fibration mirror to a weighted projective space.

\subsubsection*{General toric sufaces} There is a vast literature on mirror symmetry (homological or otherwise) for toric varieties, typically in all dimensions -- see e.g.~\cite{Hori-Vafa, Abouzaid_toric1, Abouzaid_toric2, FLTZ, CLL, CCLT}. We will explicitly spell out the connection with Abouzaid's proof of homological mirror symmetry. The work of Fang, Liu, Treumann and Zaslow \cite{FLTZ} can be viewed as a precursor to approaches to Fukaya categories using Lagrangian skeleta; \cite{GPS_sectorial} readily relates our construction to their framework (for dimension reasons computations are comparatively straightforward). 

We also compare our Lefschetz fibrations with the ones obtained in \cite{AKO_delPezzo} for pairs $(X,E)$, where $X$ is a del Pezzo surface and $E$ is a \emph{smooth} anti-canonical divisor, still viewed as the B side (Section \ref{sec:AKO_delPezzo}). 

Finally, the reader may be interested to note that there are a number of ongoing related projects,  for instance aiming to understand HMS for log CY pairs of arbitrary dimension by studying (suitable generalisations of) Lagrangian sections of SYZ fibrations; this includes projects by Hicks, Hanlon and Ward building on the notion of a monomially admissible Lagrangian section introduced in Hanlon's thesis \cite{Hanlon} (which we recommend to the reader!).

\subsection*{Structure of the paper} 
Section \ref{sec:logCY} contains background on log CY surfaces, including a discussion of distinguished complex structures (Section \ref{sec:complex_structure}). The mirror Lefschetz fibrations are constructed in Section \ref{sec:Lefschetz_fibration}, where we also show how to relate the fibrations obtained for different choices of exceptional collections of line bundles, and prove our invariance claims. Section \ref{sec:proof_hms} contains the proof of homological mirror symmetry. Relations with existing constructions, including the almost-toric fibration expected from \cite{GHK1}, are split across two sections: Section \ref{sec:relations_toric} considers the toric case, including the construction of the exact Lagrangian torus (Section \ref{sec:torus}), and comparisons with the LG models for Hirzebruch surfaces in \cite{AKO_weighted} (Section \ref{sec:AKO_comparison}) and with Abouzaid's thesis  (Section \ref{sec:Abouzaid}); Section \ref{sec:relations_interior} incorporates interior blow-ups, including the proof of Theorem \ref{thm:ATstructure}, and further comparisons with existing works.

\subsection*{Acknowledgements}
We are grateful to Roger Casals, Yank\i~ Lekili, Dhruv Ranganathan and Ivan Smith for helpful conversations and providing references. 
This project was initiated at the 2018 British Isles Graduate Workshop on `Singularities and Symplectic Topology'; we thank the organisers for a stimulating and enjoyable workshop, and the Durrell Wildlife Conservation Trust for hospitality.

P.H.~was partially supported by NSF grants DMS-1601065 and DMS-1901970. A.K.~was partially supported by an award from the Isaac Newton Trust.

\section{Log Calabi--Yau surfaces}\label{sec:logCY}

\subsection{Toric models for log Calabi--Yau surfaces}\label{sec:logCYintro}

\begin{definition}
A \emph{log Calabi--Yau (CY) surface} with maximal boundary is a pair $(Y,D)$ where $Y$ is a smooth rational projective surface over $\bC$, and $D \in |-K_Y|$ is a singular nodal curve. Such a $D$ has to be either an irreducible rational nodal curve or a cycle of $k \geq 2$ smooth rational curves; in the latter case we will denote the irreducible components of $D$ as $D_1, \ldots, D_k$, for some choice of cyclic ordering.
\end{definition}
Such $(Y,D)$ are called `Looijenga pairs' in \cite{GHK1}. 
Throughout this article we will always assume that a log CY pair has maximal boundary, and usually omit that qualification.

\begin{definition}\label{def:toric_model}
A \emph{toric model} for a log CY surface $(Y,D)$ is a pair of birational morphisms
$$
(\bar{Y}, \bar{D}) \leftarrow (\tilde{Y}, \tilde{D}) \rightarrow (Y,D)
$$
such that
\begin{itemize}
\item
the pair  $(\bar{Y}, \bar{D})$ is toric (and a fortiori, log CY): $\bar{Y}$ is a smooth projective toric surface, and the anti-canonical divisor $\bar{D}$ is its toric boundary. 
\item the pair $(\tilde{Y}, \tilde{D})$ is also log CY.
\item the map $ (\tilde{Y}, \tilde{D}) \to (\bar{Y}, \bar{D}) $ is a birational morphism such that $\tilde{D} \to \bar{D}$ is an isomorphism; it is given by iteratively blowing up interior points of components of $\bar{D}$ (and then   its proper transforms). Note that these are non-toric blow-ups.
\item 
the map  $(\tilde{Y}, \tilde{D}) \rightarrow (Y,D)$ is a birational morphism such that $ \tilde{D}$ is the total transform of $D$. This means that it is given by iteratively blowing up corners (i.e.~singular points) of $D$ (and  its total transforms).
\end{itemize}

We have that $\tilde{Y} \backslash \tilde{D} \cong Y \backslash D =: U$, say, where $U$ is a quasi-projective variety.
\end{definition}

Note that our definition is a small variation on the one made in \cite[Definition 1.2]{GHK1}, where the convention is that the data $\{ (\tilde{Y}, \tilde{D}) \rightarrow (\bar{Y}, \bar{D}) \}$ is a toric model of $(\tilde{Y}, \tilde{D})$, and the map $(\tilde{Y}, \tilde{D}) \rightarrow (Y,D)$ is known as a toric blow-up. (In other words, they don't ascribe a `toric model' to $(Y,D)$.)

\begin{proposition}\cite[Proposition 1.3]{GHK1} \label{prop:toric_model}
Any log CY surface $(Y,D)$ with maximal boundary has a toric model.
\end{proposition}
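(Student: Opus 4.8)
The plan is to construct a toric model for any log CY surface $(Y,D)$ with maximal boundary by first contracting $(Y,D)$ down to a pair whose boundary has a convenient combinatorial shape, and then realising that shape as the toric boundary of a smooth toric surface. The key geometric invariant to track is the sequence of self-intersection numbers of the components of $D$. First I would use the fact that if some component $D_i$ of $D$ has negative self-intersection $D_i^2 \le -1$ that is \emph{not} already matched to a toric configuration, then one can successively blow down $(-1)$-curves internal to the boundary, or conversely blow up at nodes (corners) of $D$ to increase the number of components, in order to arrive at a pair whose charge and self-intersection data admit a toric realisation. The standard reference point is that a smooth projective toric surface is obtained from $\bP^2$ or a Hirzebruch surface by a sequence of toric (corner) blow-ups, and its cycle of boundary components has self-intersection sequence satisfying the closed-up relation dictated by the fan.

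The cleanest route, which I would follow, is the one indicated by the surrounding discussion: start from $(Y,D)$ and perform a sequence of non-toric blow-downs of interior $(-1)$-curves meeting $D$ in a single smooth point of a component, recording each such blow-down. After finitely many steps (the process terminates since the Picard rank drops each time) one reaches a pair $(\bar Y, \bar D)$ that admits no further interior blow-downs; the claim to verify is that this minimal pair is toric, i.e.\ that $\bar Y$ is a smooth projective toric surface with $\bar D$ its toric boundary. To see this I would check that the self-intersection sequence of $\bar D$ matches that of the boundary cycle of some complete smooth fan in $\bR^2$; the numerical condition on an anticanonical cycle of rational curves on a rational surface (coming from $D \in |-K|$ and $D^2 = 0$, together with the adjunction relations $\sum (D_i^2 + 2) = \text{const}$ over a cycle) forces the existence of such a fan via the standard combinatorial classification of complete smooth toric surfaces. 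Possibly one first performs corner blow-ups of $(Y,D)$ to get the intermediate $(\tilde Y, \tilde D)$, ensuring the two boundaries become isomorphic as required by Definition~\ref{def:toric_model}.

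Assembling these gives the span
\[
(\bar Y, \bar D) \leftarrow (\tilde Y, \tilde D) \rightarrow (Y,D),
\]
where $(\tilde Y, \tilde D) \to (Y,D)$ is the composite of the corner blow-ups (total transform of $D$, hence log CY with $\tilde D \to D$ generically an isomorphism away from the new nodes) and $(\tilde Y, \tilde D) \to (\bar Y, \bar D)$ is the inverse of the interior blow-downs, so that $\tilde D \to \bar D$ is an isomorphism. Each step preserves the log CY condition because blowing up a smooth point of the boundary keeps the total transform of the anticanonical divisor anticanonical, and corner blow-ups likewise preserve membership in $|-K|$.

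The main obstacle I expect is the reduction step: showing that after exhausting interior blow-downs one genuinely lands on a \emph{toric} pair, rather than merely a minimal log CY pair. This is the crux, because one must produce an honest $2$-dimensional fan whose rays and their primitive generators reproduce the cycle $\bar D$ with its self-intersections and adjacencies. I would handle it by invoking the classification of smooth projective rational surfaces together with the characterisation of toric ones via their anticanonical cycle: a minimal log CY pair with no interior $(-1)$-curves meeting the boundary must be $(\bP^2, \text{triangle})$, $(\bP^1\times\bP^1,\text{square})$, or a Hirzebruch surface with its toric boundary, all of which are toric; any other minimal model would contain an interior $(-1)$-curve, contradicting minimality. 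Verifying that the blow-down process does not create boundary components with the wrong self-intersections (e.g.\ a component becoming a $0$-curve incompatible with a fan) is the delicate bookkeeping, but the numerical constraint $\sum_i (-D_i^2) = 3k + \sum (\text{blow-up count})$ keeps the combinatorics under control and forces termination at a toric configuration.
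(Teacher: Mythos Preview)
The paper does not prove this proposition; it simply cites \cite[Proposition~1.3]{GHK1}. So the comparison is really against the GHK argument, and more importantly against the definition of a toric model in the paper.

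Your proposal contains a genuine gap at exactly the point you flag as the ``main obstacle''. The claim that a log CY pair $(\bar{Y},\bar{D})$ admitting no interior $(-1)$-curve meeting the boundary must be $(\bP^2,\text{triangle})$, $(\bP^1\times\bP^1,\text{square})$, or a Hirzebruch surface with its toric boundary is false. The pairs $(\bP^2, D)$ with $D$ a nodal cubic, or $D$ a line plus a conic, are log CY with maximal boundary, contain no $(-1)$-curves whatsoever, and are \emph{not} toric pairs: the toric boundary of $\bP^2$ has three components. More generally, Proposition~\ref{prop:non_toric_minimal_models} in this very paper lists many minimal pairs $(Y_{\min},D_{\min})$ with $Y_{\min}=\bP^2$ or $\bF_a$ and $D_{\min}$ a non-toric anticanonical cycle (self-intersection sequences $(9)$, $(1,4)$, $(2,2)$, $(8)$, $(-a+1,a+1,0)$, etc.). Your blow-down process would terminate at one of these and get stuck.

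The fix is that corner blow-ups are not an afterthought to ``ensure the boundaries become isomorphic'' but are the essential first move. One must blow up nodes of $D$ to increase the number of boundary components \emph{before} attempting to reach a toric pair by interior blow-downs; only after enough corner blow-ups does the self-intersection sequence of $\tilde{D}$ admit a realisation as the boundary cycle of a smooth complete fan. The actual argument in \cite{GHK1} proceeds roughly this way: run MMP to reach a minimal pair over $\bP^2$ or $\bF_a$, then for each of the finitely many possible $D_{\min}$ exhibit by hand a sequence of corner blow-ups after which the pair maps to a toric one by interior blow-downs. Your numerical heuristics (``$\sum(-D_i^2)=3k+\ldots$'') do not by themselves force the fan to exist; the combinatorics require the explicit casework.
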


We will later see (Proposition \ref{prop:toric_moves}) that the toric model for $(Y,D)$ is unique up to well-understood basic moves.

\subsubsection{Complex structure assumption}\label{sec:complex_first_def}
 Given a toric pair $(\bar{Y}, \bar{D})$, deforming the interior points on components of $\D$ which get blown up induces a deformation of the complex structure on $(\tilde{Y}, \tilde{D})$ and $(Y,D)$. We want to consider a specific choice of complex structure: the one which corresponds under mirror symmetry to an exact symplectic form. We give further details shortly, in Section \ref{sec:complex_structure}. This complex structure is the one such that for each $\bar{D}_i$, all blow ups are at a point $p_i$, which is identified with $-1 \in \bC^\times \simeq \D_i \backslash \bigcup_{i \neq j} \D_j$ under a fixed torus action (or, iteratively, its preimage in the strict transform of $\bar{D}_i$).

\begin{example}
 For $\bP^2$ with the standard toric divisor $\bar{D}_1 + \bar{D}_2 + \bar{D}_3$, the distinguished complex structure condition is equivalent to blowing up three collinear points, one on each of $\D_1$, $\D_2$ and $\D_3$. Homological mirror symmetry for the corresponding surfaces $(\tilde{Y}, \tilde{D})$ was studied in \cite{Keating}; see \cite[Section 1.1]{Keating} for a discussion of how this connects with the framework of \cite{GHK1}.
\end{example}

\begin{definition} \label{def:mathcalT}
 Let $\mathcal{T}$ be the collection of log CY surfaces with maximal boundary; and $\mathcal{T}_e$ the subset of those which satisfy our complex structure assumption, i.e.~the pairs $(Y_e, D_e)$ in the notation of \cite{GHK2}. Let $\tilde{\mathcal{T}} \subset \mathcal{T}$ be the subset of log CY pairs which are interior blow-ups of toric pairs (i.e.~such that we can choose a toric model for them with $(\tilde{Y}, \tilde{D}) = (Y,D)$); and set $\tilde{\mathcal{T}}_e = \tilde{\mathcal{T}} \cap {\mathcal{T}}_e$. 
\end{definition}

\begin{definition}\label{def:n_iandm_i}
Given $\{ (\tilde{Y}, \tilde{D}) \rightarrow (\bar{Y}, \bar{D}) \}$, we will use the notation $n_i$ to denote the self-intersection numbers $\bar{D_i} \cdot \bar{D_i} $; and $m_i$ to be the number of interior blow-ups on $\bar{D}_i$ required to get to $(\tilde{Y}, \tilde{D})$. Note that $(\bar{Y}, \bar{D})$ is uniquely determined by the $n_i$, and when $(\tilde{Y}, \tilde{D})$ is in $\tilde{\mathcal{T}}_e$, it is in turn uniquely determined by the $n_i$ and $m_i$. 
\end{definition}

\subsection{Distinguished complex structures: background and SYZ heuristics} \label{sec:complex_structure}
As mentioned above, we consider log CY pairs with the distinguished complex structure within their deformation class;  such a pair will be mirror to an exact symplectic manifold, together with a superpotential; and deformations of the complex structure will yield deformations of the symplectic form (which in general should include a  $B$ field). In this section, we give more background on this, together with intuition from SYZ mirror symmetry. Some of these considerations will be revisited in Section \ref{sec:non_exact_deformations}.

Let $(Y,D)$ be a log Calabi--Yau surface with maximal boundary; say $D$ has $k$ components; as before, write $U=Y \setminus D$.  Let $p \in D$ be a node; we have an isomorphism of analytic germs
$$(p \in  D \subset Y) \simeq (0 \in \{ z_1z_2=0 \} \subset \bC^2).$$
Let $\gamma \in H_2(U,\bZ)$ be the class of the real $2$-torus 
$\{ |z_1|=|z_2|=\epsilon \} \subset U.$
Heuristically, $\gamma$ is the class of the fiber of the SYZ fibration on $U$.

\begin{lemma}
There is an exact  sequence $$
0 \rightarrow \bZ \to H_2(U,\bZ)  \rightarrow H_2(Y,\bZ) \rightarrow \bZ^k
$$
where the first arrow is $1 \mapsto \gamma$, the second one is induced by inclusion, and the third one is $\alpha \mapsto (\alpha \cap [D_i])_{i=1}^k$.
\end{lemma}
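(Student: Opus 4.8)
The plan is to extract the sequence from the long exact sequence of the pair $(Y,U)$ in singular homology, using Poincar\'e--Lefschetz duality to compute the relative groups $H_\ast(Y,U)=H_\ast(Y,Y\setminus D)$ in terms of the cohomology of $D$. Since $Y$ is a smooth rational projective surface it is simply connected, so $H_1(Y,\bZ)=H_3(Y,\bZ)=0$; the relevant portion of the long exact sequence is then
\[
0=H_3(Y)\to H_3(Y,U)\xrightarrow{\ \partial\ } H_2(U)\xrightarrow{\ i_\ast\ } H_2(Y)\xrightarrow{\ \kappa\ } H_2(Y,U),
\]
so it suffices to identify $H_3(Y,U)\cong\bZ$, $H_2(Y,U)\cong\bZ^k$, and the maps $\partial$ and $\kappa$.

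First I would compute $H^\ast(D)$. Writing $\nu\colon\tilde D=\bigsqcup_i\bP^1\to D$ for the normalisation and $N\subset D$ for the set of nodes, the pushout expressing $D$ as $\tilde D$ glued to $N$ along $\nu^{-1}(N)$ yields a Mayer--Vietoris sequence from which one reads off $H_0(D)=\bZ$, $H_1(D)=\bZ$ (the class of the loop around the cycle), and $H_2(D)=\bZ^k$ (the fundamental classes $[D_i]$); this works uniformly, including the irreducible nodal case $k=1$, where $D\simeq S^2\vee S^1$. All groups are free, so $H^1(D)=\bZ$ and $H^2(D)=\bZ^k$, the latter with basis dual to the $[D_i]$. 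Lefschetz duality $H_k(Y,Y\setminus D)\cong H^{4-k}(D)$ then gives $H_3(Y,U)\cong H^1(D)=\bZ$ and $H_2(Y,U)\cong H^2(D)=\bZ^k$, turning the display above into the asserted four-term sequence.

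It then remains to identify the two named maps. For $\kappa$ the duality isomorphisms are natural, so under $H_2(Y)\cong H^2(Y)$ (Poincar\'e duality on $Y$) and $H_2(Y,U)\cong H^2(D)$ the map $\kappa$ becomes the restriction $i^\ast\colon H^2(Y)\to H^2(D)$; pairing with $[D_i]\in H_2(D)$ and using $\langle i^\ast\mathrm{PD}(\alpha),[D_i]\rangle=\langle\mathrm{PD}(\alpha),[D_i]_Y\rangle=\alpha\cdot[D_i]$ shows $\kappa(\alpha)=(\alpha\cap[D_i])_{i=1}^k$, as claimed. For $\partial$, I note first that $\gamma$ bounds in $Y$: near the node the solid torus $\{|z_1|\le\epsilon,\ |z_2|=\epsilon\}$ is a $3$-chain in $Y$ with boundary $\gamma$, so $i_\ast\gamma=0$ and $\gamma\in\ker i_\ast=\operatorname{im}\partial\cong\bZ$. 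To see that $\gamma$ generates this group I would excise to a closed tubular neighbourhood $T\supset D$: excision and the collar identification $T\setminus D\simeq\partial T$ give $H_3(Y,U)\cong H_3(T,\partial T)$ compatibly with the connecting maps, so by naturality $\partial(\mathrm{gen})$ is the image in $H_2(U)$ of the class $\partial'(\mathrm{gen})\in H_2(\partial T)$ generating $\ker\big(H_2(\partial T)\to H_2(T)\big)$ (a rank-one group, since $H_3(T)=H_3(D)=0$ and $H_3(T,\partial T)\cong H^1(D)=\bZ$). Now $\partial T$ is the boundary of a cyclic plumbing, hence a $T^2$--bundle over $S^1$ whose fibre class is exactly $\gamma$; the Wang sequence shows the fibre class is primitive in $H_2(\partial T)$, and a primitive class lying in the rank-one subgroup $\ker\big(H_2(\partial T)\to H_2(T)\big)$ must generate it. Therefore $\gamma=\pm\partial(\mathrm{gen})$ generates $\ker i_\ast$, which gives exactness at $H_2(U)$ and injectivity of $1\mapsto\gamma$.

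The routine parts are the computation of $H^\ast(D)$ and the identification of $\kappa$ with the intersection pairing, both immediate from naturality of duality. The main obstacle is the last step: pinning down $\gamma$ as a \emph{generator} rather than a proper multiple of the generator of $\ker i_\ast$. The cleanest route I see is the primitivity of the torus fibre in the plumbing boundary $\partial T$; an alternative, should one wish to avoid invoking the torus-bundle structure, is to verify primitivity of $\gamma$ in $H_2(T\setminus D)$ directly by exhibiting a class pairing to $1$ with it. The torus-bundle description seems both cleaner and more in keeping with the SYZ heuristic that $\gamma$ is the fibre class.
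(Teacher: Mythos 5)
Your proof follows the same route as the paper's: the long exact sequence of the pair $(Y,U)$, with the relative groups computed via excision to a tubular neighbourhood $T \supset D$ and Lefschetz duality as $H_i(Y,U) \simeq H^{4-i}(D)$, together with $H_3(Y,\bZ)=0$ from rationality. The paper stops there and leaves the identification of the two maps implicit, so your extra verifications --- the naturality argument showing $\kappa(\alpha) = (\alpha \cap [D_i])_{i=1}^k$, and the primitivity of the torus fibre class in $H_2(\partial T)$ via the Wang sequence (valid also in the self-plumbed $k=1$ case) to pin down $\gamma$ as a \emph{generator} of $\ker i_\ast$ rather than a proper multiple --- are correct and supply precisely the details the paper takes for granted.
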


\begin{proof}
Let $N$ with $D \subset N \subset Y$ be a tubular neighborhood of $D$ in $Y$ (this is a slight abuse of notation as $D$ is singular).
Note that
$$H_i(Y,U,\bZ) \simeq H_i(N,\partial N,\bZ) \simeq H^{4-i}(N,\bZ) \simeq H^{4-i}(D,\bZ)$$
by excision, Poincar\'e duality, and the fact that $D \subset N$ is a deformation retract.
Now $H^1(D,\bZ) \simeq \bZ$ and $H^2(D,\bZ) \simeq \bigoplus H^2(D_i,\bZ) \simeq \bZ^k$. $Y$ is rational, so $H_3(Y,\bZ)=0$. The exact sequence now follows from the exact sequence of homology for the pair $(Y,U)$:
$$ \cdots \rightarrow H_3(Y,\bZ) \rightarrow H_3(Y,U,\bZ) \rightarrow H_2(U,\bZ) \rightarrow H_2(Y,\bZ) \rightarrow H_2(Y,U,\bZ) \rightarrow \cdots$$
\end{proof}

\begin{remark}
The choice of orientation of the real $2$-torus $\gamma$ corresponds to a choice of generator of $H^1(D,\bZ) \simeq \bZ$.
\end{remark}

This exact sequence defines the canonical mixed Hodge structure on $H_2(U,\bZ)$ (\cite{Deligne}, see \cite[$\S$8.4]{Voisin} for an overview). 
Explicitly, the mixed Hodge structure on $H_2(U,\bZ)$ is an extension of the Hodge structure  of type $(1,1)$ on $Q:=\ker(H_2(Y,\bZ) \rightarrow \bZ^k)$ by the Hodge structure of type $(0,0)$ on $\bZ$; this is determined by a class in $\Hom(Q,\bC^{\times})$.

\begin{definition} Let $\phi \in \Hom(Q,\bC^{\times})$ be the extension class determining the mixed Hodge structure on $H_2(U,\bZ)$; this is also called the period point of $(Y,D)$. 
\end{definition}

\textit{Descriptions of the period point $\phi$.} We give two different explicit descriptions of $\phi$. First, consider a holomorphic volume form $\Omega$ on $U$ such that $\Omega$ has simple poles along $D$. This  is uniquely determined up to multiplication by a scalar $\lambda \in \bC^{\times}$; we normalise so that $\int_{\gamma} \Omega = 1$. For $\alpha \in Q$, let $\tilde{\alpha} \in  H_2(U,\bZ)$ be a lift of $\alpha$. Then $\phi$ is given by the formula 
$$\phi(\alpha)=\exp\left(2\pi i \int_{\tilde{\alpha}} \Omega \right).$$

Alternatively, one can describe $\phi$ algebraically as follows. Note that 
$H_2(Y,\bZ)=H^2(Y,\bZ)=\Pic Y$
via Poincar\'e duality and the first Chern class $c_1$.
We have the homomorphism $$\Pic Y \rightarrow \Pic D$$  given by restriction. This induces a homomorphism 
$$Q \rightarrow \Pic^0(D) := \ker(c_1 \colon \Pic D \rightarrow H^2(D,\bZ)) \simeq \bC^{\times}$$
where the final isomorphism is determined by a choice of generator of $H^1(D,\bZ)$. 
This homomorphism $Q \rightarrow \bC^{\times}$ coincides with the homomorphism $\phi$ described above. See e.g.~\cite[Proposition 3.12]{Friedman} for the equivalence of the two descriptions; the second one is used in \cite{GHK1}.

The Torelli theorem for log Calabi--Yau pairs \cite{GHK1} implies the following:

\begin{theorem}\cite{GHK1}
The class $\phi$ determines $(Y,D)$ uniquely within its deformation type. More precisely, if $(Y,D)$ and $(Y',D')$ are two deformation equivalent log Calabi--Yau pairs such that $\phi = \phi'$ under an identification $H^2(Y,\bZ) \simeq H^2(Y',\bZ)$ given by parallel transport,  then $(Y,D) \simeq (Y',D')$.
\end{theorem}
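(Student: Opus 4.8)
The statement is the Torelli theorem of \cite{GHK1}; the plan is to reorganise its proof as an injectivity statement for a period map. Fix the deformation type, so that the lattice $Q$ together with the restriction map $H_2(Y,\bZ)\to\bZ^k$ is a fixed topological invariant, and note that the hypothesis provides a marking, i.e. an isometry $H^2(Y,\bZ)\simeq H^2(Y',\bZ)$ by parallel transport carrying $\phi$ to $\phi'$. It then suffices to show that, on the moduli of marked pairs of a fixed deformation type, the period map $\mathcal{P}\colon (Y,D)\mapsto \phi\in\Hom(Q,\bC^\times)$ is injective. Throughout I would use the algebraic description $\phi(\alpha)=\cO_Y(\alpha)|_D\in\Pic^0(D)\simeq\bC^\times$, since it makes $\mathcal{P}$ purely algebro-geometric and removes the need to track the transcendental normalisation $\int_\gamma\Omega=1$.

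The first half is infinitesimal Torelli: showing $\mathcal{P}$ is \'etale. Deformations of the pair $(Y,D)$ are governed by $H^1(Y,\Theta_Y(-\log D))$, where $\Theta_Y(-\log D)$ is the log tangent sheaf, and the crucial simplification is the Calabi--Yau condition: since $\det\Omega^1_Y(\log D)=\omega_Y(D)=\cO_Y$, contraction $v\mapsto \iota_v\Omega$ with the volume form $\Omega\in H^0(Y,\Omega^2_Y(\log D))$ is an isomorphism $\Theta_Y(-\log D)\xrightarrow{\ \sim\ }\Omega^1_Y(\log D)$. Hence $H^1(\Theta_Y(-\log D))\cong H^1(\Omega^1_Y(\log D))$, and $\mathcal{P}$ is \'etale provided the dimensions match, which they do by the mixed Hodge structure on $H^2(U)$ described above: both the source and the tangent space $\Hom(Q,\bC)$ of the period torus have dimension $\operatorname{rank} Q$. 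Under these identifications the derivative of $\mathcal{P}$ is cup product with the Kodaira--Spencer class followed by contraction with $[\Omega]$, i.e. precisely the above isomorphism, so $d\mathcal{P}$ is an isomorphism at every point. (The $\bP^2$ blown up at one point on each of three lines is a reassuring check: here $Q$ has rank $1$, the configuration of three points modulo the torus $(\bC^\times)^2$ is one-dimensional, and $\phi$ is the surviving cross-ratio-type invariant.)

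The second half, global injectivity, is the main obstacle. My preferred concrete route uses toric models (Proposition \ref{prop:toric_model}): within a fixed deformation type the combinatorial data of $(\bar{Y},\bar{D})$, the corner blow-ups, and the numbers $m_i$ of interior blow-ups are all fixed, so $(Y,D)$ is determined by the positions of the interior blow-up points $q_{i,1},\dots,q_{i,m_i}$ on each component $\bar{D}_i\setminus\{\text{corners}\}\simeq\bC^\times$, taken modulo the torus $(\bC^\times)^2$ and the toric automorphisms. I would reconstruct this configuration from $\phi$: for two exceptional classes over the same component, $E_{i,j}-E_{i,j'}\in Q$, and $\phi(E_{i,j}-E_{i,j'})=\cO_Y(E_{i,j}-E_{i,j'})|_D$ computes the ratio $q_{i,j}/q_{i,j'}\in\bC^\times$, so $\phi$ pins down the points on each component up to a single scaling; the remaining cross-component gluing is then rigidified by the two-dimensional torus, so that $\mathcal{P}$ descends to an injection of (configurations modulo torus) into $\Hom(Q,\bC^\times)$. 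The genuine difficulties are: (i) matching points across distinct components, where no difference of exceptional classes lies in $Q$ and one must instead exploit the cyclic gluing of $D$ together with classes pulled back from $\bar{Y}$; (ii) quotienting correctly by the torus and the residual toric automorphisms; and (iii) checking that the marking rigidifies the monodromy ambiguities (reflections in $(-2)$-classes), so that no two non-isomorphic marked pairs of the same deformation type can share a period.

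A cleaner but less hands-on alternative, closer to \cite{GHK1}, is to upgrade the \'etale property to global injectivity abstractly: realise $\Hom(Q,\bC^\times)$ as a quotient of a period domain in the style of the Torelli theorem for K3 surfaces and their Type III degenerations, and combine surjectivity of periods with separatedness of the marked moduli and a connectedness/covering argument, the marking again serving to eliminate the monodromy group. In either approach the crux is the same: infinitesimal Torelli is essentially formal once the Calabi--Yau identification $\Theta_Y(-\log D)\simeq\Omega^1_Y(\log D)$ is in hand, whereas promoting local injectivity to the global statement---controlling the monodromy and ruling out coincident periods for non-isomorphic pairs---is where the real content lies.
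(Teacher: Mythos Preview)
The paper does not give its own proof of this statement: it is quoted without argument as a consequence of the Torelli theorem of \cite{GHK1}. There is therefore no proof in the present paper to compare your sketch against.

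That said, your outline is broadly the shape of the argument in the original reference, and the key identification $\Theta_Y(-\log D)\simeq\Omega^1_Y(\log D)$ for infinitesimal Torelli is exactly the right ingredient. Two points of your global step deserve comment. First, the assertion that ``within a fixed deformation type the combinatorial data of $(\bar{Y},\bar{D})$, the corner blow-ups, and the numbers $m_i$ are all fixed'' is not literally true: a single deformation type admits many toric models, related by the moves of Proposition~\ref{prop:toric_moves}, so one must either fix a toric model once and for all and check the argument is compatible with change of model, or phrase things intrinsically. Second, your difficulty (i) is resolved in practice by classes of the form $\pi^\ast[\bar D_i]-E_{i,j}$ (the strict transform of a fibre through a blown-up point when $\bar D_i$ has an opposite ray), which lie in $Q$ and tie the position on $\bar D_i$ to the toric frame; this is essentially the content of the lemma cited in the paper as \cite[Lemma~2.8]{GHK1}. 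Your difficulty (iii), controlling the Weyl-group monodromy so that the marking genuinely rigidifies, is where the substance lies in the source, and your sketch correctly flags it as unresolved here.
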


\textit{SYZ mirror symmetry.} 
Suppose $U=Y \setminus D$ is a log Calabi--Yau manifold of complex dimension $n$. Assume $M$ is a non-compact Calabi--Yau manifold which is SYZ mirror to $U$, meaning that there exist dual special Lagrangian torus fibrations $f \colon U 
\rightarrow B$ and $g \colon M \rightarrow B$ over a common base $B$. Let $\Omega$ denote the holomorphic volume form on $U$, again normalised so that $\int_{\gamma} \Omega=1$, where $\gamma$ is the class of the SYZ fibre. Let ${\mathbf B}+i\omega$ be the complexified K\"ahler form on $M$. 
We assume that $f$ admits a topological section $\sigma$.
Then, using the identification
$$R^{n-1}f_*\bZ \simeq (R^1f_*\bZ)^{\vee} \simeq R^1g_*\bZ$$
given by Poincar\'e duality on the fibers of $f$ and SYZ duality,  and the resulting  identification
$H^1(R^{n-1}f_*\bR) \simeq H^1(R^1g_*\bR)$, one expects that
$$[\Real \Omega] - \PD([\sigma]) \mapsto {\mathbf B}$$
and
$$[\Imag \Omega] \mapsto \omega.$$
See \cite[Conjecture~6.6]{Gross_specialII}.
Note that we are assuming here that the fibrations $f$ and $g$ are \emph{simple} in the sense of \cite{Gross_specialI}, that is, writing $i \colon B^o \subset B$ for the smooth locus of $f$ and $f^o \colon U^o \rightarrow B^o$ for the restriction of $f$, we have $R^pf_*\bZ =i_*R^pf^o_*\bZ$ for all $p$, and similarly for $g$. 

Let's go back to our $n=2$ case. We have 
$$H^1(R^1f_*\bZ)=Q=H_2(U,\bZ)/\bZ \cdot \gamma.$$
Assume we know that ${\mathbf B}=0$ and $\omega$ is exact; then the above heuristic tells us that the de~Rham cohomology class $[\Omega] \in H^2_{dR}(U,\bC)$ is integral. Equivalently, this means that the homomorphism $\phi \colon Q \rightarrow \bC^{\times}$ equals the trivial homomorphism $e \in \Hom(Q,\bC^{\times})$.  By the Torelli theorem, this determines $(Y,D)$ uniquely within its deformation type. 
Within such a deformation type, the log Calabi--Yau pair such that $\phi=e$ is the one in $\mathcal{T}_e$; this also explains the notation 
 $(Y_e,D)$ used in \cite{GHK1}. Blowing up boundary nodes if necessary, there exists a toric pair $(\bar{Y},\oD)$ and a birational morphism $\pi \colon (Y,D) \rightarrow (\bar{Y},\oD)$ given by inductively blowing up smooth points of $\oD$ and taking its strict transform. 
Then all pairs in the deformation type are given by varying the position of the points we blow up. By \cite[Lemma 2.8]{GHK1}, the distinguished pair $(Y_e,D)$ is given by blowing up the points $-1 \in \bC^{\times} = \oD_i \setminus \bigcup_{j \neq i} \oD_j$ in toric coordinates on $(\bar{Y},\oD)$.

Equivalently, pairs $(Y,D)$ in $\mathcal{T}_e$ are characterised by the following property \cite[Definition 1.2]{GHK2}: let $p_i \in D_i \simeq \oD_i$ denote the distinguished point $-1$ in toric coordinates; then for $E$ a line bundle on $Y$, 
$$E|_D \simeq \cO_D\left(\sum d_i p_i \right)$$
where $d_i = E \cdot D_i$. This will be key to our mirror construction.

\subsection{Full exceptional collections for $D^b \Coh(Y)$} \label{sec:full_exc_coll}

By MMP for surfaces, any log CY pair $(Y,D)$ is the result of blowing up a log CY pair $(Y_\m, D_\m)$, where $Y_\m = \bP^2$ or $\bF_a$, and $D_\m \subset Y_\m$ is a nodal anticanonical divisor. (Unless otherwise specified, our convention throughout is that the family of Hirzebruch surfaces $\bF_a$  includes $\bF_0 = \bP^1 \times \bP^1$.)

\begin{lemma}\label{lem:exc_coll_minimal}
A full exceptional collection of line bundles on $\bP^2$ is given by $(\cO, \cO(1), \cO(2))$. A full exceptional collection of line bundles on $\bF_a$ is given by $(\cO,\cO(A),\cO(B),\cO(A+B))$, where $A$ is the class of the fibre and $B$ the class of the negative section.
\end{lemma}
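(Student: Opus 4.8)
The statement is the classical fact that these are Beilinson-type full exceptional collections; I would verify the three defining properties — exceptionality of each line bundle, semiorthogonality in the prescribed order, and generation — by reducing everything to cohomology of line bundles.

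For $\bP^2$, recall that $\Ext^k(\cO(i),\cO(j)) \cong H^k(\bP^2,\cO(j-i))$. Each object is exceptional since $H^0(\cO) = \bC$ and $H^{>0}(\cO) = 0$. For semiorthogonality I need $\Ext^\bullet(\cO(i),\cO(j)) = 0$ whenever $i > j$, i.e.~$H^\bullet(\bP^2,\cO(d)) = 0$ for $d \in \{-1,-2\}$; this is the standard vanishing of cohomology of $\cO(d)$ on $\bP^n$ for $-n \le d \le -1$. Fullness is Beilinson's theorem: the resolution of the diagonal $\cO_\Delta$ on $\bP^2 \times \bP^2$, with terms $\cO(-p) \boxtimes \Omega^p(p)$ for $p = 0,1,2$, shows via the associated spectral sequence that every object of $D^b\Coh(\bP^2)$ is built from $\cO, \cO(1), \cO(2)$ (these generate the same subcategory as the twisted collection $\cO(-2),\cO(-1),\cO$, the two differing by the autoequivalence $\otimes\,\cO(2)$).

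For $\bF_a$, write $\pi: \bF_a \to \bP^1$ for the ruling, so that $A$ is the fibre class and $B$ the negative section, with $A^2 = 0$, $A\cdot B = 1$, $B^2 = -a$. Exceptionality again follows from $H^0(\cO) = \bC$ and $H^{>0}(\cO) = 0$ ($\bF_a$ being rational). The semiorthogonality requirements $\Ext^\bullet(\text{later},\text{earlier}) = 0$ reduce, running through the six ordered pairs, to the four vanishings
$$H^\bullet(\cO(-A)) = H^\bullet(\cO(-B)) = H^\bullet(\cO(A-B)) = H^\bullet(\cO(-A-B)) = 0.$$
Intersecting with $A$ shows that $\cO(-B)$, $\cO(A-B)$ and $\cO(-A-B)$ each restrict to $\cO_{\bP^1}(-1)$ on every fibre of $\pi$, so their total derived pushforward $R\pi_*$ vanishes; the remaining bundle is $\cO(-A) = \pi^*\cO_{\bP^1}(-1)$, which has no cohomology since $\cO_{\bP^1}(-1)$ does not. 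In every case the Leray spectral sequence gives the claimed vanishing. For fullness I would invoke Orlov's projective-bundle decomposition $D^b\Coh(\bF_a) = \langle\, \pi^* D^b\Coh(\bP^1),\ \pi^* D^b\Coh(\bP^1)\otimes\cO_\pi(1)\,\rangle$ together with Beilinson's collection $(\cO,\cO(1))$ on $\bP^1$; pulling back and twisting, and using $\pi^*\cO_{\bP^1}(1) = \cO(A)$ together with the identification of the relative $\cO_\pi(1)$ with $\cO(B)$ (up to a fibrewise twist), produces exactly the four line bundles $\cO,\cO(A),\cO(B),\cO(A+B)$.

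The cohomology bookkeeping for exceptionality and semiorthogonality is routine; the real content is fullness, and the only delicate point is the convention-tracking in the $\bF_a$ case — pinning down which twist of the relative $\cO_\pi(1)$ equals $\cO(B)$ and confirming that the four bundles from Orlov's decomposition reproduce the stated collection in the correct order.
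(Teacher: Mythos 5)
Your proof is correct, and it rests on the same two external inputs as the paper's own proof (Beilinson for $\bP^2$, Orlov's projective-bundle theorem for $\bF_a$), but it distributes the work differently. The paper never computes an Ext group on $\bF_a$: it applies Orlov's theorem to $\bF_a = \bP(\cE)$ with $\cE = \cO_{\bP^1}\oplus\cO_{\bP^1}(-a)$, reads off the full exceptional collection $\cO(-B),\cO(A-B),\cO,\cO(A)$ via the exact identification $\cO_{\bP(\cE)}(1)=\cO_{\bF_a}(B)$, and tensors the whole collection by $\cO(B)$ (an autoequivalence), so exceptionality and semiorthogonality come for free. You instead verify those by hand --- your fibrewise degree $-1$ argument forcing $R\pi_*=0$ is correct, and your four vanishings are exactly what the six ordered pairs require --- and use Orlov only for generation, in the twisted form $\langle \pi^*D^b\Coh(\bP^1),\ \pi^*D^b\Coh(\bP^1)\otimes\cO_\pi(1)\rangle$; this buys a more self-contained treatment of the orthogonality half of the statement. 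The one soft spot is the point you flag but do not execute: if $\cO_\pi(1)=\cO(cA+B)$ with $c\neq 0$, Orlov would give fullness of $(\cO,\cO(A),\cO(cA+B),\cO((c+1)A+B))$ rather than of the stated collection (these are related by braiding the last two bundles, cf.\ Remark~\ref{rmk:exceptional_lines_minimal}, but that would be an extra argument). It closes exactly as in the paper: with the normalization $\cE=\cO_{\bP^1}\oplus\cO_{\bP^1}(-a)$ one has $\pi_*\cO_{\bP(\cE)}(1)=\cE$, whose one-dimensional space of global sections (for $a>0$) cuts out the section of self-intersection $-a$, so $\cO_{\bP(\cE)}(1)=\cO(B)$ on the nose and $c=0$; the case $a=0$ is immediate. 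With that one line made explicit, your argument is complete.
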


\begin{proof}
 The result holds for $\bP^2$ by \cite{Beilinson}. For $\bF_a$, it follows from \cite{Orlov}. Indeed, write $\bF_a = \bP(\cE)$ where $\cE=\cO_{\bP^1} \oplus \cO_{\bP^1}(-a)$. Then, by \cite[Theorem 2.6]{Orlov}, since $\cO_{\bP^1},\cO_{\bP^1}(1)$ is a full exceptional collection on $\bP^1$, we have a full exceptional collection 
$$\cO_{\bP(\cE)}(-1), \pi^*\cO_{\bP^1}(1) \otimes \cO_{\bP(\cE)}(-1),\cO_{\bF_a}, \pi^*\cO_{\bP^1}(1)$$ on $\bF_a$, where $\pi: \bP(\cE) \to \bP^1$ is the obvious map.
Then $\pi^*\cO_{\bP^1}(1) =\cO_{\bF_a}(A)$ and $\cO_{\bP(\cE)}(1)=\cO_{\bF_a}(B)$. Thus the above full exceptional collection is $\cO_{\bF_a}(-B),\cO_{\bF_a}(-B+A),\cO_{\bF_a},\cO_{\bF_a}(A)$. Applying $\otimes \cO_{\bF_a}(B)$ to the whole collection gives the desired one. 
\end{proof}

\begin{remark} \label{rmk:exceptional_lines_minimal}  One can classify full exceptional collections $(L_0, \ldots ,L_i)$ of line bundles on $\bP^2$ ($i=2$) or $\bF_a$ ($i=3$) up to the following operations:
\begin{itemize}
\item[(1)] Apply $\otimes L$ to the whole collection, for any line bundle $L$;
\item[(2)] Apply the Serre functor: replace $(L_0,..,L_i)$ by $(L_1, \ldots ,L_i,L_0 \otimes (-K) )$, where $K$ is canonical;
\item[(3)] Dualise: replace $(L_0,..,L_i)$ with $ (L_i^\vee, \ldots ,L_0^\vee)$, where $L_j^\vee = \mathcal{H}om(L_j, \cO)$. (Note that this is the collection of dual line bundles, as opposed to the dual collection, see Definition \ref{def:dual_collection}.) 
\end{itemize}
In the case of $\bP^2$, all full exceptional collections of line bundles are equivalent to $(\cO, \cO(1), \cO(2))$ \cite{Beilinson}. In the case of $\bF_a$, 
following the ideas in \cite{Perling}, one could show that every collection is equivalent to one of the following type:
$$
\cO,\cO(A),\cO(nA+B),\cO((n+1)A+B)
$$
where as above $A$ is the class of the fibre and $B$ the class of the negative section, and $n$ is an arbitrary integer. In terms of mutations, these are related by braiding the last two line bundles; the integer $n$ corresponds to an element of the braid group on two strands.
\end{remark}

(Recall the left mutation of an exceptional pair $(F,G)$ is $(L_F G, F)$ where $L_F G$ is defined by the distinguished triangle
$\left\{ \Hom^{\bullet}(F,G) \otimes F \stackrel{\ev}{\rightarrow} G \rightarrow L_F G \stackrel{+[1]}{\rightarrow} \right\}$,
and analogously for the right mutation $(G, R_G F)$. See e.g.~the exposition in \cite[Section 2.3]{Bridgeland-Stern}.)

We immediately get the following corollary of Lemma \ref{lem:exc_coll_minimal}.

\begin{corollary}\label{cor:gnl_exc_coll_lines}
Suppose $Y$ is a smooth rational projective surface. Then there exists a full exceptional collection of line bundles on $Y$.
\end{corollary}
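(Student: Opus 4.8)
\emph{The plan} is to induct on the number of blow-ups in a factorisation of $Y$ through a minimal model, using Orlov's blow-up formula for the inductive step and a single mutation to keep every member of the collection a line bundle; throughout I use that on a rational surface every line bundle $L$ is exceptional, since $\Ext^\bullet(L,L) = H^\bullet(\cO_Y) = \bC$. First I would reduce to the minimal case. Every smooth rational projective surface admits a birational morphism to a minimal rational surface $Y_0 \in \{\bP^2, \bF_a\}$, and such a morphism factors as a tower of point blow-ups
\[
Y = Y_N \to Y_{N-1} \to \cdots \to Y_0.
\]
Lemma \ref{lem:exc_coll_minimal} provides a full exceptional collection of line bundles on $Y_0$; after tensoring the whole collection by a line bundle (operation (1) of Remark \ref{rmk:exceptional_lines_minimal}) I may assume it has the form $(\cO, L_1, \ldots, L_n)$, with leading term $\cO$. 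This normalisation is the invariant I carry through the induction.

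For the inductive step, suppose $Y_j$ carries a full exceptional collection of line bundles $(\cO, L_1, \ldots, L_n)$, and let $\pi \colon Y_{j+1} \to Y_j$ be the blow-up of a point with exceptional curve $E \cong \bP^1$. Orlov's blow-up formula gives that $\cO_E(-1)$ is right-orthogonal to $\pi^* D^b\Coh(Y_j)$ (indeed $R\pi_* \cO_E(-1) = 0$), so that
\[
(\cO_E(-1),\ \pi^*\cO,\ \pi^* L_1,\ \ldots,\ \pi^* L_n)
\]
is a full exceptional collection on $Y_{j+1}$; exceptionality and semiorthogonality of the pulled-back terms are inherited via the full faithfulness of $\pi^*$. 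Every member is a line bundle except the leading one, $\cO_E(-1)$.

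Finally I would remove $\cO_E(-1)$ by right-mutating the leading pair $(\cO_E(-1), \pi^*\cO) = (\cO_E(-1), \cO_{Y_{j+1}})$. The crux is the computation
\[
\Hom^\bullet_{Y_{j+1}}(\cO_E(-1), \cO_{Y_{j+1}}) \cong \bC[-1],
\]
which follows from Grothendieck duality for the divisorial inclusion $j \colon E \hookrightarrow Y_{j+1}$: since $E^2 = -1$ one has $j^! \cO_{Y_{j+1}} = N_{E/Y_{j+1}}[-1] = \cO_E(-1)[-1]$, whence $R\Hom(\cO_E(-1), \cO_{Y_{j+1}}) = R\Hom_E(\cO_E(-1), \cO_E(-1))[-1] = \bC[-1]$. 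Combined with the structure sequence $0 \to \cO_{Y_{j+1}} \to \cO_{Y_{j+1}}(E) \to \cO_E(-1) \to 0$, this identifies the right mutation $R_{\cO_{Y_{j+1}}} \cO_E(-1)$ with the line bundle $\cO_{Y_{j+1}}(E)$. Hence the mutated collection
\[
(\cO_{Y_{j+1}},\ \cO_{Y_{j+1}}(E),\ \pi^* L_1,\ \ldots,\ \pi^* L_n)
\]
is again a full exceptional collection, now made entirely of line bundles and with leading term $\cO$, closing the induction.

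The only step demanding genuine care — hence the \emph{main obstacle} — is this mutation calculation: verifying that right-mutating $\cO_E(-1)$ past $\cO_{Y_{j+1}}$ returns the honest line bundle $\cO_{Y_{j+1}}(E)$ rather than a two-term complex. This rests on the $\Ext$-groups being concentrated in a single degree and one-dimensional there, which the duality identity $j^!\cO_{Y_{j+1}} = \cO_E(-1)[-1]$ guarantees; once that is known the structure sequence pins down the mutated object exactly. All remaining ingredients (the MMP factorisation, full faithfulness of $\pi^*$, and the fact that mutations preserve full exceptional collections) are standard.
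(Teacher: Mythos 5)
Your proposal is correct and follows essentially the same route as the paper's own proof: reduce to $\bP^2$ or $\bF_a$ via Lemma \ref{lem:exc_coll_minimal}, pull back through the tower of blow-ups using \cite[Theorem 4.3]{Orlov}, and right-mutate the pair $(\cO_E(-1), \cO)$ to $(\cO, \cO(E))$ using $\cO_E(E)=\cO_E(-1)$ and the structure sequence $0 \to \cO \to \cO(E) \to \cO_E(E) \to 0$. The only cosmetic difference is that you justify $\Hom^\bullet(\cO_E(-1),\cO)\cong\bC[-1]$ via Grothendieck duality, where the paper leaves this implicit in the mutation.
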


\begin{proof} 
This is true for $\bP^2$ and $\bF_a$. For an arbitrary $Y$, we iteratively  apply  \cite[Theorem 4.3]{Orlov}: if $S$ is a smooth projective surface and $\pi \colon \tilde{S} \rightarrow S$ is the blowup of a point $p \in S$ with exceptional curve $E=\pi^{-1}(p)$, then $D^b(\Coh \tilde{S})$ has semi-orthogonal decomposition $\langle \cO_E(-1),L\pi^*D^b(\Coh S) \rangle$. In our case, line bundles pull back to line bundles; now notice that 
we can assume without loss of generality that the exceptional collection on $S$ starts with $\cO_S$; moreover, 
if  we start with the exceptional pair $( \cO_E(-1), \cO )$ and perform a right mutation of $\cO_E(-1)$ over $\cO$, we get the exceptional pair $( \cO, \cO(E) )$, by using $\cO_E(E) = \cO_E(-1)$ and  the exact sequence
$0 \rightarrow \cO \rightarrow \cO(E) \rightarrow \cO_{E}(E) \rightarrow 0.$ 
\end{proof}

The collection of Corollary \ref{cor:gnl_exc_coll_lines} will usually depend on the order of the blow ups from $Y_\m$ to $Y$. 
In the case where $(Y,D)$ is toric, one can mutate to get particularly symmetric full exceptional collections of line bundles.

\begin{proposition}\label{prop:full_exc_coll}
Let $(\bar{Y}, \D)$ be a smooth projective toric surface together with its toric boundary. Let $\D=\D_1+\cdots+\D_k$ be the irreducible components of $\D$ in cyclic order. Then $\cO_{\bar{Y}},\cO_{\bar{Y}}(\D_1),\ldots,\cO_{\bar{Y}}(\D_1+\cdots+\D_{k-1})$ is a full exceptional collection of line bundles on $\bar{Y}$.
\end{proposition}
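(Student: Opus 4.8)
The plan is to verify the two defining properties of an exceptional collection separately, the easy half being exceptionality and the genuine work being fullness. Throughout I would write $F_j = \D_1 + \cdots + \D_j$ (so $F_0 = 0$ and the collection is $\cO(F_0), \ldots, \cO(F_{k-1})$) and use that for line bundles $\Ext^\bullet(\cO(F_i), \cO(F_j)) = H^\bullet(\bar{Y}, \cO(F_j - F_i))$. Since $\bar{Y}$ is rational we have $H^0(\cO) = \bC$ and $H^1(\cO) = H^2(\cO) = 0$, so each object is exceptional; and for $i > j$ the semiorthogonality $\Ext^\bullet(\cO(F_i), \cO(F_j)) = 0$ reduces to the vanishing $H^\bullet(\bar{Y}, \cO(-Z)) = 0$, where $Z = F_i - F_j = \D_{j+1} + \cdots + \D_i$. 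Because $0 \le j < i \le k-1$, the curve $Z$ is a proper, nonempty, connected sub-arc of the cycle $\D$, i.e.\ a chain of $\bP^1$'s meeting consecutively at nodes.

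First I would establish this cohomology vanishing. Tensoring the structure sequence of $Z$ gives $0 \to \cO(-Z) \to \cO \to \cO_Z \to 0$. Since $Z$ is a connected chain its dual graph is a tree, so $H^0(\cO_Z) = \bC$ and $H^1(\cO_Z) = 0$, and the restriction $H^0(\cO) \to H^0(\cO_Z)$ is an isomorphism. Feeding this into the long exact sequence, together with $H^1(\cO) = H^2(\cO) = 0$, forces $H^p(\bar{Y}, \cO(-Z)) = 0$ for all $p$. This proves the collection is exceptional.

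For fullness I would induct on the number of torus-fixed blow-ups needed to reach a minimal toric surface. A preliminary observation makes the induction clean: fullness does not depend on which component is taken to be $\D_1$. Indeed, rotating the starting point by one sends the collection to $\cO(-\D_1) \otimes \bigl(\cO(\D_1), \cO(F_2), \ldots, \cO(F_{k-1}), \cO(-K_{\bar Y})\bigr)$, and the second factor is the Serre-functor mutation of the original (operation (2) of Remark \ref{rmk:exceptional_lines_minimal}); both operations preserve fullness. The base cases $\bar{Y} = \bP^2$ and $\bar{Y} = \bF_a$ are then exactly the collections of Lemma \ref{lem:exc_coll_minimal} and Remark \ref{rmk:exceptional_lines_minimal} (for $\bF_a$ one reads off $\cO, \cO(A), \cO(A+B), \cO(2A+B)$, the $n=1$ member of that family), hence full. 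For the inductive step, let $\pi \colon \bar{Y}' \to \bar{Y}$ be the blow-up of the corner $\D_a \cap \D_{a+1}$, with exceptional curve $E$ ($E^2 = -1$), inserting $E$ between the strict transforms of $\D_a$ and $\D_{a+1}$. Using the rotation freedom I would label the new cycle so that $E$ is the \emph{first} component, $\D'_1 = E$, followed by the strict transforms $\D'_2, \ldots, \D'_{k+1}$ of $\D_{a+1}, \ldots, \D_a$. With $F'_j = \D'_1 + \cdots + \D'_j$ and the relations $\pi^*\D_{a+1} = \D'_2 + E$, $\pi^*\D_a = \D'_{k+1} + E$, and $\pi^*\D_i$ a strict transform otherwise, one computes $F'_1 = E$ and $F'_j = \pi^*(\D_{a+1} + \cdots + \D_{a+j-1})$ for $2 \le j \le k$. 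Hence the thick subcategory $\mathcal{A}'$ generated by $\cO(F'_0), \ldots, \cO(F'_k)$ contains $\cO = \pi^*\cO$ together with $\pi^*\cO(\D_{a+1} + \cdots + \D_{a+j-1})$ for all $j$, i.e.\ it contains $L\pi^*$ of the (rotated) collection on $\bar{Y}$, whose thick closure is $L\pi^* D^b\Coh(\bar{Y})$ by the inductive hypothesis and full faithfulness of $L\pi^*$. Moreover $\cO$ and $\cO(E) = \cO(F'_1)$ lie in $\mathcal{A}'$, so the sequence $0 \to \cO \to \cO(E) \to \cO_E(E) \to 0$ shows $\cO_E(-1) = \cO_E(E) \in \mathcal{A}'$. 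By Orlov's blow-up decomposition $D^b\Coh(\bar{Y}') = \langle \cO_E(-1), L\pi^* D^b\Coh(\bar{Y})\rangle$ (as used in the proof of Corollary \ref{cor:gnl_exc_coll_lines}), we conclude $\mathcal{A}' = D^b\Coh(\bar{Y}')$, completing the induction.

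The main obstacle is entirely in the fullness half: the exceptionality is a short cohomological computation, but to run the blow-up induction one must both remove the dependence on the cyclic starting point and arrange the bookkeeping of strict transforms so that $L\pi^*$ of the collection on $\bar{Y}$ and the class $\cO_E(-1)$ are visibly recovered. The choice $\D'_1 = E$ is exactly what makes the partial sums $F'_j$ match $\pi^*$ of consecutive sums on $\bar{Y}$, and it is the one point where care is required.
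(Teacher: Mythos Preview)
Your proof is correct and follows essentially the same route as the paper. Both halves match: the exceptionality argument via the structure sequence of the chain $Z$ is identical, and the fullness argument proceeds by the same induction on corner blow-ups over the base cases $\bP^2$ and $\bF_a$, using the cyclic relabelling freedom and Orlov's blow-up decomposition. The only cosmetic difference in the inductive step is that the paper explicitly performs the right mutation of $\cO_E(-1)$ over $\cO$ to obtain $\cO(E)$ and thereby exhibits the new collection as a mutation of a known full one, whereas you verify directly that the thick closure of your collection contains both $\cO_E(-1)$ and $L\pi^*$ of the old full collection; these are two phrasings of the same observation.
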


\begin{proof} 
The sequence is an exceptional collection. Indeed, since $H^1(\cO_{\bar{Y}})=H^2(\cO_{\bar{Y}})=0$, line bundles on ${\bar{Y}}$ are exceptional. 
Also, writing $E_i=\cO_{\bar{Y}}(\D_1+\cdots+\D_{i-1})$, we have $\cHom(E_i,E_j)=\cO_{\bar{Y}}(-D_{j}-\cdots-D_{i-1})$ for $i>j$. 
Note $C:=\D_{j}+\cdots+\D_{i-1}$ is a chain of smooth rational curves, so we have $H^0(\cO_C)=\bC$ and $H^1(\cO_C)=0$. Now the exact sequence
$$0 \rightarrow \cO_{\bar{Y}}(-C) \rightarrow \cO_{\bar{Y}} \rightarrow \cO_C \rightarrow 0$$
gives $H^k(\cO_{\bar{Y}}(-C))=0$ for all $k$, i.e., $\Ext^k(E_i,E_j)=H^k(\cHom(E_i,E_j))=0$ for all $k$.

Note that we are free to change the labelling of the boundary (consistent with the given cyclic order) to prove the result --- because if $E_1,\ldots,E_k$ is a full exceptional collection then so are $E_2,\ldots,E_{k},E_1\otimes \cO_{\bar{Y}}(-K_{\bar{Y}})$ and $E_1 \otimes L, \ldots, E_k \otimes L$, for $L$ a line bundle on ${\bar{Y}}$. Now since $-K_{\bar{Y}} \sim \D_1+\cdots+\D_k$ we find that if $\cO_{\bar{Y}},\cO_{\bar{Y}}(\D_1),\ldots,\cO_{\bar{Y}}(\D_1+\cdots+\D_{k-1})$ is a full exceptional collection then so is  $\cO_{\bar{Y}},\cO_{\bar{Y}}(\D_2),\ldots,\cO_{\bar{Y}}(\D_2+\D_3+\cdots+\D_k)$.

By the minimal model program for surfaces, $(\bar{Y},\D)$ is obtained from either $\bP^2$ or $\bF_a$ together with its toric boundary by inductively blowing up nodes of the boundary and taking the inverse image of the boundary. For $\bP^2$, our list agrees with the full exceptional collection $\cO_{\bP^2},\cO_{\bP^2}(1),\cO_{\bP^2}(2)$ of \cite{Beilinson}. 

For $\bF_a$,  let's again write $B$ for the negative section and $A$ for the fiber class; order the boundary of $\bF_a$ so that $\D_1 \sim A$, $\D_2 \sim B$, $\D_3 \sim A$ (and $\D_4 \sim B+aA$). Then the exceptional collection
 in the statement is $\cO_{\bF_a},\cO_{\bF_a}(A),\cO_{\bF_a}(B+A),\cO_{\bF_a}(B+2A)$, which is given by taking the one in Lemma \ref{lem:exc_coll_minimal} and mutating the third line bundle over the fourth.

Now consider a  toric blowup $\bar{Y}' \rightarrow \bar{Y}$. Cycle the labels of the $\bar{D}_i$ so that we are blowing up $\D_1 \cap \D_k$. Let $E$ be the exceptional divisor. 
The sequence $\cO_{\bar{Y}},\cO_{\bar{Y}}(\D_1),\ldots,\cO_{\bar{Y}}(\D_1+\cdots+\D_{k-1})$ on $\bar{Y}$ is a full exceptional collection by the induction hypothesis. It pulls back to the exceptional sequence  $$\cO_{\bar{Y}'},\cO_{\bar{Y}'}(E+\D_1'),\ldots, \cO_{\bar{Y}'}(E+\D_2'+\cdots+\D_{k-1}')$$ on $\bar{Y}'$, where $\D_i' \subset {\bar{Y}'}$ denotes the strict transform of $\D_i$. Thus by  \cite[Theorem 4.3]{Orlov}, $$\cO_{E}(-1),\cO_{\bar{Y}'},\cO_{\bar{Y}'}(E+\D_1'), \cdots, \cO_{\bar{Y}'}(E+ \D_1'+\cdots+\D'_{k-1})$$ is a full exceptional collection on ${\bar{Y}'}$. Now perform a right mutation on $\cO_E(-1)$ to get the full exceptional collection
 $\cO_{\bar{Y}'},\cO_{\bar{Y}'}(E), \cO_{\bar{Y}'}(E + \D_1')\ldots, \cO_{\bar{Y}'}(E+\D'_1+\cdots+\D'_{k-1})$ for $\bar{Y}'$, as required. 
\end{proof}

\begin{corollary}\label{cor:full_exc_coll}
Suppose $(\tilde{Y}, \tilde{D}) \in \tilde{\mathcal{T}}$ is given by inductively blowing up $m_i$ points on the interior of $ \bar{D}_i$, $i=1, \ldots, k$. Let $\Gamma_{ij}$ be the pullback of the $j$th exceptional curve over $\D_{i}$, for $i=1, \ldots, k$, $j=1, \ldots, m_i$. Then 
\begin{multline*}
\cO_{\Gamma_{k m_k}}(\Gamma_{ k m_k}), \ldots \cO_{\Gamma_{k1}}(\Gamma_{k1}), \ldots, \cO_{\Gamma_{1m_1}}(\Gamma_{1m_1}),\ldots,\cO_{\Gamma_{11}}(\Gamma_{11}), \cO,  \pi^\ast \cO(\D_1), \ldots, \\
 \pi^\ast\cO(\D_1+ \ldots + \D_{k-1})
\end{multline*}
is a full exceptional collection on $\tilde{Y}$.  
Alternatively, a full exceptional collection of line bundles is given by
$$
\cO, 
\cO(\Gamma_{ k m_k}), \ldots \cO(\Gamma_{k1}), \ldots, \cO(\Gamma_{1m_1}),\ldots,\cO (\Gamma_{11}), 
 \pi^\ast \cO(\D_1),\ldots, \pi^\ast\cO(\D_1+ \ldots +\D_{k-1}).
$$

Note that in the case where  $(\tilde{Y}, \tilde{D}) \in \tilde{\mathcal{T}}_e$, and all $m_i$ blow ups are at the point $p_i = -1 \in \bar{D}_i \backslash \sqcup_{j \neq i} \bar{D}_j$, we have  $\Gamma_{ij}:=C_{ij}+\cdots+C_{il_i}$, where the $C_{ij}$ are the strict transforms of the exceptional curves of the blowups; in particular,  $C_{i1}+\cdots+C_{il_i}$ is a chain of smooth rational curves with self-intersection numbers $-2,-2,\ldots,-1$.

\end{corollary}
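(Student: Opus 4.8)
The plan is to build both collections by repeatedly applying Orlov's blow-up formula \cite[Theorem 4.3]{Orlov}, starting from the toric collection of Proposition \ref{prop:full_exc_coll} on $(\bar{Y},\bar{D})$, and then to pass from the first (``sheaf'') collection to the second (``line bundle'') collection by a sequence of mutations that is the verbatim generalization of the model computation already carried out in the proof of Corollary \ref{cor:gnl_exc_coll_lines}.

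For the sheaf version, first I would order the interior blow-ups so that all $m_1$ blow-ups over $\bar{D}_1$ come first (in the given order $1,\dots,m_1$), then those over $\bar{D}_2$, and so on up to $\bar{D}_k$; this is permissible since blow-ups over distinct boundary components, and at distinct or infinitely near points, may be reordered without changing $\tilde{Y}$ or the total transforms $\Gamma_{ij}$. At each stage Orlov's formula prepends the class $\cO_E(-1)$ of the new exceptional curve $E$ and applies $L\pi^\ast$ to the previously constructed collection. Since the formula prepends, the first curve blown up ends up adjacent to $\cO$ and the last ends up at the front, so the blocks appear exactly in the stated order ($\bar{D}_k$ frontmost, $\bar{D}_1$ closest to $\cO$). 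The one identity to check is that, under a further blow-up $\pi$, one has $L\pi^\ast\cO_E(-1)=\cO_{\pi^\ast E}(\pi^\ast E)$: resolving $\cO_E$ by $[\cO(-E)\to\cO]$ and pulling back gives $L\pi^\ast\cO_E=\cO_{\pi^\ast E}$, since the divisorial inclusion stays injective after pullback, whence $L\pi^\ast\cO_E(-1)=\cO_{\tilde{Y}}(\pi^\ast E)\otimes\cO_{\pi^\ast E}$. Iterating, the sheaf prepended at the $(i,j)$-th blow-up is pulled back to $\cO_{\Gamma_{ij}}(\Gamma_{ij})$ in $\tilde{Y}$, where $\Gamma_{ij}$ is the total transform, while $\cO$ and the $\cO(\bar{D}_1+\cdots+\bar{D}_j)$ pull back to $\cO$ and $\pi^\ast\cO(\bar{D}_1+\cdots+\bar{D}_j)$; the base case $\tilde{Y}=\bar{Y}$ is Proposition \ref{prop:full_exc_coll}.

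For the line bundle version, I would move $\cO$ from its position (immediately after the exceptional block) to the front by right-mutating each $\cO_{\Gamma_{ij}}(\Gamma_{ij})$ over the adjacent $\cO$, one at a time. This is the mutation of Corollary \ref{cor:gnl_exc_coll_lines} with $E$ replaced by $\Gamma_{ij}$: the pair $(\cO_{\Gamma_{ij}}(\Gamma_{ij}),\cO)$ right-mutates to $(\cO,\cO(\Gamma_{ij}))$ through the short exact sequence $0\to\cO\to\cO(\Gamma_{ij})\to\cO_{\Gamma_{ij}}(\Gamma_{ij})\to 0$, once one knows $\Hom^\bullet(\cO_{\Gamma_{ij}}(\Gamma_{ij}),\cO)$ is one-dimensional and concentrated in degree $1$. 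The latter follows by resolving $\cO_{\Gamma_{ij}}(\Gamma_{ij})$ as $[\cO\to\cO(\Gamma_{ij})]$ and dualizing, which reduces the claim to $H^0(\cO_{\Gamma_{ij}})=\bC$ and $H^1(\cO_{\Gamma_{ij}})=0$; these hold because $\Gamma_{ij}$ is a reduced, connected tree of rational curves (blowing up a smooth point of the boundary and of its transforms keeps the total transform of an exceptional curve reduced and tree-like). Each mutation moves $\cO$ one step left and deposits $\cO(\Gamma_{ij})$ immediately to its right, so after all $m_1+\cdots+m_k$ mutations $\cO$ sits at the front and the exceptional terms appear as $\cO(\Gamma_{km_k}),\dots,\cO(\Gamma_{11})$, which is exactly the second collection. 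The closing remark for $\tilde{\mathcal{T}}_e$ is then just the explicit description of $\Gamma_{ij}=C_{ij}+\cdots+C_{il_i}$ and of the self-intersection sequence $-2,\dots,-2,-1$, read off directly from iterated blow-up at $p_i$.

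I expect the main obstacle to be the bookkeeping in the first step rather than any single hard computation: one must keep the identification $L\pi^\ast\cO_E(-1)=\cO_{\Gamma_{ij}}(\Gamma_{ij})$ consistent through all iterated pullbacks, so that both the total transforms and the block ordering match the statement precisely, and one must verify that $\Gamma_{ij}$ is genuinely a tree of rational curves so that the cohomology vanishing underlying the mutations holds. The mutations themselves are routine.
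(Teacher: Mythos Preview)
Your proposal is correct and follows essentially the same approach as the paper: both apply Orlov's blow-up formula to the toric collection of Proposition~\ref{prop:full_exc_coll}, use the identity $Lf^\ast\cO_E(E)=\cO_{f^\ast E}(f^\ast E)$ via the short exact sequence $0\to\cO\to\cO(E)\to\cO_E(E)\to 0$, and obtain the line-bundle version by the right mutation of Corollary~\ref{cor:gnl_exc_coll_lines}. Your write-up is somewhat more explicit about the block ordering and about why the mutation works for the possibly non-irreducible $\Gamma_{ij}$ (via $H^0(\cO_{\Gamma_{ij}})=\bC$, $H^1(\cO_{\Gamma_{ij}})=0$), which the paper leaves implicit in ``As before''.
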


\begin{proof}
This  follows  from Proposition \ref{prop:full_exc_coll} together with \cite[Theorem 4.3]{Orlov}. As before we use that if $E$ is exceptional, $\cO_E(-1)=\cO_E(E)$. Also, for $F$ any effective divisor, we have $Lf^\ast\cO_F(F)=\cO_{f^\ast F}(f^\ast F)$: use the notation $\cO_Y(F):=\cO_X(F)|_Y$ for $X$ a variety, $Y \subset X$ a subscheme, and $F$ a Cartier divisor on $X$. For $F \subset X$ an effective Cartier divisor, we have the exact sequence
$$0 \rightarrow \cO_X \rightarrow \cO_X(D) \rightarrow \cO_D(D) \rightarrow 0.$$
If $f \colon Y \rightarrow X$ is a birational morphism, applying  $f^*$ to this sequence gives $Lf^*\cO_D(D)=\cO_{f^*D}(f^*D)$.
\end{proof}

\begin{remark}\label{rmk:line_colls_and_degenerations}
For a general smooth rational projective surface $Y$, we expect there to be a one-to-one correspondence  between degenerations of $Y$ to a smooth toric surface (with the same second Betti number) and full exceptional collections of line bundles, considered up to the operations listed in Remark \ref{rmk:exceptional_lines_minimal} (overall tensoring with a line bundle; Serre functor; passing to the collection of dual line bundles); the collection on $Y$ would be deformed from the collection of Proposition \ref{prop:full_exc_coll} on the smooth toric surface. (We do not expect it to matter whether or not $Y$ is equipped with the distinguished complex structure.) For a general $Y$ there is no a priori expected classification of such degenerations; however, for  $\bF_a$,  one can easily show using the classification of surfaces that all the possible degenerations are to $\bF_{a+2l}$, $l \in \mathbb{Z}$; these degenerations yield the list of full exceptional collections of line bundles given in Remark \ref{rmk:exceptional_lines_minimal}.
\end{remark}



\section{Mirror Lefschetz fibrations}\label{sec:Lefschetz_fibration}

\subsection{Lefschetz fibration associated to an exceptional collection of line bundles}
Given a log CY surface $(Y,D) \in \mathcal{T}$, and certain auxiliary data, we want to algorithmically define a Lefschetz fibration, with total space a four-dimensional Weinstein domain. This will be mirror to the  deformation of $(Y,D)$ with distinguished complex structure, in a sense that will be made precise in Section \ref{sec:proof_hms}.
Following \cite[Definition 1.9]{Giroux-Pardon}, we present the mirror as an abstract Weinstein Lefschetz fibration, namely:
\begin{itemize}
\item a Weinstein 2-manifold, the smooth fibre of the Lefschetz fibration (also known as the central fibre);
\item a finite sequence of exact Lagrangian $S^1$s on the smooth fibre, which is a distinguished collection of vanishing cycles for the Lefschetz fibration.\footnote{In this dimension there is only one choice of parametrisation as considered in \cite[Definition 1.9]{Giroux-Pardon}.} We take the convention that they are ordered clockwise by the incidence angles of their vanishing paths at the central fibre. (This will match their order as elements of the directed Fukaya category of the fibration; note that Giroux-Pardon take the opposite convention \cite[Definition 6.3]{Giroux-Pardon}.) 
\end{itemize}
This data determines a Weinstein domain, the total space of the Lefschetz fibration, up to Weinstein deformation equivalence, together with a Lefschetz fibration in the `classical' sense from the total space to an open disc $B \subset \bC$. We will sometimes refer to this as the `geometric realisation' of the abstract Weinstein Lefschetz fibration.

\begin{definition}
Let $\Sigma$ denote a $k \geq 1$ punctured elliptic curve, equipped with its standard Weinstein structure. (One explicit possibility for this is to take the unbranched $k$-fold cover of the once punctured elliptic curve $\{ x^2 + y^3 = 1 \} \subset \bC^2$, with the structure inherited from $\bC^2$.) There are several spin exact Lagrangian $S^1$s of note on $\Sigma$, which we label as follows:
\begin{itemize}
\item disjoint `meridiens' $W_1, \ldots, W_k$, each of which, under the $k$-fold cover, maps $1:1$ to a fixed embedded Lagrangian $S^1$ on the once punctured elliptic curve;
\item a `longitude' $V_0$, which intersects each of the $W_i$ transversally in one point, and is a $k:1$ cover of an embedded Lagrangian $S^1$ on the once punctured elliptic curve. 
\end{itemize}
When there is ambiguity as to the value of $k$ we will use the notation $\Sigma_k$.
\end{definition}

There are $\bZ^k$ choices of longitudes. We fix $V_0$ to be our reference one. (It will be mirror to $\cO_D \in \Perf(D)$.) Let $\ell(j_1, \ldots, j_k) = \prod_{i=1}^k \tau^{j_i}_{W_i} V_0$. We will also refer to this as the $(j_1, \ldots, j_k)$--longitude of $\Sigma$. It's naturally a spin exact Lagrangian.

In general, we define a mirror to $(Y,D)$ as follows.

\begin{definition}\label{def:construction_general}
Suppose $(Y, D)$ in a log CY surface in $\mathcal{T}$, and $( E_0, \ldots, E_n )$ is a full exceptional collection of line bundles on $Y$. Say $D $ decomposes into irreducible components $ D_1 + \ldots + D_k$, and let $d_{ij} = E_i \cdot D_j$. Let $\Sigma = \Sigma_k$, and let $L_i = \ell(d_{i1}, \ldots, d_{ik})$, $i=0, \ldots, n$.
The abstract Weinstein Lefschetz fibration associated to the data of $(Y,D)$ and $(E_0, \ldots, E_n)$  is  $\{ \Sigma, (L_0, \ldots, L_n) \}$. 
\end{definition}

We will typically call $M$ the total space of this abstract Lefschetz fibration, and $w: M \to B \subset \bC$ its geometric realisation. 
When $(Y,D) \in \mathcal{T}_e$, we will see that  $L_i$, as an object of the directed Fukaya category of $w$, is mirror to $E_i$. 

\begin{example} Let $Y = \bP^2$, and let $D$ be the union of a line and a conic. Take the full exceptional collection $\cO, \cO(1), \cO(2)$. Then $\Sigma$ is a twice-punctured elliptic curve, and the vanishing cycles are $\ell(0,0), \ell(1,2), \ell(2,4)$. This is precisely the Lefschetz fibration studied by Pascaleff \cite[Figure 5]{Pascaleff_thesis}.
\end{example}

\subsection{Mirrors to some operations on line bundles} \label{sec:mirror_operations}

\begin{definition}\label{def:Lefschetz_moves}
Start with an abstract Weinstein Lefschetz fibration with central fibre $S$, say, and ordered collection of vanishing cycles $L_0, \ldots, L_n$. The following operations leave the Lefschetz fibration unchanged up to Weinstein deformation equivalence (\cite[Section 1.2]{Giroux-Pardon}):

\begin{itemize}

\item \emph{Hurwitz moves:} Replace $(L_0, \ldots, L_n)$ with $(L_0, \ldots, L_{i-1}, L_{i+1}, \tau^{-1}_{L_{i+1}}L_i, L_{i+2}, \ldots, L_n)$ (right, or negative, mutation) or with $(L_0, \ldots, L_{i-1}, \tau_{L_{i}} L_{i+1}, L_{i}, L_{i+2}, \ldots, L_n)$ (left, or positive, mutation). Here `left' and `right' refer to the direction of the mutation, following the algebro-geometric convention: a left mutation has the effect of modifying a vanishing cycle by a right-handed Dehn twist.

\item \emph{Cyclic permutation:} Replace $(L_0, \ldots, L_n)$ with $(L_1, \ldots, L_k, L_0)$. 

\end{itemize}

We will \emph{avoid} using cyclic permutations throughout this article: while it is a natural operation when the smooth fibre is taken to be central, it isn't when that fibre is taken to be near $\infty$ -- and so it isn't a natural operation for the directed Fukaya category (in contrast with Hurwitz moves). 

Additionally, note that the following operation leaves the Lefschetz fibration unchanged up to an overall symplectomorphism of the total space, intertwining the fibration:

\begin{itemize} 

\item \emph{Global fibre automorphism:} Replace  $(L_0, \ldots, L_n)$ with $(\sigma(L_0), \ldots,\sigma(L_n))$ for some exact, compactly supported symplectomorphism $\sigma$ of $S$. 

\end{itemize}
\end{definition}

The following is immediate from Definition \ref{def:construction_general}.

\begin{lemma}\label{lem:line_bundle_tensor} 
Suppose $(Y, D) \in \mathcal{T}$, $D=D_1+ \ldots + D_k$, and $E_0, \ldots, E_n$ is a full exceptional collection of line bundles on $Y$. Let $\{ \Sigma, (L_0, \ldots, L_n) \}$ be the associated abstract Weinstein Lefschetz fibration. Fix any line bundle $F$ on $Y$. Then the abstract Weinstein Lefschetz fibration associated to  $E_0 \otimes F, \ldots, E_n \otimes F$ is   $\{ \Sigma, (\sigma(L_1), \ldots, \sigma(L_n)) \}$, where  $\sigma$ is the following product of meridional Dehn twists:
$$
\sigma = \prod_{i=1}^k \tau_{W_i}^{F \cdot D_i}.
$$
\end{lemma}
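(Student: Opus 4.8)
The plan is to trace through Definition \ref{def:construction_general} and compute how tensoring the exceptional collection by a fixed line bundle $F$ changes the associated vanishing cycles, then recognise the resulting change as the global fibre automorphism $\sigma$. First I would record the intersection numbers: writing $d_{ij} = E_i \cdot D_j$, the vanishing cycle attached to $E_i$ is $L_i = \ell(d_{i1}, \ldots, d_{ik}) = \prod_{j=1}^k \tau_{W_j}^{d_{ij}} V_0$. Setting $f_j = F \cdot D_j$, the intersection numbers for the tensored bundle are $(E_i \otimes F) \cdot D_j = E_i \cdot D_j + F \cdot D_j = d_{ij} + f_j$, by bilinearity of the intersection pairing. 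Hence the vanishing cycle associated to $E_i \otimes F$ is by definition
\[
L_i' = \ell(d_{i1}+f_1, \ldots, d_{ik}+f_k) = \prod_{j=1}^k \tau_{W_j}^{d_{ij}+f_j} V_0.
\]

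The key step is then to compare $L_i'$ with $\sigma(L_i)$, where $\sigma = \prod_{j=1}^k \tau_{W_j}^{f_j}$. Since the meridians $W_1, \ldots, W_k$ are pairwise disjoint, the Dehn twists $\tau_{W_j}$ commute with one another, so the products defining $L_i$, $L_i'$ and $\sigma$ may be freely reordered. Applying $\sigma$ to $L_i$ gives
\[
\sigma(L_i) = \Bigl(\prod_{j=1}^k \tau_{W_j}^{f_j}\Bigr)\Bigl(\prod_{j=1}^k \tau_{W_j}^{d_{ij}}\Bigr) V_0 = \prod_{j=1}^k \tau_{W_j}^{f_j + d_{ij}} V_0 = L_i',
\]
using that a symplectomorphism intertwines Dehn twists with the twists along the images of the curves, i.e.\ $\tau_{W_j}^{f_j} \circ \tau_{W_j}^{d_{ij}} = \tau_{W_j}^{f_j + d_{ij}}$ on $W_j$ (and these commute across distinct $j$). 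This identifies the new collection as $(\sigma(L_0), \ldots, \sigma(L_n))$, exactly the global fibre automorphism move of Definition \ref{def:Lefschetz_moves}.

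The only genuine point requiring care — and the closest thing to an obstacle — is confirming that $\sigma$ really is an honest exact, compactly supported symplectomorphism of $\Sigma$ of the kind allowed in the global fibre automorphism move, rather than merely a formal product of twists: this is immediate since each $\tau_{W_j}$ is such a symplectomorphism and compositions preserve these properties, but one should note the disjointness of the $W_j$ ensures $\sigma$ is well-defined independent of the order. The statement as written in Lemma \ref{lem:line_bundle_tensor} has a minor indexing slip — it writes $(\sigma(L_1), \ldots, \sigma(L_n))$ whereas the collection should begin at $L_0$, giving $(\sigma(L_0), \ldots, \sigma(L_n))$ — but this is a typo and does not affect the argument. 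Everything else reduces to the commutativity of disjoint Dehn twists and the bilinearity of the intersection pairing, so the proof is essentially a one-line unwinding of the definitions once the intersection numbers are substituted.
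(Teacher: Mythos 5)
Your proof is correct and matches the paper exactly: the paper declares the lemma ``immediate from Definition \ref{def:construction_general}'', and your unwinding --- bilinearity of the intersection pairing giving $(E_i \otimes F)\cdot D_j = d_{ij} + f_j$, plus commutativity of twists in the disjoint meridians $W_j$ (each fixed by $\sigma$) to identify $\ell(d_{i1}+f_1,\ldots,d_{ik}+f_k)$ with $\sigma(L_i)$ --- is precisely that intended one-line argument. Your observation that the statement's $(\sigma(L_1),\ldots,\sigma(L_n))$ should read $(\sigma(L_0),\ldots,\sigma(L_n))$ is also right; it is an indexing typo in the paper.
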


At the categorical level, it is well known that left and right mutations are mirror Hurwitz moves (\cite{Seidel_LES} and \cite[Section 17j]{Seidel_book}). When $(Y,D) \in \mathcal{T}_e$, we check that this also applies geometrically in our context.

\begin{lemma}\label{lem:mutations_mirror_mutations} Suppose $(Y, D) \in \mathcal{T}_e$, $D=D_1+ \ldots + D_k$, and $E_0, \ldots, E_n$ is a full exceptional collection of line bundles on $Y$. Let $\{ \Sigma, (L_0, \ldots, L_n) \}$ be the associated abstract Weinstein Lefschetz fibration. Assume that there is another full exceptional collection of line bundles on $Y$, say $F_0, \ldots, F_n$, which is mutation equivalent to the first one (we're not assuming that the mutations are through line bundles). Consider the abstract Weinstein Lefschetz fibration given by starting with  $\{ \Sigma, (L_0, \ldots, L_n) \}$ and performing the same sequence of mutations on the $L_i$. Then up to Hamiltonian isotopy of the vanishing cycles, this is equal to the abstract Weinstein Lefschetz fibration associated with $F_0, \ldots, F_n$. 
\end{lemma}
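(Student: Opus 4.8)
The plan is to carry out the promised geometric check by transporting the whole mutation sequence down to the central fibre $\Sigma$ via the Lekili--Polishchuk equivalence, exploiting in an essential way that $D$ is anticanonical in order to identify each algebraic mutation with a spherical twist after restriction to $D$.

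Write $i\colon D \hookrightarrow Y$ for the inclusion and $i^*\colon D^b\Coh(Y)\to\Perf(D)$ for derived restriction; this lands in $\Perf(D)$ since $D$ is a Cartier divisor. Recall from \cite{Lekili-Polishchuk} the equivalence $\Phi\colon\Perf(D)\xrightarrow{\sim}D^\pi\Fuk(\Sigma)$ with $\Phi(\cO_D)=V_0$ and $\Phi(\cO_{p_j})=W_j$ (with its local system), so that $\Phi(\cO_D(p_j))=\tau_{W_j}V_0$ and, more generally, $\Phi(\cO_D(\sum_j e_j p_j))=\prod_j\tau_{W_j}^{e_j}V_0=\ell(e_1,\dots,e_k)$. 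Because $(Y,D)\in\mathcal{T}_e$, the restriction of a line bundle $E$ is $i^*E=\cO_D(\sum_j (E\cdot D_j)\,p_j)$, so the two starting data already match under $\Phi\circ i^*$: we have $\Phi(i^*E_i)=\ell(d_{i1},\dots,d_{ik})=L_i$, and likewise $\Phi(i^*F_i)=\ell(F_i\cdot D_1,\dots,F_i\cdot D_k)$ is exactly the vanishing cycle assigned to $F_i$ by Definition \ref{def:construction_general}. It therefore suffices to show that applying the algebraic mutations to $(E_0,\dots,E_n)$ and the geometric Hurwitz moves to $(L_0,\dots,L_n)$ produces, term by term, objects that agree under $\Phi\circ i^*$; the classification of objects of the Fukaya category of a punctured surface then upgrades such a quasi-isomorphism to a Hamiltonian isotopy of the corresponding embedded curves (all curves in sight are embedded, Dehn twists being symplectomorphisms).

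The heart of the matter is a single elementary step. Let $(A,B)$ be an adjacent exceptional pair in the current collection; as it sits inside an exceptional collection we have the semiorthogonality $\Hom_Y^\bullet(B,A)=0$. Applying the exact functor $i^*$ to the triangle $\Hom_Y^\bullet(A,B)\otimes A\xrightarrow{\ev}B\to L_A B$ defining the left mutation (recalled above) yields a triangle of the same shape with $A,B$ replaced by $i^*A,i^*B$ but with the \emph{same} coefficient space $\Hom_Y^\bullet(A,B)$, whereas the spherical twist $T_{i^*A}(i^*B)$ is the cone over $\Hom_D^\bullet(i^*A,i^*B)\otimes i^*A\xrightarrow{\ev}i^*B$. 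The two cones coincide once the restriction map $\Hom_Y^\bullet(A,B)\to\Hom_D^\bullet(i^*A,i^*B)$ is an isomorphism (compatibly with evaluation, which is automatic by naturality). This is precisely where the anticanonical condition enters: from the triangle $\cO_Y(K_Y)\to\cO_Y\to i_*\cO_D$ (using $-K_Y\sim D$) one obtains a triangle $\Hom_Y^\bullet(A,B\otimes\cO_Y(K_Y))\to\Hom_Y^\bullet(A,B)\to\Hom_D^\bullet(i^*A,i^*B)$, and Serre duality identifies the first term with $\Hom_Y^\bullet(B,A)^\vee$, which vanishes by semiorthogonality. (The same computation with $A=B$ shows $\Hom_D^\bullet(i^*A,i^*A)=\bC\oplus\bC[-1]$, so $i^*A$ is indeed spherical.) Thus $i^*(L_A B)\simeq T_{i^*A}(i^*B)$, and symmetrically $i^*(R_B A)\simeq T_{i^*B}^{-1}(i^*A)$ for the right mutation. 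Since mutations preserve exceptionality, this vanishing, and hence the identification of mutation with a spherical twist after restriction, holds at \emph{every} stage of the sequence, even though the intermediate objects need not be line bundles.

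Finally, under $\Phi$ the spherical twist $T_{i^*A}$ is carried to the Dehn twist $\tau_{\Phi(i^*A)}$ (Seidel's exact triangle \cite{Seidel_LES}, \cite[Section 17j]{Seidel_book}), with handedness matched to the Hurwitz-move convention of Definition \ref{def:Lefschetz_moves}; this is exactly the categorical statement quoted before the lemma, now realised geometrically on $\Sigma$. Hence each elementary algebraic mutation is sent by $\Phi\circ i^*$ to the corresponding elementary Hurwitz move, and composing over the whole sequence gives, for every $i$, that the geometric Hurwitz transform of $L_i$ agrees with $\Phi(i^*F_i)=\ell(F_i\cdot D_1,\dots,F_i\cdot D_k)$, which is the claim. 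I expect the step $i^*(L_A B)\simeq T_{i^*A}(i^*B)$ --- that is, the restriction isomorphism on Hom-complexes forced by the anticanonical condition and exceptionality --- to be the crux; the remaining ingredients (the Lekili--Polishchuk equivalence, Seidel's triangle, and the surface-object classification) are imported wholesale.
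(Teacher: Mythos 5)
Your categorical core is the same as the paper's. The restriction isomorphism $\Hom_Y^\bullet(A,B)\xrightarrow{\sim}\Hom_D^\bullet(A|_D,B|_D)$ forced by the triangle $\cO_Y(K_Y)\to\cO_Y\to i_*\cO_D$, Serre duality, and semiorthogonality is exactly the paper's Lemma \ref{lem:restricting_exceptionals}, and its consequence $i^*(L_AB)\simeq T_{i^*A}(i^*B)$ (resp.\ $i^*(R_BA)\simeq T^{-1}_{i^*B}(i^*A)$) at every stage of the mutation sequence, transported to $\Sigma$ via Lekili--Polishchuk (Theorem \ref{thm:Lekili-Polishchuk}) and Seidel's Dehn-twist/spherical-twist identification, is precisely Lemma \ref{lem:isomorphic_vanishing_cycles}, which the paper's proof of the present lemma cites forward. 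So up to the point where each Hurwitz-transformed curve is known to be \emph{quasi-isomorphic} in $\Fuk(\Sigma)$ to the curve assigned to $F_i$, you and the paper coincide.

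The genuine gap is the single sentence doing the geometric work: ``the classification of objects of the Fukaya category of a punctured surface then upgrades such a quasi-isomorphism to a Hamiltonian isotopy.'' You neither state nor cite such a classification, and the paper explicitly flags this upgrade as the step requiring an argument (``this is not usually a tractable question''). The paper's mechanism is concrete: apply meridional Dehn twists to reduce to comparing $V_0'$ with $V_0$; use the two-dimensional fact \cite[Proposition 3.10]{FLP} that for exact curves (or arcs) in distinct isotopy classes the rank of $HF$ equals the unsigned minimal intersection number; and then --- the non-obvious move --- enlarge $\Sigma$ to $\tilde\Sigma$ by attaching handles along the arcs $c_{i,i+1}$ joining consecutive punctures, use full faithfulness of $\Fuk(\Sigma)\to\Fuk(\tilde\Sigma)$ \cite[Lemma 7.5]{Seidel_book} so the quasi-isomorphism persists, and test against the circles $L_{i,i+1}$ capping those arcs: $HF(V_0',L_{i,i+1})\simeq HF(V_0,L_{i,i+1})=0$ forces $I_{\text{min}}(V_0',c_{i,i+1})=0$, which pins down the isotopy class of $V_0'$ inside $\Sigma$; exactness then upgrades isotopy to Hamiltonian isotopy. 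The enlargement is needed precisely because the \emph{compact} Fukaya category of $\Sigma$ does not obviously contain enough test objects to detect wrapping around the punctures, which is what an isomorphic-but-non-isotopic curve would do. Your route could in principle be repaired by invoking a specific classification --- e.g.\ Haiden--Katzarkov--Kontsevich for partially wrapped categories of surfaces, together with the fact that for compact exact Lagrangians wrapped and ordinary Floer cohomology agree (so the comparison functor is fully faithful on the objects at hand), plus the standard flux argument that isotopic exact embedded curves are Hamiltonian isotopic --- but none of this is in your write-up; as it stands, the final step is asserted rather than proved.
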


\begin{proof}
Let $S_0, \ldots, S_n$ be the collection of vanishing cycles associated to $F_0, \ldots, F_n$; and $S_0', \ldots, S_n'$ the one given by performing our sequence of mutations to $L_0, \ldots, L_n$.

Consider the Fukaya category of $\Sigma$, $\Fuk(\Sigma)$, as set up in \cite[Section 12]{Seidel_book} (salient features will also be recalled at the start of our Section \ref{sec:proof_hms}). The Lagrangians $S_i$ and $S_i'$, equipped with a grading and a spin structure, give objects of $\Fuk(\Sigma)$, which by a slight abuse of notation we will also denote by $S_i$ and $S_i'$. 
We claim that for natural choices of gradings and spin structures, $S_i$ and $S_i'$ are isomorphic as objects of $\Fuk(\Sigma)$. 
This is an easy consequence of proof of homological mirror symmetry in Section \ref{sec:proof_hms}, recorded in Lemma \ref{lem:isomorphic_vanishing_cycles}.

We want to upgrade the categorical isomorphism to show that $S_i$ and $S_i'$ are Hamiltonian isotopic; while this is not usually a tractable question, as we are in real dimension two, we can use the fact that whenever two exact Lagrangians are in different isotopy classes, the rank of the Floer cohomology between them is equal to their (unsigned) minimal intersection number $I_{\text{min}}$; the same is true if we allow Lagrangian arcs with boundaries on $\partial \Sigma$. (All of these statements boil down to \cite[Proposition 3.10]{FLP};  exposition in the case of arcs can be found in \cite[Section 2.1]{Keating_stabilisation}.) By applying Dehn twists in meridiens to both sides, we may assume without loss of generality that $S_i = V_0$, and that $S_i'=V_0'$, say, is isomorphic to it in $\Fuk(\Sigma)$. 

Now assume that $\tilde{\Sigma}$ is another punctured surface, equipped with a Liouville form, such that there is an exact symplectomorphism $\Sigma \hookrightarrow \tilde{\Sigma}$; the inclusion of spaces induces a functor $\Fuk(\Sigma) \to \Fuk(\Sigma')$, which is fully faithful by \cite[Lemma 7.5]{Seidel_book}. Let $c_{i,i+1} \subset \Sigma$, $i=1, \ldots, k$ (with indices taken $\text{mod }k$) be the collection of pairwise disjoint, cyclically symmetric arcs joining consecutive punctures, disjoint from $V_0$. Now take $\tilde{\Sigma}$ to be the result of gluing $k$ one-handles to $\Sigma$, at the boundaries of each of the $c_{i,i+1}$. The arc $c_{i,i+1}$ can be capped off in $\tilde{\Sigma}$ with the core of one of the one-handles to give an exact Lagrangian circle, say $L_{i, i+1}$. Now for all $i$, we have that $HF(V_0', L_{i,i+1}) \simeq HF(V_0, L_{i,i+1})=0$. This implies in turn that $HF(V_0', c_{i,i+1})=0$, and so $I_{\text{min}}(V_0', c_{i,i+1})=0$. But considering the topology of $\Sigma$, it is now elementary to see that $V_0'$ is Hamiltonian isotopic to $V_0$. 
\end{proof}

\begin{remark} For a general log CY surface $Y$ it is not known whether all full exceptional collections of coherent sheaves are related by mutations. It is true in the del Pezzo case \cite{Kuleshov-Orlov}; the proof uses the fact that if $Y$ is del Pezzo, then any exceptional object $E \in D^b \Coh(Y)$ is either a vector bundle or $\cO_C(n)$, where $C \subset Y$ is a $(-1)$ curve and $n \in \bZ$; as soon as there is a chain $C_1 \cup C_2$ in $Y$ where $C_1$ is a $(-1)$ curve and $C_2$ is a $(-2)$ curve, this classification no longer holds. 
\end{remark}

Suppose $(Y, D) \in \mathcal{T}$, $D=D_1+ \ldots + D_k$, and $E_0, \ldots, E_n$ is a full exceptional collection of line bundles on $Y$, without loss of generality with $E_0 = \cO$. Let $\{ \Sigma, (L_0, \ldots, L_n) \}$ be the mirror abstract Weinstein Lefschetz fibration  constructed in Definition  \ref{def:construction_general}. Assume that we blow up a point to get $(Y', D') \in \mathcal{T}$. There are two possibilites: either an interior blow-up or a corner blow up.

\begin{proposition}\label{prop:interior_blow_up}
(Mirror to an interior blow up.) Given the setting above, say we blow up an interior point on $D_i$ to get  to get $\pi: (Y', D') \to (Y,D)$, where $D' \simeq D$; let $E$ be the exceptional divisor.  Consider the full exceptional collection of line bundles $\cO, \cO(E), \pi^\ast E_1, \ldots, \pi^\ast E_n$ on $Y'$. 
 The construction of Definition \ref{def:construction_general} associates to it the abstract Weinstein Lefschetz fibration with fibre $\Sigma$ and vanishing cycles $L_0$, $L'= \ell(0,\ldots, 0,1,0,\ldots, 0)$, $L_1$, \ldots, $L_n$, where for $L'$ the 1 is at the $i$th position.
\end{proposition}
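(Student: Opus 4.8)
The plan is to observe that Definition \ref{def:construction_general} reads off the mirror Lefschetz fibration from exactly two pieces of data: the number $k$ of irreducible components of the anticanonical divisor, which fixes the central fibre $\Sigma_k$, and the intersection numbers of each line bundle in the collection against those components, which fix each vanishing cycle via $L_i = \ell(d_{i1}, \ldots, d_{ik})$. So the entire content of the proposition is a computation of intersection numbers against the boundary of $(Y', D')$.

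First I would record that the central fibre is unchanged. Since $\pi \colon (Y', D') \to (Y,D)$ is an interior blow-up, we have $D' \simeq D$, so $D'$ again has $k$ irreducible components $D'_1, \ldots, D'_k$ (the strict transforms of the $D_j$), and the construction again uses $\Sigma = \Sigma_k$. If needed, I would also note that $\cO, \cO(E), \pi^\ast E_1, \ldots, \pi^\ast E_n$ is genuinely a full exceptional collection: this follows exactly as in the proof of Corollary \ref{cor:gnl_exc_coll_lines}, starting from the semiorthogonal decomposition $\langle \cO_E(-1), L\pi^\ast D^b(\Coh Y)\rangle$ and right-mutating $\cO_E(-1)$ past $\pi^\ast \cO = \cO$ to produce the exceptional pair $\cO, \cO(E)$.

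Next I would compute the three types of intersection numbers $(\,\cdot\,) \cdot D'_j$. For $\cO$ all intersections vanish, giving the vanishing cycle $\ell(0, \ldots, 0) = V_0 = L_0$. For $\cO(E)$, the key geometric input is that the blow-up is at an \emph{interior} point of $D_i$ --- not a node of $D$ --- so the exceptional curve $E$ meets only the strict transform $D'_i$, transversally in a single point; hence $E \cdot D'_i = 1$ and $E \cdot D'_j = 0$ for $j \neq i$, yielding $\ell(0, \ldots, 1, \ldots, 0) = L'$ with the $1$ in the $i$th slot. For $\pi^\ast E_m$, the projection formula together with $\pi_\ast [D'_j] = [D_j]$ gives $\pi^\ast E_m \cdot D'_j = E_m \cdot D_j = d_{mj}$, so its vanishing cycle is $\ell(d_{m1}, \ldots, d_{mk}) = L_m$. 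Assembling these in order produces the collection $(L_0, L', L_1, \ldots, L_n)$ claimed.

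The main thing to be careful about --- rather than a genuine obstacle --- is the boundary bookkeeping for $\cO(E)$: one must use that $E$ is supported over an interior point of a single component to rule out $E \cdot D'_j \neq 0$ for $j \neq i$, and to see that the intersection with $D'_i$ is exactly one. Everything else (the projection-formula step, the preservation of $k$) is routine, and there is no hard analytic or categorical input; the proposition is essentially the statement that the construction of Definition \ref{def:construction_general} behaves functorially under interior blow-ups precisely because it is computed through boundary intersection numbers, which transform transparently under $\pi$.
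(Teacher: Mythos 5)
Your proposal is correct and follows essentially the same route as the paper, whose proof is the one-line observation that everything follows from Definition \ref{def:construction_general} together with the intersection-number computations $\pi^\ast E_m \cdot D'_j = E_m \cdot D_j$, $E \cdot D'_i = 1$, and $E \cdot D'_j = 0$ for $j \neq i$ (the paper cites adjunction where you cite the projection formula and $D'_i = \pi^\ast D_i - E$, but these are the same bookkeeping). Your additional verification that $\cO, \cO(E), \pi^\ast E_1, \ldots, \pi^\ast E_n$ is a full exceptional collection, via Orlov's blow-up theorem and the right mutation of $\cO_E(-1)$ over $\cO$, is a correct bonus that the paper handles elsewhere (in Corollary \ref{cor:gnl_exc_coll_lines} and Remark \ref{rmk:Hurwitz_meridien}) rather than inside this proof.
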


\begin{proof}
This is immediate from the definition and adjunction, which implies that $\pi^\ast E_i \cdot D_i' = E_i \cdot D_i$ and $\pi^\ast E_i \cdot E = 0$. 
\end{proof}

\begin{remark}\label{rmk:Hurwitz_meridien}
Suppose we left mutate the second element of each collection over the first. On the B side, we get the  full exceptional collection $\cO_E(-1), \cO, \pi^\ast E_1, \ldots, \pi^\ast E_n$, i.e.~the output of \cite[Theorem 4.3]{Orlov}. On the A side, as $\tau_{V_0} \ell(0,\ldots, 0,1,0,\ldots, 0) = W_i$, that Hurwitz move gives the collection  $W_i, L_0, \ldots, L_n$. In particular, we see that $\cO_E(-1)$ corresponds to $W_i$. 
\end{remark}

In order to describe the effect of a corner blow up, we need a further definition first.

\begin{definition} 
Suppose we are given an abstract Weinstein Lefschetz fibration with central fibre $F$ (two dimensional in our case) and vanishing cycles $(L_0, \ldots, L_n)$. A \emph{stabilisation} of it is given as follows. Given a Lagrangian interval $I \hookrightarrow F$ with Legendrian boundary $\{ 0, 1 \} = \partial I \hookrightarrow \partial F$ such that $[\lambda] = 0 \in H^1 (I, \partial I)$, where $\lambda$ is the Liouville form on $F$, we get a new abstract Weinstein Lefschetz fibration by replacing $F$ with $F'$, obtained by attaching a Weinstein handle to $F$ along $\partial I$, and replacing $(L_0, \ldots, L_n)$ by  $(L', L_0, \ldots, L_n)$, where $L'$ is given by gluing together $I$ and the core of the handle. 
\end{definition}
As before we follow the set-up in \cite{Giroux-Pardon};  stabilisations were first introduced in the context of contact open books, see \cite{Giroux_ICM}. It is well-known that stabilisations do not change the total space of the Lefschetz fibration up to canonical Weinstein deformation equivalence: the procedure adds both a Weinstein one-handle and a Weinstein two-handle to the original Weinstein manifold, and these cancel. The cancellation theorem is \cite[Theorem 10.12]{Cieliebak-Eliashberg}, building on \cite{Milnor_hcobordism, Weinstein}; to understand why the two handles are in the correct configuration for cancellation, see e.g.~\cite{vanKoert}.

\begin{proposition}\label{prop:corner_blow_up} 
(Mirror to a corner blow up.) Given the same setting as before, say we blow up the point $D_i \cap D_{i+1}$ to get $\pi: (Y', D') \to (Y,D)$, where we now have $D' = \pi^{-1}(D)$. Let $E$ be the exceptional divisor. 

Consider the full exceptional collection of line bundles $\cO, \cO(E), \pi^\ast E_1, \ldots, \pi^\ast E_n$ on $Y'$;  the construction of Definition  \ref{def:construction_general} associates to it  the abstract Weinstein Lefschetz fibration with fibre $\Sigma' = \Sigma_{k+1}$ and vanishing cycles $V_0=L_0'$, $V_E$, $L_1'$, \ldots, $L_n'$, where  $L_i'$ is the image of $L_i$ under the inclusion $\Sigma \hookrightarrow \Sigma'$ given in Figure \ref{fig:stabilisation}, and $V_E =  \tau_{V_0}^{-1} S_E$, where $S_E$ is also given on the figure. 

$V_E$ is the longitude $
 \ell(0, \ldots, 0,1,-1,1, 0, \ldots, 0) \subset \Sigma'$ with the $1$s in the positions indexed by $i$ and $i+1$ (with labels inherited from $\Sigma$), and the $-1$ in the position `indexed' by $E$. Mutating the second cycle over the first, we get $S_E = \tau_{V_0} V_E$; this means that the mirror to a corner blow up is a stabilisation in $c_E:= S_E \cap \Sigma$; and $S_E$ corresponds to $\cO_E(-1)$. 
\begin{figure}[htb]
\begin{center}
\includegraphics[scale=0.38]{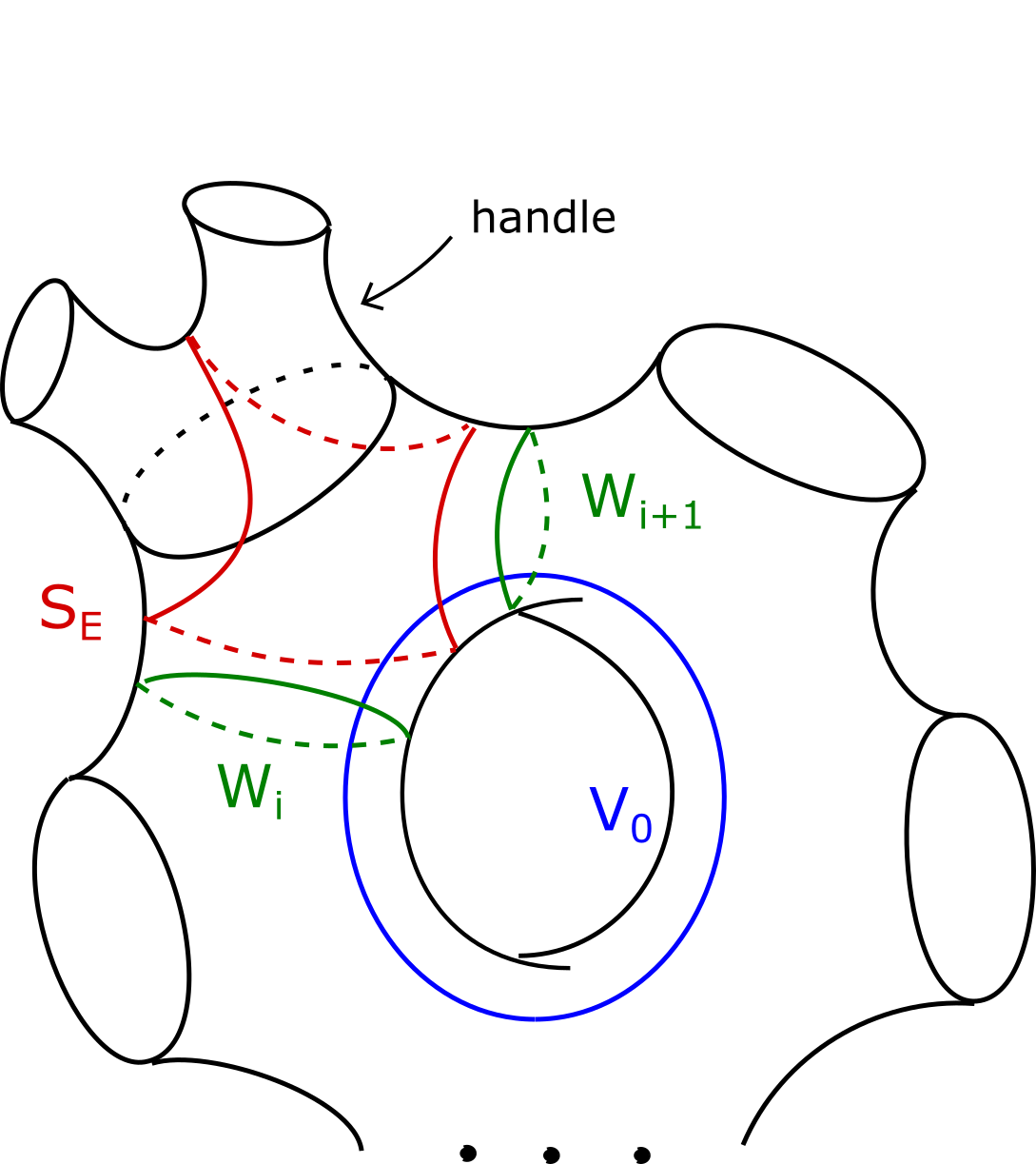}
\caption{Stabilisation mirror to a toric blow up: handle attachement to $\Sigma$, with the curves $W_{i}, W_{i+1}$ and $V_0$ for reference. The new vanishing cycle $S_E$ is given in red; $c_E$ is its restriction to $\Sigma$. The result of the handle attachment is identified with the central fibre $\Sigma'$ of the abstract Weinstein Lefschetz fibration for $(Y', D')$ in the obvious way. In particular, the cyclic order of the punctures is the one given by the clockwise ordering on the page.}
\label{fig:stabilisation}
\end{center}
\end{figure}
\end{proposition}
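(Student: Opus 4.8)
The plan is to compute directly, from Definition \ref{def:construction_general}, the vanishing cycles associated to the exceptional collection $\cO, \cO(E), \pi^\ast E_1, \ldots, \pi^\ast E_n$ on $Y'$, and then to recognise the resulting abstract Weinstein Lefschetz fibration as a stabilisation of the original one. The first step is purely bookkeeping with intersection numbers: I would record the new boundary, which after the corner blow up has the $k+1$ components $D'_1, \ldots, D'_i, E, D'_{i+1}, \ldots, D'_k$ in cyclic order (where $D'_j$ is the strict transform of $D_j$), and compute $E_i' \cdot D'_j$ for each new line bundle $E_i'$. For the strict transforms $\pi^\ast E_m$, I would use $\pi^\ast E_m \cdot E = 0$ and $\pi^\ast E_m \cdot D'_j = E_m \cdot D_j$ (since $D_j$ passes through the blown-up corner for $j \in \{i, i+1\}$ one must account for the total versus strict transform, but the projection formula handles this cleanly). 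For $\cO(E)$ itself, adjunction and $E^2 = -1$, $E \cdot D'_i = E \cdot D'_{i+1} = 1$, $E \cdot D'_m = 0$ otherwise, gives precisely the index pattern $(0, \ldots, 0, 1, -1, 1, 0, \ldots, 0)$ claimed for $V_E = \ell(0,\ldots,1,-1,1,\ldots,0) \subset \Sigma'$, with the $1$s in positions $i, i+1$ and the $-1$ in position $E$.

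Next I would identify $\Sigma$ inside $\Sigma' = \Sigma_{k+1}$ via the inclusion of Figure \ref{fig:stabilisation}, so that all the old cycles $L_0, \ldots, L_n$ become their images $L_0', \ldots, L_n'$; since $E_m \cdot D_j$ is unchanged and $\pi^\ast E_m \cdot E = 0$, the longitude data defining $L_m'$ agrees with that defining $L_m$, so $L_m'$ really is the image of $L_m$ under the inclusion. The second element of the new collection is $\ell$ with index vector $(0,\ldots,0,1)$ at position $E$ and zeros elsewhere for $\cO$-based reference, but more precisely $\cO(E)$ yields $V_E$ as computed; I would then invoke Remark \ref{rmk:Hurwitz_meridien}'s philosophy in the corner case, mutating the second cycle over the first. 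By Lemma \ref{lem:mutations_mirror_mutations} (which applies once we are in $\mathcal{T}_e$, as guaranteed here), left mutation of $\cO(E)$ over $\cO$ on the B side corresponds to a left Hurwitz move on the A side, giving $S_E = \tau_{V_0} V_E$ and the collection $S_E, V_0, L_1', \ldots, L_n'$; on the B side this is the Orlov collection $\cO_E(-1), \cO, \pi^\ast E_1, \ldots, \pi^\ast E_n$, confirming that $S_E$ corresponds to $\cO_E(-1)$.

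The geometric heart of the argument is the claim that passing from $(L_0, \ldots, L_n)$ on $\Sigma$ to $(V_E, L_0', \ldots, L_n')$ on $\Sigma'$ is exactly a stabilisation in the arc $c_E := S_E \cap \Sigma$. Here I would argue that the index vector $(\ldots,1,-1,1,\ldots)$ for $V_E$, after the mutation making it $S_E = \tau_{V_0} V_E$, describes a curve whose restriction $c_E$ to the original surface $\Sigma \subset \Sigma'$ is a Lagrangian interval joining the two new punctures created by the corner blow up, running through the single one-handle attached to $\Sigma$ along $\partial c_E$; capping $c_E$ with the core of that handle produces $S_E$. I would verify that $[\lambda] = 0 \in H^1(c_E, \partial c_E)$ (the exactness condition for a stabilising arc) by noting the arc is embedded and the Liouville form is exact on it, and check that the new puncture structure on $\Sigma'$ matches the cyclic ordering in the figure. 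The main obstacle is precisely this last identification: one must match the \emph{abstract} combinatorial data (index vectors, Dehn twists, mutations) with the \emph{geometric} handle-attachment picture of Figure \ref{fig:stabilisation}, confirming that the one-handle glued along $\partial c_E$ reproduces $\Sigma'$ with the correct cyclic order of punctures and that $V_E$ is the stated longitude. This is where the explicit topology of the once-punctured elliptic curve cover, the placement of $W_i, W_{i+1}, V_0$, and the behaviour of Dehn twists under the inclusion $\Sigma \hookrightarrow \Sigma'$ must all be reconciled; everything else reduces to the intersection-number computation and an appeal to Lemma \ref{lem:mutations_mirror_mutations}.
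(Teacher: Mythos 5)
Your proposal follows essentially the same route as the paper, whose entire proof of this proposition is the one-line observation that everything is immediate from adjunction and comparing intersection numbers: your bookkeeping ($\pi^\ast E_m \cdot E = 0$ and $\pi^\ast E_m \cdot D'_j = E_m \cdot D_j$ via $\pi^\ast D_j = D'_j + E$ for $j = i, i+1$; and $E \cdot D'_i = E \cdot D'_{i+1} = 1$, $E^2 = -1$ giving the index vector of $V_E$) is exactly that computation, with the stabilisation picture then read off from Figure \ref{fig:stabilisation}.

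One concrete misstep: your appeal to Lemma \ref{lem:mutations_mirror_mutations} is both unavailable and unnecessary. The proposition is stated for $(Y,D) \in \mathcal{T}$, whereas that lemma requires the distinguished complex structure, i.e.\ $\mathcal{T}_e$; the hypothesis you describe as \qq{guaranteed here} is not part of the setting, so that step as written would fail. It is also not needed: the replacement of $(V_0, V_E, \ldots)$ by $(S_E, V_0, \ldots)$ with $S_E = \tau_{V_0} V_E$ is a Hurwitz move in the sense of Definition \ref{def:Lefschetz_moves} --- a definitional operation on abstract Weinstein Lefschetz fibrations, with the identity $\tau_{V_0} V_E = S_E$ verified by a direct curve computation on $\Sigma'$ --- while the B-side counterpart (left mutation of $\cO(E)$ over $\cO$ yields $\cO_E(-1)$) follows from the exact sequence $0 \to \cO \to \cO(E) \to \cO_E(E) \to 0$ together with $\cO_E(E) = \cO_E(-1)$, exactly as in the proof of Corollary \ref{cor:gnl_exc_coll_lines} and Remark \ref{rmk:Hurwitz_meridien}. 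Two smaller points: $c_E$ is an arc in $\Sigma$ with Legendrian boundary on $\partial \Sigma$, capped off by the core of the new one-handle to form $S_E$ --- it does not itself \qq{run through} the handle; and the exactness condition $[\lambda] = 0 \in H^1(c_E, \partial c_E)$ amounts to $\int_{c_E} \lambda = 0$, which is not implied by the arc being embedded --- it is arranged by choosing $c_E$ in a suitable position (or after a Hamiltonian isotopy), as implicit in the figure.
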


\begin{proof}
This is also immediate from adjunction and comparing intersection numbers. 
\end{proof}

\begin{remark}
Notice that $[S_E] = [W_i] - [W_E]+[W_{i+1}]$ in homology. This will be significant in Section \ref{sec:Symington}. 
\end{remark}

\subsubsection{Aside: collections of dual line bundles and anti-symplectic involutions}\label{sec:anti-symplectic_start}
Suppose $(Y,D)$ is in $\mathcal{T}$. If $E_0$, \ldots, $E_n$ is a full exceptional collection of line bundles on $Y$,  a standard way of getting another such collection, as already mentioned, is to take $E_n^\vee, \ldots, E_0^\vee$,  where $E_i^\vee = \mathcal{H}om (E_i, \cO)$ is the dual to $E_i$. (As the $E_i$ are line bundles, the regular dual is equal to the derived one; and the general equality $\text{R}\mathcal{H}om(E,F) \cong \text{R}\mathcal{H}om(F^\vee,E^\vee)$  is in this case simply $\text{Ext}^\ast (E, F) \cong \text{Ext}^\ast(F^\vee, E^\vee)$.) How are the corresponding Lefschetz fibrations related?

Call $\{ \Sigma, (L_0, \ldots, L_n) \}$  the abstract Lefschetz Weinstein fibration associated to $E_0, \ldots, E_n$, and $\{ \Sigma, (L_n^\vee, \ldots, L_0^\vee) \}$ the one associated to $E_n^\vee, \ldots, E_0^\vee$; let $w: M \to \bC$ and $\check{w}: \check{M} \to \bC$, respectively, be their geometric realisations. 
There is a fibre-preserving antisymplectic map from $M$ to $\check{M}$, determined by the following pair of maps:
\begin{itemize}
\item an antisymplectic involution on the central fibre, say $\varphi$, which
can be thought of as a reflection in a `plane' through $V_0$ and a collection of $k$ disjoint arcs cyclically joining the punctures of $\Sigma$. It fixes $V_0$ pointwise, fixes each $W_i$ setwise and reverses its orientation, and takes the $(l_1, \ldots, l_k)$ longitude to the $(-l_1, \ldots, -l_k)$ one.  This involution can be thought of as lifted from complex conjugation on the plane via a branched covering with critical values on the real axis. In particular, $\varphi(L_i) = L_{n-i}^\vee$, $i=0, \ldots, n$. 

\item an antisymplectic involution on the base of the Lefschetz fibration, e.g.~complex conjugation; note this reverses the order of the collection of vanishing cycles;
\end{itemize}
We will later consider  explicit full exceptional collections of line bundles for which $M$ and $\check{M}$ will be Weinstein deformation equivalent; heuristically, the anti-symplectic map will correspond to complex conjugation when $M$ is defined over $\bR$, or, more generally, to switching the orientation of the SYZ fibre.  See Section \ref{sec:anti-symplectic_contined}.

\subsection{Lefschetz fibration associated to a toric model}\label{sec:construction}

Given $(\tilde{Y}, \tilde{D}) \in \tilde{\mathcal{T}}$, we want to apply Definition \ref{def:construction_general} to the collections of exceptional line bundles on $\tilde{Y}$ of Corollary \ref{cor:full_exc_coll}. We first spell out what the Lefschetz fibrations look like in this case, and then show,  in Section \ref{sec:indep_choices}, that different choices of auxiliary data for the log CY surface yield equivalent Lefschetz fibrations. 
Pairs in $\mathcal{T} \backslash \tilde{\mathcal{T}}$ will be considered in Section \ref{sec:non_toric_fibrations}. 

\subsubsection{Toric case}\label{sec:construction_toric}

Given a toric pair $(\bar{Y}, \bar{D})$, we make an auxiliary choice: a labelling of the components of $\bar{D}$, say $\bar{D}_1, \ldots, \bar{D}_k$, respecting the cyclic order. (This involves both picking $\D_1$ and picking which way to travel around $\bar{D}$, i.e.~a generator for $H_1 (\bar{D},  \bZ)$ -- note that there is no canonical choice for this.)

\begin{definition}\label{def:construction_toric}
The abstract  Weinstein Lefschetz fibration associated to $(\bar{Y}, \bar{D})$ and the labelling $ \bar{D}_1 ,  \ldots , \bar{D}_k$ 
is the one given by applying the construction of Definition \ref{def:construction_general} to the full exceptional collection of line bundles 
$\cO, \cO(\D_1), \ldots, \cO(\D_1 + \ldots + \D_{k-1})$.
We fix notation for the associated collection of vanishing cycles: say $V_i$ is given by starting with $V_0$, and  Dehn twisting it  $\bar{D}_j \cdot (\bar{D}_1 + \ldots + \bar{D}_i)$ times (with sign) in $W_j$, for each $j=1, \ldots, k$, so that the abstract Weinstein Lefschetz fibration is $\{ \Sigma_k, (V_0, \ldots, V_{k-1}) \}$. 
\end{definition}

We'll refer to the total space of this Lefschetz fibration as $\bar{M}$, and to the map itself as $\bar{w}: \bar{M} \to B \subset \bC$, where $B$ is an open ball. 

\begin{example}\label{ex:CP2}
In the case of $(\bP^2, \bar{D})$, where $\bar{D}$ is the standard toric divisor, $k=3$, we have intersection numbers $(\bar{D}_i \cdot \bar{D}_1)_{i=1,2,3} = (1,1,1)$, and $(\bar{D}_i \cdot (\bar{D}_1 + \bar{D}_2))_{i=1,2,3} = (2,2,2)$. One gets the Lefschetz fibration of Figure \ref{fig:CP2}; this is the `standard' Lefschetz fibration $(\bC^\times)^2 \to \bC$ given by $(x,y) \mapsto x + y + \frac{1}{xy}$, see Section \ref{sec:torus} and \cite[Remark 3.3]{Keating}. 

\begin{figure}[htb]
\begin{center}
\includegraphics[scale=0.28]{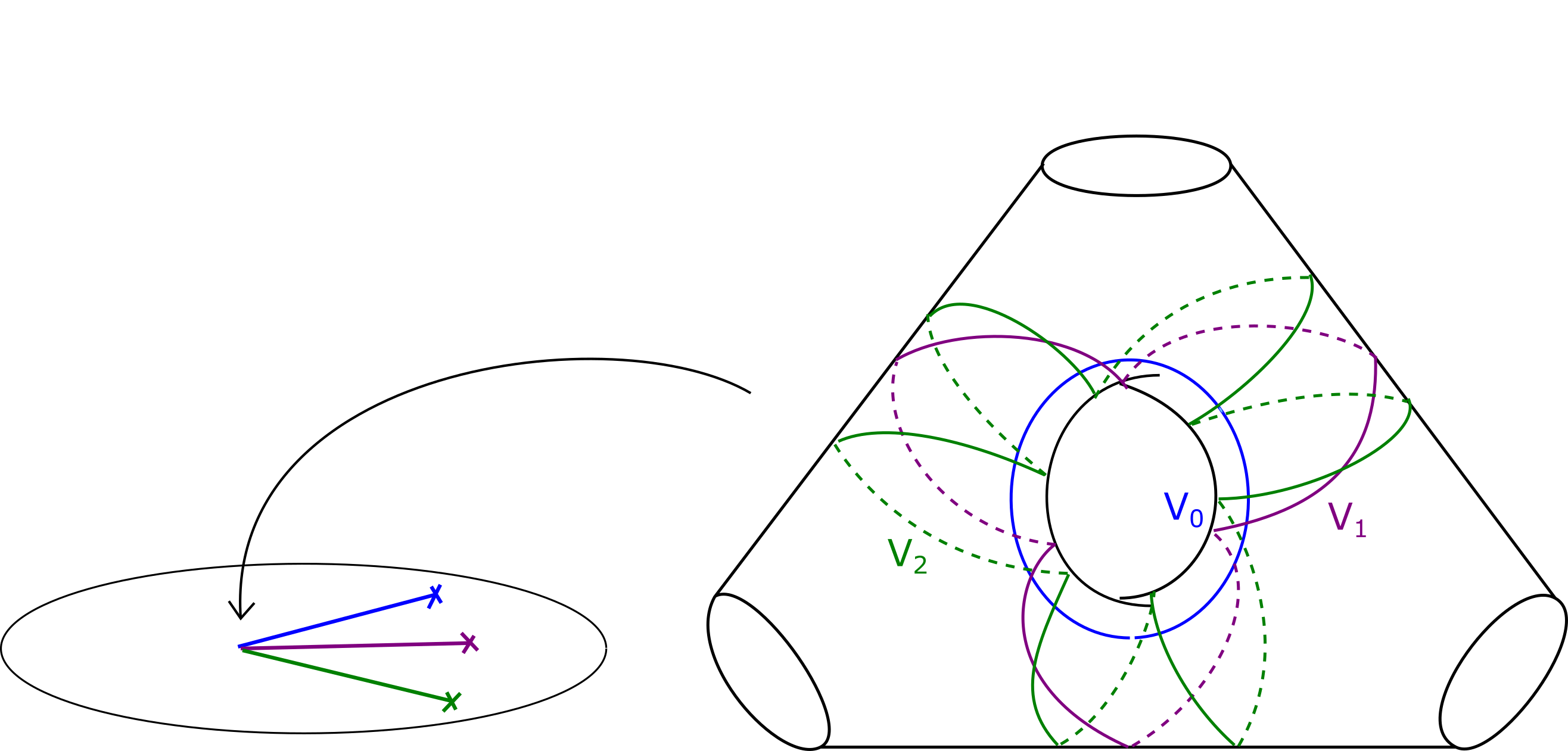}
\caption{Lefschetz fibration mirror to $(\bP^2, D)$.}
\label{fig:CP2}
\end{center}
\end{figure}
\end{example}

The following might be of independent interest.

\begin{proposition}\label{prop:monodromy}
The total monodromy of $w: \bar{M} \to \bC$ is given by
$$
\tau_{V_0} \tau_{V_1}  \ldots \tau_{V_{k-1}}  =
\prod_{i=1}^k \tau_{W_i}^{
-n_i-2} \prod_{i=1}^k \tau_{b_{i, i+1}}
$$
where $b_{i,i+1}$ is a Lagrangian parallel to the boundary component `between' $W_i$ and $W_{i+1}$, and indices are interpreted cyclically. (Recall from Definition \ref{def:n_iandm_i} that $n_i = \bar{D}_i \cdot \bar{D}_i$.) Note that the $W_i$ and $b_{j,j+1}$ are all disjoint, so the Dehn twists on the right-hand side of the above expression commute.
\end{proposition}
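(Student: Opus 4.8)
The plan is to compute the total monodromy $\tau_{V_0}\tau_{V_1}\cdots\tau_{V_{k-1}}$ as a product of Dehn twists in the mapping class group of the punctured surface $\Sigma_k$, exploiting the explicit description of each $V_i$ in Definition \ref{def:construction_toric} as the image of $V_0$ under a product of meridional Dehn twists. First I would record the key structural input: each vanishing cycle is $V_i = \big(\prod_{j=1}^k \tau_{W_j}^{c_{ij}}\big) V_0$ where $c_{ij} = \bar{D}_j \cdot (\bar{D}_1 + \cdots + \bar{D}_i)$, so that $\tau_{V_i} = \big(\prod_j \tau_{W_j}^{c_{ij}}\big) \tau_{V_0} \big(\prod_j \tau_{W_j}^{c_{ij}}\big)^{-1}$ by naturality of Dehn twists under diffeomorphisms. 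Since the $W_j$ are pairwise disjoint, their twists commute, which keeps the conjugating factors manageable.

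The main computational engine is the relation coming from the chain/lantern-type structure on the twice-punctured neighbourhood of $V_0 \cup W_j$. Concretely, on a once-punctured torus (the $k=1$ building block, the cover appearing in the definition of $\Sigma$) the product $\tau_{V_0}\tau_{W}$ of twists in a dual pair of curves is conjugate to a boundary-parallel twist, and more generally I expect the identity $\tau_{V_0} \tau_{V} = \tau_W^{\,?}\,\tau_{b}$ to telescope. My strategy is therefore to proceed by induction on toric blow-ups, using Proposition \ref{prop:corner_blow_up}: a corner blow-up is mirror to a stabilisation, which adds a puncture, a new meridian $W_E$, and a new vanishing cycle, and I would verify that the claimed right-hand side transforms correctly under this stabilisation. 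The base cases are $\bP^2$ (where $k=3$, all $n_i$ are computed directly as in Example \ref{ex:CP2}) and the Hirzebruch surfaces $\bF_a$, where the four intersection numbers and self-intersections $n_i$ are explicit from Lemma \ref{lem:exc_coll_minimal} and Proposition \ref{prop:full_exc_coll}. The homological identity $[S_E] = [W_i] - [W_E] + [W_{i+1}]$ noted after Proposition \ref{prop:corner_blow_up}, together with $n_E = -1$ for the new exceptional ray and the change $n_i \mapsto n_i - 1$ for the two adjacent components, should make the exponents $-n_i - 2$ and the boundary-parallel factors $\tau_{b_{i,i+1}}$ update consistently.

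Alternatively, and perhaps more cleanly, I would verify the identity directly in $H_1$ of the fibre (a necessary condition) and then upgrade to an equality of mapping classes using the fact, already invoked in the proof of Lemma \ref{lem:mutations_mirror_mutations}, that on a surface a mapping class is determined by its action together with minimal-intersection data — or simply by checking the two sides agree as words after abelianising and then matching the geometric configuration of the curves $W_i$ and $b_{i,i+1}$, which are all disjoint. The self-intersection $n_i$ enters because $V_{i} = \tau_{W_i}^{n_i + \cdots} V_{i-1}$ differs from its predecessor precisely by the local intersection $\bar{D}_i \cdot \bar{D}_i = n_i$ picked up at the $i$-th stage, so the exponents on the left telescope into $-n_i - 2$ on the right after accounting for the two neighbouring contributions to each $W_i$.

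I expect the main obstacle to be bookkeeping the signs and the cyclic indexing: disentangling how each $\tau_{V_i}$, written as a $W_j$-conjugate of $\tau_{V_0}$, contributes to the $W_i$-exponent versus the boundary-parallel twists $\tau_{b_{i,i+1}}$, and confirming the shift by $-2$ rather than $-1$ or $0$. The cleanest route to pin this down is to reduce to the local model of a neighbourhood of $V_0$ together with two consecutive meridians $W_i, W_{i+1}$ and the boundary arc between them, prove the identity there, and glue; the global statement then follows because all the right-hand factors are supported in disjoint subsurfaces and hence commute.
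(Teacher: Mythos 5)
Your primary route coincides with the paper's proof: the paper argues by induction on the number of rays in the fan via toric MMP, checks the base cases $\bP^2$ and $\bF_a$ by hand, and at each corner blow-up (the stabilisation of Proposition \ref{prop:stabilisation}) first undoes the Hurwitz move to rewrite the new total monodromy as $\tau_{S_E}\tau_{V_0^s}\cdots\tau_{V_{k-1}^s}$, then applies precisely the lantern relation $\tau_{W_1}\tau_{W_k}\tau_{b_{E,1}}\tau_{b_{k,E}} = \tau_{S_E}\tau_{b_{k,1}}\tau_{W_E}$ on the four-holed subsurface created by the handle attachment. The exponent bookkeeping works out exactly as you predict ($n_E=-1$ giving $\tau_{W_E}^{-1}$, and the two adjacent self-intersections dropping by one to absorb the extra factors $\tau_{W_1}\tau_{W_k}$), with a general blow-up position reduced to this one via the relabelling moves of Proposition \ref{prop:cyclic_reordering}, which commute with the total monodromy.

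Two of your fallback remarks, however, would fail if leaned on, so do not let them into the write-up. First, on a once-punctured torus $\tau_{V_0}\tau_{W}$ is \emph{not} conjugate to a boundary-parallel twist: the correct chain relation is $(\tau_{V_0}\tau_{W})^6=\tau_b$, and indeed $\tau_{V_0}\tau_W$ acts on $H_1$ with order six while $\tau_b$ acts trivially; the relation your induction actually needs is the lantern relation, not a chain relation. Second, your alternative of verifying the identity on $H_1$ and then ``upgrading'' to mapping classes cannot work: the homological action is far from determining a mapping class (the Torelli group is large), and the device invoked in Lemma \ref{lem:mutations_mirror_mutations} pins down isotopy classes of \emph{curves} from intersection/Floer data, not mapping classes. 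To promote a homological check to an equality in the mapping class group you would instead have to evaluate both sides on a filling collection of curves and arcs (an Alexander-method argument), which is strictly more work than the lantern computation your main route already supplies.
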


The term $\prod_{i=1}^k \tau_{W_i}^{-n_i-2}$ is expected by mirror symmetry: with our orientation conventions, the total monodromy is expected to induce the Serre functor on the directed Fukaya category of $w$, following~\cite[p.~30-31]{Kontsevich_ENS}; now note that $\bar{D}_i \cdot D = n_i + 2$, and $D$ is anticanonical. The mirror monodromy statement is proved in \cite[Corollary 2.10]{Bridgeland-Stern}.\footnote{Note the typo in the indices in \cite[Corollary 2.10]{Bridgeland-Stern}: using their notation, we have $L_{\mathbb{E}} = L_{E_1} \ldots L_{n-1}$ -- see Section 2.3 \textit{ibid}.} (This also takes care of understanding how to relate the Lefschetz fibration associated with $(E_0, \ldots, E_n)$ with the one associated with $(E_1, \ldots, E_n, E_0\otimes(-K))$, cf.~Remark \ref{rmk:exceptional_lines_minimal}.)

We will give a proof of Proposition \ref{prop:monodromy} in Section \ref{sec:stabilisation}.

\subsubsection{Mirror Lefschetz fibrations for $(\tilde{Y}, \tilde{D})$}\label{sec:construction_tilde}

\begin{definition}\label{def:construction_tilde}
Suppose we start with $(\tilde{Y}, \tilde{D}) \in \tilde{\mathcal{T}}$, a toric model $\{ (\tilde{Y}, \tilde{D}) \to (\bar{Y}, \bar{D}) \}$ given by interior blow ups, and  as before a labelling $ \bar{D}_1, \ldots, \bar{D}_k$ of the components of $\bar{D}$.
We associate to this the abstract Weinsten Lefschetz fibration given by applying the construction of 
Definition \ref{def:construction_general} to the full exceptional collection of line bundles of Corollary \ref{cor:full_exc_coll}. 
This can be viewed as an iterated application of Proposition \ref{prop:interior_blow_up}. We get as smooth fibre the $k$ punctured elliptic curve $\Sigma$; and, after the obvious Hurwitz moves (mutating the vanishing cycles associated to $\cO(\Gamma_{ij})$ back over the one associated to $\cO$, cf.~Remark \ref{rmk:Hurwitz_meridien}), a collection of vanishing cycles which is given by starting with the collection $V_0, \ldots, V_k$ for $(\bar{Y}, \bar{D}) $ (as in Definition \ref{def:construction_toric}), and adding at the start of the sequence one vanishing cycle for each interior blow up, as follows: recall we perform $m_i$ interior blow ups on $\bar{D}_i$ (and its iterated proper transform); for each of these, add a vanishing cycle given by a copy of the $i$th meridien $W_i$, say $W_{i,j}$, where $i=1,\ldots, k$, and $j=1, \ldots, m_i$. 
\end{definition}

As $W_{i_1, j_1}$ and ${W_{i_2, j_2}}$ are disjoint for $i_1 \neq i_2$, we don't need to worry about their relative order. We'll refer to the total space of this Lefschetz fibration as $\tilde{M}$, and to the map itself as $\tilde{w}: \tilde{M} \to B \subset \bC$. Note that we get the same Lefschetz fibration whenever $(\tilde{Y}, \tilde{D})$ and  $(\tilde{Y}', \tilde{D}')$ are deformation equivalent; as already mentionned, this Lefschetz fibration is mirror to the point with distinguished complex structure, and to get the mirrors to other complex structures, we will need to deform the symplectic form on the mirror; the resulting Landau--Ginzburg model is described in Remark \ref{rmk:non_exact_LGmodel}.

\begin{example} 
Consider the log CY surface $(\tilde{Y}, \tilde{D})$ given by blowing up the three components of the toric divisor on $\bP^2$ at, respectively, $p$, $q$ and $r$ points (this is the log CY pair known as $(Y_{p,q,r}, D)$ in \cite{Keating}). Definition \ref{def:construction_tilde} readily outputs the Lefschetz fibration $\Xi: \mathcal{T}_{p,q,r} \to \bC$ of \cite[Proposition 2.2]{Keating}, which that paper proves is mirror to $(\tilde{Y}, \tilde{D})$.
\end{example}

\subsection{Independence of choices}\label{sec:indep_choices}

The goal of this subsection is to prove the following:

\begin{proposition}
The Lefschetz fibration associated to $(\tilde{Y}, \tilde{D})$ in Definition \ref{def:construction_tilde} is independent of all choices made: the labelling of the components of $\D$, and the choice of toric model for $(\tilde{Y}, \tilde{D})$. 
\end{proposition}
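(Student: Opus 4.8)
The plan is to reduce the statement to the uniqueness of toric models up to the basic moves of Proposition~\ref{prop:toric_moves}, and then to check that each basic move induces an equivalence of abstract Weinstein Lefschetz fibrations under the operations of Definition~\ref{def:Lefschetz_moves} (Hurwitz moves, cyclic permutation, global fibre automorphism) together with stabilisation. As a preliminary simplification I would note that the fibration of Definition~\ref{def:construction_tilde} depends only on the deformation type (equivalently, on the data $n_i, m_i$), so there is no loss in assuming $(\tilde Y, \tilde D) \in \tilde{\mathcal{T}}_e$ throughout the comparison. I would also peel off the interior blow-ups first: by Proposition~\ref{prop:interior_blow_up} each interior blow-up on $\bar D_i$ contributes a meridian $W_{i,j}$, and since these are pairwise disjoint for distinct components and the associated Hurwitz moves commute, their contribution is manifestly independent of any ordering once the toric pair $(\bar Y, \bar D)$ and its labelling are fixed. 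This reduces the problem to two points: independence of the labelling of $\bar D$, and independence of the choice of toric surface $(\bar Y, \bar D)$.

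For the labelling I would treat rotation and reflection separately. A cyclic rotation $\bar D_i \rightsquigarrow \bar D_{i+1}$ changes the collection $\cO, \cO(\D_1), \dots, \cO(\D_1 + \dots + \D_{k-1})$ into $\cO, \cO(\D_2), \dots$; as in the proof of Proposition~\ref{prop:full_exc_coll}, using $-K_{\bar Y} \sim \D_1 + \dots + \D_k$, these two collections differ by a cyclic shift composed with the Serre functor and an overall tensor by a line bundle. On the mirror side the tensor is a global fibre automorphism (Lemma~\ref{lem:line_bundle_tensor}), the Serre functor is implemented by the total monodromy (Proposition~\ref{prop:monodromy}, matching the Bridgeland--Stern statement), which is a global symplectomorphism of the total space, and the shift is the cyclic permutation move; all three preserve the geometric realisation up to Weinstein deformation equivalence, and the meridians $W_{i,j}$ are permuted consistently. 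Reversal of the cyclic orientation I would handle using the anti-symplectic symmetry of Section~\ref{sec:anti-symplectic_start}: reversing the generator of $H_1(\bar D, \bZ)$ corresponds to passing to the collection of dual line bundles, which yields the anti-symplectically identified fibration, and one checks that for the symmetric toric collection this identification upgrades to a Weinstein deformation equivalence.

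The remaining and genuinely substantial point is independence of the toric surface, where two interior-blow-up toric models $(\bar Y, \bar D)$ and $(\bar Y', \bar D')$ of the same $(\tilde Y, \tilde D)$ can have non-isomorphic $\bar Y$ (for instance $\bar Y = \bF_0$ versus $\bar Y' = \bF_1$). By Proposition~\ref{prop:toric_moves} these are connected by elementary transformations, and I would realise each such transformation through a common corner blow-up: there is a corner blow-up $(\tilde Y^+, \tilde D^+)$ of $(\tilde Y, \tilde D)$ carrying a toric model over a toric pair $(\bar Y^+, \bar D^+)$ that corner-blows-down onto both $\bar Y$ and $\bar Y'$. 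Proposition~\ref{prop:corner_blow_up} identifies the fibration attached to $(\tilde Y^+, \tilde D^+)$ as a stabilisation of the fibration attached to $(\tilde Y, \tilde D)$ via $\bar Y$, and, applied to the other blow-down, also as a stabilisation of the fibration via $\bar Y'$; since stabilisation leaves the total space unchanged up to Weinstein deformation equivalence, the two original fibrations must agree.

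The hard part will be this last step: verifying that the two presentations of the $(\tilde Y^+, \tilde D^+)$-fibration as stabilisations are genuinely compatible, i.e.\ that after stabilising, the vanishing-cycle collection coming from $\bar Y$ is carried to the one coming from $\bar Y'$ by Hurwitz moves and a global fibre automorphism. This requires matching the explicit combinatorics of how the interior-blow-up meridians redistribute under the elementary transformation against the B-side line-bundle description, using the formula $V_E = \ell(0,\dots,0,1,-1,1,0,\dots,0)$ from Proposition~\ref{prop:corner_blow_up} for the new vanishing cycle. I would also need to confirm that the orientation-reversal case produces an honest Weinstein deformation equivalence rather than merely an anti-symplectomorphism; this is where the symmetry of the distinguished collection $(\dagger)$ enters.
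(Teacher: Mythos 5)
There is a genuine gap, and it sits exactly where you locate ``the hard part'': your proposed mechanism for elementary transformations does not exist. You want a single corner blow-up $(\tilde Y^+, \tilde D^+)$ carrying one toric model over a common toric pair $(\bar Y^+,\bar D^+)$ that blows down onto both $(\bar Y,\bar D)$ and $(\bar Y^\natural,\bar D^\natural)$. While the fans of $\bar Y$ and $\bar Y^\natural$ do admit a common toric refinement $\bar Y^+$, no common toric \emph{model} exists: corner blow-downs do not change the $m_l$ on surviving components, yet by Lemma \ref{lem:change_n_m} the two models assign $m_i$ and $m_i - 1$ interior blow-ups to the same component $\tilde D_i$ (and $m_j$, $m_j+1$ to $\tilde D_j$), whose rays survive in $\bar Y^+$ — so a single $m$-vector on $(\tilde Y^+,\tilde D^+)$ cannot restrict to both. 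Geometrically, the two models contract genuinely different interior curves of $\tilde Y$: in the notation of Definition \ref{def:elem_trans}, the model over $\bar Y$ contracts the exceptional chain over $p \in \bar D_i$, while the model over $\bar Y^\natural$ contracts the strict transform $F'$ of the fibre through $p$, and $F'$ is not contracted by the first morphism. If instead you stabilise each model separately, the two resulting toric models of $(\tilde Y^+,\tilde D^+)$ are still related by an elementary transformation, so the argument is circular. This is why the paper cannot (and does not) dispose of elementary transformations by stabilisation: Proposition \ref{prop:elem_trans_invariance} is an explicit Hurwitz-move computation, with base case $\bF_a \rightsquigarrow \bF_{a-1}$ carried out by mutating the meridian $W_{j,m_j}$ through the whole collection (using $\tau_{V_0}\tau_{V_1}W_2 = W_4$ and Proposition \ref{prop:monodromy}), followed by an induction on corner blow-ups that tracks how the stabilisation of Proposition \ref{prop:stabilisation} interacts with that mutation sequence. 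Your proposal defers precisely this computation to a ``verification'' of a compatibility that is false as stated.

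A secondary gap: for reversal of the cyclic orientation you invoke the anti-symplectic identification of Section \ref{sec:anti-symplectic_start} and hope to ``upgrade'' it; but an anti-symplectomorphism is not a symplectomorphism, and there is no general mechanism for such an upgrade — this is the same reason the paper treats that section as an aside. The paper's actual proof (Proposition \ref{prop:switch_orientation}, Section \ref{sec:proof_switch_orientations}) is again an induction via toric MMP: the base cases $\bP^2$ and $\bF_a$ are automatic because the reversed self-intersection sequences $(1,1,1)$ and $(0,a,0,-a)$ are cyclically equal to the originals, and the inductive step uses stabilisation — the palindromic symmetry of the base cases, not duality, is what makes it work. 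Your treatment of the cyclic rotation of labels is essentially the paper's Proposition \ref{prop:cyclic_reordering} and is fine (though note the paper deliberately avoids the cyclic permutation move, implementing the shift by Hurwitz moves and a global fibre automorphism so as to stay compatible with the directed Fukaya category), and your reduction via Proposition \ref{prop:toric_moves} and peeling off of the interior meridians match the paper. But as submitted, the two orientation-type steps rest on unproven upgrades, and the elementary-transformation step rests on a construction that provably cannot exist.
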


\subsubsection{Changing the cyclic ordering of the components of $\bar{D}$}

\begin{proposition}\label{prop:cyclic_reordering}
Suppose $(\bar{Y}, \bar{D})$ is a toric pair, and $ \bar{D}_1, \ldots, \bar{D}_k$ as before. Let $\{ \Sigma$, $(V_0$, \ldots,$ V_{k-1}) \}$ be the abstract Weinstein Lefschetz fibration of Definition \ref{def:construction_toric}, and let $\{ \Sigma, (V'_0, \ldots, V'_{k-1}) \}$ be the one obtained by applying the same definition to  $\bar{D}_2, \ldots, \bar{D}_k, \bar{D}_1$. Then the second Lefschetz fibration is obtained from the first by:
\begin{itemize}
\item[(a)] applying a global fibre automorphism by $\tau^{-1}_{W_k} \tau^{-n_1}_{W_1} \tau^{-1}_{W_2}$; 

\item[(b)] right-mutating the image of $V_0$ over the images of $V_1, \ldots, V_{k-1}$.
\end{itemize}
\end{proposition}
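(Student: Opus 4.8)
The plan is to reduce the statement to a corresponding pair of operations on the $B$-side, where the cyclic relabelling has a transparent description. First I would record that, writing $E_i = \cO(\bar{D}_1 + \cdots + \bar{D}_i)$ for the original collection and $E_i' = \cO(\bar{D}_2 + \cdots + \bar{D}_{i+1})$ for the one attached to the relabelling $\bar{D}_2, \ldots, \bar{D}_k, \bar{D}_1$, the two collections are related by: (i) tensoring the whole collection by $\cO(-\bar{D}_1)$, followed by (ii) the Serre-functor move $(F_0, \ldots, F_{k-1}) \mapsto (F_1, \ldots, F_{k-1}, F_0 \otimes \cO(-K))$. This is a direct computation using $-K \sim \bar{D}_1 + \cdots + \bar{D}_k$: after (i) the collection begins $\cO(-\bar{D}_1), \cO, \cO(\bar{D}_2), \ldots$, and the Serre move sends the leading $\cO(-\bar{D}_1)$ to $\cO(-\bar{D}_1) \otimes \cO(-K) = \cO(\bar{D}_2 + \cdots + \bar{D}_k)$ at the end, yielding exactly $(E_0', \ldots, E_{k-1}')$.

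Next I would translate each $B$-side operation to the $A$-side. Operation (i) is handled by Lemma \ref{lem:line_bundle_tensor} with $F = \cO(-\bar{D}_1)$: since $F \cdot \bar{D}_j = -\bar{D}_1 \cdot \bar{D}_j$ equals $-n_1, -1, -1$ for $j = 1, 2, k$ and $0$ otherwise, the associated global fibre automorphism is $\sigma = \tau_{W_1}^{-n_1} \tau_{W_2}^{-1} \tau_{W_k}^{-1}$, which is precisely (a) (the $W_i$ being disjoint, the order is immaterial). For operation (ii) I would show the Serre move is mirror to right-mutating $\sigma(V_0)$ over $\sigma(V_1), \ldots, \sigma(V_{k-1})$, i.e. (b). Unwinding the right Hurwitz moves of Definition \ref{def:Lefschetz_moves} produces the ordered collection $\bigl(\sigma(V_1), \ldots, \sigma(V_{k-1}), \sigma(\tau_{V_{k-1}}^{-1} \cdots \tau_{V_1}^{-1} V_0)\bigr)$. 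The first $k-1$ entries match the relabelled cycles directly: each $\sigma(V_{i+1})$ is, by construction, the longitude $\ell$ of the intersection tuple of $\cO(-\bar{D}_1) \otimes E_{i+1} = \cO(\bar{D}_2 + \cdots + \bar{D}_{i+1}) = E_i'$ against the $\bar{D}_j$, hence Hamiltonian isotopic to $V_i'$.

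The crux is identifying the final cycle $\sigma(\tau_{V_{k-1}}^{-1} \cdots \tau_{V_1}^{-1} V_0)$ with $V_{k-1}'$, the mirror of $\cO(\bar{D}_2 + \cdots + \bar{D}_k)$. Here I would invoke the monodromy factorisation of Proposition \ref{prop:monodromy}: writing $\mu = \tau_{V_0} \tau_{V_1} \cdots \tau_{V_{k-1}}$ for the total monodromy, one has $\tau_{V_{k-1}}^{-1} \cdots \tau_{V_1}^{-1} V_0 = \mu^{-1} \tau_{V_0} V_0 = \mu^{-1} V_0$, with $\mu^{-1} = \prod_i \tau_{W_i}^{n_i + 2} \prod_i \tau_{b_{i,i+1}}^{-1}$. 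Since the $b_{i,i+1}$ are parallel to boundary components and can be taken disjoint from the longitude $V_0$, the boundary-parallel twists act trivially, so $\mu^{-1} V_0 \simeq \ell(n_1 + 2, \ldots, n_k + 2)$; applying $\sigma$ then gives $\ell(2, n_2 + 1, n_3 + 2, \ldots, n_{k-1} + 2, n_k + 1)$, which is exactly the tuple $\bigl((\bar{D}_2 + \cdots + \bar{D}_k) \cdot \bar{D}_j\bigr)_j$ defining $V_{k-1}'$. I expect the main obstacle to be precisely this last identification: justifying that the boundary-parallel Dehn twists drop out up to Hamiltonian isotopy (a short geometric argument on $\Sigma$ about the position of $V_0$ relative to the $b_{i,i+1}$), and more generally upgrading the combinatorial matches of vanishing cycles to honest Hamiltonian isotopies via the two-dimensional rigidity already used in Lemma \ref{lem:mutations_mirror_mutations} (the \emph{FLP}-type statement that exact curves realising their minimal geometric intersection numbers are isotopic).
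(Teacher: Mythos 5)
Your overall strategy coincides with the paper's own proof, which is exactly your B-side decomposition: step (a) is tensoring the whole collection by $\cO(-\bar{D}_1)$, translated to the global fibre automorphism via Lemma \ref{lem:line_bundle_tensor}, and step (b) is the iterated right mutation of $\cO(-\bar{D}_1)$ past the remaining bundles, which by \cite[Corollary 2.10]{Bridgeland-Stern} equals applying the inverse Serre functor, i.e.\ $\otimes\, \cO(\bar{D})$, the whole package then transported to vanishing cycles by Lemma \ref{lem:mutations_mirror_mutations}. Your computations are correct: the intersection numbers $\cO(-\bar{D}_1)\cdot \bar{D}_j$ do give $\sigma = \tau_{W_k}^{-1}\tau_{W_1}^{-n_1}\tau_{W_2}^{-1}$, and the tuple $(2, n_2+1, n_3+2, \ldots, n_{k-1}+2, n_k+1)$ is indeed $\bigl((\bar{D}_2+\cdots+\bar{D}_k)\cdot \bar{D}_j\bigr)_j$ — modulo the implicit cyclic re-identification of the punctures/meridiens of $\Sigma$ between the two fibrations, a gloss the paper's statement shares. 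Your auxiliary observations ($\tau_{V_0}V_0 = V_0$ up to isotopy; the boundary-parallel twists $\tau_{b_{i,i+1}}$ act trivially on $V_0$ since a compact curve can be isotoped off a collar of $\partial\Sigma$) are also fine.

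The one genuine problem is your invocation of Proposition \ref{prop:monodromy} to evaluate $\tau_{V_{k-1}}^{-1}\cdots\tau_{V_1}^{-1}V_0 = \mu^{-1}V_0$: within the paper's logical order this is circular. Proposition \ref{prop:monodromy} is proved in Section \ref{sec:proof_switch_orientations}'s neighbourhood by induction via stabilisations and the lantern relation, and its inductive step establishes the formula only for a blow-up at $\bar{D}_1 \cap \bar{D}_k$; the extension to a blow-up at a general corner $\bar{D}_i \cap \bar{D}_{i+1}$ is obtained precisely by citing the content of Proposition \ref{prop:cyclic_reordering} (``Hurwitz moves and global fibre automorphisms which commute with the total monodromy''), and Proposition \ref{prop:stabilisation}, used there, likewise invokes Proposition \ref{prop:cyclic_reordering}. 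So your proof of cyclic reordering may not rest on the monodromy factorisation as the paper establishes it. The repair is easy and is what the paper actually does: replace the monodromy computation by the categorical fact that mutating an exceptional object past the rest of a full exceptional collection applies the inverse Serre functor, i.e.\ tensoring $\cO(-\bar{D}_1)$ by $\cO(\bar{D})$ yields $\cO(\bar{D}_2+\cdots+\bar{D}_k)$, and then use Lemma \ref{lem:mutations_mirror_mutations} — whose FLP-rigidity mechanism you already invoke at the end — to conclude that the final mutated cycle is Hamiltonian isotopic to $\ell(2, n_2+1, \ldots, n_k+1)$. Alternatively you would need to reprove the monodromy factorisation independently of cyclic reordering, e.g.\ by verifying the lantern-relation inductive step directly at an arbitrary corner.
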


\begin{proof}
We can read this off from the proof of Propositon \ref{prop:full_exc_coll} together with Lemma \ref{lem:mutations_mirror_mutations}: the corresponding mutations of full exceptional collections of coherent sheaves are
\begin{itemize}
\item[(a)] Replace $\cO, \cO(\D_1), \ldots, \cO(\D_1+ \ldots + \D_{k-1})$ with $\cO(-\D_1), \cO, \cO(\D_2) \ldots, \cO(\D_2+ \ldots + \D_{k-1})$ by applying $\otimes \cO(-\D_1)$ to the entire exceptional collection.

\item[(b)] Mutate $\cO(-\D_1)$ iteratively past $\cO, \ldots, \cO(\D_2+ \ldots + \D_{k-1})$, i.e.~apply the sequence of negative mutations $$R_{\cO(\D_2+ \ldots + \D_{k-1})} \ldots R_{\cO}$$ in the notation of \cite[Section 2.3]{Bridgeland-Stern}.
 By \cite[Corollary 2.10]{Bridgeland-Stern}\footnote{Again, beware the typo in the indices at that point in \cite{Bridgeland-Stern}: using their notation, we should have instead that $L_{\mathbb{E}} = L_{E_1} \ldots L_{n-1}$.},  this is equivalent to applying the inverse of the Serre functor to $\cO(-\D_1)$, i.e.~tensoring with $\cO(\D)$. 
\end{itemize}
\end{proof}

 We readily get the following corollary.

\begin{corollary}\label{cor:cyclic_reordering}
 Applying the construction of Definition \ref{def:construction_tilde} using $\bar{D}_2, \ldots, \bar{D}_k, \bar{D}_1$ gives an abstract Weinstein Lefschetz fibration for $(\tilde{Y}, \tilde{D})$ which can be obtained from the one given by using $ \bar{D}_1, \ldots, \bar{D}_k$ by first applying a global fibre automorphism in $\tau^{-1}_{W_k} \tau^{-n_1}_{W_1} \tau^{-1}_{W_2}$ (note this fixes each of the $W_i$), and then mutating the image of $V_0$ over the images of $V_1, \ldots, V_{k-1}$. 
\end{corollary}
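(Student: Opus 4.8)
The plan is to deduce the corollary directly from Proposition \ref{prop:cyclic_reordering} by isolating the only feature that distinguishes the interior-blow-up construction of Definition \ref{def:construction_tilde} from the purely toric construction of Definition \ref{def:construction_toric}: the meridional vanishing cycles prepended at the start of the collection. First I would record the shape of the two collections being compared. By Definition \ref{def:construction_tilde}, the fibration built from the labelling $\bar{D}_1, \ldots, \bar{D}_k$ has vanishing cycles $(W_{1,1}, \ldots, W_{k,m_k}, V_0, V_1, \ldots, V_{k-1})$, where each $W_{i,j}$ is a parallel copy of the meridian $W_i$ attached to the interior blow-ups on $\bar{D}_i$, and $(V_0, \ldots, V_{k-1})$ is \emph{exactly} the toric collection of Definition \ref{def:construction_toric} (the toric vanishing cycles are unchanged by interior blow-ups, since $\pi^\ast\cO(\bar{D}_1+\cdots+\bar{D}_i)\cdot\tilde{D}_j=(\bar{D}_1+\cdots+\bar{D}_i)\cdot\bar{D}_j$ by the projection formula). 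The fibration built from the reordered labelling $\bar{D}_2, \ldots, \bar{D}_k, \bar{D}_1$ has the analogous shape $(W_{1,1}, \ldots, W_{k,m_k}, V'_0, \ldots, V'_{k-1})$, with the same prepended block and with $(V'_0, \ldots, V'_{k-1})$ the toric collection for the new labelling.

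The first substantive point is that the prepended block genuinely agrees for the two labellings. This holds because the interior blow-ups are geometric data of $(\tilde{Y}, \tilde{D})$ independent of the labelling, and the mirror cycle attached to a blow-up on a given boundary component is the meridian canonically associated to that component (equivalently, to the corresponding puncture of $\Sigma$); under the cyclic reordering the \emph{label} of this meridian shifts, but the underlying curve does not. Since the toric parts are related precisely by operations (a) and (b) of Proposition \ref{prop:cyclic_reordering}, it then suffices to check that these two operations leave the cycles $W_{i,j}$ undisturbed. For (a): the global fibre automorphism $\sigma=\tau^{-1}_{W_k}\tau^{-n_1}_{W_1}\tau^{-1}_{W_2}$ is a product of Dehn twists in pairwise disjoint meridians, and a Dehn twist fixes its own curve as well as any disjoint curve, together with any parallel copy, up to Hamiltonian isotopy; hence $\sigma$ fixes each $W_i$ and therefore each $W_{i,j}$. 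For (b): the right mutation moves the image of $V_0$ over the images of $V_1, \ldots, V_{k-1}$ only, all of which lie to the right of $V_0$, so the prepended cycles, lying to its left, are mere spectators.

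Assembling these observations, applying (a) then (b) to the full collection fixes the block $(W_{1,1}, \ldots, W_{k,m_k})$ and carries $(V_0, \ldots, V_{k-1})$ to $(V'_0, \ldots, V'_{k-1})$ by Proposition \ref{prop:cyclic_reordering}, producing exactly the collection attached to the reordered labelling, which is what we want.

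The main obstacle is the bookkeeping in the middle step: one must verify that the meridional cycles truly coincide as \emph{curves} (not merely as labels) for the two labellings, and that $\sigma$ fixes parallel copies of a meridian rather than only the meridian itself. Once these identifications are made explicit the argument is immediate, since everything reduces to the already-established toric statement together with the fact that (a) and (b) act trivially on the prepended block.
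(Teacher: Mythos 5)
Your proof is correct and matches the paper's (implicit) argument: the paper derives Corollary \ref{cor:cyclic_reordering} directly from Proposition \ref{prop:cyclic_reordering} by exactly the observation you make, namely that the prepended meridional cycles $W_{i,j}$ are fixed by the global fibre automorphism $\tau^{-1}_{W_k}\tau^{-n_1}_{W_1}\tau^{-1}_{W_2}$ (a product of twists in disjoint meridians) and are untouched by the mutations of $V_0$ over $V_1,\ldots,V_{k-1}$. Your explicit bookkeeping — that the toric cycles $V_i$ are unchanged by interior blow-ups via the projection formula, and that the meridians coincide as curves (not labels) under relabelling — is precisely what the paper's "we readily get" elides.
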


\subsubsection{Switching the choice of generator for $H_1(D, \bZ)$}

We now turn our attention to the other choice made when writing down the abstract Lefschetz fibration associated to $(\bar{Y}, \bar{D})$.

\begin{proposition}\label{prop:switch_orientation}

Suppose $(\bar{Y}, \bar{D})$ is a toric pair, and $ \bar{D}_1, \ldots, \bar{D}_k$ as before. Then applying the construction of Definition \ref{def:construction_toric} to $\bar{D}_k, \bar{D}_{k-1}, \ldots, \bar{D}_1$  gives an abstract Weinstein Lefschetz fibration which is the same as the original one up to Hurwitz moves and global fibre automorphisms given by products of Dehn twists in meridiens. 
\end{proposition}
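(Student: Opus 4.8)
The plan is to follow exactly the strategy of the previous Proposition~\ref{prop:cyclic_reordering}: reduce the statement to a purely algebro-geometric comparison of two full exceptional collections on $\bar{Y}$, and then transport that comparison to the $A$-side using Lemma~\ref{lem:mutations_mirror_mutations} (mutations are mirror to Hurwitz moves) and Lemma~\ref{lem:line_bundle_tensor} (tensoring by a line bundle is mirror to a product of meridional Dehn twists). Concretely, Definition~\ref{def:construction_toric} applied to $\bar{D}_k, \ldots, \bar{D}_1$ is built from the ``backward'' collection $\cO, \cO(\bar{D}_k), \cO(\bar{D}_k + \bar{D}_{k-1}), \ldots, \cO(\bar{D}_k + \cdots + \bar{D}_2)$, whereas the original fibration comes from the ``forward'' collection $\cO, \cO(\bar{D}_1), \ldots, \cO(\bar{D}_1 + \cdots + \bar{D}_{k-1})$. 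It therefore suffices to prove the following B-side claim: the forward and backward collections are related by a sequence of mutations together with tensoring by line bundles --- and, crucially, without ever invoking the operation of passing to dual line bundles (operation (3) of Remark~\ref{rmk:exceptional_lines_minimal}), since that operation is mirror to the anti-symplectic involution of Section~\ref{sec:anti-symplectic_start} rather than to a meridional twist.

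I would prove the B-side claim by induction on the minimal model program for $(\bar{Y}, \bar{D})$, in parallel with the proof of Proposition~\ref{prop:full_exc_coll}, since that induction already expresses both collections in terms of the base cases via the Orlov blow-up formula followed by the right mutation of $\cO_E(-1)$. For the base cases one checks the claim by hand: on $\bP^2$ the forward and backward collections are literally equal (each $\bar{D}_i \sim \cO(1)$, so both read $\cO, \cO(1), \cO(2)$), while on $\bF_a$, ordering the boundary as $\bar{D}_1 \sim A$, $\bar{D}_2 \sim B$, $\bar{D}_3 \sim A$, $\bar{D}_4 \sim B + aA$, a single application of the Serre-functor operation $(L_0, \ldots, L_3) \mapsto (L_1, L_2, L_3, L_0 \otimes \cO(-K))$ followed by tensoring carries the backward collection to the forward one; by Proposition~\ref{prop:cyclic_reordering} this Serre operation is realised on the $A$-side by Hurwitz moves and meridional twists alone. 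For the inductive step I would blow up the corner $\bar{D}_1 \cap \bar{D}_k$ as in Proposition~\ref{prop:full_exc_coll}: pulling back and mutating $\cO_E(-1)$ transforms the forward (resp.\ backward) collection on $\bar{Y}$ into the forward (resp.\ backward) collection on $\bar{Y}'$, so that the mutation-plus-tensoring relation assumed on $\bar{Y}$ lifts to one on $\bar{Y}'$, the extra boundary component $E$ contributing symmetrically to both sides. Finally, combining the B-side claim with Lemmas~\ref{lem:mutations_mirror_mutations} and~\ref{lem:line_bundle_tensor} (and Corollary~\ref{cor:cyclic_reordering} to absorb any change of starting component) yields the two fibrations as equal up to Hurwitz moves and meridional Dehn twists, as required.

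The main obstacle is the inductive step, and more precisely the need to keep the entire comparison within mutations and tensorings. The geometric reason this is delicate is that reversing the generator of $H_1(D,\bZ)$ reverses the cyclic order of the meridians, and the naive diffeomorphism of $\Sigma$ realising this --- fixing $V_0$ and sending $W_j \mapsto W_{k+1-j}$ --- is orientation-reversing; a priori one would expect the anti-symplectic involution (that is, the dualizing operation) to intervene. The content of the proposition is that for the highly symmetric collections of Definition~\ref{def:construction_toric} this orientation reversal is in fact absorbed by orientation-preserving moves, which is exactly what the B-side mutation argument certifies. One must therefore verify at each stage of the induction that no dualizing is secretly required, i.e.\ that the forward and backward collections remain in the same mutation-plus-tensoring class. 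I expect the bookkeeping of the $\cO_E(-1)$ mutations under the corner blow-up --- equivalently, the interaction of the stabilisation of Proposition~\ref{prop:corner_blow_up} with the reversal --- to be the only genuinely laborious point.
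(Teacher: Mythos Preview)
Your approach is correct and is in fact one of the two strategies the paper explicitly names just before postponing the proof: ``There are two possible (mirror) strategies: showing that up to an overall tensor with a line bundle, the two collections of full exceptional line bundles are mutation equivalent, and applying Lemma~\ref{lem:mutations_mirror_mutations}; or directly working with mutations of Lefschetz fibrations. We go with the latter.'' You have chosen the former. The paper's proof (Section~\ref{sec:proof_switch_orientations}) runs the same MMP induction but stays entirely on the $A$-side: the inductive step uses Proposition~\ref{prop:stabilisation} to recognise the blown-up fibration as a stabilisation, and transports the induction hypothesis through the stabilisation directly, rather than through Orlov's formula on $D^b\Coh$. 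The remark after the paper's proof confirms that tracing those moves back would produce exactly the mutation sequence you are after.

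One point where the paper's argument is slicker than yours: for the base cases it simply observes that the cyclic sequences of self-intersection numbers $(1,1,1)$ and $(0,a,0,-a)$ are palindromic, so the backward labelling gives the same abstract data as a cyclic shift of the forward labelling, and Proposition~\ref{prop:cyclic_reordering} already handles cyclic shifts. Your $\bF_a$ base case via a Serre-functor step amounts to the same thing (indeed Proposition~\ref{prop:cyclic_reordering} is proved exactly by that Serre-plus-tensor move), but the palindrome observation dispatches it in one line. Conversely, your $B$-side route has the advantage that it does not require Proposition~\ref{prop:stabilisation} (only the weaker Proposition~\ref{prop:corner_blow_up}), so it could in principle be placed earlier in the logical order.
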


We'll prove this by induction on $k$, using MMP for toric surfaces. There are two possible (mirror) strategies: showing that up to an overall tensor with a line bundle, the two collections of full exceptional line bundles are mutation equivalent, and applying Lemma \ref{lem:mutations_mirror_mutations}; or directly working with mutations of Lefschetz fibrations. We go with the latter; for convenience we postpone the proof to Section \ref{sec:proof_switch_orientations}. We get  the following corollary.

\begin{corollary}\label{cor:switch_orientation}

Suppose we are given $\{ (\bar{Y}, \bar{D}) \leftarrow (\tilde{Y}, \tilde{D}) \}$ and a labelling $ \bar{D}_1, \ldots, \bar{D}_k$ as before. Then applying the construction of Definition \ref{def:construction_toric} to $\bar{D}_k, \bar{D}_{k-1}, \ldots, \bar{D}_1$  gives an abstract Weinstein Lefschetz fibration which is the same as the original one up to Hurwitz moves and global fibre automorphisms given by products of Dehn twists in meridiens. 
\end{corollary}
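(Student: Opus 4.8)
The plan is to bootstrap from Proposition~\ref{prop:switch_orientation} (the toric case) by carrying the interior-blow-up vanishing cycles through its argument, exactly as Corollary~\ref{cor:cyclic_reordering} was deduced from Proposition~\ref{prop:cyclic_reordering}. First I would record the shape of the fibration produced by Definition~\ref{def:construction_tilde}: after the Hurwitz moves of Remark~\ref{rmk:Hurwitz_meridien}, its collection of vanishing cycles is the toric collection $(V_0,\ldots,V_{k-1})$ of Definition~\ref{def:construction_toric} with a block of meridional cycles prepended, namely $m_i$ copies of $W_i$ for each $i$ (Proposition~\ref{prop:interior_blow_up}). Reversing the labelling to $\D_k,\ldots,\D_1$ changes two things: it replaces the toric block by the reversed toric block, which Proposition~\ref{prop:switch_orientation} relates to $(V_0,\ldots,V_{k-1})$ through an explicit sequence of Hurwitz moves and meridional global fibre automorphisms; and it replaces the prepended block by $m_{k+1-p}$ copies of $W_p$, since a blow-up on the geometric component now labelled $p$ (formerly $\D_{k+1-p}$) contributes a copy of the $p$-th meridien.

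I would then run the sequence of moves furnished by the proof of Proposition~\ref{prop:switch_orientation} (Section~\ref{sec:proof_switch_orientations}) on the \emph{full} collection. Two features make the toric part transparent: each meridional global fibre automorphism is a product of Dehn twists in meridiens, hence fixes every $W_i$ up to Hamiltonian isotopy (distinct meridiens are disjoint and $\tau_{W_i}$ fixes $W_i$), so it preserves the isotopy class of each prepended cycle; and the Hurwitz moves of the proposition take place among the toric cycles, so they commute past the prepended block. This reproduces, for the toric part, the equivalence of Proposition~\ref{prop:switch_orientation} in the presence of the extra cycles.

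The main obstacle is the prepended block itself. Unlike the cyclic reordering of Corollary~\ref{cor:cyclic_reordering}, which is realised by an orientation-preserving rotation of $\Sigma_k$ freely permuting the meridiens, reversing the cyclic order is morally the orientation-reversing (anti-symplectic) involution $\varphi$ of Section~\ref{sec:anti-symplectic_start}; so one is \emph{not} permitted to relabel meridiens by a symplectomorphism, and on the fixed $\Sigma_k$ the reversed block $\{m_{k+1-p}\text{ copies of }W_p\}$ genuinely differs from $\{m_i\text{ copies of }W_i\}$ once the profile $m_\bullet$ is not palindromic. Since meridional automorphisms preserve multiplicities and the toric Hurwitz moves leave the block alone, reconciling the two profiles is the real content. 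I would handle it by re-running the MMP induction underlying Proposition~\ref{prop:switch_orientation} with the interior-blow-up cycles carried along, checking at each corner blow-down (an A-side destabilisation) that the prepended cycles on the affected components transform consistently, so that the reversed block is carried to the original by Hurwitz moves through the interface with $V_0$. The cleanest packaging, however, is probably on the $B$-side: the two collections of Corollary~\ref{cor:full_exc_coll} for the two labellings are mutation equivalent up to an overall twist --- the sheaves $\cO_{\Gamma_{ij}}(\Gamma_{ij})$ are intrinsic to the geometric exceptional curves and only get reindexed --- so Lemma~\ref{lem:mutations_mirror_mutations} (applicable since the fibration is that of the distinguished, hence $\mathcal{T}_e$, representative) together with Lemma~\ref{lem:line_bundle_tensor} transfers this to the asserted equivalence of fibrations, absorbing the meridional bookkeeping into the general transfer.
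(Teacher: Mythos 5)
Your reduction is exactly the paper's (unwritten) derivation: the paper states the corollary with no proof, the intended argument being to rerun the induction of Section~\ref{sec:proof_switch_orientations} with the $W_{i,j}$ carried along as spectators (they are disjoint from the stabilising arcs $c_E$, and the global fibre automorphisms are meridional twists, which fix them). You are also right that the remaining issue is the prepended block itself: the reversed labelling pairs $m_{k+1-p}$ copies with $W_p$, so for a non-palindromic profile the two prefixes genuinely differ, and this mismatch already bites in the \emph{base cases} of the induction. For $\bP^2$ with $(m_1,m_2,m_3)=(1,2,0)$ the two fibrations are $(W_1,W_2,W_2,V_0,V_1,V_2)$ and $(W_2,W_2,W_3,V_0,V_1,V_2)$: the toric parts are literally identical, so Proposition~\ref{prop:switch_orientation} contributes nothing, Corollary~\ref{cor:cyclic_reordering} only produces the rotated profiles $(m_2,m_3,m_1)$ and $(m_3,m_1,m_2)$, and spectator reasoning cannot change which meridien occurs. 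What is needed, and what your first suggestion names but does not carry out, are explicit meridien-transport relations trading $W_j$ for $W_{k+1-j}$ relative to the toric collection --- the analogue of the identity $\tau_{V_0}\cdots\tau_{V_{j-1}}W_j = W_k$ that the paper builds into the strengthened induction hypothesis in the proof of Proposition~\ref{prop:elem_trans_invariance}. That is where the work lies; your plan restates the problem rather than solving it.

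The ``cleanest packaging'' on the B-side is where the genuine error sits: Lemma~\ref{lem:mutations_mirror_mutations} compares the fibrations obtained by applying Definition~\ref{def:construction_general} with a \emph{fixed} pairing $D_j \leftrightarrow W_j$ to two mutation-equivalent collections, whereas the corollary compares fibrations built with two \emph{different} pairings. Granting that the two collections of Corollary~\ref{cor:full_exc_coll} are mutation-equivalent up to an overall twist, the lemma (with Lemma~\ref{lem:line_bundle_tensor}) only yields that the original-assignment mirror of the reversed collection is Hurwitz-plus-meridional-twist equivalent to the original fibration. The reversed-labelling fibration is the image of that mirror under the symplectomorphism of $\Sigma_k$ reversing the cyclic order of the punctures; since any global fibre automorphism in the sense of Definition~\ref{def:Lefschetz_moves} is compactly supported and hence fixes each puncture, this map is not an allowed move, and showing that its effect can be traded for Hurwitz moves and meridional twists is precisely the statement being proved --- so the transfer is circular, and the ``meridional bookkeeping'' is not absorbed but is the whole content. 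A side correction: this puncture-order-reversing map (with twisting numbers preserved) is orientation-preserving, of elliptic-involution type; it is \emph{not} the anti-symplectic $\varphi$ of Section~\ref{sec:anti-symplectic_start}, which negates the twisting numbers while fixing the index order and corresponds to dualising the line bundles. Your operative conclusion (that one may not simply relabel meridiens within the allowed move set) is correct, but for the compact-support reason, not because reversal is anti-symplectic.
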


\subsubsection{Aside: anti-symplectic maps and orientations of SYZ fibres} \label{sec:anti-symplectic_contined} $ \bar{Y} \backslash \D \cong (\bC^\times)^2$ comes equipped with the SYZ fibration given by the toric moment map. 
Choosing a generator for $H_1 (\bar{D},  \bZ)$ corresponds choosing an orientation for the SYZ fibre (think of reflecting the moment polytope). Such an orientation allows one to normalise the holomorphic volume form $\Omega$ on $U$ to have integral one on the oriented fibre.  Under mirror symmetry, $\text{Im}(\Omega) = \omega_{\bar{M}}$, the Kaehler form on $\bar{M}$ -- so we expect that switching the sign of $\Omega$ would switch the sign of the symplectic form. 

Following Section \ref{sec:anti-symplectic_start}, we can see this sign switch 
  as follows. Start with the exceptional collection
$
\cO, \cO(\D_1), \cO(\D_1+ \D_2), \ldots, \cO(\D_1+ \ldots + \D_{k-1}). 
$
Apply the functor $\otimes \cO(-\D_1 - \ldots - \D_{k-1})$ to get the exceptional collection
$$
\cO(-\D_1 - \ldots - \D_{k-1}),\cO(-\D_2 - \ldots - \D_{k-1}), \ldots, \cO. 
$$
On the symplectic side, this corresponds to a global fibre automorphism (Lemma \ref{lem:line_bundle_tensor}). Let $\bar{w}: \bar{M} \to \bC$ be the geometric realisation of the resulting Lefschetz fibration. Now the collection of dual line bundles is $\cO$, $\cO(\D_{k-1})$, \ldots, $\cO(\D_1 + \ldots + \D_{k-1})$; and, calling $\check{w}: \check{M} \to \bC$ the geometric realisation of the resulting Lefschetz fibration,  there is a fibre-preserving anti-symplectic map from $\bar{M}$ to $\check{M}$. 

One could add interior blow ups to this discussion by noting that for an exceptional divisor $E$, $R\mathcal{H}om(\cO_E(-1), \cO) = \cO_E[-1]$; and that mutating $\cO_E(-1)$ past all of $\cO$, $\cO(\D_1)$, \ldots, $\cO(\D_1+ \ldots + \D_{k-1})$ has the effect of applying the inverse Serre functor to it (\cite[Corollary 2.10]{Bridgeland-Stern}), yielding $\cO_E[-2]$. (Note it's enough for our purposes to have the two sheaves agree up to a shift.)

We briefly tie this to intuition from related constructions. The established framework for mirror symmetry for toric surfaces, following \cite{Hori-Vafa}, associates a superpotential $w_\text{HV}: (\bC^\times)^2 \to \bC$ to a fan for $(\bar{Y}, \bar{D})$, where $w_\text{HV}$ is a Laurent polynomial each of whose monomial terms has the exponents of a vertex of the moment polytope for $\bar{Y}$. In the non-Fano case, one needs to restrict to a Stein submanifold of $(\bC^\times)^2$ (itself deformation equivalent to $D^\ast T^2$), and discard a subset of the critical values of $w_\text{HV}$ accordingly \cite{Abouzaid_toric2, Chan}. We expect the resulting Lefschetz fibration to then  agree with ours, see the discussion in Sections \ref{sec:AKO_comparison} and \ref{sec:Abouzaid}. (We'll also prove in Proposition \ref{prop:Cstar2} that $\bar{M}$ and $\check{M}$ are Weinstein deformation equivalent to $D^\ast T^2$.) 

A choice of  orientation of the lattice  of characters $M$ determines the holomorphic form $\Omega$, which fixes an orientation of the SYZ fibre. 
Reflecting the lattice flips the choice of orientation of $M$, and so the sign of $\Omega$ and the orientation of the fibre. Suppose $w_{1}(x,y)$ is the superpotential associated to an explicit choice of (ordered) basis for $M$. Applying the reflection matrix $\begin{pmatrix} 0 &-1 \\ -1 & 0 \end{pmatrix}$ gives the superpotential $w_2(x,y) = w_1(1/y, 1/x)$. Let $f:  (x,y) \mapsto (1/y,1/x)$. Recall that the symplectic form on $(\bC^\times)^2$ is given by:
$$
\omega_{FS} =\frac{i}{2} \partial \bar{\partial} \text{log}(|xy|^2).
$$
Now notice that $f^\ast \omega_{FS} = -\omega_{FS}$, confirming the expectation. 

For a general log CY pair $(Y,D)$, Gross--Hacking--Keel construct a mirror to $(Y,D)$ as a formal family over a disc, defined in terms of theta functions; in particular, the equations defining the space expected to agree with our $M$, say $M_{\text{GHK}}$, can be given with real (indeed, rational) coefficients; the same is true of the expression for the potential $w$; complex conjugation gives an involution on $M_{\text{GHK}}$, compatible via $w$ with complex conjugation on the base, which should agree with our anti-symplectic involution.
(More generally, note that to equip the mirror space with a complex structure, we should equip the original $(Y,D)$ with a symplectic structure, including a $\mathbf{B}$ field. Suppose we're defining our mirror superpotential by a count of holomorphic discs $u$, weighted by $\text{exp}\left(2\pi i \int_u (\mathbf{B}+i\omega) \right)$; this has real coefficients whenever $\mathbf{B}=0$.)

\subsubsection{Changes of toric models}\label{sec:model_change}

\begin{definition}\label{def:toric_blow_up}
Given a toric model $\{ (\bar{Y}, \bar{D}) \leftarrow (\tilde{Y}, \tilde{D}) \rightarrow (Y,D) \}$ for a log Calabi-Yau surface $(Y,D)$, a \emph{toric blow-up} of this model is a new toric model for $(Y,D)$ given by  blowing up a node on $(\tilde{Y}, \tilde{D})$ and the corresponding node on $(\bar{Y}, \bar{D})$, giving the following commutative diagram:
$$
\xymatrix{
(\bar{Y}', \bar{D}') 
\ar[d] 
& (\tilde{Y}', \tilde{D}')  \ar[l] \ar[r] \ar[d]
&  (Y, D) \ar@{=}[d] 
\\
(\bar{Y}, \bar{D}) & (\tilde{Y}, \tilde{D})    \ar[l] \ar[r] &  (Y, D)
}
$$
where $(\bar{Y}', \bar{D}') $ and $(\bar{Y}, \bar{D})$ are toric.
\end{definition}
Recall that if we're blowing up the node $\bar{D}_i \cap \bar{D}_{i+1}$, and $\bar{D}_i$ corresponds to ray $v_i$ in a toric fan for $\bar{Y}$, then the toric fan for $\bar{Y}'$ is given by adding ray $v_i + v_{i+1}$ to the toric fan for $\bar{Y}$.

\begin{definition}\emph{Elementary transformations of toric models.}\label{def:elem_trans}
Suppose we are given a toric model $\{ (\bar{Y}, \bar{D}) \leftarrow (\tilde{Y}, \tilde{D}) \rightarrow (Y,D)\}$ such that the fan for $(\bar{Y}, \bar{D})$ has two opposite rays, say $\bR_{\geq 0} v$ and $ \bR_{\geq 0} (-v) \subset N$. Let $\bar{D}_i$ be the divisor corresponding to $v$ and $\bar{D}_j$ be the one corresponding to $-v$. Assume further that $m_i > 0$. 
The projection $N \to N / \bZ v$ determines a fibration $f: \bar{Y} \to \bP^1$. Let $p \in \bar{D}_i$ be an interior point which is being blown up.
Let $(\hat{Y},\hat{D}) \rightarrow (\bar{Y},\bar{D})$ be the non-toric blowup at $p$ and $F \subset \bar{Y}$ be the fibre of $f$ containing $p$, and $F' \subset \hat{Y}$ its strict transform. Then $F'$ is a $(-1)$ curve. Let $\hat{Y} \rightarrow \bar{Y}^\natural$ be the contraction of $F'$ and $\bar{D}^\natural$ the image of $\hat{D}$. Then $(\bar{Y}^\natural,\bar{D}^\natural)$ is a toric pair and 
$$
(\bar{Y}^\natural,\bar{D}^\natural) \leftarrow (\tilde{Y},\tilde{D}) \rightarrow (Y,D)
$$
is a toric model of $(Y,D)$, called an elementary transformation of our original one. 
\end{definition}
In keeping with our conventions so far, we will use $\tilde{w}': \tilde{M}' \to B \subset \bC$ and $\tilde{w}^\natural: \tilde{M}^\natural \to B \subset \bC$,  to refer to the total spaces and Lefschetz fibration maps associated to the above pairs. 

Elementary tranformations change the $n_l$ and $m_l$ without changing the pair $(\tilde{Y}, \tilde{D})$ itself. We will later use the following formulae, whose proofs are immediate:

\begin{lemma} \label{lem:change_n_m}
Under an elementary transformation, using the same notation as above, the $n_l$ and $m_l$ get changed to 
\begin{equation*}
n^\natural_l=
\begin{cases}
n_l -1  & l=i \\
n_l +1 & l=j \\
n_l & \text{otherwise}
\end{cases}
\end{equation*}

\begin{equation*}
m^\natural_l=
\begin{cases}
m_l -1  & l=i \\
m_l +1 & l=j \\
m_l & \text{otherwise}
\end{cases}
\end{equation*}
\end{lemma}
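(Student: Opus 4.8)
The plan is to prove both sets of formulae by tracking the two birational morphisms $\hat{Y} \to \bar{Y}$ and $\hat{Y} \to \bar{Y}^\natural$ that make up the elementary transformation: elementary intersection theory handles the $n_l$, and a factorisation of the interior blow-ups handles the $m_l$.

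First, for the self-intersection numbers I would record the local geometry of the modification. Since $\bar{D}_i$ and $\bar{D}_j$ correspond to the opposite rays $v$ and $-v$, they are the two sections of the ruling $f \colon \bar{Y} \to \bP^1$, while every other boundary component lies in one of the two special fibres. Hence the fibre $F$ through the interior point $p \in \bar{D}_i$ is a smooth $\bP^1$ with $F^2 = 0$, meeting $\bar{D}_i$ transversally at $p$ and $\bar{D}_j$ transversally at one further point, and disjoint from all $\bar{D}_l$ with $l \neq i,j$. Writing the blow-up at $p$ as $\hat{Y} \to \bar{Y}$ with exceptional curve $E_p$, so that $F' = f^\ast F - E_p$, $\bar{D}_i' = f^\ast\bar{D}_i - E_p$ and $\bar{D}_j' = f^\ast\bar{D}_j$, a one-line computation gives $(F')^2 = -1$ (confirming the claim made in the definition), $F' \cdot \bar{D}_i' = 0$, $F' \cdot \bar{D}_j' = 1$, $(\bar{D}_i')^2 = n_i - 1$ and $(\bar{D}_j')^2 = n_j$. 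Contracting the $(-1)$-curve $F'$ by $c \colon \hat{Y} \to \bar{Y}^\natural$ and using $(c_\ast C)^2 = C^2 + (C \cdot F')^2$ for strict transforms $C$, I obtain $n_i^\natural = (n_i - 1) + 0 = n_i - 1$, $n_j^\natural = n_j + 1$, and $n_l^\natural = n_l$ for $l \neq i,j$, the last because those components are disjoint from $F'$.

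For the $m_l$ I would factor the interior blow-ups. Since $p$ is one of the $m_i$ centres on $\bar{D}_i$, the morphism $(\tilde{Y}, \tilde{D}) \to (\bar{Y}, \bar{D})$ factors as $(\tilde{Y}, \tilde{D}) \xrightarrow{\rho} (\hat{Y}, \hat{D}) \to (\bar{Y}, \bar{D})$, where the second map blows up $p$ and $\rho$ performs the remaining interior blow-ups: $m_i - 1$ over $\bar{D}_i$ and $m_l$ over $\bar{D}_l$ for $l \neq i$. On the other hand $c \colon \hat{Y} \to \bar{Y}^\natural$ is exactly the blow-up of the single point $p^\natural = c(F') \in \bar{D}_j^\natural$, where I use $F' \cdot \bar{D}_j' = 1$ to see that $c(F')$ lands on $\bar{D}_j^\natural$. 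Composing, $(\tilde{Y}, \tilde{D}) \to (\bar{Y}^\natural, \bar{D}^\natural)$ is an interior blow-up with $m_i - 1$ centres over $\bar{D}_i^\natural$, the original $m_j$ centres together with the new centre $p^\natural$ over $\bar{D}_j^\natural$, and $m_l$ over the remaining components, giving the stated $m_l^\natural$.

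The only genuine point to check — and the place I expect any friction — is the bookkeeping at $\bar{D}_j$: one must verify that $c$ restricts to an isomorphism $\bar{D}_j' \xrightarrow{\sim} \bar{D}_j^\natural$ carrying the $m_j$ remaining centres of $\rho$ away from $p^\natural$, so that they survive as distinct interior centres in the new model while $p^\natural$ genuinely adds one. This holds because $F'$ meets $\bar{D}_j'$ transversally in the single point lying over $p^\natural$, and the centres over $\bar{D}_j$ are chosen in the interior away from this intersection (automatic for the distinguished complex structure, and in any case part of the choice of $p$). Everything else is a routine transfer of blow-up data across the $(-1)$-curve contraction, consistent with the authors' assessment that the proof is immediate.
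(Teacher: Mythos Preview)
Your proof is correct and more detailed than the paper's, which simply asserts that the formulae are ``immediate'' and gives no argument. Your intersection-theory computation for the $n_l^\natural$ and your factorisation argument for the $m_l^\natural$ are both sound.

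One small correction to your final paragraph: the condition that the $m_j$ centres on $\bar{D}_j'$ lie away from $q' = F' \cap \bar{D}_j'$ is neither necessary nor, contrary to what you claim, automatic in the distinguished complex structure case. There $p = -1 \in \bar{D}_i$, so the fibre $F$ through $p$ meets $\bar{D}_j$ precisely at $-1$, which is exactly where all $m_j$ centres on $\bar{D}_j$ sit. This does not affect the count: the centres of $\rho$ over $\bar{D}_j'$ are interior points of the strict transform of $\bar{D}_j^\natural$ (after blowing up $p^\natural$), so they contribute $m_j$ further interior blow-ups over $\bar{D}_j^\natural$ even if they are infinitely near $p^\natural$. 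The concern you flag is therefore a non-issue, and the argument goes through without your proposed resolution.
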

Elementary transformations are self-inverse: if one first performs an elementary transformation using a ray $\bR_{\geq 0} v$ in the fan for $(\bar{Y}, \bar{D})$, and then a second one on the ray $\bR_{\geq 0} (-v)$ viewed as lying in the fan for  $(\bar{Y}^\natural, \bar{D}^\natural)$, one gets back to $(\bar{Y}, \bar{D})$. 

\begin{example}
Consider $\bP^1 \times \bP^1$ blown up at a point on the interior of any of the toric divisors. The pairs $(n_i, m_i)$, $i=1, \ldots, 4$, are given by $(0,0), (0,1), (0,0), (0,0)$. Performing the only possible elementary transformation (on $\bar{D}_2$) gives $(0,0), (-1,0), (0,0), (1,1)$, which is the blow-up of the Hirzebruch surface $\bF_1$ at a point on the interior of the self-intersection one component of the toric divisor (a section).
\end{example}

\begin{proposition}\label{prop:toric_moves}
Given a log CY surface $(Y,D)$, any two toric models for $(Y,D)$ can be related by a sequence of toric blow-ups and elementary transformations.
\end{proposition}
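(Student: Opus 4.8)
The plan is to argue in two stages: first use toric blow-ups to arrange that the two models share the same intermediate surface $(\tilde Y, \tilde D)$, and then to compare two interior-blow-up models of a \emph{fixed} pair by an induction, with the elementary transformations of Definition~\ref{def:elem_trans} supplying the moves that reconcile the two contractions.

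\emph{Stage 1: common intermediate surface.} Write the models as $(\bar Y_a, \bar D_a) \leftarrow (\tilde Y_a, \tilde D_a) \rightarrow (Y,D)$ for $a = 1,2$. Each right-hand map is a sequence of corner blow-ups, so there is a log CY pair $(\tilde Y, \tilde D) \to (Y,D)$, obtained by further corner blow-ups, dominating both $(\tilde Y_a, \tilde D_a)$ (take a common refinement of the two sequences of nodal blow-ups). Since interior blow-ups are disjoint from the nodes of the boundary, the nodes of $\tilde D_a$ correspond bijectively to those of $\bar D_a$; hence each extra corner blow-up of $(\tilde Y_a, \tilde D_a)$ is realised by blowing up the corresponding node of $(\bar Y_a, \bar D_a)$, that is, by a toric blow-up (Definition~\ref{def:toric_blow_up}). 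After finitely many toric blow-ups I may therefore assume $(\tilde Y_1, \tilde D_1) = (\tilde Y_2, \tilde D_2) =: (\tilde Y, \tilde D)$, reducing to a comparison of two interior-blow-up models $\pi_a \colon (\tilde Y, \tilde D) \to (\bar Y_a, \bar D_a)$, $a=1,2$, of the same pair.

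\emph{Stage 2: induction on the number of interior blow-ups.} Let $m_a = \sum_i m_i^{(a)}$ be the number of interior blow-ups in $\pi_a$, and induct on $m_1 + m_2$. If $m_1 = 0$ then $(\tilde Y, \tilde D)$ is itself a toric pair, so $U = \tilde Y \setminus \tilde D \cong (\bC^\times)^2$; an interior blow-up with $m_2 > 0$ would contract a curve $C$ with $C \setminus \tilde D \cong \bA^1$ a closed affine line inside $(\bC^\times)^2$, which is impossible (any regular function on $\bA^1$ that is a unit is constant, forcing the image to be a point). Hence $m_2 = 0$ as well, and then $\bar Y_1 = \tilde Y = \bar Y_2$ with $\bar D_1 = \bar D_2$, so the two models coincide: this is the base case, and the same reasoning shows that whenever $m_1 + m_2 > 0$ both $m_1, m_2 > 0$. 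For the inductive step I would use that, by adjunction on the log CY surface $\tilde Y$, every $(-1)$-curve $C \subset \tilde Y$ satisfies $C \cdot \tilde D = 1$, so it meets $\tilde D$ transversally at one interior point of a single component; choose such a $C$ among the curves contracted by $\pi_1$. If $\pi_2$ also contracts $C$, both models descend to interior-blow-up models of the contracted pair, to which the inductive hypothesis applies. If not, the image $\pi_2(C) \subset \bar Y_2$ lies in the fibre through $\pi_2(C \cap \tilde D)$ of the ruling $\bar Y_2 \to \bP^1$ determined by the pair of opposite rays $\pm v$ in the fan of $\bar Y_2$ associated to that component; performing the elementary transformation of Definition~\ref{def:elem_trans} along $\pm v$ leaves $(\tilde Y, \tilde D)$ unchanged (Lemma~\ref{lem:change_n_m}) while replacing $\pi_2$ by a model that does contract $C$, returning me to the previous case.

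The hard part will be the last case of Stage 2: one must verify that the opposite-ray hypothesis of Definition~\ref{def:elem_trans} is genuinely available (that the relevant ruling exists and that $m_i^{(2)} > 0$ for the appropriate component) and that, after the transformation, $C$ is exactly one of the curves contracted by the new $\pi_2$. Equally, in each reduction one must check that the relation between the \emph{contracted} models lifts back through the interior blow-up of $C$ to a relation between the original models, i.e.\ that the elementary transformations and toric blow-ups produced by the inductive hypothesis can be carried out away from $C$. Both are combinatorial statements about the toric fan of $\bar Y_2$ relative to the fixed interior-blow-up data, and this compatibility is where the real work lies; it is precisely what shows Definition~\ref{def:elem_trans} and Definition~\ref{def:toric_blow_up} together furnish a complete set of moves.
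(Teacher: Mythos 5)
Stage 1 of your plan (dominating both models by a common corner blow-up, so that toric blow-ups in the sense of Definition \ref{def:toric_blow_up} reduce you to two interior-blow-up models of a fixed $(\tilde Y,\tilde D)$) is fine, as is the base case via the non-existence of a closed $\bA^1$ in $(\bC^\times)^2$. The genuine gap is exactly where you flag it, and it is not a routine combinatorial verification: the claim that when $\pi_2$ fails to contract your chosen $(-1)$-curve $C$, the image $\pi_2(C)$ lies in a fibre of a ruling reachable by a single elementary transformation, is false, and the hypotheses of Definition \ref{def:elem_trans} can fail outright. Concretely, take $(\bar Y_2,\bar D_2)=(\bP^2,\bar D_1+\bar D_2+\bar D_3)$ and let $\pi_2$ blow up interior points $p_1\in\bar D_1$ and $p_2\in\bar D_2$; the strict transform $C$ of the line through $p_1,p_2$ is a $(-1)$-curve with $C\cdot\tilde D=1$, meeting the interior of the strict transform of $\bar D_3$, and there are models $\pi_1$ contracting $C$ first. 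The fan of $\bP^2$ has no opposite rays at all; after the toric blow-up creating $\bF_1$ with the ray $-v_3$, the component corresponding to $v_3$ has $m_3=0$, so Definition \ref{def:elem_trans} is not applicable along $\pm v_3$ — and in any case $\pi_2(C)$ is not a fibre of that ruling, since the line through $p_1,p_2$ misses the blown-up corner. In general $\pi_2(C)$ can be a plane curve of arbitrarily high degree through many of the blown-up points, and no single move (nor any obviously terminating greedy procedure) makes the other model contract it; what is needed is a factorization theorem with a strictly decreasing invariant.

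That is precisely how the paper proceeds: it reduces the comparison of two toric models to a birational self-map $F$ of $\bP^2$ and factors $F$ using the Sarkisov program for surfaces \cite[Theorem 2.24]{KSC04}, into links of types (1)--(4). Links of types (1), (3), (4) are already admissible moves; the problematic type (2) links (a non-toric blow-up of a basepoint $q$ on the boundary of $\bP^2$) are replaced by the composite you would need here: a toric blow-up of the node opposite the component containing $q$ (yielding $\bF_1$), an elementary transformation blowing up $q$ and blowing down the strict transform of the fibre, and a switch of rulings on the resulting $\bF_0$. Termination of this modified algorithm is not free — it rests on the computation that the Sarkisov degree drops, $(d-n)/2<d/3$ because $n>d/3$ for the multiplicity $n$ at $q$. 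Your induction on $m_1+m_2$ has no analogue of this invariant: after your proposed transformation there is no guarantee that the new $\pi_2$ contracts $C$, so the induction does not close. To repair your approach you would essentially have to reprove this untwisting-with-decreasing-degree statement, i.e.\ the Sarkisov factorization itself.
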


A closely related result was proved by Blanc \cite[Theorem 1]{Blanc}. The proposition follows from a modified version of the Sarkisov program for surfaces. A detailed proof is given by Wendelin Lutz in the Appendix.

\subsubsection{Toric blow-ups and stabilisation}\label{sec:stabilisation}

\begin{proposition}\label{prop:stabilisation} 
Suppose $(\bar{Y}, \bar{D})$ is a toric pair with $\D= \D_1 + \ldots + \D_k$. Let $(\bar{Y}', \bar{D}')$ be the toric pair given by blowing up $\D_i \cap \D_{i+1}$, with $\D' = \D_1 + \ldots + \D_i' + E + \D_{i+1}' + \ldots + \D_k$. (Our convention is that if $i=k$, we use the order $E + \D'_1 + \ldots + \D_k'$.)

Then up to Hurwitz moves, the abstract Weinstein Lefschetz fibration associated to $(\bar{Y}', \bar{D}')$ is given by stabilising the one associated to  $(\bar{Y}, \bar{D})$ along the handle $c_E$ given in Figure \ref{fig:stabilisation}. 

\end{proposition}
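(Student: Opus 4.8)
The plan is to compare the abstract Weinstein Lefschetz fibrations of $(\bar{Y}, \bar{D})$ and $(\bar{Y}', \bar{D}')$ by working entirely on the B-side with the canonical full exceptional collections of Definition \ref{def:construction_toric}, and then transporting the comparison to the A-side via Lemma \ref{lem:mutations_mirror_mutations} and Proposition \ref{prop:corner_blow_up}. First I would fix the cyclic labelling so that we are blowing up $\D_k \cap \D_1$ (using Corollary \ref{cor:cyclic_reordering} to permit this reindexing up to the allowed moves), since Proposition \ref{prop:corner_blow_up} is stated for a corner blow-up and its conclusion is cleanest when the new boundary component $E$ is inserted at the start of the collection. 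With this normalisation, the full exceptional collection $\cO, \cO(\D_1), \ldots, \cO(\D_1 + \cdots + \D_{k-1})$ on $\bar{Y}$ pulls back under $\pi$ to an exceptional collection on $\bar{Y}'$, and I would compare it with the collection $\cO, \cO(E), \pi^\ast\cO(\D_1), \ldots$ supplied by the corner-blow-up analysis, exactly as in the proof of Proposition \ref{prop:full_exc_coll}.

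Next I would observe that Proposition \ref{prop:corner_blow_up} already identifies, essentially verbatim, the A-side output of inserting a corner blow-up: the central fibre becomes $\Sigma_{k+1}$, the new vanishing cycle is $S_E = \tau_{V_0} V_E$ (where $V_E$ is the explicit longitude $\ell(0,\ldots,1,-1,1,\ldots,0)$), the remaining cycles $L_i$ are carried along by the inclusion $\Sigma \hookrightarrow \Sigma'$ of Figure \ref{fig:stabilisation}, and the operation realised on the total space is precisely a stabilisation along the arc $c_E = S_E \cap \Sigma$. So the substance of the argument is to check that the collection of vanishing cycles for $(\bar{Y}', \bar{D}')$ produced by Definition \ref{def:construction_toric} --- namely the $V_0', \ldots, V_k'$ built from $\cO, \cO(\D'), \ldots$ on $\bar{Y}'$ --- agrees, up to Hurwitz moves and Hamiltonian isotopy, with the stabilised collection $(S_E, L_0, \ldots, L_{k-1})$ coming out of Proposition \ref{prop:corner_blow_up}. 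I would verify this by matching intersection numbers: the key numerical input is that the Dehn-twisting recipe in Definition \ref{def:construction_toric} records the pairings $\bar{D}'_j \cdot (\bar{D}'_1 + \cdots + \bar{D}'_i)$, and under the corner blow-up these are controlled by adjunction and by the relation $[S_E] = [W_i] - [W_E] + [W_{i+1}]$ recorded in the Remark following Proposition \ref{prop:corner_blow_up}. Since everything takes place on a punctured surface, I would invoke the dimension-two rigidity principle already used in Lemma \ref{lem:mutations_mirror_mutations} (minimal geometric intersection number equals rank of Floer cohomology, via \cite[Proposition 3.10]{FLP}) to upgrade the categorical matching of cycles to genuine Hamiltonian isotopy.

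The main obstacle I anticipate is bookkeeping rather than conceptual: the two exceptional collections on $\bar{Y}'$ (the pullback of the one on $\bar{Y}$, versus the intrinsic one for $\bar{Y}'$) differ by a specific sequence of mutations involving $\cO_E(-1)$, and I must track that this sequence corresponds on the A-side exactly to the single stabilisation along $c_E$ together with Hurwitz moves, with no stray global fibre automorphism left over beyond what the statement permits. Concretely, the delicate point is confirming that the right-mutation of $\cO_E(-1)$ over $\cO$ (producing $\cO(E)$, as in Corollary \ref{cor:gnl_exc_coll_lines}) is mirror to the relation $S_E = \tau_{V_0} V_E$, and that the cyclic insertion of $E$ between $\D_i'$ and $\D_{i+1}'$ is compatible with the clockwise ordering convention of Figure \ref{fig:stabilisation}. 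Once the intersection-number comparison is set up carefully this should follow immediately from adjunction, as in the one-line proof of Proposition \ref{prop:corner_blow_up}; the detailed proof is deferred to Section \ref{sec:stabilisation}, where it is carried out alongside the proof of the monodromy formula of Proposition \ref{prop:monodromy}.
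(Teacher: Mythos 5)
Your overall route coincides with the paper's: normalise by cyclic relabelling so that the corner being blown up is $\D_k \cap \D_1$, then feed Proposition \ref{prop:corner_blow_up} into Definition \ref{def:construction_toric}. But you make the middle step much heavier than it needs to be. In the normalised position the two exceptional collections on $\bar{Y}'$ agree \emph{on the nose}: since $\pi^\ast \D_1 = \D_1' + E$ and $\pi^\ast \D_m = \D_m'$ for $m \geq 2$, one has $\pi^\ast \cO(\D_1 + \cdots + \D_j) = \cO(E + \D_1' + \cdots + \D_j')$ for every $j$, so the collection $\cO, \cO(E), \pi^\ast\cO(\D_1), \ldots, \pi^\ast\cO(\D_1+\cdots+\D_{k-1})$ from Proposition \ref{prop:corner_blow_up} \emph{is} the intrinsic collection of Definition \ref{def:construction_toric} for the labelling $E, \D_1', \ldots, \D_k'$. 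Consequently no intersection-number matching, no tracking of the mutations of $\cO_E(-1)$, and no appeal to Lemma \ref{lem:mutations_mirror_mutations} or the FLP rigidity principle is required for this case: the collections are equal as tuples of longitudes, and the single Hurwitz move of Proposition \ref{prop:corner_blow_up} (giving $S_E = \tau_{V_0}V_E$ first) exhibits the stabilisation along $c_E$ directly.

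The genuine gap is in your reduction to this normalised case. Proposition \ref{prop:cyclic_reordering} (the relevant toric statement, rather than Corollary \ref{cor:cyclic_reordering}) relates the fibrations for the labellings $\D_1,\ldots,\D_k$ and $\D_2,\ldots,\D_k,\D_1$ only up to Hurwitz moves \emph{and} the global fibre automorphism $\tau^{-1}_{W_k}\tau^{-n_1}_{W_1}\tau^{-1}_{W_2}$, whereas the proposition you are proving permits Hurwitz moves alone. You correctly flag the risk of a stray global fibre automorphism, but you supply no mechanism for removing it, and "this should follow immediately from adjunction" does not address it: adjunction controls the intersection numbers $d_{ij}$, not the automorphism. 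The missing ingredient is the cancellation argument: choose the component labels so that the \emph{same} global fibre automorphism is applied before and after the stabilisation, and observe that this automorphism, being a product of Dehn twists in meridians, is supported away from $c_E$ and hence fixes the stabilising handle; the stabilisation along $c_E$ therefore commutes with it, the pre- and post-compositions cancel, and only Hurwitz moves remain. Without this step your plan proves the statement only up to Hurwitz moves together with a global fibre automorphism, which is strictly weaker than the claim.
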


\begin{proof}
Let $S_E$ be the Lagrangian $S^1$ given by gluing the core of the handle to $c_E$.  Let $\pi$ be the blow-down map. The case $i=k$  (and so $i+1 = 1$) is immediately covered by Proposition \ref{prop:corner_blow_up}, as $\pi^\ast \cO(\D_1+ \ldots + \D_i) = \cO(E+ \D'_1 + \ldots + \D_i)$. 

For the general case, we need to combine this stabilisation with a pre- and post-composition with the Hurwitz moves and global fibre automorphism of Proposition \ref{prop:cyclic_reordering}; we have chosen our component labels so as to use the same global fibre automorphism before and after the stabilisation; as it fixes $c_E$ (it's a product of Dehn twists in meridiens, disjoint from $c_E$), it can be factored out, leaving only Hurwitz moves. 
\end{proof}

\begin{remark}\label{rmk:mutations_for_stabilisation}
Using our formula for the total monodromy (Proposition \ref{prop:monodromy}) gives a simpler sequence of Hurwitz moves for the general case in the proof above, as follows. Let $V_i^s$, $i=0, \ldots, k-1$ be the image of $V_i$ under the natural inclusion $\Sigma \hookrightarrow \Sigma'$; and let $V_0', \ldots, V_{i}', V_{E, i}, V_{i+1}', \ldots,  V_{k-1}'$ be the ordered collection of vanishing cycles associated to $\bar{D}_1, + \ldots + \bar{D}_i' + E + \bar{D}_{i+1}' + \ldots + \D_k$. 
\begin{itemize}
\item Mutate $S_E$ to the end of the vanishing cycle collection, which by Proposition \ref{prop:monodromy} gives  $\left( V_0^s, V_1^s, \ldots, V_k^s, \bar{S}_E \right)$, where $\bar{S}_E = \tau^{-1}_{b_{k,1}}S_E$. 

\item Mutate $V_k^s,  \ldots, V_{i}^s$ over $\bar{S}_E$ to get  
$$\left( V_0^s, \, V_1^s, \, \ldots, \, V_{i-1}^s, \,\bar{S}_E, \, \tau^{-1}_{\bar{S}_E} V_{i}^s, \, \ldots, \, \tau^{-1}_{\bar{S}_E}  V_k^s \right)$$

\item Mutate $\bar{S}_E$ over  $\tau^{-1}_{\bar{S}_E} V_{i}^s$ to get  
$$\left( V_0^s, \, V_1^s,\, \ldots, \, V_{i-1}^s, \, \tau^{-1}_{\bar{S}_E} V_{i}^s,  \, \tau_{ \left( \tau^{-1}_{\bar{S}_E} V_{i}^s \right)}  \bar{S}_E, \, \tau^{-1}_{\bar{S}_E} V_{i+1}^s, \, \ldots, \, \tau^{-1}_{\bar{S}_E}  V_k^s \right)$$
\end{itemize}
which one can check is precisely $V_0', \ldots, V_{i}', V_{E, i}, V_{i+1}', \ldots,  V_{k-1}'$. 
\end{remark}

As the  $W_{i,j}$ don't intersect $c_E$, performing interior blow ups, we immediately get:

\begin{corollary} 
Suppose two pairs $\{ (\bar{Y}, \bar{D}) \leftarrow (\tilde{Y}, \tilde{D}) \}$ and $\{ (\bar{Y}', \bar{D}') \leftarrow (\tilde{Y}', \tilde{D}') \}$ are related by a toric blow-up as in Definition \ref{def:toric_blow_up}. Then the abstract Weinstein Lefschetz fibration associated to $\{  (\bar{Y}', \bar{D}') \leftarrow (\tilde{Y}', \tilde{D}') \}$ is given by starting with the one for $\{ (\bar{Y}, \bar{D}) \leftarrow (\tilde{Y}, \tilde{D}) \}$ and applying the stabilisation and Hurwitz moves described in Proposition \ref{prop:stabilisation}. 
In particular, the total spaces of these two Lefschetz fibrations (i.e.~$\tilde{M}$ and $\tilde{M}'$) are Weinstein deformation equivalent. 
\end{corollary}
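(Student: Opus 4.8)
The plan is to reduce to Proposition~\ref{prop:stabilisation} by observing that the interior blow-ups and the toric (corner) blow-up are supported in disjoint regions of the fibre and act on disjoint blocks of the vanishing-cycle collection, so that the two constructions commute. By Definition~\ref{def:construction_tilde}, the abstract Weinstein Lefschetz fibration for $\{(\bar{Y}, \bar{D}) \leftarrow (\tilde{Y}, \tilde{D})\}$ has vanishing cycles $(W_{1,1}, \ldots, W_{k,m_k}, V_0, \ldots, V_{k-1})$: a leading block $W_\bullet$ of meridional cycles, one per interior blow-up, followed by the toric collection $(V_0, \ldots, V_{k-1})$ of Definition~\ref{def:construction_toric}. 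First I would check that the interior blow-up data is untouched by the toric blow-up: the node being blown up is a corner of $\tilde{D}$, whereas the interior blow-ups lie at interior points of the components, away from that corner. Hence the strict transforms $\bar{D}_i', \bar{D}_{i+1}'$ carry the same interior blow-ups, the new component $E$ carries none, and all $m_l$ are unchanged. Consequently the fibration for $\{(\bar{Y}', \bar{D}') \leftarrow (\tilde{Y}', \tilde{D}')\}$ has collection $(W_\bullet, \mathcal{V}')$, where $\mathcal{V}' = (V_0', \ldots, V_i', V_{E,i}, V_{i+1}', \ldots, V_{k-1}')$ is the toric collection of $(\bar{Y}', \bar{D}')$ and $W_\bullet$ is the same meridional block, now included via $\Sigma_k \hookrightarrow \Sigma_{k+1}$.

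The key geometric input is that the stabilising cycle $S_E$ is disjoint from every meridian $W_{i,j}$. Indeed $c_E = S_E \cap \Sigma_k$ is disjoint from the $W_{i,j}$ by hypothesis, and the remainder of $S_E$ is the core of the newly attached handle, which lies off $\Sigma_k$; so $S_E \cap W_{i,j} = \emptyset$ for all $i,j$. I would use this as follows. Stabilising the tilde collection along $c_E$ prepends $S_E$, giving $(S_E, W_\bullet, V_0, \ldots, V_{k-1})$. Since $S_E$ is disjoint from each $W_{i,j}$, the Hurwitz moves commuting $S_E$ rightward past the block $W_\bullet$ act trivially on the cycles (a Dehn twist in a disjoint curve is the identity up to isotopy, so each mutation merely transposes the pair), and we may freely reorder to $(W_\bullet, S_E, V_0, \ldots, V_{k-1})$ without altering any vanishing cycle.

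It then remains to observe that the Hurwitz moves of Proposition~\ref{prop:stabilisation} (explicitly, the sequence of Remark~\ref{rmk:mutations_for_stabilisation}) only ever mutate adjacent pairs drawn from $S_E$ and the $V_l$. As these all sit after $W_\bullet$ in the ordering, performing them leaves the meridional block untouched and transforms $(S_E, V_0, \ldots, V_{k-1})$ into $\mathcal{V}'$ exactly as in the purely toric case, yielding $(W_\bullet, \mathcal{V}')$, the required collection. The final clause is then automatic: a stabilisation adds a cancelling pair of Weinstein handles and Hurwitz moves preserve Weinstein deformation type (Definition~\ref{def:Lefschetz_moves} and the discussion preceding Proposition~\ref{prop:corner_blow_up}), so $\tilde{M}$ and $\tilde{M}'$ are Weinstein deformation equivalent.

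The one point that warrants care — and which I regard as the only real obstacle — is the disjointness $S_E \cap W_{i,j} = \emptyset$, together with the bookkeeping that the interior blow-up points survive on the strict transforms so that the two meridional blocks coincide. The disjointness is exactly what upgrades the hypothesis that $c_E$ avoids the $W_{i,j}$ into the statement that $S_E$ can be slid past the meridional block for free; once it is in hand, the result is a formal combination of Proposition~\ref{prop:stabilisation} with the observation that the two blocks of vanishing cycles never interact.
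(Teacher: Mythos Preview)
Your proposal is correct and follows exactly the same approach as the paper. The paper's own proof is a single line---``As the $W_{i,j}$ don't intersect $c_E$, performing interior blow ups, we immediately get [the corollary]''---and you have simply spelled out the details of why that disjointness suffices: the meridional block commutes freely with $S_E$ under Hurwitz moves, so the stabilisation and mutations of Proposition~\ref{prop:stabilisation} act only on the toric part of the collection.
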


\subsubsection{Proof of Proposition \ref{prop:switch_orientation} via stabilisations}\label{sec:proof_switch_orientations}
We're ready to prove that  the choice of generator for $H_1(\D, \bZ)$ doesn't change the Lefschetz fibration given in Definition \ref{def:construction_tilde}. 

\begin{proof} We proceed by induction on the number of rays in the toric fan. By MMP for smooth toric surfaces, we have two base cases to consider: $\bP^2$ and $\bF_a$. Both of these are automatic: irrespective of the choice of generator for $H_1(D, \bZ)$, the cyclically ordered self-intersection numbers of the components of $\bar{D}$ are $(1,1,1)$, respectively $(0,a,0,-a)$, both of which are (cyclically) unchanged if you reverse their order. 

Inductive step: fix a toric pair $(\bar{Y}, \bar{D})$, and suppose we know that the abstract Weinstein Lefschetz fibration associated to  $ \bar{D}_1, \ldots, \bar{D}_k$ is the same as the one associated to $\bar{D}_k, \bar{D}_{k-1}, \ldots, \bar{D}_1$ up to Hurwitz moves and global fibre automorphisms given by Dehn twists in meridiens. 
Say the first one has ordered collection of vanishing cycles $(V_0, \ldots, V_{k-1})$, as before; for concreteness, let's call $(V_0, \mathcal{V}_k, \ldots, \mathcal{V}_2)$ the ordered collection for the second one. 
Blow up the node at the intersection of $\bar{D}_1$ and $\bar{D}_k$, and call the exceptional divisor $E$. Using the same notation as before, the induction hypothesis implies that the abstract Weinstein Lefschetz fibrations $\{\Sigma', (S_E, V_0^s, \ldots, V_{k-1}^s) \} $ and $\{ \Sigma', (S_E, V_0^s, \mathcal{V}_k^s, \ldots, \mathcal{V}_2^s)\}$, where for consistency we have called $\mathcal{V}_i^s$ the image of $\mathcal{V}_i$ in $\Sigma'$, are the same up to Hurwitz moves and global Dehn twists in meridiens. By Proposition \ref{prop:stabilisation}, this implies that the abstract Weinstein Lefschetz fibrations associated to $(\bar{Y}', \bar{D}')$ with the orderings $E, \bar{D}_1', \ldots, \bar{D}_k'$ and $E, \bar{D}_k', \ldots, \bar{D}_1'$ are the same up to Hurwitz moves and global Dehn twists in meridiens, which completes the inductive step. 
\end{proof}

\begin{remark}
One could follow the steps of this proof to get a sequence of mutations which takes the exceptional collection of coherent sheaves
$$
\cO, \cO(\D_1), \cO(\D_1+ \D_2), \ldots, \cO(\D_1+ \ldots + \D_{k-1})
$$
to the exceptional collection
$$
 \cO, \cO(\D_k), \cO(\D_k+ \D_{k-1}), \ldots, \cO(\D_k+ \ldots + \D_2). 
$$
(This is also implicitly contained in the proof of Proposition \ref{prop:full_exc_coll}.) 
\end{remark}

\subsubsection{Proof of Proposition \ref{prop:monodromy}, on the total monodromy of $w$, via stabilisations}

\begin{proof}
We proceed again by induction on the number of rays in the fan for $(\bar{Y}, \D)$, using MMP for toric surfaces. The base cases are $\bP^2$ and $\bF_a$, both of which can be checked by hand. 

Let us now do the inductive step. Assume we are given a toric pair $(\bar{Y}, \bar{D})$, and $V_0, \ldots, V_{k-1}$ as in Definition \ref{def:construction_toric}, and that 
$$
\tau_{V_0} \tau_{V_1}  \ldots \tau_{V_{k-1}}  =
\prod_{i=1}^k \tau_{W_i}^{-n_i-2} \prod_{i=1}^k \tau_{b_{i, i+1}}.
$$

First assume we are blowing up $\D_1 \cap \D_k$. We need to calculate $\tau_{V_0^s} \tau_{V_E} \tau_{V_1^s}  \ldots \tau_{V_{k-1}^s}$. Undoing the Hurwitz move gives 
$$
\tau_{V_0^s} \tau_{V_E} \tau_{V_1^s}  \ldots \tau_{V_{k-1}^s} = \tau_{S_E} \tau_{V_0^s}  \tau_{V_1^s}  \ldots \tau_{V_{k-1}^s}.
$$
 Let $W_E$ denote the meridien associated to $E$, and keep $W_i$, $i=1, \ldots, k$ for the images of the other meridiens under $\Sigma \hookrightarrow \Sigma'$. Similarly, keep $b_{i, i+1}$ for $i=1, \ldots, k-1$, and use $b_{E,1}$ and $b_{k,E}$ for the two new boundary parallel curves, with the obvious labels. We will also consider $b_{k,1} \subset \Sigma'$, which is no longer boundary parallel. See Figure \ref{fig:stabilisation_labels}.
\begin{figure}[htb]
\begin{center}
\includegraphics[scale=0.38]{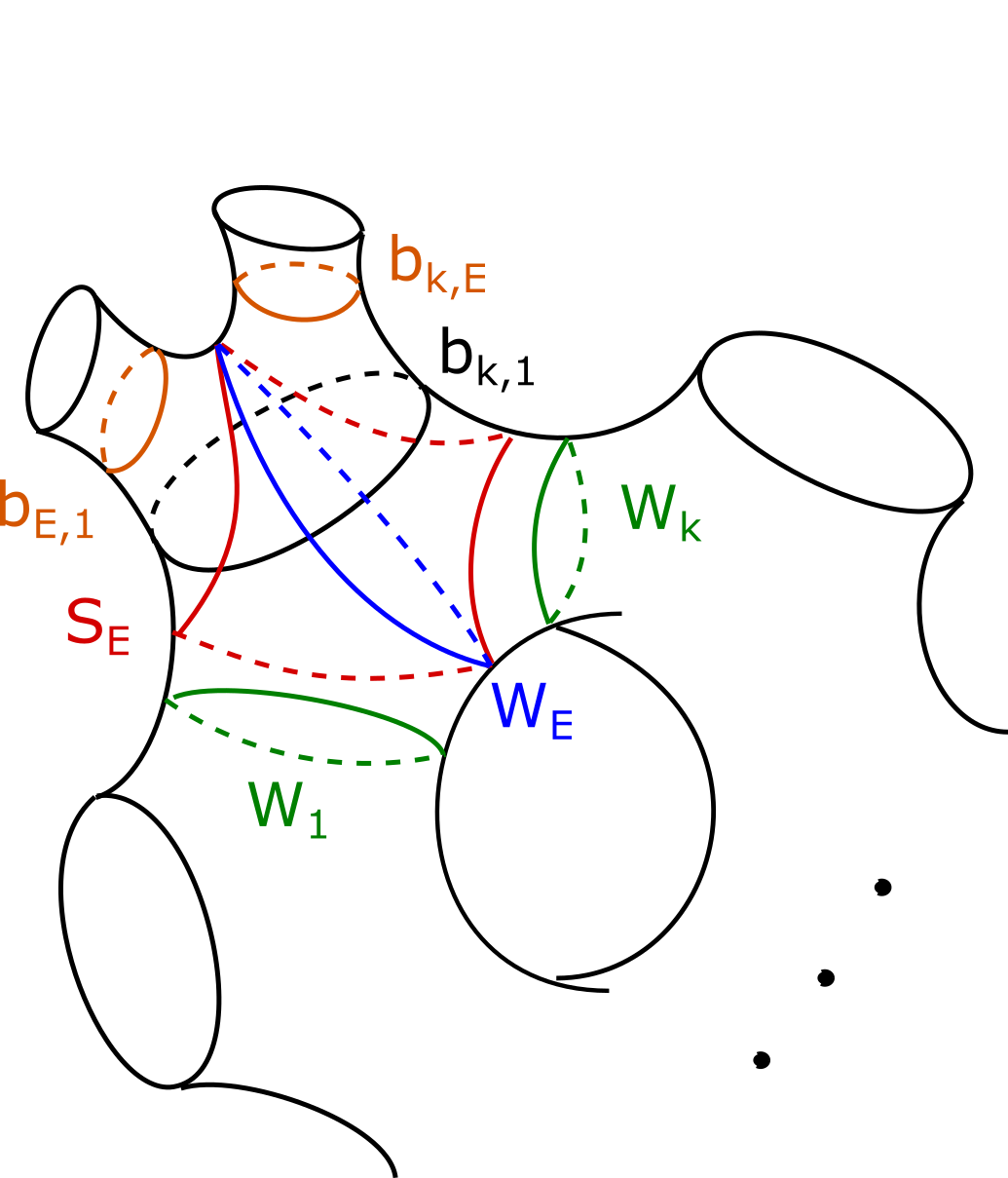}
\caption{Lantern relation configuration for some distinguished Lagrangians in $\Sigma'$.}
\label{fig:stabilisation_labels}
\end{center}
\end{figure}

The lantern relation gives
$$
\tau_{W_1} \tau_{W_k} \tau_{b_{E,1}} \tau_{b_{k,E}} = \tau_{S_E} \tau_{b_{k,1}} \tau_{W_E}.
$$
Using the induction hypothesis,
\begin{align}
\tau_{S_E} \tau_{V_0^s}  \tau_{V_1^s}  \ldots \tau_{V_{k-1}^s}
& = 
\tau_{S_E} \prod_{i=1}^k \tau_{W_i}^{-n_i-2}   \prod_{i=1}^k \tau_{b_{i, i+1}} \\
& = \prod_{i=1}^k \tau_{W_i}^{-n_i-2} \left(  \prod_{i=1}^{k-1} \tau_{b_{i, i+1}} \right) \tau_{S_E} \tau_{b_{k,1}} \\
& = \prod_{i=1}^k \tau_{W_i}^{-n_i-2} \left(  \prod_{i=1}^{k-1} \tau_{b_{i, i+1}} \right)  \tau_{W_1} \tau_{W_k} \tau_{b_{E,1}} \tau_{b_{k,E}} \tau_{W_E}^{-1} \\
& =  \tau_{W_E}^{-1} \prod_{i=1}^k \tau_{W_i}^{-n'_i-2} \left( \prod_{i=1}^{k-1} \tau_{b_{i, i+1}} \right) \tau_{b_{E,1}} \tau_{b_{k,E}}
\end{align}
which is exactly what we want.

As passing from the abstract Weinstein Lefschetz fibration associated to $\bar{D}_1, \ldots, \bar{D}_k$ to the one associated to $\bar{D}_2, \ldots, \bar{D}_k, \bar{D}_1$ involves Hurwitz moves and global fibre automorphisms which commute with the total monodromy, we see that the claim also holds if we instead blow up $\D_i \cap \D_{i+1}$ for a general $i$. This completes the proof.
\end{proof}

\subsubsection{Invariance under elementary transformations}

\begin{proposition}\label{prop:elem_trans_invariance}

Suppose we are given two toric models
$$
(\bar{Y}, \bar{D}) \leftarrow (\tilde{Y},\tilde{D}) \rightarrow (Y,D)
$$
and 
 $$
(\bar{Y}^\n,\bar{D}^\n) \leftarrow (\tilde{Y},\tilde{D}) \rightarrow (Y,D)
$$
related by an elementary transformation (Definition \ref{def:elem_trans}). Then the abstract Weinstein Lefschetz fibration associated to $\{ (\bar{Y}, \bar{D}) \leftarrow (\tilde{Y},\tilde{D})  \}$ is equivalent the one associated to $\{ (\bar{Y}^\n,\bar{D}^\n) \leftarrow (\tilde{Y},\tilde{D})  \}$ up to Hurwitz moves (and, depending on the labelling we choose, a global fibre automorphism given by a product of meridional Dehn twists). 
\end{proposition}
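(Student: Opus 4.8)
The plan is to reduce the statement to a purely algebraic comparison of full exceptional collections of line bundles on the single surface $\tilde Y$, and then to identify a convenient common blow-up on which that comparison becomes transparent.

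First I would invoke deformation invariance: by Definition \ref{def:construction_tilde} the abstract Weinstein Lefschetz fibration depends only on the deformation class of $(\tilde Y, \tilde D)$, so we may assume $(\tilde Y, \tilde D) \in \tilde{\mathcal{T}}_e$ and thus apply Lemma \ref{lem:mutations_mirror_mutations}. Both toric models produce, through Corollary \ref{cor:full_exc_coll} and Definition \ref{def:construction_general}, abstract Weinstein Lefschetz fibrations that are determined up to Hurwitz moves by full exceptional collections of line bundles on the \emph{same} surface $\tilde Y$, built from the fixed fibre $\Sigma_k$ and the fixed meridiens indexed by the (fixed) cyclic order of the components of $\tilde D$. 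Combining Lemma \ref{lem:mutations_mirror_mutations} with Lemma \ref{lem:line_bundle_tensor}, it therefore suffices to show that these two collections on $\tilde Y$ are related by a sequence of mutations together with an overall tensor by a line bundle; the overall tensor is exactly what produces the optional global fibre automorphism by meridional Dehn twists in the statement.

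The geometric heart is the observation that the elementary transformation singles out a common surface. Let $(\hat Y, \hat D) \to (\bar Y, \bar D)$ be the single non-toric blow-up at the interior point $p \in \bar D_i$ used in Definition \ref{def:elem_trans}, with exceptional curve $e$ and with $F' \subset \hat Y$ the strict transform of the fibre through $p$. Contracting $F'$ is, read backwards, precisely the interior blow-up of $\bar Y^\natural$ at the smooth boundary point $q^\natural \in \bar D_j^\natural$ onto which $F'$ collapses; thus $\hat Y$ is simultaneously a single interior blow-up of $\bar Y$ (on $\bar D_i$) and of $\bar Y^\natural$ (on $\bar D_j$), the whole picture being symmetric under $e \leftrightarrow F'$, $i \leftrightarrow j$, $\pi^* \leftrightarrow (\pi^\natural)^*$, in accordance with the fact that elementary transformations are self-inverse. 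Using Lemma \ref{lem:change_n_m} one checks that the two factorisations $\tilde Y \to \hat Y \to \bar Y$ and $\tilde Y \to \hat Y \to \bar Y^\natural$ share the \emph{same} further sequence of interior blow-ups $\tilde Y \to \hat Y$; the corresponding exceptional line bundles $\cO_{\Gamma}(\Gamma)$ occur identically in the two collections and are mere spectators. So it is enough to treat the base case $(\tilde Y, \tilde D) = (\hat Y, \hat D)$.

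For the base case I would work in $\Pic \hat Y = \pi^* \Pic \bar Y \oplus \bZ e = (\pi^\natural)^* \Pic \bar Y^\natural \oplus \bZ F'$, using the key relation $\pi^* A = (\pi^\natural)^* A^\natural = e + F'$ between the two ruling-fibre classes (here $F'^2 = -1$, $e \cdot F' = 1$) together with the comparison of the remaining boundary divisors across the transformation. After a cyclic relabelling via Proposition \ref{prop:cyclic_reordering} placing $\bar D_i$ and its opposite $\bar D_j$ in convenient positions, the two toric backbones $\cO, \cO(\bar D_1), \dots$ and $\cO, \cO(\bar D^\natural_1), \dots$ differ only by the elementary transformation, while the single extra exceptional bundle is carried over $\bar D_i$ in the first collection and over $\bar D_j$ in the second. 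I would then write down the explicit mutation sequence relating them — built from the interior-blow-up mutation of Proposition \ref{prop:interior_blow_up} and Remark \ref{rmk:Hurwitz_meridien}, the boundary-reordering mutations of Proposition \ref{prop:cyclic_reordering}, and the short exact sequences used in Corollary \ref{cor:full_exc_coll} — and read off the residual overall twist by a line bundle. \emph{The main obstacle} is this last piece of bookkeeping: expressing $(\pi^\natural)^*\cO(\bar D^\natural_1 + \cdots + \bar D^\natural_m)$ in the basis adapted to $\pi$, and verifying that the reshuffling of the collection is realised by genuine mutations (rather than an ad hoc replacement), so that Lemma \ref{lem:mutations_mirror_mutations} applies on the nose. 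Once the divisor classes are matched, checking that each step is a legitimate mutation is routine.
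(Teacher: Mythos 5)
Your strategy is the B-side alternative that the paper itself flags in the remark immediately following its proof: show the two full exceptional collections of line bundles on the single surface $\tilde{Y}$ are mutation equivalent up to an overall tensor by a line bundle, then conclude via Lemma \ref{lem:mutations_mirror_mutations} (with the twist absorbed into a global fibre automorphism by meridional Dehn twists, Lemma \ref{lem:line_bundle_tensor}). The paper's actual proof instead stays on the A-side: it inducts on corner blow-ups via the stabilisation result (Proposition \ref{prop:stabilisation}), resting on the structural Proposition \ref{prop:elem_trans_base_case} to reduce to $\bF_a$, and carries through the induction the non-trivial identity $\tau_{V_0}\cdots\tau_{V_{j-1}}W_j = W_k$. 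Your reduction via the common surface $\hat{Y}$ is a genuinely nice extra idea: since $\rho\colon \tilde{Y}\to\hat{Y}$ is literally the same morphism in both factorisations, the exceptional bundles attached to blow-ups over $\hat{Y}$ coincide (up to reindexing), and the two non-shared ones are $\cO(\Gamma_{i1})=\rho^*\cO_{\hat{Y}}(e)$ and $\cO(\Gamma^\natural_{j1})=\rho^*\cO_{\hat{Y}}(F')$. One caveat you should state: these occupy different positions in the two orderings (the $i$-block versus the $j$-block), so declaring them spectators requires the mutual orthogonality of the $\cO_{\Gamma}(\Gamma)$ over distinct boundary components (mirror to the disjointness of the meridiens) to transpose them by trivial mutations.

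The genuine gap is the base case itself, which you label "the main obstacle" and then defer as bookkeeping. Reducing to $\hat{Y}$ strips out the extra interior blow-ups but does not simplify the toric backbone: $\bar{Y}$ is still an arbitrary smooth toric surface whose fan contains opposite rays, and you must actually exhibit a mutation sequence from $\left(\cO,\, \cO(e),\, \hat{\pi}^*\cO(\D_1),\, \ldots,\, \hat{\pi}^*\cO(\D_1+\cdots+\D_{k-1})\right)$ to the corresponding collection built from $\cO(F')$ and $(\hat{\pi}^\natural)^*$ of the $\natural$-backbone, up to a twist. None of the ingredients you cite produces this for general $\bar{Y}$: Proposition \ref{prop:interior_blow_up} and Remark \ref{rmk:Hurwitz_meridien} only insert or move a meridional class, and Proposition \ref{prop:cyclic_reordering} only relabels the backbone. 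In the paper, even the $\bF_a$ case needs a specific, non-obvious sequence (move the meridional cycle to the end using the total monodromy formula of Proposition \ref{prop:monodromy}, then a pair of right mutations followed by a pair of left mutations, realising the braiding of Remark \ref{rmk:exceptional_lines_minimal}), and passing from $\bF_a$ to general $\bar{Y}$ requires Proposition \ref{prop:elem_trans_base_case} together with an induction whose hypothesis includes the identity quoted above (on the B-side: that mutating the class corresponding to the interior blow-up through the entire backbone yields the object attached to the opposite boundary component).

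Matching divisor classes in $\Pic\hat{Y}$, as you propose via $e+F'=\hat{\pi}^*F$, only shows the two collections agree at the level of $K_0$ after the twist; mutation equivalence of exceptional collections is strictly stronger, and verifying it is precisely where all the content of the proposition lives. So the framing and reductions are sound and the route is viable (the authors endorse it), but as written the proof is incomplete: to close it you would either carry out, on the B-side, an induction parallel to the paper's (base case $\bF_a$ plus corner blow-ups via Proposition \ref{prop:elem_trans_base_case}), or give the explicit mutation computation on $\hat{Y}$ directly.
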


In order to use induction to prove this, we need the following straightforward refinement of MMP for smooth toric surfaces.  

\begin{proposition}\label{prop:elem_trans_base_case}
Suppose that $(\bar{Y}, \D)$ is a smooth toric pair whose fan contains two opposite rays, say $\bR_{\geq 0} v$ and  $\bR_{\geq 0} (-v)$. Then $(\bar{Y}, \D)$ is obtained by performing iterated toric (i.e.~corner) blow ups on $\bF_a$, and, possibly after an overall $SL_2(\bZ)$ transformation, the rays $\bR_{\geq 0} v$ and  $\bR_{\geq 0} (-v)$ are the images of rays given by the positive and negative imaginary axis in the standard fan for $\bF_a$. 
\end{proposition}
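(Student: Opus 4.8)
The plan is to run an equivariant (toric) minimal model program adapted to the fibration $f \colon \bar{Y} \to \bP^1$ induced by the projection $N \to N/\bZ v$, contracting only rays different from $\pm v$, so that the two opposite rays survive and we terminate on a Hirzebruch surface. First I would normalise: since $v$ is a primitive ray generator, apply an element of $SL_2(\bZ)$ carrying $v$ to $(0,1)$, so that $-v = (0,-1)$. The two rays $\pm v$ then lie on the imaginary axis and cut the plane into an open right half-plane $\{x>0\}$ and an open left half-plane $\{x<0\}$; every other ray lies strictly in one of these, and strong convexity of the cones of a fan forces at least one ray in each half-plane, so $\bar{Y}$ has $n \geq 4$ rays. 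Throughout I use the smooth-fan relation $w_{j-1}+w_{j+1} = -b_j\, w_j$, where $w_{j-1}, w_j, w_{j+1}$ are consecutive ray generators and $b_j = \D_j^2$ is the self-intersection, normalised so that consecutive rays have determinant $1$.

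The key combinatorial step is the claim: if a half-plane, say the right one, contains $m \geq 2$ interior rays, then one of them is a $(-1)$-curve, i.e.\ a ray $w$ with $w = w_- + w_+$ for its two fan-neighbours $w_\pm$. I would prove this by listing the right half-plane rays in counterclockwise order $w_0 = (0,-1), w_1, \dots, w_m, w_{m+1}=(0,1)$, recording first coordinates $p_j$ (so $p_0 = p_{m+1}=0$ and $p_j > 0$ for interior $j$), and taking $j^\ast$ to be the \emph{smallest} index at which $p_j$ is maximal. Reading off the first coordinate of the relation above gives $-b_{j^\ast}\,p_{j^\ast} = p_{j^\ast-1}+p_{j^\ast+1}\geq 0$, so $b_{j^\ast}\leq 0$, and $b_{j^\ast}=0$ would force $m=1$; hence $b_{j^\ast}<0$. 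If $p_{j^\ast+1}<p_{j^\ast}$ as well, then $p_{j^\ast-1}+p_{j^\ast+1}\leq 2p_{j^\ast}-2 < 2p_{j^\ast}$ gives $b_{j^\ast}=-1$. The remaining case $p_{j^\ast+1}=p_{j^\ast}$ is the genuine subtlety (a chain of $(-2)$-curves): there $\det(w_{j^\ast},w_{j^\ast+1})=1$ with equal first coordinates forces the maximal value to be $1$, so every interior ray is of the form $(1,q_j)$ with the $q_j$ increasing by $1$, and then $w_1 = (0,-1) + w_2$ shows $w_1$ is itself a $(-1)$-curve. This plateau case is the step I expect to be the main obstacle.

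Finally I would contract such a ray. This is a corner blow-down, since blowing up the node between $w_-$ and $w_+$ reintroduces exactly the ray $w_- + w_+ = w$; the contraction keeps the surface smooth, complete and toric, preserves $\pm v$ and the fibration $f$, and lowers $n$ by one while leaving at least one interior ray in the affected half-plane. By induction on $n$, alternately applying the claim to each half-plane, I reduce both half-planes to a single interior ray, leaving exactly the four rays $(0,1),(0,-1),(1,s),(-1,t)$. A shear $\begin{pmatrix} 1 & 0 \\ -s & 1\end{pmatrix}$, which fixes the imaginary axis, then normalises the right ray to $(1,0)$ and the left ray to $(-1,a)$, exhibiting the base as the standard fan of $\bF_a$ with $\pm v$ as its two imaginary-axis (section) rays. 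Reversing the sequence of contractions expresses $(\bar{Y},\D)$ as iterated corner blow-ups of this $\bF_a$, which is the assertion of the proposition.
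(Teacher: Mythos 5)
Your proof is correct, and all the delicate points check out: the strong-convexity argument forcing an interior ray in each open half-plane, the sign bookkeeping in $p_{j^\ast-1}+p_{j^\ast+1}=-b_{j^\ast}p_{j^\ast}$, the exclusion of $b_{j^\ast}=0$ when $m\geq 2$, and the plateau case, where $\det(w_{j^\ast},w_{j^\ast+1})=p_{j^\ast}(q_{j^\ast+1}-q_{j^\ast})=1$ does force the maximal first coordinate to be $1$ and hence exhibits $w_1=w_0+w_2$ as a contractible ray. The route is genuinely different from the paper's, though, which disposes of the proposition in three lines: it cites Fulton's exercise (Section 2.5, part (b) of the first exercise) to assert that one can blow down rays \emph{without touching} $\bR_{\geq 0}v$ and $\bR_{\geq 0}(-v)$ until at most six rays remain, observes that any such configuration with two opposite rays arises from at most two successive corner blow-ups on $\bF_a$, and finishes with an unwritten short case analysis. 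Your argument replaces the citation with a self-contained combinatorial lemma (the smallest-index-maximum trick on first coordinates), which is a real gain: Fulton's exercise only guarantees \emph{some} ray satisfying $w_{i-1}+w_{i+1}=w_i$, and the paper's claim that this ray can always be chosen away from $\pm v$ is exactly the point your plateau case makes rigorous -- the chain of $(-2)$-rays is where a naive appeal to the exercise could stall. You also push the induction down to four rays rather than six, so the case analysis collapses to a single explicit shear $\begin{pmatrix} 1 & 0 \\ -s & 1\end{pmatrix}$ normalising $(1,s),(-1,t)$ to the standard $\bF_{s+t}$ fan (with $\pm v$ as the section rays, and negative $a$ allowed as the paper's conventions permit). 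What the paper's version buys is brevity via a standard reference; what yours buys is a complete, checkable proof whose only input is the smooth-fan relation, and a sharper endpoint of the blow-down process.
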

For notational convenience, we allow $a=0$ with the usual $\bF_0 = \bP^1 \times \bP^1$, and also $a$ negative (of course $\bF_{-a} \cong \bF_a$). 

\begin{proof}
Starting with \cite[Section 2.5, Part (b) of first exercise]{Fulton}, it's immediate that we can blow down rays without touching $\bR_{\geq 0} v$ or $\bR_{\geq 0} (-v)$ until we get down to (at most) six rays, two of which are opposite each other. On the other hand, any such configuration must result from performing two successive  corner blow ups starting with $\bF_a$ (blow ups of $\bP^2$ are subsumed into the $\bF_1$ case). The result then follows from a short case analysis. 
\end{proof}

\begin{proof} of Proposition \ref{prop:elem_trans_invariance}. 
Let's start with the case of $\{ (\bF_a, \D) \leftarrow (\tilde{Y}, \tilde{D}) \}$. Say we have picked a cyclic ordering of the components of $\D$ so that their self-intersections are $(0,a,0,-a)$; let's first do that case with $m_2=1$ and the other $m_i$  zero. The associated abstract Weinstein Lefschetz fibration is $\{ \Sigma, (W_2, V_0, \ldots, V_3) \}$, where $V_0, \ldots, V_3$ are  given by, respectively:
\begin{center}
\begin{tabular}{c c c c}
$\ell( 0,    $ & 0 ,    & 0, & 0 ) \\
$\ell( 0,  $   & 1 ,    & 0, & 1 ) \\
$\ell( 1,$  & $a+1$, & 1, & 1 ) \\
$\ell( 1, $ & $a+2$, & 1, & 2 )
\end{tabular}
\end{center}
In particular, notice that $\tau^{-1}_{W_2} V_2 = \ell( 1,  a,  1,  1 )$, and  $\tau^{-1}_{W_2} V_3 = \ell( 1,  a+1,  1,  2 )$; further, $\tau_{V_0} \tau_{V_1} W_2 = W_4$. Apply right mutations to $W_2$ to take it to the end of the list of vanishing cycles (by Proposition \ref{prop:monodromy} this fixes $W_2$); a pair of right mutations followed by a pair a left mutations then give the abstract Weinstein Lefschetz fibration  $\{ \Sigma, (\tau_{V_0} \tau_{V_1} W_2, V_0, V_1, \tau^{-1}_{W_2} V_2 , \tau^{-1}_{W_2} V_3) \}$, which is precisely the abstract Weinstein Lefschetz fibration associated to $\bF_{a-1}$ with $m_4=1$ and the other $m_i$ zero, i.e.~the elementary transformation of our initial configuration (using our previous notation, $\bF_{a-1} = \bF_a^\n$, etc). 

Next (this is still part of the base case), look at the case $\{ (\bP_a, \D) \leftarrow (\tilde{Y}, \tilde{D}) \}$, where $\tilde{Y}$ is given by a multiple interior blow-ups, with at least one on the self-intersection $a$ component, used to perform the elementary transformation. As $\tau_{W_j} W_i = W_i$ for any meridiens $W_i, W_j$, the above sequence of Hurwitz moves readily generalises to this case, by simply adding in some `trivial' Hurwitz moves of meridiens over each other. 

We now move to the inductive step. Assume we start with the following:
\begin{itemize}
\item 
a toric  pair $(\bar{Y}, \D)$, with the usual notation: $\D = \D_1 + \ldots + \D_k$; $n_i = \D_i \cdot \D_i$, $m_i$ the number of interior blow ups on $D_i$ to get $(\tilde{Y}, \tilde{D})$;
 the associated abstract Weinstein Lefschetz fibration $\{ \Sigma, (\{W_{i,l} \}_{i=1, \ldots, k; l=1, \ldots, m_i}, V_0, \ldots, V_k ) \}$; and we are in the set-up for an elementary transformation: if $\bR_{\geq 0} v_j$ is the ray corresponding to $\D_j$ in the fan of $(\bar{Y}, \bar{D})$, assume that $v_j = -v_k$, and that $m_j > 0$; 

\item the elementary transformation of the above: in the notation of Definition \ref{def:elem_trans}, $(\bar{Y}^\n, \D^\n)$, with $\D^\n = \D_1^\n + \ldots + \D_k^\n$; $n_j^\n = n_j-1,  n_k^\n = n_k +1$, and  $n_i^\n = n_i$ otherwise; and similarly with the $m_i^\n$; say the associated abstract  Lefschetz fibration is $\{ \Sigma, (\{W_{i,l} \}_{i=1, \ldots, k; \, l=1, \ldots, m_i^\n}, V_0^\n, \ldots, V_k^\n ) \}$;

\item assume that $\tau_{V_0} \ldots \tau_{V_{j-1}} W_j = W_k$, and that 
\begin{multline*}
\{ \Sigma, (\{ W_{i,l} \}_{i=1, \ldots, k; \, l=1, \ldots, m_i^n}, V_0^\n, \ldots, V_k^\n ) \}  \\ 
=
\{ \Sigma, (\{ W_{i,l}\}_{i=1, \ldots, k;\, l=1, \ldots, m_i} \backslash W_{j, m_j}, \tau_{V_0} \ldots \tau_{V_{j-1}} W_{j, m_j} , V_0, \ldots, V_{j-1}, \tau^{-1}_{W_{j,m_j}} V_j,  \ldots,  \tau^{-1}_{W_{j, m_j}} V_k) \}
\end{multline*}
In words, the second abstract Lefschetz fibration is obtained from the first by mutating $W_{j,m_j}$ to the end of the list of vanishing cycles (which leaves $W_{j, m_j}$ unchanged); mutating $V_k, \ldots, V_{j}$ over it to get $V_k^\n, \ldots, V_{j}^\n$; and then mutating it over $V_{j-1}$, then $V_{j-1}$, \ldots, $V_0$ to get a copy of $W_k$. (This description ignores trivial Hurwitz moves of meridiens over each other.) Note that $V^\n_l = V_l$ for $l=0, \ldots, j-1$ is immediate. 
\end{itemize}

We need to check that the analogous identification of abstract Lefschetz fibrations holds when we perform a corner blow-up, say at the corner between $\D_{i}$ and $\D_{i+1}$. 

 Consider the stabilised abstract Weinstein Lefschetz fibration of Proposition \ref{prop:stabilisation}; perform the Hurwitz moves of Remark \ref{rmk:mutations_for_stabilisation} to get the collection of vanishing cycles 
\begin{multline*}
\left( V_0^s, \,\, V_1^s,\, \,\ldots,  V_{j}^s, \ldots, \,\, V_{i-1}^s, \,\, \tau_{\bar{S}_E}^{-1}{V}_{i}^s,  \,\, \tau_{ \left( \tau_{\bar{S}_E}^{-1}{V}_{i}^s \right) }  \bar{S}_E, \,\,  \tau_{\bar{S}_E}^{-1}{V}_{i+i}^s, \,\, \ldots, \,\, \tau_{\bar{S}_E}^{-1}{V}_k^s \right) = \\
\left( V_0', \ldots, V_{i}', V_{E, i}, V_{i+1}', \ldots,  V_{k-1}' \right)
\end{multline*}
where we are using the same notation as before. 

Let $\left( \bar{Y}^{\n'}, \D^{\n'} \right)$ be the blow up of $(\bar{Y}^\n, \D^\n)$ at the intersection point of $\D^\n_i$ and $\D^\n_{i+1}$, taking indices mod $k$, with $\D^{\n'} = \D_1^{\n'} +\ldots + \D_i^{\n'} + E^\n +\D_{i+1}^{\n'} + \ldots, \D_k^{\n'}$. 
Let $\left( V^{\n'}_0, \ldots, V^{\n'}_i, V^\n_{E,i},  V^{\n'}_{i+1}, \ldots, V^{\n'}_{k-1} \right) $
be the associated collection of vanishing cycles. 

Inspecting coefficients for longitudes, we have $V'_l = V^{\n '}_l$ for $l=0, \ldots, j-1$ and $\tau^{-1}_{W_j} V'_l = V^{\n '}_l$ for $l=j, \ldots, k-1$.

\underline{Case 1:}  $0 \leq i < j$. We have $V_{E,i} = V_{E,i}^\n$. We need to calculate the effect of Dehn twisting $W_j$ over $V_{j-1}', \ldots,  V'_{i+1}, V_{E,i},  V'_i, \ldots,  V'_0$. 
Undoing Hurwitz moves, this is the same as Dehn twisting $W_j$ over $S_E, V_{j-1}^s, \ldots,  V^s_0$. As $S_E$ and $W_j$ are disjoint, the result of this sequence of mutations is simply $W_k$, by the induction hypothesis. 

This means that starting with the collection $(V'_0, \ldots, V'_i, V_{E,i}, V'_{i+1}, \ldots, V_j', \ldots, V_{k-1}', W_j)$ (we ignore the other meridiens), mutating $V'_l$ over $W_j$ for $l=k-1, \ldots, j$,  and then mutating $W_j$ over $V_{j-1}'$, then $V_{j-2}'$ \ldots,  $V'_{i+1}, V_{E,i},  V'_i, \ldots,  V'_0$, precisely gives $\left( W_k, V^{\n'}_0, \ldots, V^{\n'}_i, V^\n_{E,i},  V^{\n'}_{i+1}, \ldots, V^{\n'}_{k-1} \right) $, as required.

\underline{Case 2:} $j \leq i < k$. We now have instead that $\tau^{-1}_{W_j} V_{E,i} = V^{\n}_{E,i}$, as required, by inspecting longitude coefficients. Finally, observe that applying the induction hypothesis gives
$$
\tau_{V_0}' \ldots \tau_{V_{j-1}}' W_j = \tau_{V_0}^s \ldots \tau_{V_{j-1}}^s W_j = W_k
$$
which completes the inductive step for this case, and the proof.
\end{proof}

\begin{remark}
Alternatively, one could follow the steps in the proof above to show that the two full exceptional collections of line bundles used to define the Lefschetz fibrations are mutation equivalent, and then apply Lemma \ref{lem:mutations_mirror_mutations}. 
\end{remark}

\subsection{Lefschetz fibration for $(Y,D)$}\label{sec:non_toric_fibrations} 

Suppose $(Y,D)$ is in $\mathcal{T} \backslash \tilde{\mathcal{T}}$. We can associate a Lefschetz fibration to it  via Definition \ref{def:construction_general} combined with Corollary \ref{cor:gnl_exc_coll_lines}, by using a minimal model; we want to relate this to the more explicit fibrations in the previous section. Let's first classify pairs in $\mathcal{T} \backslash \tilde{\mathcal{T}}$ in terms of their toric models.

\subsubsection{Classification of non-toric blow-downs}

Start with $(Y,D) \in \mathcal{T} \backslash \tilde{\mathcal{T}}$ (see Definition \ref{def:mathcalT}), and a toric model for it, with $(\tilde{Y}, \tilde{D})$, $(\bar{Y}, \bar{D})$ and $n_i, m_i$ as before.
The blow-down sequence from $(\tilde{Y}, \tilde{D})$ to $(Y,D)$ must start with a $\tilde{D_i}$ such that $\tilde{D_i} \cdot \tilde{D_i} = n_i - m_i = -1$. We're interested in the case where  the log CY pair given by blowing down $\tilde{D}_i$ is no longer in $\tilde{\mathcal{T}}$. We take the convention that we do all of the blow downs that keep the pair in $\tilde{\mathcal{T}}$ first (there is a choice if there are two or more $(-1)$ components at any point and both are eventually blown down). 

\begin{proposition}\label{prop:blow_downs}
Suppose $(\tilde{Y}, \tilde{D})$ is in $\tilde{\mathcal{T}}$, that $\tilde{D}_1 \cdot \tilde{D}_1 = -1$, and that the log CY pair obtained by blowing down $\tilde{D}_1$ is \emph{not} in $\tilde{\mathcal{T}}$.  Up to elementary transformations, there is a finite list of possibilities, including for subsequent blow-downs, given by the toric fans of Figure \ref{fig:blow_downs}. In this figure we are giving the `full' possible sequence of blow-downs; it is  possible to stop earlier, in which case the $m_i$ for the components which are not blown down are allowed to be arbitrary. 
\begin{figure}[htb]
\begin{center}
\includegraphics[scale=0.45]{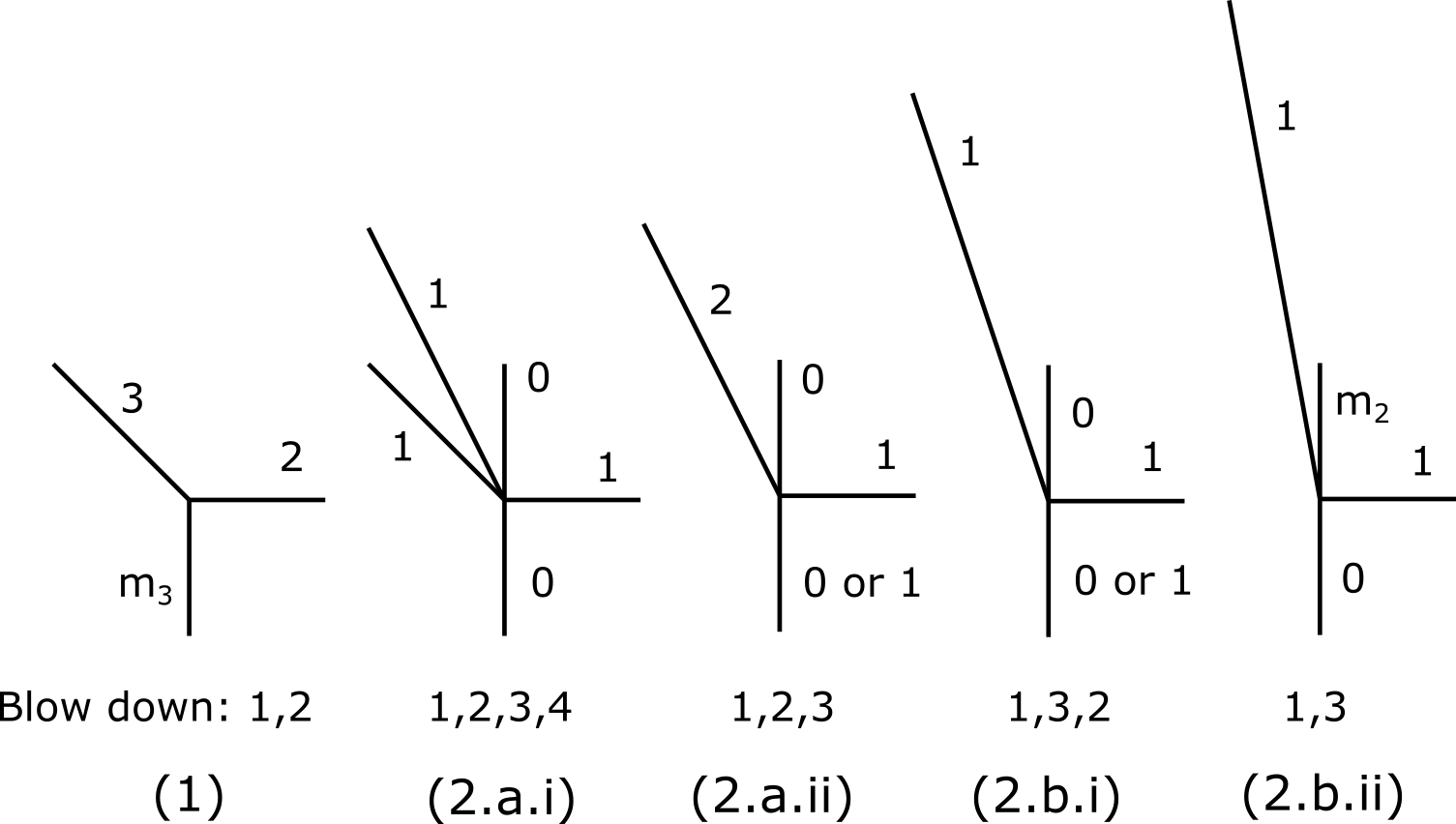}
\caption{Fans for $\bP^2$, a blow-up of $\bF_1$ or $\bF_2$, $\bF_2$, $\bF_3$ and $\bF_a$, $a \neq 3$. The components are ordered cyclically counterclockwise, starting with the ray in the positive real axis, and the blow down sequences refer to component labels. The integers labelling each ray are the $m_i$. The cases are numbered for reference in the proof.}
\label{fig:blow_downs}
\end{center}
\end{figure}

\end{proposition}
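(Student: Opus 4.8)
The plan is to reduce the statement to a purely combinatorial classification of the fan of $(\bar{Y}, \bar{D})$ together with the multiplicities $m_i$, and then to run the minimal model program for toric surfaces while keeping track of the component $\tilde{D}_1$ that is to be contracted. The conceptual heart is a single contractibility criterion. First I would record the elementary observation that if $m_1 = 0$ then $\bar{D}_1^2 = \tilde{D}_1^2 = -1$, so contracting $\tilde{D}_1$ is compatible with the toric contraction of $\bar{D}_1$ in $\bar{Y}$ (the interior blow-ups all lie on other components), and the resulting pair is again an interior blow-up of a toric surface, hence in $\tilde{\mathcal{T}}$; this contradicts the hypothesis, so $m_1 \geq 1$. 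Next, suppose the fan of $\bar{Y}$ contains the ray $-v_1$ opposite to the ray $v_1$ of $\bar{D}_1$. Then $m_1 \geq 1$ lets us perform an elementary transformation (Definition \ref{def:elem_trans}) along this pair of opposite rays; by Lemma \ref{lem:change_n_m} this sends $m_1 \mapsto m_1 - 1$ and $n_1 \mapsto n_1 - 1$, so $\tilde{D}_1^2 = n_1 - m_1 = -1$ is preserved while $m_1$ strictly drops, and the opposite ray $-v_1$ persists. Iterating brings us to the case $m_1 = 0$, which as above stays in $\tilde{\mathcal{T}}$. The upshot is a clean criterion: \emph{up to elementary transformations, blowing down $\tilde{D}_1$ leaves $\tilde{\mathcal{T}}$ only when the ray $v_1$ admits no opposite ray in the fan and $m_1 \geq 1$.}

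With this criterion in hand I would run MMP for smooth toric surfaces to bring $(\bar{Y}, \bar{D})$ down to a minimal model $\bP^2$ or $\bF_a$, contracting toric $(-1)$-rays other than $\bar{D}_1$ and its immediate neighbours and using elementary transformations to normalise, exactly as in the proof of Proposition \ref{prop:toric_moves}. The point is that the no-opposite-ray condition on $v_1$ is stable under these reductions, and that components away from the blow-down locus play no role, so their $m_i$ remain arbitrary, as the figure records. In the minimal cases the criterion becomes explicit: for $\bP^2$ (rays $(1,0),(0,1),(-1,-1)$) no ray has an opposite, and $\tilde{D}_1^2 = -1$ forces $m_1 = 2$; for $\bF_a$ with $a \neq 0$ only the two section rays are opposite, so the contracted component must be a fibre, forcing $m_1 = 1$. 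One then checks that contracting genuinely exits $\tilde{\mathcal{T}}$ by means of the charge $Q(Y,D) := \sum_i D_i^2 + 3k$, which equals $12$ for toric pairs, drops by one under each interior blow-up, and is unchanged under corner (de)blow-ups; thus $m = 12 - Q$ is an intrinsic count of the interior blow-ups, and after contracting any remaining interior $(-1)$-curves the surface becomes a minimal rational surface whose boundary cycle has too few components to be a toric boundary, so $(Y',D')$ cannot lie in $\tilde{\mathcal{T}}$.

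Finally I would assemble the full list, including the intermediate fans of $\bF_2$, $\bF_3$ and the blow-ups of $\bF_1$ and $\bF_2$, by following the \emph{subsequent} blow-downs: after the first exit the pair typically still carries boundary $(-1)$-curves, and continuing the contraction sequence one step at a time produces the chains drawn in Figure \ref{fig:blow_downs}. Each chain is short because the exit criterion constrains the local configuration near $v_1$ and its two neighbours to finitely many types, and the special values $a = 2, 3$ appear precisely because the self-intersection bookkeeping for the negative section of $\bF_a$ behaves differently in those small cases. I expect the main obstacle to be exactly this last bookkeeping: verifying that the enumeration of local configurations near the contracted rays is exhaustive, that the special small-$a$ cases are correctly separated from the generic $\bF_a$ ($a \neq 3$) case, and that each listed fan both satisfies the no-opposite-ray criterion and leads, through its full blow-down sequence, to a pair genuinely outside $\tilde{\mathcal{T}}$. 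The conceptual criterion of the first paragraph is straightforward; packaging the resulting cases into the finite list of the figure is where the real work lies.
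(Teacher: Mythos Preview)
Your approach is essentially the same as the paper's. Both proofs establish the identical criterion---up to elementary transformations, the contraction leaves $\tilde{\mathcal{T}}$ precisely when the fan has no ray opposite $v_1$ and $m_1 \geq 1$---and both then reduce via toric MMP to $\bP^2$ or (a blow-up of) $\bF_a$ before running the case analysis for subsequent blow-downs. Your iteration of elementary transformations via Lemma~\ref{lem:change_n_m} to drive $m_1$ to zero while preserving $\tilde{D}_1^2=-1$ is exactly what the paper does, stated slightly more explicitly.

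Two minor remarks. First, your charge $Q(Y,D)=\sum D_i^2+3k$ is a correct invariant (it is essentially Noether's formula rewritten), but its role in your argument is unclear: the proposition \emph{assumes} the blown-down pair is not in $\tilde{\mathcal{T}}$, so there is nothing to verify there, and if you mean to check that the listed fans genuinely satisfy the hypothesis, the sentence about ``too few components to be a toric boundary'' needs more justification. The paper simply omits this direction. Second, you are honest that the exhaustive bookkeeping for the subsequent blow-down chains (cases (2.a.i), (2.a.ii), (2.b.i), (2.b.ii) in the paper) is where the real work lies and that you have not carried it out; the paper's proof is indeed mostly this case-by-case reduction, repeatedly using elementary transformations to normalise self-intersections and rule out configurations. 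Your plan would complete to a correct proof once that casework is filled in.
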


\begin{proof}
If $m_i = 0$, then $n_i = -1$, and $\D_i$ itself can be blown down, so the log CY pair given by blowing down $\tilde{D}_i$ is still in $\tilde{\mathcal{T}}$. This means we can assume $m_i > 0$, and so $n_i \geq 0$. Let $\bR v_i$ be the ray corresponding to $\D_i$ in the fan for $(\bar{Y}, \bar{D})$.

 \emph{If} $\bR (- v_i)$ is also a ray for the fan, say corresponding to $\D_{i^\text{opp}}$, note that we must have $n_{i^\text{opp}} \leq 0$ (with equality only if $n_i=0$, in which case we have $\bP^1 \times \bP^1$). Perform elementary transformations to get $n_i^\n = -1, n_{i^\text{opp}}^\n = n_{i^\text{opp}} + m_i$, $m_i^\n = 0, m_{i^\text{opp}}^\n = m_{i^\text{opp}} + m_i$; we now see we can blow down $\tilde{D_i}$ and remain in $\tilde{\mathcal{T}}$. 

This leaves the case where $\bR (- v_i)$ is \text{not} also a ray for the fan. By MMP for toric surfaces, there are two possible cases:
\begin{itemize}
\item[(1)] $\bar{Y} = \bP^2$, with $\D_i$ any of the components of the toric divisor;

\item[(2)] $\bar{Y}$ is a blow-up of $\bF_a$, and $\D_i$ is a self-intersection zero component (such that the blow-up happens away from it, meaning in the two quadrants of the fan of $\bF_a$ which are not adjacent to the ray for $\D_i$).
\end{itemize}

\emph{Case (1):} For concretness, say $\D_i$ is $\D_1$; we must have $m_1=2$. Once $\tilde{D}_1$ has been blown down, the only possibility for a further blow down is if $m_2=3$ (where the cyclic direction for the indexing is chosen without loss of generality). This would give a log CY surface with anticanonical divisor a nodel curve, and, given our `maximal boundary' assumption, no further blow-downs are possible.

\emph{Case (2):} Choose indices such that $\tilde{D}_1$ is being blown down, and $\D_2$ is the image of the self-intersection $- a \leq 0$ component of $\bF_a$ (remember it may have been blown up to get to $\tilde{Y}$).  $\D_k$ is the image of the self-intersection $a$ one, corresponding to an opposite ray. We must have $m_1 = 1$. Also, we can rule out $a=0$ as the opposite of the ray for $\D_1$ would also be in the fan of $\bar{Y}$ (this is a blow-up of $\bP^1 \times \bP^1$). Once  $\tilde{D}_1$ has been blown down, there are two potential options  for further blow-downs: 

\begin{itemize}
\item[(2.a)] We blow down a component which was originally adjacent to $\tilde{D}_1$. 

\item[(2.b)] We blow down a component which wasn't originally adjacent to $\tilde{D}_1$.

\end{itemize}

\emph{Case (2.a)} 
 We blow down a component which was originally adjacent to $\tilde{D}_1$, i.e.~the image of either $\tilde{D}_2$ or $\tilde{D}_k$; this must have originally had self-intersection $-2$. Assume first this is $\tilde{D}_k$; note $n_k \leq a$, $\tilde{D}_k \cdot \tilde{D}_k = n_k-m_k = -2$. If $n_k \leq 0$, $\bar{Y}$ is also a blow-up of $\bP^1 \times \bP^1$, which we've already ruled out. Thus $n_k > 0$ and $m_k \geq 2$, but then we can do an elementary transformation to get to $n_k^\n = 0$ and, again, a blow up of $\bP^1 \times \bP^1$. 
This means the second blow down must have been the image of $\tilde{D}_2$, and  $\tilde{D}_2 \cdot \tilde{D}_2 = n_2-m_2 = -2$; as $n_2 \leq -a$, this only leaves the cases $a=1$ and $a=2$. 
The former is subsumed by the latter:  if $n_2=-1, m_2=1$, an  elementary transformation gives $n_2^\n = -2, m_2^\n = 0$, which falls under the $a=2$ case; and  if $n_2=-2$, $\bar{Y}$ is a toric blow up of $\bF_1$ which could equally well be viewed as a toric blow up of $\bF_2$. Finally, if $a=2$, the second curve to be blown down must be the image of $\tilde{D}_2$ with $n_2 = -2, m_2=0$. For third (or more) blow-ups, the same flavour of arguments now quickly reduce possibilities down to cases (2.a.i) and (2.a.ii). 

\emph{Case (2.b)}
We blow down a component which wasn't originally adjacent to $\tilde{D}_1$. 
The only possibility left is that $\bar{Y} = \bF_a$ (not blown up), that $m_3=1$, and that we're blowing down $\tilde{D}_3$. How about the possibilities for a third blow-down? Using elementary transformations as before, we can reduce this to $a=3$ and blowing down the image of $\tilde{D}_2$ (with $m_2=m_4=0$), which we call case (2.b.i); the general case, with $a \neq 3$, can be reduced down to $m_4=0$ and $m_2$ arbitrary, say (2.b.ii). 
\end{proof}

One can quickly calculate that this classification relates to minimal pairs $(Y_\m, D_\m)$ as follows.

\begin{proposition} \label{prop:non_toric_minimal_models}
For each of the cases of Proposition \ref{prop:blow_downs}, the minimal pairs $(Y_\m, D_\m)$ are as follows. We refer to \cite[Theorem 2.4]{Friedman} for a classification of the possibilities for $D_\m$; we describe these via the self-intersection of the irreducible components. The labelling of blow down sequences follows Proposition \ref{prop:blow_downs}; the $m_i$ are listed as $m_1, m_2, \ldots$

\begin{center}
\begin{tabular}{c | c | c | c | c }
Case & Blow-down seq. & Non-zero $m_i$ & $Y_\m$ & $D_\m$ \\
(1) & $1$ & $2$ & $\bF_0$ or $\bF_2$ & $(2,2) $\\
(1) &  $1,2$ & $2 , 3, \star$ & $\bP^2$ & $(9)$ \\
(2.a.i) & $1, 2$ & $1,0, \star$ & $\bP^2$ & $(1,4)$ \\
(2.a.i) & $1, 2,3$ & $1,0, 1$ & $\bP^2$ & $(1,4)$ \\
(2.a.i) & $1, 2, 3,4$ & $1,0,1,1$ & $\bF_0$ or $\bF_2$ & $(8)$ \\
(2.a.ii) & $1, 2$ & $1,0$ & $\bP^2$  & $(1,4)$ \\
(2.a.ii) & $1, 2, 3$ & $1,0,2$ & $\bF_0$ or $\bF_2$ & $(8)$ \\
(2.b.i) & $1$ & $1$ & $\bF_{2}$ & $(-2,4,0)$ \\
(2.b.i) & $1,3,2$ & $1,0,1$ & $\bP^2$ & $(9)$ \\
(2.b.ii) & $1$ & $1$ & $\bF_{a-1}$ & $(-a+1,a+1,0)$\\
(2.b.ii) & $1,3$ & $1,0,1$ $[\star$ for $a=1]$ & $\bF_{a-2}$ $[\bP^2]$ & $(-a+2,a+2)$ $[(1,4)]$
\end{tabular}
\end{center}
The choice between $\bF_0$ and $\bF_2$ depends on the position of the blown up points: if $D_\m$ denotes either a $(2,2)$ or a $(8)$ anticanonical divisor, note that $(\bF_2, D_\m)$ has the distinguished complex structure and $(\bF_0, D_\m)$ does not. 
$(\star)$ records that to get to the minimal model we need to blow down further $(-1)$ curves after the corner blow downs; in particular, aside from the $(\star)$ cases, the pair $(Y,D)$ is obtained from $(Y_\m, D_\m)$ via interior blow ups of components of $D_\m$. 
\end{proposition}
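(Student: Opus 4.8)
The plan is to run the minimal model program by hand in each of the finitely many cases produced by Proposition~\ref{prop:blow_downs}, tracking the anticanonical boundary at every step. For a fixed case I would begin from the toric data recorded in Figure~\ref{fig:blow_downs}: the fan of $(\bar Y,\D)$ fixes the self-intersections $n_i=\D_i^2$, and the interior blow-ups give the strict transforms $\tilde D_i^2=n_i-m_i$ on $(\tilde Y,\tilde D)$. The listed blow-down sequence is then a sequence of \emph{corner} contractions, each contracting a boundary component of self-intersection $-1$; I would follow its effect on the surviving components through the contraction formula $(\pi_*C)^2=C^2+(C\cdot E)^2$ for a $(-1)$-curve $E$, which raises the self-intersection of a neighbour by one in a simple corner (and by four when only a two-cycle remains). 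This produces $(Y,D)$, after which I would keep contracting $(-1)$-curves --- now necessarily \emph{interior} ones, meeting $D$ transversally in a single smooth point --- until landing on a minimal rational surface $Y_\m\in\{\bP^2,\bF_a\}$.

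To read off the answer I would track two invariants alongside the explicit contractions. The number of components of $D$ is fixed once the corner contractions are done, since interior blow-downs preserve it; hence $k_\m$ equals the number of rays of the fan minus the length of the blow-down sequence. The self-intersection $K^2$ is equally easy to follow: it drops by one at each interior blow-up and rises by one at each contraction, and at the end it pins down $Y_\m$, as $K^2=9$ characterises $\bP^2$ and $K^2=8$ the Hirzebruch surfaces; for a boundary cycle one moreover has $\sum_i D_{\m,i}^2=K_{Y_\m}^2-2k_\m$ when $k_\m\ge 2$ (and the single value equals $K_{Y_\m}^2$ when $k_\m=1$), a convenient check against the computed self-intersections. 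Away from the $\star$ cases the interior exceptional curves survive as $(-1)$-curves meeting $D$ once, so contracting them reaches the minimal model and exhibits $(Y,D)$ as an interior blow-up of $(Y_\m,D_\m)$; adding the interior blow-ups back, each lowering one component's self-intersection by one, recovers the self-intersection sequence of $D_\m$, which I then match with \cite[Theorem~2.4]{Friedman}, reading the index $a$ of $\bF_a$ off the negative section. For instance, in case (1) with sequence $1$ and $m_1=2$, contracting $\tilde D_1$ (self-intersection $1-2=-1$) raises both neighbours from $1$ to $2$, giving the cycle $(2,2)$ on a surface with $K^2=8$, as stated.

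The two points needing genuine care are the $\bF_0$ versus $\bF_2$ dichotomy and the $\star$ cases, the latter being the main obstacle. Whenever the computation lands on $K^2=8$ with $D_\m$ of type $(2,2)$ or $(8)$, both $\bF_0$ and $\bF_2$ occur, the outcome depending on the position of the blown-up points; here I would invoke the explicit description of the distinguished complex structure from Section~\ref{sec:complex_structure} (all interior blow-ups at the point $-1$) to see that the distinguished pair gives $\bF_2$ and a general pair gives $\bF_0$, matching the final clause of the statement. The $\star$ cases are harder precisely because there $(Y,D)$ is \emph{not} an interior blow-up of $(Y_\m,D_\m)$: after the corner contractions the original interior exceptional curves have degenerated to $0$-curves rather than $(-1)$-curves, so the further contractions needed to reach the minimal model are by $(-1)$-curves invisible in the toric/interior picture --- strict transforms of auxiliary rational curves through the blown-up points. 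For these I would either locate the relevant $(-1)$-curves explicitly and check that contracting them yields the asserted $(Y_\m,D_\m)$, or argue more cheaply that $k_\m$ together with the relation $\sum_i D_{\m,i}^2=K_{Y_\m}^2-2k_\m$ and Friedman's list leaves only the stated possibility; verifying that one indeed reaches $\bP^2$ (so $K^2=9$) rather than some $\bF_a$ is the crux of these cases.
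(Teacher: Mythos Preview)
Your approach is exactly what the paper does: it states only that ``one can quickly calculate'' the table and omits the details, so your case-by-case MMP computation, tracking the self-intersections of the boundary components and $K^2$ through the corner contractions and then the interior ones, is the intended argument. Your identification of the $\bF_0$/$\bF_2$ ambiguity and of the $\star$ cases as the only places requiring genuine care is also correct, and your outline for handling them (explicitly locating the extra $(-1)$-curves, since the invariant argument alone does not distinguish $\bP^2$ from $\bF_a$) is sound.
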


Whenever $(Y,D) , (Y', D') \in \mathcal{T} \backslash \tilde{\mathcal{T}}$ above are related by a deformation of complex structure, we see that the Lefschetz fibration that we assign to them (combining Definition \ref{def:construction_general} and Corollary \ref{cor:gnl_exc_coll_lines}) is the same. (This is immediate apart from the `$\bF_0$ or $\bP_2$' cases, which only take one Hurwitz move.)

\begin{proposition}
Fix $(Y,D) \in \mathcal{T} \backslash \tilde{\mathcal{T}}$. Let $(Y_{\m}, D_\m)$ be a minimal model for it, and $(\tilde{Y}, \tilde{D})$ a toric model for it. Consider the following two exceptional collections of line bundles on $(\tilde{Y}, \tilde{D})$:

\begin{itemize}
\item the one from Corollary \ref{cor:full_exc_coll}, given by using the toric model;
\item the one given by using the minimal model: start with $\cO, \cO(1), \cO(2)$ if $Y_\m = \bP^2$, or $\cO, \cO(A), \cO(A+B), \cO(2A+B)$ if $Y_\m = \bF_a$, and iteratively apply \cite[Theorem 4.3]{Orlov}, using any sequence of blow-ups $(\tilde{Y}, \tilde{D}) \to (Y_\m, D_\m)$ which factors through $(Y,D)$. 
\end{itemize}
Then these two collections are mutation equivalent.
\end{proposition}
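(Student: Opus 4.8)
The plan is to reduce the statement to a single "well-definedness up to mutation" principle for the collections produced by Orlov's blow-up formula \cite[Theorem 4.3]{Orlov}. Recall that both collections under comparison are built by the same recipe: given a full exceptional collection of line bundles $(\cO_S, F_1, \dots, F_n)$ on a smooth rational surface $S$ beginning with $\cO_S$, and the blow-up $\pi\colon \tilde S \to S$ of a point with exceptional curve $E$, Orlov's semiorthogonal decomposition $\langle \cO_E(-1), L\pi^* D^b(\Coh S)\rangle$ together with the right mutation of $\cO_E(-1)$ past $\cO = L\pi^*\cO_S$ (as in the proof of Corollary~\ref{cor:gnl_exc_coll_lines}) yields the line-bundle collection $(\cO, \cO(E), \pi^* F_1, \dots, \pi^* F_n)$ on $\tilde S$. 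The first key observation is that this recipe is \emph{functorial for mutations of the base}: since $L\pi^*$ is fully faithful with admissible image, a mutation of $(\cO_S, F_1, \dots, F_n)$ transports to a mutation of $(\cO, \pi^* F_1, \dots, \pi^* F_n)$ inside the subcategory $L\pi^* D^b(\Coh S)$, leaving $\cO_E(-1)$ undisturbed at the front; performing on both sides the final mutation sending $\cO_E(-1)$ to $\cO(E)$ then shows that mutation-equivalent base collections (both beginning with $\cO_S$) produce mutation-equivalent collections on $\tilde S$. I will record this as a lemma and use it repeatedly.

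Next I would identify both collections in the statement as instances of this recipe applied along a chain of blow-ups issuing from a minimal rational surface. By Proposition~\ref{prop:full_exc_coll} and Corollary~\ref{cor:full_exc_coll}, the toric-model collection is exactly the recipe applied along $\bar Y_\m \to \bar Y \to \tilde Y$, where $\bar Y_\m\in\{\bP^2,\bF_a\}$ is the minimal toric surface under $\bar Y$ (the proof of Proposition~\ref{prop:full_exc_coll} is itself an iterated application of the recipe for the toric blow-ups, and Corollary~\ref{cor:full_exc_coll} for the interior blow-ups). The minimal-model collection is, by definition, the recipe applied along $Y_\m \to \cdots \to \tilde Y$, starting from one of the standard base collections on $Y_\m\in\{\bP^2,\bF_a\}$. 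Crucially, the recipe only sees the surface and the exceptional classes of the blow-ups, not the choice of anticanonical boundary; so both are recipe-collections on the fixed surface $\tilde Y$ attached to two presentations of $\tilde Y$ as an iterated blow-up of a minimal rational surface.

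The heart of the proof is then a route-independence statement: any two such presentations produce mutation-equivalent recipe-collections. This splits into three ingredients. First, base-collection independence on a fixed minimal surface, which holds by \cite{Beilinson} for $\bP^2$ and by Remark~\ref{rmk:exceptional_lines_minimal} for $\bF_a$, and is promoted to the blow-up by the functoriality lemma. Second, order-independence: two blow-ups at distinct (not infinitely near) points may be interchanged, and I would check that the corresponding recipe-collections differ by a single mutation --- indeed the two exceptional divisors $E_1,E_2$ are then disjoint $(-1)$-curves, so $\Hom^\bullet(\cO(E_1),\cO(E_2)) = H^\bullet(\cO(E_2-E_1)) = 0$ in both directions by a Riemann--Roch computation, the pair is orthogonal, and its mutation is simply the transposition of $\cO(E_1)$ and $\cO(E_2)$. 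Since any two blow-down sequences refining the same (intrinsic) partial order of infinitely near centres are connected by transpositions of incomparable centres, order-independence follows. Third, independence of the minimal surface: two minimal models dominated by $\tilde Y$ are linked by a chain of elementary moves, namely the blow-up $\bF_1 = \mathrm{Bl}_{pt}\bP^2$ and the elementary transformations relating $\bF_a$ and $\bF_{a\pm1}$; at the level of line-bundle collections these are precisely the moves whose mutation-invariance is established (as the algebraic content behind the symplectic statements) in the corollary following Proposition~\ref{prop:stabilisation} and in Proposition~\ref{prop:elem_trans_invariance} together with its concluding remark.

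I expect the main obstacle to be this third ingredient, the change of minimal surface: one must realise the transition between the toric base $\bar Y_\m$ and the minimal model $Y_\m$ as a sequence of moves taking place "at the bottom" of the blow-up tower for $\tilde Y$, and verify that the bookkeeping of exceptional classes lines up so that the invariance results apply verbatim to the relevant line-bundle collections (the anticanonical boundaries being irrelevant). Combining the three ingredients gives route-independence, and hence the two collections of the statement --- being recipe-collections for two presentations of $\tilde Y$ --- are mutation equivalent. As a cross-check, and an alternative to the structural argument, one can instead run through the finite list of cases in Proposition~\ref{prop:non_toric_minimal_models}, writing down both collections explicitly and exhibiting the mutations directly; the structural route has the advantage of treating the $(\star)$ cases (where $(Y,D)$ is not merely an interior blow-up of $(Y_\m,D_\m)$) on the same footing as the rest.
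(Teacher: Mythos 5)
Your proposal takes a genuinely different route from the paper: the paper's proof simply reduces to the finite classification of Propositions \ref{prop:blow_downs} and \ref{prop:non_toric_minimal_models} and disposes of the resulting finite list by (omitted) case-by-case mutation computations, whereas you aim for a structural route-independence theorem for ``recipe collections'' built from Orlov's blow-up formula. Two of your three ingredients are sound: the functoriality lemma is correct (mutations are computed from $\Hom$-complexes and cones, which a fully faithful exact embedding $L\pi^*$ preserves, and since the recipe always reproduces a collection starting with $\cO$, no renormalising tensors are ever needed along the tower), and your order-independence step is correct as computed --- for incomparable centres the classes satisfy $(E_2-E_1)^2=-2$, $(E_2-E_1)\cdot K=0$, so $\chi=0$ and non-effectivity kills $H^0$ and $H^2$, making the pair completely orthogonal and the mutation a transposition; linear extensions of the poset of infinitely near centres are indeed connected by such transpositions.

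The genuine gap is your third ingredient, and it is not just the bookkeeping issue you flag. You propose to import the change-of-minimal-surface invariance from the corollary to Proposition \ref{prop:stabilisation} and from Proposition \ref{prop:elem_trans_invariance}, but those are statements about abstract Weinstein Lefschetz fibrations (Hurwitz moves and global fibre automorphisms of vanishing-cycle collections); the paper's corresponding B-side claims appear only as remarks (``one could follow the steps \ldots to show the two full exceptional collections of line bundles \ldots are mutation equivalent''), i.e.\ they are not established results you can cite. Moreover, there is no formal transfer from A-side Hurwitz equivalence to B-side mutation equivalence: the assignment $E \mapsto \ell(E\cdot D_1,\ldots,E\cdot D_k)$ remembers only intersection numbers with boundary components, and for $(\tilde{Y},\tilde{D})$ with the distinguished complex structure the entire subgroup $Q=\ker(\Pic \tilde{Y} \to \bZ^k)$ restricts trivially to $\tilde{D}$ --- that triviality is precisely the defining property of $\mathcal{T}_e$ --- so distinct line-bundle collections can have identical mirror vanishing-cycle data. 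The paper's transfer result, Lemma \ref{lem:mutations_mirror_mutations}, goes only in the direction B-side $\Rightarrow$ A-side. Consequently you must carry out the elementary-transformation computation (relating the recipe collections for $\bF_a$ and $\bF_{a\mp 1}$) and the $\bF_1/\bP^2$ move directly in $D^b\Coh$, mirroring the Hurwitz computations in the proof of Proposition \ref{prop:elem_trans_invariance}; and you must also actually prove, rather than assert, that any two presentations of $\tilde{Y}$ over minimal models are linked by such elementary moves compatibly with the blow-up tower (an analogue of Proposition \ref{prop:toric_moves}, whose Sarkisov-style proof is nontrivial). With those pieces supplied your argument would go through, and would indeed be more general than the paper's finite casework --- in particular handling the $(\star)$ cases uniformly --- but as written the heart of the proof is missing.
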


\begin{proof}
Propositions \ref{prop:blow_downs} and \ref{prop:non_toric_minimal_models} boil this down to a straightforward casework exercise, which we omit.
\end{proof}

We now get the following corollary of  Proposition \ref{prop:corner_blow_up} and Lemma \ref{lem:mutations_mirror_mutations} (note that without loss of generality we can take the distinguished complex structure).

\begin{corollary}\label{cor:non-toric_fibr_indep}
Suppose a log CY pair $(Y,D)$ is in $\mathcal{T} \backslash \tilde{\mathcal{T}}$. One can assign a Lefschetz fibration to it by starting with the fibration for a toric model, and destabilising it, using Proposition \ref{prop:corner_blow_up}; or one can start with the Lefschetz fibration for a minimal model $(Y_\m, D_\m)$ and follow the blow ups to $(Y,D)$ to modify it using Propositions \ref{prop:interior_blow_up} and \ref{prop:corner_blow_up}. The resulting Lefschetz fibrations are all equivalent up to Hurwitz moves and global fibre automorphisms. 
\end{corollary}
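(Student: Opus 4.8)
The plan is to compare the two constructions after lifting both of them to the toric model $(\tilde{Y},\tilde{D})$, where the fibration is controlled by a genuine full exceptional collection of line bundles and Lemma~\ref{lem:mutations_mirror_mutations} is available. By the parenthetical remark before the statement we may assume $(Y,D)\in\mathcal{T}_e$. Fix a toric model $(\bar{Y},\bar{D})\leftarrow(\tilde{Y},\tilde{D})\rightarrow(Y,D)$ and a minimal model $(Y_\m,D_\m)$. Recall that $(\tilde{Y},\tilde{D})\rightarrow(Y,D)$ is a sequence of corner blow-downs, so $(\tilde{Y},\tilde{D})$ is obtained from $(Y,D)$ by corner blow-ups; by Proposition~\ref{prop:corner_blow_up} each of these is a stabilisation of the associated abstract Weinstein Lefschetz fibration, along a handle $c_E$ carrying a vanishing cycle $S_E$ corresponding to $\cO_E(-1)$.

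First I would check that each construction, continued up to $(\tilde{Y},\tilde{D})$, produces the fibration attached by Definition~\ref{def:construction_general} to one of the two collections compared in the Proposition immediately preceding this Corollary. For the toric construction this is immediate: by definition it is the destabilisation of the fibration of Corollary~\ref{cor:full_exc_coll}, so re-stabilising along the same handles recovers that fibration. For the minimal-model construction, continuing the blow-ups from $(Y,D)$ to $(\tilde{Y},\tilde{D})$ and applying Propositions~\ref{prop:interior_blow_up} and~\ref{prop:corner_blow_up} (with the Hurwitz move of Remark~\ref{rmk:Hurwitz_meridien} for the interior blow-ups) realises the fibration attached to the collection obtained from $(Y_\m,D_\m)$ by iterated use of Orlov's blow-up formula, which is the second collection of that Proposition. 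Since those two collections on $(\tilde{Y},\tilde{D})$ are mutation equivalent, Lemma~\ref{lem:mutations_mirror_mutations} gives that the two fibrations for $(\tilde{Y},\tilde{D})$ agree up to Hurwitz moves and a global fibre automorphism by meridional Dehn twists.

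It then remains to descend this equivalence along the destabilisation $(\tilde{Y},\tilde{D})\rightarrow(Y,D)$: both fibrations for $(Y,D)$ are obtained by destabilising the two (now equivalent) fibrations for $(\tilde{Y},\tilde{D})$ along the same handles $c_E$. The global fibre automorphism is a product of meridional Dehn twists, which are disjoint from the $c_E$ (as already used in the stabilisation discussion of Section~\ref{sec:stabilisation}) and so survive the destabilisation; the one genuinely delicate point, and the main obstacle, is that the intertwining Hurwitz moves could in principle involve the cycles $S_E$ and thereby obstruct removal of the handles.

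To overcome this I would argue that the mutation sequence furnished by the preceding Proposition can be chosen to fix the exceptional objects $\cO_E(-1)$ attached to the corner blow-ups $(\tilde{Y},\tilde{D})\rightarrow(Y,D)$, i.e.\ to act only on the complementary part of each collection; this is exactly the kind of bookkeeping carried out in the finitely many cases of Propositions~\ref{prop:blow_downs} and~\ref{prop:non_toric_minimal_models}. Under the dictionary of Lemma~\ref{lem:mutations_mirror_mutations} and Proposition~\ref{prop:corner_blow_up}, $\cO_E(-1)$ corresponds to $S_E$, so the corresponding Hurwitz moves fix each $S_E$; destabilising along every $c_E$ on both sides then yields the two fibrations for $(Y,D)$, related by the residual Hurwitz moves and the meridional fibre automorphism, as required. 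At the level of total spaces the statement is automatic, since stabilisation, Hurwitz moves, and meridional fibre automorphisms all preserve the Weinstein deformation type of $M$; the argument above is needed only to match the abstract fibration data.
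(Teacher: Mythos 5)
Your proposal is correct and follows essentially the same route as the paper: the Corollary is stated there as an immediate consequence of the preceding Proposition (mutation equivalence of the toric-model collection and the minimal-model collection lifted along a blow-up sequence factoring through $(Y,D)$), Lemma~\ref{lem:mutations_mirror_mutations}, and Proposition~\ref{prop:corner_blow_up}, with the same reduction to the distinguished complex structure. Your final paragraph merely makes explicit the compatibility of the intertwining Hurwitz moves with destabilisation along the $S_E$, a point the paper leaves to the omitted casework behind that Proposition and to the explicit move sequences of Appendix~\ref{app:destabilisations}.
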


The sequence of destabilisations to go from the fibration associated to $(\tilde{Y}, \tilde{D})$ to the one associated to $(Y,D)$ may be of practical use should the reader have a particular example in mind. It can be found in Section \ref{app:destabilisations}.

\subsubsection{Degenerating minimal pairs and capping punctures}\label{sec:degenerations_capping}
As an aside, we note the following immediate corollary.

\begin{corollary}\label{cor:minimal_capping}
Suppose $(Y_\m, D_\m)$ is a minimal log CY pair in $\mathcal{T}$. Let $(Y_\m, \D_\m)$ be a toric degeneration, so that $D_\m$ is given by smoothing nodes of $\D_\m$. Then the Lefschetz fibration associated to $(Y_\m, D_\m)$  is equivalent to taking the Lefschetz fibration associated to $(Y_\m, \D_\m)$ and capping off each of the fibre punctures corresponding nodes that have been smoothed.
\end{corollary}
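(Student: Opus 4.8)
The plan is to observe that both Lefschetz fibrations are produced by Definition \ref{def:construction_general} from the \emph{same} full exceptional collection of line bundles on $Y_\m$, and that the only difference between them is the choice of anticanonical divisor. Capping punctures will then be precisely the operation recording the effect of merging boundary components under node-smoothing, so nothing beyond the construction and additivity of intersection numbers is needed. First I would fix the collection. For $Y_\m = \bP^2$ the collection of Proposition \ref{prop:full_exc_coll} for the toric boundary $\bar{D}_\m$ is $\cO, \cO(1), \cO(2)$, which is exactly the collection of Lemma \ref{lem:exc_coll_minimal}; for $Y_\m = \bF_a$ the toric collection $\cO, \cO(A), \cO(A+B), \cO(2A+B)$ is obtained from the minimal collection $\cO, \cO(A), \cO(B), \cO(A+B)$ of Lemma \ref{lem:exc_coll_minimal} by a single mutation, as recorded in the proof of Proposition \ref{prop:full_exc_coll}. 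Hence, up to Hurwitz moves via Lemma \ref{lem:mutations_mirror_mutations}, I may use one and the same collection $(E_0, \ldots, E_n)$ to build both the fibration for $(Y_\m, \bar{D}_\m)$ and the one for $(Y_\m, D_\m)$.

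Next I would spell out the topology of capping. Writing $\bar{D}_\m = \bar{D}_1 + \cdots + \bar{D}_{\bar{k}}$, the fibre for $(Y_\m, \bar{D}_\m)$ is $\Sigma_{\bar{k}}$, with cyclically arranged meridiens $W_1, \ldots, W_{\bar{k}}$ and, between each consecutive pair $W_a, W_{a+1}$, a single boundary puncture $b_{a,a+1}$ (the picture underlying Proposition \ref{prop:monodromy}). Smoothing the node $\bar{D}_a \cap \bar{D}_{a+1}$ merges these two components into one component $D_j \sim \bar{D}_a + \bar{D}_{a+1}$ of $D_\m$; on the mirror this is capping off the puncture $b_{a,a+1}$, after which the annulus cobounded by $W_a$ and $W_{a+1}$, which previously contained only that puncture, becomes a genuine cylinder. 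Thus $W_a$ and $W_{a+1}$ become isotopic to a single meridien $W_j$ of $\Sigma_k$, while the reference longitude $V_0$ is carried to the reference longitude of $\Sigma_k$. Iterating over all smoothed nodes carries $\Sigma_{\bar{k}}$ to $\Sigma_k$.

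Finally I would compare vanishing cycles. By construction $\bar{L}_i = \prod_{l=1}^{\bar{k}} \tau_{W_l}^{E_i \cdot \bar{D}_l} V_0$; capping replaces each pair of twists $\tau_{W_a}^{E_i \cdot \bar{D}_a}\tau_{W_{a+1}}^{E_i \cdot \bar{D}_{a+1}}$ by the single twist $\tau_{W_j}^{E_i \cdot \bar{D}_a + E_i \cdot \bar{D}_{a+1}}$, and since intersection numbers are additive in the Picard group, $E_i \cdot \bar{D}_a + E_i \cdot \bar{D}_{a+1} = E_i \cdot D_j$. Hence the capped vanishing cycle is $\prod_j \tau_{W_j}^{E_i \cdot D_j} V_0 = L_i$, exactly the $i$-th vanishing cycle of the fibration associated to $(Y_\m, D_\m)$ for the collection $(E_0,\ldots,E_n)$. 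This identifies the capped fibration with the one for $(Y_\m, D_\m)$, up to the Hurwitz moves and global fibre automorphisms introduced when passing between the toric and minimal collections. The one genuinely geometric input — and so the main, albeit minor, obstacle — is the isotopy claim that capping $b_{a,a+1}$ renders $W_a$ and $W_{a+1}$ isotopic; this is immediate once one fixes the standard model of $\Sigma_{\bar{k}}$ as the cyclic chain of meridiens and punctures, after which the corollary follows directly from the construction together with Proposition \ref{prop:full_exc_coll} and Lemma \ref{lem:mutations_mirror_mutations}.
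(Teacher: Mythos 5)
Your proof is correct and follows exactly the route the paper intends: the corollary is stated there as immediate, with the implicit argument being precisely yours — the toric collection of Proposition \ref{prop:full_exc_coll} agrees with (for $\bP^2$) or is one mutation away from (for $\bF_a$, handled by Lemma \ref{lem:mutations_mirror_mutations}) the minimal collection of Lemma \ref{lem:exc_coll_minimal}, intersection numbers add under node-smoothing, and capping a puncture $b_{a,a+1}$ turns the pair of pants between $W_a$ and $W_{a+1}$ into an annulus, merging the meridional twists. Your spelled-out version, including the isotopy claim you flag, is exactly the justification the paper leaves to the reader.
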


Suppose $(Y,D) \in \mathcal{T}$ blows down to $(Y_\m, D_\m)$. Fix a  degeneration of $(Y_\m, D_\m)$ to a toric pair $(Y_\m, \D_\m)$, given by smoothing nodes of $\D_\m$. This degeneration pulls back to a degeneration of $(Y,D)$ to, say, $(\check{Y}, \check{D}) \in \tilde{\mathcal{T}}$. (This is not in general unique: there are choices to be made for interior blow ups of the components of $D_\m$ that get smoothed.) While in general we can't expect to have $Y$ and $\check{Y}$ to be in the same deformation class, one can still think of $D$ as being obtained from $\check{D}$ by smoothing some of its nodes. Corollary \ref{cor:minimal_capping} immediately implies the following:

\begin{corollary}\label{cor:general_capping}
The Lefschetz fibration associated to $(Y,D)$ can also be obtained by taking the Lefschetz fibration associated to $(\check{Y}, \check{D})$ as an element of $\tilde{\mathcal{T}}$ (Definition \ref{def:construction_tilde}) and capping off each of the fibre punctures corresponding to nodes of $\check{D}$ that are smoothed to get $D$. 
\end{corollary}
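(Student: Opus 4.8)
The plan is to deduce this by lifting the minimal-model statement of Corollary~\ref{cor:minimal_capping} through the blow-ups that relate $(Y,D)$ to $(Y_\m,D_\m)$, after checking that the operation of capping a fibre puncture commutes with the two operations on abstract Weinstein Lefschetz fibrations induced by blow-ups: adding a meridional vanishing cycle for an interior blow-up (Proposition~\ref{prop:interior_blow_up}) and stabilising along $c_E$ for a corner blow-up (Proposition~\ref{prop:corner_blow_up}). Granting this commutativity, the corollary follows from a short chain of identities, so the real content is the commutativity check.

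First I would record the geometry underlying the setup. By construction $(Y,D)\to(Y_\m,D_\m)$ is a sequence of interior and corner blow-ups, and the chosen degeneration of $(Y_\m,D_\m)$ to $(Y_\m,\D_\m)$ pulls back to the degeneration of $(Y,D)$ to $(\check Y,\check D)$. Corner blow-ups occur only at nodes of $D_\m$, equivalently at the nodes of $\D_\m$ that are \emph{not} smoothed, and interior blow-ups occur at smooth points of components; hence the blow-up loci are disjoint from the nodes of $\D_\m$ that get smoothed. Consequently the same blow-ups realise $(\check Y,\check D)\to(Y_\m,\D_\m)$, the smoothed nodes of $\D_\m$ persist throughout and map isomorphically to the smoothed nodes of $\check D$, and $D$ is obtained from $\check D$ by smoothing exactly those nodes. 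In particular, building the fibration of $(\check Y,\check D)$ from that of $(Y_\m,\D_\m)$ (Definition~\ref{def:construction_tilde} together with the stabilisation mirror to corner blow-ups, Proposition~\ref{prop:stabilisation}) and building the fibration of $(Y,D)$ from that of $(Y_\m,D_\m)$ (Propositions~\ref{prop:interior_blow_up} and~\ref{prop:corner_blow_up}, as in Corollary~\ref{cor:non-toric_fibr_indep}) use the \emph{identical} sequence of operations.

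The main step is the commutativity of capping with these operations. Capping the puncture between two consecutive meridians $W_a$ and $W_{a+1}$ -- the puncture associated to a smoothed node, whose two bounding components merge -- turns the region between $W_a$ and $W_{a+1}$ into an annulus, so that $W_a$ and $W_{a+1}$ become isotopic and both descend to the single meridian of the merged component. With this identification, an interior blow-up on either merging component, which adds a parallel copy of $W_a$ or of $W_{a+1}$, descends to an interior blow-up on the merged component adding a copy of the merged meridian; every other interior blow-up, and every corner blow-up (a stabilisation along $c_E$), affects a region of the fibre disjoint from the capped puncture and so is untouched by capping. Thus capping commutes with the whole sequence of operations, and I would conclude with the chain: capping the fibration of $(\check Y,\check D)$ equals capping the result of applying the blow-up operations to the fibration of $(Y_\m,\D_\m)$, which by commutativity equals applying those operations to the capped fibration of $(Y_\m,\D_\m)$, which by Corollary~\ref{cor:minimal_capping} is the fibration of $(Y_\m,D_\m)$, which finally yields the fibration of $(Y,D)$ by Corollary~\ref{cor:non-toric_fibr_indep}. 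The main obstacle is the bookkeeping in the adjacent interior-blow-up case: one must confirm that filling the puncture between $W_a$ and $W_{a+1}$ genuinely identifies these two meridians, and that the copies added by interior blow-ups on the two merging components become the $m_a+m_{a+1}$ parallel copies of the merged meridian prescribed by Definition~\ref{def:construction_tilde} for $(Y_\m,D_\m)$ after its own blow-ups.
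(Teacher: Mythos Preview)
Your proof is correct and follows the same route the paper has in mind: the paper states this corollary as an immediate consequence of Corollary~\ref{cor:minimal_capping}, and what you have written is exactly the unpacking of that word ``immediate''---you lift the minimal-model capping statement through the blow-ups by checking that capping a puncture commutes with the mirror operations to interior and corner blow-ups (Propositions~\ref{prop:interior_blow_up} and~\ref{prop:corner_blow_up}). The commutativity check you give is the right one, and the observation that the blow-up loci are disjoint from the smoothed nodes (so that the same sequence of operations builds $(\check Y,\check D)$ from $(Y_\m,\bar D_\m)$ as builds $(Y,D)$ from $(Y_\m,D_\m)$) is the key geometric point.
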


The reader may be interested in comparing this with the constructions in the recent preprint \cite{wiscon_collab}, which carefully explains how to 
 get Weinstein handlebody presentations for complements of smoothed toric divisors.


\section{Proof of homological mirror symmetry}\label{sec:proof_hms}

\subsection{Quasi-isomorphism $D^\pi \Fuk (\Sigma) \cong \Perf(D)$}

Let $D$ be a cycle of $k$ copies of $\bP^1$ (or a nodal elliptic curve in the case $k=1$), with components $D_1, \ldots, D_k$, and let $p_i$ be a point on the interior of $D_i$. (This will later correspond to the distinguished $-1$ point of a torus action on $Y$, though while $D$ is a standalone curve there is no distinguished point.)

Let $\Fuk(\Sigma)$ denote the Fukaya category of $\Sigma$, as set up in \cite[Section 12]{Seidel_book}, with coefficient field $\bC$. 
Fix an auxiliary trivialisation $\alpha: T \Sigma \cong \bC \times \Sigma$, and let $\alpha: \text{Gr}(T\Sigma) \to S^1$ be the associated squared phase map (see \cite[Section 22j]{Seidel_book}). The objects of $\Fuk(\Sigma)$ are Lagrangian branes $(L, \alpha^\#, \mathfrak{s})$, where $L$ is a Maslov index zero compact exact Lagrangian in $\Sigma$; $\alpha^\#$ is a choice of grading for $L$, i.e.~a smooth function $\alpha^\#: L \to \bR$ such that $\text{exp}(2\pi i \alpha^\# (x)) = \alpha(TL_x)$ (these form a $\bZ$-torsor); and $\frak{s}$ is the non-trivial spin structure on $L$, i.e.~the one corresponding to the connected double cover of $S^1$. (For calculations it often gets recorded by a choice  of marked point on $L$, as in \cite[Figure 2]{Lekili-Polishchuk}; note that the orientations in that figure are the ones induced by the choices of $\alpha$ and $\alpha^\#$.)  $D^\pi \Fuk(\Sigma)$ will denote the split-derived closure of $\Fuk(\Sigma)$.

\begin{theorem}(Lekili--Polishchuk) \label{thm:Lekili-Polishchuk}
Let $\twvect (D)$ be the standard dg enhancement of $\Perf (D)$. (This is unique up to quasi-equivalence of dg categories by Lunts--Orlov \cite[Theorem 2.14]{Lunts-Orlov}.)
Let $\mathcal{V}_0$ and $\mathcal{W}_i$, $i=1, \ldots, k$ be branes with underlying Lagrangians $V_0$ and the $W_i$, and gradings such that the unique point $q_i \in \hom(\mathcal{V}_0, \mathcal{W}_i)$ has grading zero. (The gradings of the $\mathcal{W}_i$ are cyclically symmetric.) 
 Then there is an  $A_\infty$ functor $\varphi: \Fuk(\Sigma) \to \twvect(D)$ such that:

\begin{itemize}
\item[(a)] $\varphi$ induces an equivalence of derived categories $D^\pi \Fuk (\Sigma) \to \Perf(D)$; 

\item[(b)] $\varphi$ maps $\mathcal{V}_0$ to $\cO$ and $\mathcal{W}_i$ to $\{ \cO(-p_i)[1] \to \cO \}$, i.e.~the resolution of $\cO_{p_i}$;

\item[(c)] Under $\varphi$, the categorical spherical twist in $\mathcal{W}_i$, which geometrically is given by the right handed Dehn twist in the meridian $W_i$, corresponds to the functor $\otimes \cO(p_i)$ on $\twvect (D)$.
\end{itemize}
Moreover, the collection $\{ \mathcal{V}_0, \mathcal{W}_1, \ldots, \mathcal{W}_k \}$, resp.~$\{ \cO, \{ \cO(-p_1)[1] \to \cO \}, \ldots, \{ \cO(-p_k)[1] \to \cO \} \}$, split generates $D^\pi \Fuk(\Sigma)$, resp.~$\Perf(D)$. 

\end{theorem}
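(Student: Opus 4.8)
The plan is to construct $\varphi$ by computing the endomorphism $A_\infty$-algebra of a generating set on each side, matching the two algebras, and then bootstrapping to an equivalence via split-generation. First I would establish the generation statements. On the symplectic side, the curves $V_0, W_1, \ldots, W_k$ form a system whose complement in $\Sigma$ is a union of polygons (punctured disks), so a standard generation criterion for Fukaya categories of surfaces (cutting arguments, or Abouzaid's criterion) shows that $\{\mathcal{V}_0, \mathcal{W}_1, \ldots, \mathcal{W}_k\}$ split-generates $D^\pi \Fuk(\Sigma)$. On the coherent side, every perfect complex on the cycle $D$ is built from line bundles, and each line bundle is obtained from $\cO$ by twisting by divisors supported at the smooth points $p_i$; together with the resolutions $\{\cO(-p_i)[1] \to \cO\}$ of the skyscrapers, this shows $\cO$ and the $\cO_{p_i}$ split-generate $\Perf(D)$.

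Second, I would compute the two endomorphism algebras. On the A-side, form the full $A_\infty$-subcategory $\A$ on $\{\mathcal{V}_0, \mathcal{W}_i\}$: by hypothesis $V_0$ meets each $W_i$ transversally in a single point $q_i$, the $W_i$ are mutually disjoint, and each self-Floer complex is $H^*(S^1)$, so all morphism spaces are small and concentrated in degrees $0$ and $1$; the $A_\infty$-products are then counts of immersed polygons in the punctured surface. On the B-side, I would compute $\Ext^*$ among $\cO$ and the $\cO_{p_i}$, using that $D$ has arithmetic genus one (so $\Ext^*(\cO,\cO) = \bC \oplus \bC[-1]$) and that $\cO_{p_i}$ is a spherical object supported on the smooth locus $D_i \setminus \Sing D$, together with its minimal $A_\infty$-model (the Massey products).

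Third, and at the heart of the argument, I would match the two $A_\infty$-algebras: send $\mathcal{V}_0 \mapsto \cO$, $\mathcal{W}_i \mapsto \{\cO(-p_i)[1] \to \cO\}$, and $q_i$ to the degree-zero generator of $\Hom(\cO, \cO_{p_i})$, which is consistent with the grading normalisation fixed in the statement, and then verify the products agree. Since every morphism space is concentrated in degrees $0$ and $1$, the possible $A_\infty$-structures are tightly constrained, and I would compute the relevant piece of Hochschild cohomology to show rigidity up to the rescaling and gauge freedom available, so that matching a finite list of low products suffices. Property (b) is built into this construction. Split-generation on both sides then upgrades the quasi-isomorphism $\A \simeq \twvect(D)$ on generators to a derived equivalence $D^\pi \Fuk(\Sigma) \to \Perf(D)$, giving (a). For (c), I would use that an $A_\infty$-functor intertwines the spherical twist in $\mathcal{W}_i$, realised geometrically by the right-handed Dehn twist $\tau_{W_i}$, with the twist in the spherical object $\cO_{p_i}$, and that the latter is computed to be $\otimes\, \cO(p_i)$ by the Seidel--Thomas identification of the spherical twist in a point of a Calabi--Yau curve with tensoring by the corresponding line bundle.

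The hard part will be the third step, specifically pinning down the higher $A_\infty$-products together with their signs: reconciling the immersed-polygon counts on $\Sigma$ with the Massey products on $D$ exactly, coefficients included, demands careful control of the chosen gradings, the (non-trivial) spin structures, and the orientation conventions, and this is where the genuine work lies.
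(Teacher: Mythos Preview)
Your proposal sketches a from-scratch proof of the Lekili--Polishchuk theorem, whereas the paper does not attempt this: parts (a) and (b) are simply cited as \cite{Lekili-Polishchuk}, Theorem~B(i) and Corollary~3.4.1, and the only argument the paper supplies is for (c). So you are doing considerably more than the paper asks; in this context the theorem is a black box, and the ``proof'' consists of pointing to the reference and then deducing (c).

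That said, your outline is broadly faithful to the actual strategy in \cite{Lekili-Polishchuk}: identify generating collections on each side, compute the $A_\infty$-endomorphism algebra of the generators, and match the two via a classification/rigidity argument controlled by Hochschild cohomology. You correctly flag the hard step as the precise $A_\infty$-matching with signs and gradings; in Lekili--Polishchuk this is handled not by a direct polygon-versus-Massey-product comparison but by classifying $A_\infty$-structures on the relevant graded algebra up to gauge equivalence and identifying the moduli parameter on each side, which is what makes the argument tractable. Your direct-comparison formulation would be harder to carry out as stated.

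For (c), your argument is essentially identical to the paper's: combine Seidel's theorem that a Dehn twist acts as the categorical spherical twist with the identification of the spherical twist in $\cO_{p_i}$ with $\otimes\,\cO(p_i)$ (the paper cites \cite[Example~8.10(i)]{Huybrechts} rather than Seidel--Thomas for the latter, but the content is the same).
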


\begin{proof}
The first two points are explicitly in \cite{Lekili-Polishchuk}: the statement at the level of derived categories is their Theorem B (i), and the dg level statements are contained in their Corollary 3.4.1 and the proof thereof. 

Point (c), while not explicitly stated in Lekili--Polishchuk, follows from combining their result with known ingredients: first, the fact that a right-handed Dehn twist is categorical twist, due to Seidel (\cite{Seidel_LES} and \cite[Section 17j]{Seidel_book}); and second, the fact that $\{ \cO(-p_i)[1] \to \cO \}$ is a spherical object, and that the spherical twist associated to this object is the same as the tensor $\otimes \cO(p_i)$ \cite[Example 8.10 (i)]{Huybrechts}.
\end{proof}

\subsection{Restricting coherent sheaves}

\begin{lemma}\label{lem:restricting_exceptionals}
Assume $(Y,D)$ is a log CY pair. Then exceptional perfect complexes on $Y$ restrict to $D$ as follows.

\begin{itemize}

\item[(a)] If $E \in D^b \Coh(Y)$ is exceptional, then the (left) derived pull-back $E|_D \in \Perf(D)$ is spherical.

\item[(b)] If $(E,F)$ is an exceptional pair in $D^b \Coh(Y)$, then for each $i$ there is a natural isomorphism 
$$
\theta_{E,F}: \Hom^i(E, F) \to \Hom^i(E|_D, F|_D) 
$$

\item[(c)] The maps $\theta_{E,F}$ are compatible with products: if $(E,F,G)$ is an exceptional triple  in $D^b \Coh(Y)$, then products there and in $\Perf(D)$ satisfy the following commutative diagram:
$$
\xymatrix{
\Hom(E,F) \otimes \Hom(F,G)\ar[d]^{\theta_{E,F} \otimes \theta_{F,G}} \ar[r]  & \Hom(E,G) \ar[d]_{\theta_{E,G}} \\
\Hom(E|_D,F|_D) \otimes \Hom(F|_D,G|_D) \ar[r]  & \Hom(E|_D,G|_D)  
}
$$
where $\Hom$ denotes total derived morphisms in $D^b \Coh(Y)$ or $\Perf(D)$.
\end{itemize}
\end{lemma}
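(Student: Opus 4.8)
The plan is to deduce all three parts from a single long exact sequence built out of the anticanonical property $D\in|-K_Y|$. Write $\iota\colon D\hookrightarrow Y$ for the inclusion, so that $E|_D=L\iota^*E$ and $F|_D=L\iota^*F$; these are perfect complexes on $D$ because $Y$ is smooth. Tensoring the exact sequence $0\to\cO_Y(-D)\to\cO_Y\to\iota_*\cO_D\to0$ by $F$ and using $\cO_Y(-D)\cong\omega_Y$ (as $D\sim-K_Y$) gives an exact triangle
\[
F\otimes\omega_Y \;\to\; F \;\to\; F\otimes^{L}\iota_*\cO_D \;\xrightarrow{+1}
\]
in $D^b\Coh(Y)$. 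I would then apply $R\Hom_Y(E,-)$: the projection formula $F\otimes^{L}\iota_*\cO_D\cong\iota_*L\iota^*F$ together with the adjunction $L\iota^*\dashv\iota_*$ identifies the cohomology of the third term with $\Hom_D^\bullet(E|_D,F|_D)$, while Serre duality on the surface $Y$ gives $\Hom_Y^i(E,F\otimes\omega_Y)\cong\Hom_Y^{2-i}(F,E)^\vee$. The outcome is the long exact sequence
\[
\cdots\to\Hom_Y^{2-i}(F,E)^\vee\to\Hom_Y^i(E,F)\xrightarrow{\ \theta\ }\Hom_D^i(E|_D,F|_D)\to\Hom_Y^{1-i}(F,E)^\vee\to\cdots
\]
in which $\theta$ is induced by the map $F\to F\otimes^{L}\iota_*\cO_D$.

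The next step is to observe that this $\theta$ is nothing but the map on morphism spaces induced by the functor $L\iota^*$. Indeed, under the projection formula the map $F\to\iota_*L\iota^*F$ is the unit $\eta_F$ of the adjunction, and the composite of $R\Hom_Y(E,\eta_F)$ with the adjunction isomorphism sends a morphism $f$ to $\epsilon_{L\iota^*F}\circ L\iota^*(\eta_F\circ f)=L\iota^*f$ by the triangle identity $\epsilon_{L\iota^*F}\circ L\iota^*\eta_F=\id$. Granting this, part (c) is immediate: the three maps $\theta_{E,F},\theta_{F,G},\theta_{E,G}$ are all induced by the \emph{same} functor $L\iota^*$, so the square in the statement is just functoriality of $L\iota^*$ with respect to composition.

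For (a) I would set $E=F$. Exceptionality gives $\Hom_Y^\bullet(E,E)=\bC$ concentrated in degree $0$, so $\Hom_Y^{2-i}(E,E)^\vee$ is $\bC$ for $i=2$ and $0$ otherwise. Feeding this into the long exact sequence degree by degree, the degree-$0$ generator of $\Hom_Y^\bullet(E,E)$ maps isomorphically to $\Hom_D^0(E|_D,E|_D)=\bC$, while the connecting homomorphism produces $\Hom_D^1(E|_D,E|_D)\cong\Hom_Y^2(E,E\otimes\omega_Y)\cong\bC$, and all remaining degrees vanish. Since adjunction gives $\omega_D\cong\cO_D$, the category $\Perf(D)$ carries a $1$-Calabi--Yau Serre pairing, and the computation $\Hom_D^\bullet(E|_D,E|_D)\cong\bC\oplus\bC[-1]$ is exactly the statement that $E|_D$ is spherical. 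For (b) I would use that an exceptional \emph{pair} $(E,F)$ satisfies $\Hom_Y^\bullet(F,E)=0$; then every flanking term $\Hom_Y^{2-i}(F,E)^\vee$ of the long exact sequence vanishes, and hence $\theta=\theta_{E,F}$ is an isomorphism in each degree $i$, which is the assertion.

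The one point that needs genuine care — and the only place where (c) is not completely formal — is the identification of the connecting map $\theta$ with the functorial pullback $L\iota^*$ on $\Hom$'s; it is precisely this identification that converts the compatibility-with-products statement from a computation into a triviality. I would therefore write the triangle-identity argument out in full (equivalently, check that the natural transformation $\id\to\iota_*L\iota^*$ appearing in the triangle really is the adjunction unit after applying the projection formula), and then note that a functor automatically carries the composition pairing $\Hom(E,F)\otimes\Hom(F,G)\to\Hom(E,G)$ to its counterpart on $D$, which is all that (c) asserts.
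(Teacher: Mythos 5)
Your proof is correct, and for parts (b) and (c) it is essentially the paper's argument: the paper tensors the ideal-sheaf sequence $0 \to \cO_Y(-D) \to \cO_Y \to \cO_D \to 0$ with $R\mathcal{H}om(E,F)$, reads off the same long exact sequence, kills the flanking terms $\Hom^{2-i}(F,E)^\vee$ by Serre duality and exceptionality of the pair, and disposes of (c) with the one-line remark that the products are compatible ``by naturality.'' You diverge in two places, both to your credit. First, for (a) the paper simply cites Seidel--Thomas, Proposition 3.13, whereas you reprove it by setting $E=F$ in the same long exact sequence: your degree bookkeeping is right ($\Hom^0_D \cong \bC$ from exceptionality, $\Hom^1_D \cong \Hom^2_Y(E,E\otimes\omega_Y)^{\phantom{\vee}} \cong \Hom^0_Y(E,E)^\vee \cong \bC$ via the connecting map, all other degrees vanishing), and combined with $\omega_D \cong \cO_D$ from adjunction this is exactly the content of the cited proposition, so the lemma becomes self-contained at no real cost. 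Second, you make explicit what the paper's ``naturality'' conceals: you identify $\theta_{E,F}$ with the map induced by the functor $L\iota^*$, via the projection formula $F\otimes^L\iota_*\cO_D \cong \iota_*L\iota^*F$, the observation that the resulting map $F \to \iota_*L\iota^*F$ is the adjunction unit, and the triangle identity $\epsilon_{L\iota^*F}\circ L\iota^*\eta_F = \id$. This is the genuinely load-bearing step (without it, compatibility with composition is not formal), and you are right to flag it as the point requiring care; it is also the one compatibility (unit versus projection-formula map) you should verify in writing, as you propose to do. In short: same skeleton as the paper, with the citation inlined and the naturality step honestly unpacked.
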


Note that $\Hom^i(F|_D, E|_D)$ can be calculated using Serre duality.

\begin{proof}
Point (a) is a special case of \cite[Proposition 3.13]{Seidel-Thomas}.

For points (b) and (c), start with the short exact sequence
$$
0 \to \cO_{D/Y} \to \cO_Y \to \cO_D \to 0. 
$$
Apply the derived tensor $\otimes \mathcal{H}om(E,F)$ to the whole sequence, to get the short exact sequence
$$
0 \to \mathcal{H}om (E,F) (K_Y) \to \mathcal{H}om (E,F) \to \mathcal{H}om (E|_D,F|_D) \to 0.
$$

The corresponding long exact sequence is given by:
\begin{align*}
0 & \to \Hom^0(E, F(K_Y)) \to \Hom^0(E,F) \stackrel{\theta^0_{E,F}}{\longrightarrow}  \Hom^0 (E|_D, F|_D) \\
& \to \Hom^1 (E, F(K_Y) \to \Hom^1(E,F) \stackrel{\theta^1_{E,F}}{\longrightarrow}  \Hom^1 (E|_D, F|_D) \to  \ldots 
\end{align*}

By Serre duality, $\Hom^i(E, F(K_Y)) \cong \Hom^{2-i} (F,E)^\ast$; as $(E,F)$ is an exceptional pair, $\Hom^{2-i} (F,E) = 0$ for all $i$, and so $\theta^i_{E,F}$ is an isomorphism for all $i$. The claim about products follows by naturality in the above argument.
\end{proof}

Recall the following from the end of Section \ref{sec:complex_structure}. 

\begin{lemma}\label{lem:restricted_sheaves}
Suppose $(Y, D) \in \mathcal{T}_e$. Then there exist points $p_i \in D_i$ such that  any line bundle $E$ on $Y$ restricts on $D$ to $\cO_D (\sum d_i p_i)$, where $d_i = E \cdot D_i$. 
\end{lemma}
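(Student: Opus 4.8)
The plan is to derive the formula from the triviality of the period point. Recall that $\Pic^0(D)=\ker(\Pic D \to H^2(D,\bZ))\cong\bC^\times$, and that a line bundle on $D$ is determined by its multidegree in $\bZ^k$ together with its class in $\Pic^0(D)$. Both $E\mapsto E|_D$ and $E\mapsto \cO_D(\sum_i d_i p_i)$ (with $d_i=E\cdot D_i$) are group homomorphisms $\Pic Y \to \Pic D$, and by construction they have the same multidegree component $(d_i)_i$. Hence their difference defines a homomorphism $\delta\colon \Pic Y \to \Pic^0(D)\cong\bC^\times$, $\delta(E)=E|_D\otimes\cO_D(-\sum_i d_i p_i)$, and the assertion is exactly that $\delta$ is trivial for the right choice of the $p_i$. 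Note that on each component $D_i\cong\bP^1$ the restriction $E|_{D_i}$ is determined by its degree $d_i$, so $E|_{D_i}\cong\cO_{D_i}(d_i p_i)$ automatically; thus the only content of the lemma is the gluing data at the nodes, which is precisely what $\delta$ records.

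First I would show that $\delta$ vanishes on $Q=\ker(\Pic Y\to\bZ^k)$. For $L\in Q$ one has $d_i=0$ for all $i$, so $\delta(L)=L|_D\in\Pic^0(D)$; by the algebraic description of the period point recalled in Section \ref{sec:complex_structure} this is exactly $\phi(L)$, and $\phi=e$ since $(Y,D)\in\mathcal{T}_e$. Therefore $\delta$ factors through $\Pic Y/Q\cong\image(\partial)\subseteq\bZ^k$, a free abelian group, where $\partial(E)=(E\cdot D_i)_i$. It then remains to verify $\delta\equiv 1$ on a set of generators of $\image(\partial)$, equivalently to check the formula $E|_D\cong\cO_D(\sum_i d_i p_i)$ for finitely many $E$.

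For the remaining verification I would take the points $p_i\in D_i$ to be the distinguished points $-1\in\bC^\times=\bar{D}_i\setminus\bigcup_{j\neq i}\bar{D}_j$ in toric coordinates, via a toric model $\pi\colon(Y,D)\to(\bar{Y},\bar{D})$ given by blowing up interior points (Proposition \ref{prop:toric_model}, together with \cite[Lemma 2.8]{GHK1} for the identification of the blown-up points in the exact case). Since $\pi$ induces an isomorphism $D\cong\bar{D}$ and $\pi^\ast E$ meets the exceptional curves trivially, the formula is compatible with blow-up, so it suffices to treat the toric base case. There every line bundle is of the form $\cO_{\bar{Y}}(\sum_j a_j\bar{D}_j)$, and I would compute its restriction to $\bar{D}$ directly in $T$-equivariant coordinates; the distinguished point $-1$ is chosen exactly so that the resulting gluing cocycle on $\Pic^0(\bar{D})$ is trivial, which is the content of the characterization of $\mathcal{T}_e$ in \cite[Definition 1.2]{GHK2}. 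The main obstacle is this last step: one must pin down the $\Pic^0(D)$-gluing of the restriction and match it with $\sum_i d_i p_i$, and it is only for the distinguished complex structure (equivalently, the distinguished choice of points) that these agree on all of $\image(\partial)$ rather than merely on $Q$.
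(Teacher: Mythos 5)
Your overall strategy is sound, and it is essentially the argument the paper leaves implicit: the paper gives no real proof of Lemma \ref{lem:restricted_sheaves}, instead citing \cite[Definition 1.2]{GHK2} (with the equivalence between the two descriptions of the period point outsourced to \cite[Proposition 3.12]{Friedman} and the location of the blown-up points to \cite[Lemma 2.8]{GHK1}). Your skeleton --- define $\delta(E)=E|_D\otimes\cO_D(-\sum d_ip_i)\in\Pic^0(D)$, identify $\delta|_Q$ with the period point $\phi$ so that $\phi=e$ kills $\delta$ on $Q$, then check $\delta$ on lifts of generators of $\image(\partial)\subseteq\bZ^k$ --- is exactly the right reduction and is more self-contained than what the paper writes. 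Two steps, however, are stated too loosely and need patching.

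First, your reduction to the toric base case is incomplete as written: $\Pic Y$ is \emph{not} generated by $\pi^*\Pic\bar{Y}$ together with $Q$. For instance, for $\bP^2$ blown up at one interior point of $\bar{D}_1$ one has $Q=0$ and $\image(\partial)=\bZ(1,1,1)+\bZ(1,0,0)$, while pullbacks only account for $\bZ(1,1,1)$. So besides the toric computation you must separately verify $\delta$ on the exceptional classes $\cO(\Gamma_{ij})$, i.e.\ that $\cO(\Gamma_{ij})|_D\cong\cO_D(p_i)$; this holds precisely because the blow-up is performed at $p_i$, and it is a second place (not only the toric gluing cocycle) where the distinguished choice of points enters. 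Your phrase ``$\pi^*E$ meets the exceptional curves trivially'' only addresses pullback classes, not these generators. Second, a general $(Y,D)\in\mathcal{T}_e$ need not admit a morphism to a toric pair by interior blow-ups alone: Definition \ref{def:toric_model} and Proposition \ref{prop:toric_model} allow corner blow-ups, and pairs in $\mathcal{T}_e\setminus\tilde{\mathcal{T}}_e$ exist (cf.\ Proposition \ref{prop:blow_downs}). So you need a descent step along a corner blow-down $\pi\colon(\tilde{Y},\tilde{D})\to(Y,D)$: since $\pi_*\cO_{\tilde{D}}=\cO_D$, the map $\pi^*\colon\Pic D\to\Pic\tilde{D}$ is injective; one has $\pi^*(E|_D)=(\pi^*E)|_{\tilde{D}}$ with $\pi^*E$ of degree zero on the exceptional component, and $\pi^*\cO_D\left(\sum d_ip_i\right)=\cO_{\tilde{D}}\left(\sum d_i\tilde{p}_i\right)$ because the $p_i$ avoid the nodes; injectivity then transfers the formula from $(\tilde{Y},\tilde{D})$ down to $(Y,D)$. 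Both repairs are routine one-liners, after which your proof is complete.
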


Remember this  is embedded in Gross--Hacking--Keel's definition of the special complex structure  \cite[Definition 1.2]{GHK2}: the complex structure we want is the one for which the period point is equal to one, and we get an exact sequence
$$0 \rightarrow \bC^{\times} \rightarrow \Pic(\tilde{D}) \stackrel{c_1}{\longrightarrow} \bZ^k \rightarrow 0.$$
See also the discussion in \cite[Section 3]{Friedman}. 
If we were to blow up corners to get to a toric model $(\tilde{Y}, \tilde{D})$ in $\tilde{\mathcal{T}}_e$, the $p_i$ would pull back to the $-1$ points of the torus action on $\tilde{Y}$; for instance, in the case of $(\bP^2, D)$, where $D = D_1 \cup D_2$ is the union of a line and a conic, pick a line $l$  tangent to $D_2$ at some interior point; then $p_i = D_i \cap l$. 

\begin{corollary}\label{cor:restricted_sheaves}
Suppose $(\tilde{Y}, \tilde{D})$ is a log CY pair in $\tilde{\mathcal{T}}_e$.  Consider the full exceptional collection for $D^b \Coh(\tilde{Y})$ given in Corollary \ref{cor:full_exc_coll}. Its restricts to the following collection of spherical objects in  $\Perf \tilde{D}$:
$$ \cO_{p_k}, \cdots,\cO_{p_k},\cdots,\cO_{p_1},\cdots, \cO_{p_1},\cO_{\tilde{D}}, \cO_{\tilde{D}}\left(\sum d_{i1}p_{i}\right), \ldots, \cO_{\tilde{D}} \left(\sum d_{ki}p_i\right)$$

where $d_{ij} = \cO(\D_1 + \ldots + \D_{i}) \cdot \D_j$ and there are $m_i$ copies of $\cO_{p_i}$.
\end{corollary}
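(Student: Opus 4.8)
The plan is to handle the line bundles and the torsion sheaves $\cO_{\Gamma_{ij}}(\Gamma_{ij})$ separately and to compute each derived restriction to $\tilde D$ by hand; that every restriction is then spherical is automatic from Lemma~\ref{lem:restricting_exceptionals}(a), since the collection of Corollary~\ref{cor:full_exc_coll} is exceptional.

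For the line bundles $\cO, \pi^\ast\cO(\D_1), \dots, \pi^\ast\cO(\D_1+\cdots+\D_{k-1})$ I would simply invoke Lemma~\ref{lem:restricted_sheaves}: as $(\tilde Y,\tilde D)\in\tilde{\mathcal T}_e$, the line bundle $\pi^\ast\cO(\D_1+\cdots+\D_i)$ restricts to $\cO_{\tilde D}(\sum_j d_{ij}p_j)$ with $d_{ij}=\pi^\ast\cO(\D_1+\cdots+\D_i)\cdot\tilde D_j$. It then remains to rewrite this intersection number on $\bar Y$: by the projection formula $\pi^\ast L\cdot\tilde D_j=L\cdot\pi_\ast\tilde D_j=L\cdot\D_j$, using that $\tilde D_j$ is the strict transform of $\D_j$, so indeed $d_{ij}=\cO(\D_1+\cdots+\D_i)\cdot\D_j$ as stated. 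This part is routine.

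The substance is the identification $\cO_{\Gamma_{ij}}(\Gamma_{ij})|_{\tilde D}\simeq\cO_{p_i}$. Here I would reuse the identity from the proof of Corollary~\ref{cor:full_exc_coll}, that $Lf^\ast\cO_F(F)=\cO_{f^\ast F}(f^\ast F)$ for an effective Cartier divisor $F$ and a birational morphism $f$. Write $f\colon\tilde Y\to Y_j$ for the blow-down onto the surface obtained just after the $j$th blow-up in the tower over $p_i$, so that the $j$th exceptional curve $E^{(j)}\subset Y_j$ is a $(-1)$-curve with $f^\ast E^{(j)}=\Gamma_{ij}$; then $\cO_{\Gamma_{ij}}(\Gamma_{ij})=Lf^\ast\cO_{E^{(j)}}(-1)$. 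Composing derived pullbacks and writing $\iota\colon\tilde D\hookrightarrow\tilde Y$, we get $L\iota^\ast\cO_{\Gamma_{ij}}(\Gamma_{ij})=L(f\circ\iota)^\ast\cO_{E^{(j)}}(-1)$. The key geometric point is that $f$ restricts to an \emph{isomorphism} $\tilde D\xrightarrow{\sim} D_j$ onto the stage-$j$ boundary (interior blow-downs leave the boundary curve unchanged), so $f\circ\iota$ factors through the embedding $D_j\hookrightarrow Y_j$. Since in $Y_j$ the irreducible curve $E^{(j)}$ meets $D_j$ transversally in the single point $q$ lying over $p_i$, a local Koszul computation shows the derived restriction $\cO_{E^{(j)}}(-1)|_{D_j}$ is the skyscraper $\cO_q$ with no higher $\operatorname{Tor}$; pulling back along $\tilde D\cong D_j$ gives $\cO_{p_i}$. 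Letting $j$ run over $1,\dots,m_i$ produces exactly the $m_i$ copies of $\cO_{p_i}$ in the stated order.

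The main obstacle is precisely this last derived restriction: $\Gamma_{ij}$ is in general a reducible chain of rational curves (with self-intersections $-2,\dots,-2,-1$) meeting $\tilde D$ only through its final component, so a direct $\operatorname{Tor}$ computation on $\tilde Y$ is unwieldy. The pullback identity $\cO_{\Gamma_{ij}}(\Gamma_{ij})=Lf^\ast\cO_{E^{(j)}}(-1)$ is what lets me transfer the computation to $Y_j$, where $E^{(j)}$ is irreducible and the transversality is manifest. The points I would check carefully are that $f|_{\tilde D}$ is genuinely an isomorphism rather than merely birational, and that for each $j$ the curve $E^{(j)}$ meets the stage-$j$ boundary transversally at a single point; both follow from the iterated-blow-up structure of pairs in $\tilde{\mathcal T}_e$ recorded in Corollary~\ref{cor:full_exc_coll}.
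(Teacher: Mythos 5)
Your proposal is correct, and the two visible parts of it (the line bundles via Lemma~\ref{lem:restricted_sheaves} plus the projection formula, and sphericality via Lemma~\ref{lem:restricting_exceptionals}(a)) are exactly the ingredients the paper relies on -- it states the corollary without further proof, treating the torsion computation as implicit. The one place where you diverge is the identification $\cO_{\Gamma_{ij}}(\Gamma_{ij})|_{\tilde D}\simeq\cO_{p_i}$: you transfer the computation to the intermediate surface $Y_j$ via $\cO_{\Gamma_{ij}}(\Gamma_{ij})=Lf^\ast\cO_{E^{(j)}}(-1)$ and the isomorphism $f|_{\tilde D}\colon\tilde D\xrightarrow{\sim}D_j$. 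This is sound (the composition of derived pullbacks and the factorisation through $D_j\hookrightarrow Y_j$ both behave as you say), but the detour is avoidable, and your stated motivation for it -- that a direct $\operatorname{Tor}$ computation on $\tilde Y$ is unwieldy because $\Gamma_{ij}$ is a reducible chain -- is not actually a problem. Since $\Gamma_{ij}$ is an effective \emph{Cartier} divisor on $\tilde Y$, one has the short exact sequence $0\to\cO_{\tilde Y}\to\cO_{\tilde Y}(\Gamma_{ij})\to\cO_{\Gamma_{ij}}(\Gamma_{ij})\to 0$ regardless of reducibility (this is the same sequence used at the end of the proof of Corollary~\ref{cor:full_exc_coll}), and only the final $(-1)$-component $C_{im_i}$ of the chain meets $\tilde D$, transversally at the single smooth point $p_i$. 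So the derived restriction is computed by the two-term complex $\left[\cO_{\tilde D}\to\cO_{\tilde D}(\Gamma_{ij}|_{\tilde D})\right]=\left[\cO_{\tilde D}\to\cO_{\tilde D}(p_i)\right]$, whose differential is a nonzerodivisor (its vanishing locus is the single smooth point $p_i$, contained in no component of the reduced curve $\tilde D$), giving $\cO_{p_i}$ in degree zero with no higher $\operatorname{Tor}$. What your route buys instead is that the transversality is checked on an irreducible $(-1)$-curve rather than on the chain, at the cost of verifying that $f|_{\tilde D}$ is an isomorphism and that $q$ maps to $p_i$; both versions are a few lines, and either legitimately completes the argument the paper leaves to the reader.
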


\begin{remark}
For $(Y,D)$ a general log CY pair, the restriction of a line bundle $E$ on $Y$ to $D$ is \emph{not} determined by $c_1(E|_D)=(E \cdot D_i)_{i=1}^k$.
\end{remark}

We also have the following, which we used to prove Lemma \ref{lem:mutations_mirror_mutations}.

\begin{lemma}\label{lem:isomorphic_vanishing_cycles}
Suppose $(Y,D) \in \mathcal{T}_e$, and that $E_0, \ldots, E_n$ is a full exceptional collection of line bundles on $(Y,D)$. Let $L_0, \ldots, L_n$ be the mirror collection of vanishing cycles. Assume that after a series of mutations on the $E_i$, we get an exceptional collection which includes a line bundle $F$. Let $S$ be the result of applying the same sequence of mutations (i.e.~Dehn twists) on the $L_i$, and let $S' = \ell(d_1, \ldots, d_k)$, where $d_i = F \cdot D_i$. Then there are natural choices of gradings and spin structures for which $S$ and $S'$ give isomorphic objects in $\Fuk(\Sigma)$. 
\end{lemma}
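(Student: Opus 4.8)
The plan is to push everything through the Lekili--Polishchuk functor $\varphi \colon \Fuk(\Sigma) \to \twvect(D)$ of Theorem \ref{thm:Lekili-Polishchuk} and to compare two ways of mutating. First I would record the base case. Since $L_i = \ell(d_{i1},\ldots,d_{ik}) = \prod_j \tau_{W_j}^{d_{ij}} V_0$ with $d_{ij} = E_i \cdot D_j$, parts (b) and (c) of Theorem \ref{thm:Lekili-Polishchuk} give
$$
\varphi(L_i) \cong \cO_D\Big( \sum_j d_{ij}\, p_j \Big),
$$
and by Lemma \ref{lem:restricted_sheaves}, using $(Y,D) \in \mathcal{T}_e$, the right-hand side is exactly $E_i|_D$. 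So under $\varphi$ the mirror vanishing cycles $L_i$ correspond precisely to the restrictions $E_i|_D$, once we fix gradings so that the generator of $\hom(\mathcal{V}_0,\mathcal{W}_i)$ has degree zero. This pins down the ``natural choices of gradings and spin structures'' in the statement: all Lagrangians carry the non-trivial spin structure, and the gradings are those making these identifications degree-preserving.

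Next I would argue that the derived restriction functor $r = (-)|_D \colon D^b\Coh(Y) \to \Perf(D)$ intertwines mutations. It is triangulated, and Lemma \ref{lem:restricting_exceptionals} says it carries an exceptional object to a spherical one and, for an exceptional pair $(E,F)$, induces an isomorphism $\theta_{E,F} \colon \Hom^\bullet(E,F) \to \Hom^\bullet(E|_D, F|_D)$ compatible with composition. Since the left mutation $L_E F$ is the cone of the evaluation map $\Hom^\bullet(E,F) \otimes E \to F$, applying $r$ and using $\theta$ identifies $r(L_E F)$ with the cone of $\Hom^\bullet(E|_D, F|_D) \otimes E|_D \to F|_D$ computed intrinsically in $\Perf(D)$ (and likewise for right mutations). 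Hence performing a sequence of mutations on $(E_0,\ldots,E_n)$ and then restricting agrees with restricting first and then mutating the spherical objects $E_i|_D$ in $\Perf(D)$.

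Finally I would match the two sides. On the symplectic side a Hurwitz move is a Dehn twist, and by Seidel's theorem (Theorem \ref{thm:Lekili-Polishchuk}(c) and \cite{Seidel_LES}) the twist $\tau_L$ is carried by $\varphi$ to the twist $X \mapsto \operatorname{Cone}(\hom(\varphi(L),X)\otimes \varphi(L)\to X)$; so applying the prescribed sequence of mutations to $L_0,\ldots,L_n$ and then $\varphi$ is the same as applying the corresponding mutations to the objects $\varphi(L_i) = E_i|_D$ in $\Perf(D)$. By the previous paragraph this mutation, read off at the slot occupied by $F$, is exactly $F|_D$; thus $\varphi(S) \cong F|_D$. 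On the other hand $\varphi(S') = \cO_D(\sum_i d_i p_i) = F|_D$ directly from the base case, with $d_i = F \cdot D_i$. Therefore $\varphi(S) \cong \varphi(S')$ in $\Perf(D)$, and since $\varphi$ induces an equivalence of derived categories and $\Fuk(\Sigma) \hookrightarrow D^\pi\Fuk(\Sigma)$ is cohomologically full and faithful, the honest Lagrangian branes $S$ and $S'$ are isomorphic as objects of $\Fuk(\Sigma)$.

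The main obstacle is the middle step: one must check that restriction genuinely converts an exceptional mutation upstairs into the corresponding mutation of spherical objects downstairs, i.e.\ that the evaluation map computed intrinsically in $\Perf(D)$ via $\Hom_{\Perf(D)}$ coincides with the restriction of the evaluation map in $D^b\Coh(Y)$. This is precisely the compatibility of $\theta$ with products in Lemma \ref{lem:restricting_exceptionals}(c), so the real work is bookkeeping with these identifications and the grading conventions; no geometric input beyond Theorem \ref{thm:Lekili-Polishchuk} and Lemma \ref{lem:restricting_exceptionals} is required.
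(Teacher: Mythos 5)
Your proposal is correct and takes essentially the same route as the paper's proof: Lemma \ref{lem:restricting_exceptionals} shows that restriction to $D$ carries left/right mutations of exceptional pairs to spherical twists (and inverse twists) of their spherical restrictions, and Theorem \ref{thm:Lekili-Polishchuk} together with Seidel's identification of Dehn twists with spherical twists transports this through $\varphi$ to $\Fuk(\Sigma)$. The extra bookkeeping you supply --- the base-case identification $\varphi(L_i) \cong E_i|_D$ via Lemma \ref{lem:restricted_sheaves} and the check that the intrinsic evaluation map in $\Perf(D)$ matches the restricted one via $\theta$ --- is exactly what the paper's terser argument leaves implicit, so no gap remains.
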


\begin{proof}
Suppose $(F,G)$ is an exceptional pair in $D^b \Coh(Y)$. 
It's immediate from Lemma \ref{lem:restricting_exceptionals}, combined with the definition of the left or right mutation of an exceptional pair (\cite[Section 2.3]{Bridgeland-Stern}), that $(L_F G)|_D \simeq T_{F|_D} G|_D \in \Perf(D)$, and $(R_G F)|_D \simeq T^{-1}_{G|_D} F|_D \in \Perf(D)$, where $T_{F|_D}$ denotes the spherical twist in $F|_D$, and similarly for $T_{G|_D}$ (\cite[Definition 2.5]{Seidel-Thomas}). The claim then follows from Theorem \ref{thm:Lekili-Polishchuk} together with the fact that Dehn twists in Lagrangian spheres act by spherical twists on the Fukaya category (\cite{Seidel_LES} and \cite[Section 17j]{Seidel_book}).
\end{proof}

\subsection{Bimodule structures}\label{sec:bimodule_generalities}

\subsubsection{Directed Fukaya category}\label{sec:directed_generalities}
We recall some facts from Seidel's work on Fukaya categories associated to Lefschetz fibrations. 
Assume we're given a Weinstein domain $N^4$ and a Lefschetz fibration $\varpi: N \to B \subset \bC$ with fibre $\Sigma$. We will denote by $\Fuk^{\to}(\varpi)$ the directed Fukaya category of $\varpi$, as set up in \cite[Section 18]{Seidel_book}. 
Assume we've taken a strictly unital model for $\Fuk (\Sigma)$, as in \cite[Section (2a)]{Seidel_book}.
Now for suitable auxiliary choices, $\Fuk^\to(\varpi)$ sits as a \emph{non full} subcategory of $\Fuk (\Sigma)$, see \cite[Proposition 18.14]{Seidel_book}. The objects are Lagrangian branes given by a distinguished collection of vanishing cycles, say $L_0, \ldots, L_n$, with gradings determined by the Lagrangian thimbles which cap them off in $N$; let $\mathcal{L}_i$ denote the brane itself. (While $\Fuk^{\to}(\varpi)$ depends on a choice of distinguished collection of vanishing paths for $\varpi$, the category of twisted complexes, $\text{tw } \Fuk^\to (\varpi)$, does not.) We have 
$$
hom_{\Fuk^\to (\varpi) }(\mathcal{L}_i, \mathcal{L}_j) =
\begin{cases}
hom_{\Fuk (\Sigma)}(\mathcal{L}_i, \mathcal{L}_j) & \text{  if } i<j \\
\bC \langle e_i \rangle  & \text{  if } i=j  \\
0 & \text{  otherwise}
\end{cases}
$$
where $e_i \in hom_{\Fuk (\Sigma)}(\mathcal{L}_i, \mathcal{L}_i)$ is the unit. Whenever $i_0 \leq i_1 \leq \ldots \leq i_l$, the $A_\infty$ operations 
\begin{eqnarray*}
\mu_l: hom_{\Fuk^\to (\varpi) }(\mathcal{L}_{i_{l-1}}, \mathcal{L}_{i_l}) \otimes \ldots  \otimes  hom_{\Fuk^\to (\varpi) }(\mathcal{L}_{i_{1}}, \mathcal{L}_{i_{2}})  \otimes  hom_{\Fuk^\to (\varpi) }(\mathcal{L}_{i_0}, \mathcal{L}_{i_1}) &
\\
 \to hom_{\Fuk^\to (\varpi) }(\mathcal{L}_{i_0}, \mathcal{L}_{i_l}) 
\end{eqnarray*}
agree with the $A_\infty$ operations of the image morphisms in ${\Fuk (\Sigma)}$. (The strict unitality assumption implies that aside from $\mu_2$ products with unit elements, in order to have a non-trivial $A_\infty$ product in $\Fuk^\to (\tilde{w})$, one needs $i_0 < i_1 < ... < i_l$.)
Note also that this gives ${\Fuk (\Sigma)}$ the structure of an $A_\infty$ module over $\Fuk^\to (\varpi)$, by restricting the diagonal bimodule.

\subsubsection{B-side restrictions revisited}
Recall that if $X$ is any projective variety, we can construct a dg enhacement of $\Perf(X)$ as follows. 
Start with $\text{vect}(X)$, the dg category whose objects are locally free coherent sheaves on $X$, and, for a fixed finite affine open cover of $X$, morphisms given by \v{C}ech cochain complexes  in $\mathcal{H}om$ sheaves.
Then take  $\text{tw vect}(X)$, the dg category of twisted complexes in $\text{vect}(X)$.  (For a careful treatment see e.g.~\cite{Lekili-Perutz}.) 
Up to quasi-isomorphism, this doesn't depend on the choice of affine cover; more generally, dg enhancements of $\Perf(X)$ are unique up to quasi-isomorphism by Lunts--Orlov \cite[Theorem 2.14]{Lunts-Orlov}. The natural restriction map $\text{tw vect}(Y) \to \text{tw vect}(D)$ gives the latter the structure of a dg bimodule over the former. (Intuitively speaking, Lemma \ref{lem:restricting_exceptionals}, on restricting exceptional collections in $D^b \Coh(Y)$ to $\Perf(D)$, can be viewed as the mirror counterpart to the directedness of morphisms discussed in Section \ref{sec:directed_generalities}.)

\subsection{Equivalence $D^b \Fuk^{\to} (w) \cong D^b \Coh (Y)$}

\begin{theorem}\label{thm:mirror_line_bundles}
Suppose $(Y, D)$ in a log CY surface in $\mathcal{T}_e$, and $( E_0, \ldots, E_n )$ is an exceptional collection of line bundles on $Y$. Say $D = D_1 + \ldots + D_k$, and let $d_{ij} = E_i \cdot D_j$. Let $\Sigma = \Sigma_k$, and let $L_i = \ell(d_{i1}, \ldots, d_{ik})$, $i=0, \ldots, n$, be an exact Lagrangian on $\Sigma$. Let $M$ be the total space of the abstract Weinstein Lefschetz fibration $\{ \Sigma, (L_0, \ldots, L_n) \}$, and $w: M \to B \subset \bC$ be a geometric realisation of the fibration. Let $\mathcal{L}_i \in \text{Ob } \Fuk^\to (w)$ be the brane with underlying Lagrangian $L_i$, given as an object of $ \Fuk(\Sigma)$ by applying $d_{ij}$ categorical spherical twists in $\mathcal{W}_i$ to $\mathcal{V}_0$, using the notation of Theorem \ref{thm:Lekili-Polishchuk} and the inclusion of Section \ref{sec:directed_generalities}. 

Then we can find $A_\infty$ models $\AF$  of $\Fuk^\to (w)$ and $\BF$ of $\Fuk(\Sigma)$, and dg enrichments $\AC$ of $\Perf(D)$ and $\BC$ of $D^b \Coh(Y)$ such that $\AF$ lies as a subcategory of $\BF$, $\AC$ lies as a subcategory of $\BC$, and the $A_\infty$ functor $\varphi$ of Theorem \ref{thm:Lekili-Polishchuk} induces a quasi-isomorphism $\AF \to \AC$, which in turn induces  an equivalence of  categories $$D^b \Coh(Y) \cong D^b \Fuk^\to (w).$$
This equivalence takes $E_i$ to $\mathcal{L}_i$. Moreover,  $(\AF, \BF)$ and $(\AC, \BC)$ give equivalent $A_\infty$ bimodule structures. 
\end{theorem}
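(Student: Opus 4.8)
The plan is to let the single $A_\infty$ functor $\varphi$ of Theorem~\ref{thm:Lekili-Polishchuk} carry the entire bimodule structure, and to obtain the asserted equivalence formally from the fact that $\varphi$ is a quasi-equivalence which restricts compatibly to the two directed subcategories. First I would fix the models already used for the main equivalence: a strictly unital model $\BF$ of $\Fuk(\Sigma)$ with $\AF = \Fuk^{\to}(w)$ sitting inside it as the non-full directed subcategory on $\mathcal{L}_0, \ldots, \mathcal{L}_n$ (\cite[Proposition 18.14]{Seidel_book}), together with the restriction functor $\twvect(Y) \to \twvect(D)$ on the $B$-side. The two $A_\infty$ bimodules to be compared are then the diagonal bimodule of the ambient category restricted, in both slots, along the relevant (non-full) inclusion: on the $A$-side this is precisely the $\Fuk^{\to}(w)$-module structure on $\Fuk(\Sigma)$ recalled in Section~\ref{sec:directed_generalities}, with entries $\hom_{\Fuk(\Sigma)}(\mathcal{L}_i, \mathcal{L}_j)$, and on the $B$-side it is the restriction bimodule $\twvect(D)$ over $\twvect(Y)$ of Section~\ref{sec:bimodule_generalities}, with entries $\Hom_D(E_i|_D, E_j|_D)$.

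The first substantive step is to pin down $\varphi$ on objects and on the directed morphisms. Since $\mathcal{L}_i$ is obtained from $\mathcal{V}_0$ by $d_{ij}$ right-handed Dehn twists in $\mathcal{W}_j$, parts (b) and (c) of Theorem~\ref{thm:Lekili-Polishchuk} give $\varphi(\mathcal{L}_i) \simeq \cO_D(\sum_j d_{ij} p_j)$, which because $(Y,D) \in \mathcal{T}_e$ is identified with $E_i|_D$ by Lemma~\ref{lem:restricted_sheaves}; these are spherical by Lemma~\ref{lem:restricting_exceptionals}(a). For $i<j$ the equality $\hom_{\Fuk(\Sigma)}(\mathcal{L}_i, \mathcal{L}_j) \cong \Hom_D(E_i|_D, E_j|_D)$ is part of the quasi-equivalence in Theorem~\ref{thm:Lekili-Polishchuk}(a), so $\varphi$ restricts to a quasi-isomorphism $\AF \to \AC$ onto the directed subcategory on the $E_i|_D$. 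In parallel, the product-compatible isomorphisms $\theta_{E_i,E_j}$ of Lemma~\ref{lem:restricting_exceptionals}(b),(c) identify this directed subcategory with the directed subcategory of $\twvect(Y)$ on the full exceptional collection $E_0, \ldots, E_n$, which is a dg model of $D^b\Coh(Y)$; the same isomorphisms identify the restricted diagonal bimodule with the restriction bimodule of Section~\ref{sec:bimodule_generalities}.

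The heart of the argument is then the formal behaviour of $A_\infty$ functors on diagonal bimodules. The functor $\varphi$ induces a canonical morphism of $\BF$-bimodules from the diagonal bimodule $\BF$ to the pullback $\varphi^{*}\twvect(D)$ of the diagonal bimodule of $\twvect(D)$, whose linear term is $\varphi$ itself; because $\varphi$ is cohomologically full and faithful on the objects in play (Theorem~\ref{thm:Lekili-Polishchuk}(a)), this morphism is a quasi-isomorphism. Restricting both bimodules along the inclusions $\AF \hookrightarrow \BF$ and $\AC \hookrightarrow \twvect(D)$ --- which are compatible since $\varphi(\AF) \subseteq \AC$ and $\varphi|_{\AF}$ is a quasi-isomorphism --- descends this to a quasi-isomorphism of $\AF$-bimodules between the $\Fuk^{\to}(w)$-module $\Fuk(\Sigma)$ and the pullback of the $B$-side bimodule. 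Combined with the identifications of the previous paragraph, this is exactly the claimed equivalence of $A_\infty$ bimodule structures for $(\AF, \BF)$ and $(\AC, \BC)$.

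The main obstacle is not conceptual but one of bookkeeping in the $A_\infty$ setting: one must ensure that $\varphi$, together with all of its higher-order components, genuinely intertwines the full bimodule structure --- all higher products --- and not merely the objects and cohomology-level morphisms. I would handle this by never passing to cohomology prematurely, treating $\varphi$ throughout as a functor of $A_\infty$ categories and invoking the standard homological algebra of pullback of $A_\infty$ bimodules along a functor (as in \cite{Seidel_book} and \cite{Seidel_subalgebras}), so that the bimodule quasi-isomorphism is produced at once from the quasi-equivalence $\varphi$. The only genuinely geometric input is thereby confined to the identification $\varphi(\mathcal{L}_i) \simeq E_i|_D$ and to the product-compatibility of the restriction isomorphisms $\theta$ in Lemma~\ref{lem:restricting_exceptionals}(c).
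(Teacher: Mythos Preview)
Your proposal is correct and follows essentially the same route as the paper: fix a strictly unital model $\BF$ for $\Fuk(\Sigma)$, carve out the directed subcategory $\AF$ on the $\mathcal{L}_i$, and do the same on the $B$-side inside $\twvect(D)$ using the objects $E_i|_D \simeq \cO_D(\sum d_{ij}p_j)$; then the quasi-equivalence $\varphi$ of Theorem~\ref{thm:Lekili-Polishchuk} restricts to $\AF \to \AC$, and Lemma~\ref{lem:restricting_exceptionals} identifies $\AC$ with a model for $D^b\Coh(Y)$. Your account of the bimodule claim via pullback of diagonal bimodules along $\varphi$ is more explicit than the paper's, which simply records the bimodule statement as ``immediate'' from the construction, but the content is the same.
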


\begin{proof}
On the A-side, start with a strictly unital model for $\Fuk(\Sigma)$, say $\BF$, with named branes $\mathcal{W}_i$, $i=1, \ldots, k$, and $\mathcal{V}_0$ as before. Now define an $A_\infty$ subcategory $\AF$ of $\Fuk(\Sigma)$ by taking objects $\mathcal{L}_i$ as above, and imposing directedness, as in Section \ref{sec:directed_generalities}, by taking subsets of the morphism spaces in $\Fuk(\Sigma)$, and the induced $A_\infty$ operations. (We allow ourselves to take duplicate objects in our model for $\Fuk(\Sigma)$ if needed.) 
By construction, $D^b \Fuk^\to (w) \cong H^0 (\perf \, \AF)$. 

On the B-side, starting with $\BC$, proceed identically using $\{ \cO(-p_i)[1] \to \cO \}$ instead of $\mathcal{W}_i$, and $\cO$ instead of $\mathcal{V}_0$.  Call the resulting dg category $\AC$. Now combining Lemmas \ref{lem:restricting_exceptionals} and \ref{lem:restricted_sheaves} with the fact that  applying a spherical twist in $\{ \cO(-p_i)[1] \to \cO \}$ is equivalent to tensoring with $\cO(p_i)$, we get that 
$H^0 \perf \, \AC$ is equivalent to $D^b \Coh Y$. 

We now get that by construction, the $A_\infty$ quasi-isomorphism $\varphi: \BF \to \BC$ of Theorem \ref{thm:Lekili-Polishchuk} induces  an $A_\infty$ quasi-isomorphism $\varphi_{\mathcal{A}}: \AF \to \AC$, and so an equivalence $D^b \Fuk^\to \tilde{w} \cong D^b \Coh \tilde{Y}$; the claims about objects and bimodule structures are also immediate. 
\end{proof}

For our explicit collection, we get the following.

\begin{corollary}\label{cor:directed_equivalence}
Suppose $(\tilde{Y}, \tilde{D}) \in \tilde{\mathcal{T}}_e$, and $\tilde{w}: \tilde{M} \to \bC$ is the Lefschetz fibration associated to it in Definition \ref{def:construction_tilde}. Then there is an equivalence of categories
$$
D^b \Coh (\tilde{Y}) \cong D^b \Fuk^{\to}(\tilde{w})
$$
which, using the notation of Corollary \ref{cor:full_exc_coll}, takes $\cO_{\Gamma_{ij}}(\Gamma_{ij})$ to the $j$th copy of $\mathcal{W}_i$, and $\pi^\ast \cO(\D_1 + \ldots + \D_i)$ to $\mathcal{V}_i$. 
\end{corollary}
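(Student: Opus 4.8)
The plan is to obtain the corollary as a direct application of Theorem \ref{thm:mirror_line_bundles} to the full exceptional collection of line bundles on $\tilde{Y}$ supplied by the alternative form of Corollary \ref{cor:full_exc_coll}, namely
\[
\cO,\ \cO(\Gamma_{km_k}),\ \ldots,\ \cO(\Gamma_{11}),\ \pi^\ast\cO(\D_1),\ \ldots,\ \pi^\ast\cO(\D_1+\cdots+\D_{k-1}).
\]
Since $(\tilde{Y},\tilde{D})\in\tilde{\mathcal{T}}_e\subseteq\mathcal{T}_e$, the hypotheses of Theorem \ref{thm:mirror_line_bundles} are met, and the abstract Weinstein Lefschetz fibration it assigns to this collection agrees, after the Hurwitz moves of Proposition \ref{prop:interior_blow_up} and Remark \ref{rmk:Hurwitz_meridien}, with the one of Definition \ref{def:construction_tilde} that defines $\tilde{w}$; these moves leave $D^b\Fuk^\to$ unchanged. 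Thus the equivalence $D^b\Coh(\tilde{Y})\cong D^b\Fuk^\to(\tilde{w})$ is produced immediately, and the real content of the corollary is the identification of the images of the distinguished objects with the named Lagrangian branes.

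First I would read off the images of the line bundles themselves. For $\cO$ all intersection numbers vanish, so its mirror brane has underlying Lagrangian $\ell(0,\ldots,0)=V_0$, i.e.\ $\mathcal{V}_0$. For $\pi^\ast\cO(\D_1+\cdots+\D_i)$ the projection formula and adjunction (as in Proposition \ref{prop:interior_blow_up}) give $\pi^\ast\cO(\D_1+\cdots+\D_i)\cdot\tilde{D}_j=\cO(\D_1+\cdots+\D_i)\cdot\bar{D}_j=\bar{D}_j\cdot(\bar{D}_1+\cdots+\bar{D}_i)$, which is exactly the exponent of $\tau_{W_j}$ used to define $V_i$ in Definition \ref{def:construction_toric}; hence the mirror brane has underlying Lagrangian $V_i$, giving $\mathcal{V}_i$. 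For $\cO(\Gamma_{ij})$ adjunction gives $\cO(\Gamma_{ij})\cdot\tilde{D}_l=\delta_{il}$, so by the description in Theorem \ref{thm:mirror_line_bundles} its mirror brane is $T_{\mathcal{W}_i}\mathcal{V}_0$, with underlying Lagrangian $\ell(0,\ldots,1,\ldots,0)=\tau_{W_i}V_0$ (the $1$ in the $i$th slot).

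It then remains to identify where the non-locally-free sheaves $\cO_{\Gamma_{ij}}(\Gamma_{ij})$ go, and here I would invoke the mutation relating the two collections of Corollary \ref{cor:full_exc_coll}. Iteratively left mutating $\cO$ past each $\cO(\Gamma_{ij})$ replaces the latter, on the $B$-side, by $L_{\cO}\cO(\Gamma_{ij})=\cO_{\Gamma_{ij}}(\Gamma_{ij})$, the cone of $\cO\to\cO(\Gamma_{ij})$, exactly as in Remark \ref{rmk:Hurwitz_meridien}; the corresponding Hurwitz move on the $A$-side sends $\ell(0,\ldots,1,\ldots,0)$ to $\tau_{V_0}\ell(0,\ldots,1,\ldots,0)=W_i$, namely the $j$th copy $W_{i,j}$. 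By Lemma \ref{lem:mutations_mirror_mutations} these mutations are intertwined by the equivalence, so $\cO_{\Gamma_{ij}}(\Gamma_{ij})$ maps to the $j$th copy of $\mathcal{W}_i$, completing the identification. As a consistency check this matches Theorem \ref{thm:Lekili-Polishchuk} together with Corollary \ref{cor:restricted_sheaves}: $\cO_{\Gamma_{ij}}(\Gamma_{ij})$ restricts on $\tilde{D}$ to the skyscraper $\cO_{p_i}$, whose resolution is the $\varphi$-image of $\mathcal{W}_i$.

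The argument is essentially formal once the earlier results are in hand; the one step that genuinely needs care is this last one, since Theorem \ref{thm:mirror_line_bundles} only speaks of line bundles, and the image of the non-locally-free sheaf must be pinned down through the compatibility of mutations with the equivalence (Lemma \ref{lem:mutations_mirror_mutations}) rather than read off directly. The remaining verifications—the intersection-number computations and the identification of the Definition \ref{def:construction_tilde} fibration with a geometric realisation of the Theorem \ref{thm:mirror_line_bundles} fibration—are routine bookkeeping.
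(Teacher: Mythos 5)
Your proposal is correct and follows essentially the same route as the paper, which presents the corollary as an immediate consequence of Theorem \ref{thm:mirror_line_bundles} applied to the line-bundle collection of Corollary \ref{cor:full_exc_coll}, with the image of $\cO_{\Gamma_{ij}}(\Gamma_{ij})$ pinned down by the mutation/Hurwitz-move compatibility already built into Definition \ref{def:construction_tilde} and Remark \ref{rmk:Hurwitz_meridien}. One small caveat on your final step: Lemma \ref{lem:mutations_mirror_mutations} concerns Hamiltonian isotopy classes for mutation-equivalent collections of \emph{line bundles}, so it does not literally cover a mutated collection containing the non-locally-free sheaf $\cO_{\Gamma_{ij}}(\Gamma_{ij})$; the intertwining you need follows instead from the fact that a triangulated equivalence commutes with mutations (cones of evaluation maps), combined with Seidel's identification of Dehn twists with spherical twists, exactly as in the proof of Lemma \ref{lem:isomorphic_vanishing_cycles}.
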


For any $(Y,D)$  in $\mathcal{T}_e \backslash \tilde{\mathcal{T}}_e$, one could similarly explicitly spell out what Theorem \ref{thm:mirror_line_bundles} gives for the Lefschetz fibrations discussed in Section \ref{sec:non_toric_fibrations}.

\subsection{Localisation and equivalence $D^b \W (M) \cong D^b \Coh (Y \backslash D)$}

We have shown that the pairs $(\AF, \BF)$ and $(\AC, \BC)$ give equivalent $A_\infty$ bimodule structures. We now want to use a localisation construction to show that $D^b \Coh (Y \backslash D) \cong D^b \W (M)$. This is exactly the same argument as in \cite[Section 6]{Keating}. The only difference is that instead of the theorem-in-progress of Abouzaid and Seidel \cite{Abouzaid-Seidel}, stated as \cite[Theorem 6.1]{Keating}, one can now use more recent work of Ganatra, Shende and Pardon \cite{GPS_sectorial}, which can in part be viewed as a generalisation of \cite{Abouzaid-Seidel}. (It also builds on Sylvan's thesis \cite{Sylvan}.)

The localisation result we use is \cite[Theorem 1.16]{GPS_sectorial}, which is spelled out in the case of a Lefschetz fibration in \cite[Example 1.19]{GPS_sectorial}. Their presentation is superficially different than the one of Abouzaid--Seidel; we briefly explain how they are equivalent in our case.

Let's start with background to the Abouzaid--Seidel  construction,  taken from \cite{Seidel_Hochschild, Seidel_subalgebras, Seidel_Lefschetz_I, Seidel_Lefschetz_II}, where we have also attempted to use the same notation. Use the description of the directed Fukaya category using Lefschetz thimbles, following \cite{Seidel_Lefschetz_I}; the equivalence with the definition we used earlier is \cite[Corollary 7.1]{Seidel_Lefschetz_I}.  Consider the pair $(\A, \B) = (\AF, \BF)$ as before. There is an isomorphism $\B / \A \cong \A^\vee[-1]$, which induces a short exact sequence $ 0 \to \A \to \B \to \A^\vee[-1] \to 0$. Let $V = H^0 (\perf(\A))$. 
The dg functor $\Phi_{\A^\vee[-2]}: \perf(\A) \to \text{prop}(\A) $, induced by convolution with the dual diagonal bimodule $\A^\vee$, is shown to be a cochain-level implementation of the Serre functor for perfect modules.  Let $F = H^0 (\Phi_{\A^\vee[-2]} ): V \to V$. On the other hand, convolution with the diagonal bimodule induces $\text{Id} = H^0 (\Phi_\A): V \to V$. Let $\delta[-1]: \A^\vee[-2] \to \A$ be the boundary homomorphism of our exact sequence, shifted so as to have degree zero. This induces a natural transformation $T = [\Phi_{\delta[-1]}]: F \to \text{Id}$. 

An explicit description of $T$ is given in \cite{Seidel_Lefschetz_II}. For each thimble $K$, $F(K)$ is the thimble given by applying the total monodromy of the fibration to $K$. $T_K \in Hom(F(K), K)$ is the class of $e_K$,  the distinguished intersection point of $K$ and $F(K)$ at the critical point they share. See Figure \ref{fig:localise}. 
The article \cite{Seidel_subalgebras} explains how to localise $V$ along $T$. With suitable care, this amounts to inverting the collection $\{ T_K \}$, see \cite[Equation 1.4]{Seidel_subalgebras}; this is equivalent to quotienting out the smallest full thick subcategory of $V$ containing the cones of all of the $T_K$. Abouzaid--Seidel's theorem says that localising $V$ along $T$ gives the wrapped Fukaya category $D^b \W (M)$. 
\begin{figure}[htb]
\begin{center}
\includegraphics[scale=0.38]{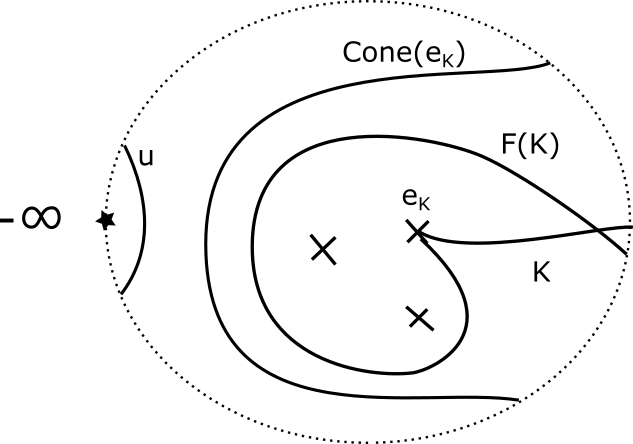}
\caption{A thimble $K$, its image $F(K)$ under the Serre functor, the distinguished morphism $e_K \in Hom(F(K), K)$ and its cone, the arc $u$, and the reference point at $-\infty$ for the stop.}
\label{fig:localise}
\end{center}
\end{figure}

On the other hand, \cite{GPS_sectorial} presents the wrapped Fukaya category as the localisation of the Fukaya--Seidel category of $w$; the latter is defined as the wrapped Fukaya category $M$ stopped at $w^{-1}(\infty)$, denoted $\W(M, \Sigma \times \{ - \infty \})$, and is equivalent to our  $\text{tw }\Fuk^{\to} (w)$ by their Corollary 1.14. They prove that $\W(M)$ is given by taking  $\W(M, \Sigma \times \{ - \infty \})$ and quotienting 
 the collection of linking discs of $\mathfrak{s} \times \{ - \infty \}$, where  $\mathfrak{s}$ is the core of $\Sigma$ \cite[Theorem 1.16]{GPS_sectorial}. 
 By their Theorem 1.10, this is the same as quotienting out the image of  $\Fuk(\Sigma)$ in $\W(M, \Sigma \times \{ - \infty \})$ under the map taking a Lagrangian $S$ to $S \times u$, where $u \simeq \bR$ is a small arc about $-\infty$. As $\W(M, \Sigma \times \{ - \infty \})$ is generated by thimbles, it's enough to take $L_i \times u$, $i=0, \ldots, n$, where the $L_i$ are a distinguished collection of vanishing cycles for $w$.

On the other hand, for any thimble $K$, using e.g.~\cite[Section 17j]{Seidel_book}, we know that the cone on $e_K$ is quasi-isomorphic to the smooth Lagrangian cylinder given by smoothing the critical point associated to $e_K$, as in Figure \ref{fig:localise}. Now notice that if $S_K$ is the vanishing cycle associated to $K$, the ends of this cyclinder can be Hamiltonian isotoped around (without crossing the stop, i.e.~the fibre about $-\infty$) so that it is equal to $S_K \times u$. It's then clear that the two localisation procedures give quasi-equivalent derived categories.

For an explanation of why the analogous localisation procedure, applied to the pair $(\AC, \BC)$, gives $D^b \Coh(Y \backslash D)$, see \cite[p.~42]{Seidel_subalgebras}.
Altogether we get the following.

\begin{theorem}\label{thm:wrapped_iso}
Let $(Y,D) \in \mathcal{T}_e$, and let $E_0, \ldots, E_n$ be a full exceptional collection of line bundles on $Y$. Let $\{ \Sigma, (L_0, \ldots, L_n) \}$ be the abstract Weinstein Lefschetz fibration associated to this data following Definition \ref{def:construction_general}, and let $M$ be its total space. Then there is an equivalence of derived categories $$D^b \Coh (Y \backslash D) \cong D^b \W(M).$$
This equivalence takes the line bundle $E_i|_{Y \backslash D}$ to the Lagrangian thimble $K_i$ in $M$ with vanishing cycle  $L_i$. 
\end{theorem}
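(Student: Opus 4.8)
The plan is to deduce this from Theorem \ref{thm:mirror_line_bundles} by applying one and the same localisation operation to both sides of the bimodule equivalence established there. Recall that Theorem \ref{thm:mirror_line_bundles} produces $A_\infty$ models $(\AF, \BF)$ for $(\Fuk^\to(w), \Fuk(\Sigma))$ and dg enrichments $(\AC, \BC)$ for $(\Perf(D), D^b\Coh(Y))$, together with a quasi-isomorphism of pairs intertwining the two bimodule structures. The point is that $D^b\W(M)$ and $D^b\Coh(Y \backslash D)$ should each arise as a localisation of the corresponding directed/global pair, and the matching of bimodules will force the localisations to agree.

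First I would pin down the localisation procedure on the A-side. Following \cite{GPS_sectorial}, the Fukaya--Seidel category $\W(M, \Sigma \times \{-\infty\})$ is quasi-equivalent to $\text{tw}\,\Fuk^\to(w)$ (their Corollary 1.14), and $\W(M)$ is obtained from it by quotienting by the image of $\Fuk(\Sigma)$ under the cap-off functor $S \mapsto S \times u$, where $u \simeq \bR$ is a small arc about $-\infty$ (their Theorems 1.10 and 1.16). Since the thimbles generate, it suffices to quotient by the objects $L_i \times u$. I would then check that this coincides with the Abouzaid--Seidel prescription of inverting the natural transformation $T \colon F \to \mathrm{Id}$ built from the boundary map $\delta[-1] \colon \A^\vee[-2] \to \A$ of the exact sequence $0 \to \A \to \B \to \A^\vee[-1] \to 0$: the cone on $e_K = T_K$ is quasi-isomorphic to the smoothing of the shared critical point, and its ends Hamiltonian isotope, without crossing the stop, to $S_K \times u$.

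Next I would exhibit the mirror localisation on the B-side. The identical algebraic operation applied to $(\AC, \BC)$ --- inverting the boundary map of the analogous sequence $0 \to \AC \to \BC \to \AC^\vee[-1] \to 0$ --- produces a dg enrichment of $D^b\Coh(Y \backslash D)$; this is precisely the restriction-to-the-open-complement statement of \cite[p.~42]{Seidel_subalgebras}, reflecting that the diagonal-versus-dual-diagonal bimodule extension encodes the ideal sheaf $\cO_{D/Y}$ whose inversion cuts out $Y \backslash D$. Because the bimodule data $(\AF, \BF)$ and $(\AC, \BC)$ are equivalent, the two localisations are quasi-equivalent, yielding $D^b\W(M) \cong D^b\Coh(Y \backslash D)$, and the object correspondence $E_i|_{Y \backslash D} \leftrightarrow K_i$ follows by tracking the identification $\mathcal{L}_i \leftrightarrow E_i$ through the cap-off functor.

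The main obstacle I expect is verifying that the two a priori different localisation recipes --- Abouzaid--Seidel's inversion of the monodromy natural transformation and Ganatra--Pardon--Shende's quotient by the linking discs of the stop --- genuinely agree as functors on our explicit pair, not merely at the level of derived categories. This is the geometric heart of the argument, turning on the identification of $\mathrm{cone}(e_K)$ with $S_K \times u$ and on the fact that inverting $T$ is mirror to removing the anticanonical divisor $D$; everything else is a matter of transporting the bimodule equivalence of Theorem \ref{thm:mirror_line_bundles} through this localisation.
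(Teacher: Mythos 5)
Your proposal follows the paper's proof essentially verbatim: both deduce the result from the bimodule equivalence of Theorem \ref{thm:mirror_line_bundles} by localising the pairs $(\AF,\BF)$ and $(\AC,\BC)$ in parallel, using \cite[Theorems 1.10 and 1.16, Corollary 1.14]{GPS_sectorial} on the A-side, reconciling this with the Abouzaid--Seidel inversion of $T$ via the identification of $\mathrm{cone}(e_K)$ with the smoothed cylinder isotoped to $S_K \times u$ without crossing the stop, and citing \cite[p.~42]{Seidel_subalgebras} for the B-side localisation yielding $D^b\Coh(Y\backslash D)$. The step you flag as the ``geometric heart'' --- that the two localisation recipes agree --- is exactly the comparison the paper carries out, so your plan is correct and complete as it stands.
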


If $(\tilde{Y}, \tilde{D})$ is obtained from $(Y,D)$ by corner blow ups, then $\tilde{Y} \backslash \tilde{D} \cong Y \backslash D$; on the A side, we get two Lefschetz fibrations which are related by a stabilisation (Proposition \ref{prop:corner_blow_up}), which implies that their totals spaces are Weinstein deformation equivalent. We note the following:

\begin{corollary}\label{cor:wrapped_iso}
Let $(Y,D) \in \mathcal{T}_e$, and let $(\tilde{Y}, \tilde{D}) \in \tilde{\mathcal{T}}_e$ be a toric model for it. Let $\tilde{M}$ be the total space of the Lefschetz fibration associated to $(\tilde{Y}, \tilde{D})$ in Definition \ref{def:construction_tilde}. Then there is an equivalence of derived categories 
$$
D^b \Coh(Y \backslash D) \cong D^b \W(\tilde{M}). 
$$
\end{corollary}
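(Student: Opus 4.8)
The plan is to deduce this directly from Theorem \ref{thm:wrapped_iso}, applied not to $(Y,D)$ but to the toric model $(\tilde{Y}, \tilde{D})$ itself, combined with the observation that passing to a toric model does not change the open part. Since the corollary is essentially a packaging of results already in hand, I would aim for a short argument.

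First I would record that $(\tilde{Y}, \tilde{D}) \in \tilde{\mathcal{T}}_e \subseteq \mathcal{T}_e$, so the hypotheses of Theorem \ref{thm:wrapped_iso} are met, and that $\tilde{Y}$, being a smooth rational projective surface, carries the full exceptional collection of line bundles exhibited in Corollary \ref{cor:full_exc_coll}. The Lefschetz fibration of Definition \ref{def:construction_tilde} is by construction the geometric realisation of the abstract Weinstein Lefschetz fibration $\{\Sigma, (L_0, \ldots, L_n)\}$ associated to precisely this collection, so its total space is the space called $\tilde{M}$ in the statement. Applying Theorem \ref{thm:wrapped_iso} to this data then yields a derived equivalence
$$D^b \Coh(\tilde{Y} \setminus \tilde{D}) \cong D^b \W(\tilde{M}).$$

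Next I would invoke the defining property of a toric model (Definition \ref{def:toric_model}): the morphism $(\tilde{Y}, \tilde{D}) \to (Y,D)$ is obtained by iteratively blowing up corners of $D$, hence is an isomorphism away from the boundary, giving a canonical isomorphism of quasi-projective varieties $\tilde{Y} \setminus \tilde{D} \cong Y \setminus D$. This induces an equivalence $D^b \Coh(\tilde{Y} \setminus \tilde{D}) \cong D^b \Coh(Y \setminus D)$, and composing with the equivalence above completes the argument.

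The statement therefore carries essentially no new content beyond Theorem \ref{thm:wrapped_iso}, and the only point requiring care is the identification of spaces: on the B-side the open variety is literally unchanged under corner blow-ups, while on the A-side one should note (as in the paragraph preceding the statement) that the fibration $\tilde{w}$ differs from the fibration $w$ attached to $(Y,D)$ by the stabilisations of Proposition \ref{prop:corner_blow_up}, so that $\tilde{M}$ and $M$ are Weinstein deformation equivalent and hence $D^b \W(\tilde{M}) \cong D^b \W(M)$. This furnishes an alternative route—apply Theorem \ref{thm:wrapped_iso} to $(Y,D)$ with any full exceptional collection of line bundles (which exists by Corollary \ref{cor:gnl_exc_coll_lines}) and transport across the Weinstein equivalence—but the direct route via $(\tilde{Y}, \tilde{D})$ is cleaner, since it avoids choosing a collection on $Y$ altogether.
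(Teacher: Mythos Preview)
Your proposal is correct and matches the paper's own reasoning: the corollary is stated without a separate proof, immediately after the paragraph observing that $\tilde{Y}\setminus\tilde{D}\cong Y\setminus D$ under corner blow-ups and that the corresponding Lefschetz fibrations are related by stabilisations (hence have Weinstein deformation equivalent total spaces). Your primary route---apply Theorem~\ref{thm:wrapped_iso} directly to $(\tilde{Y},\tilde{D})$ with the collection of Corollary~\ref{cor:full_exc_coll}, then transport across the isomorphism of open parts---is exactly the intended argument.
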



\section{Relations with other constructions: toric pairs}\label{sec:relations_toric}

Let $(\bar{Y},\oD)$ be a toric pair. 
Let $T \simeq (\bC^{\times})^2$ be the algebraic torus acting on $\bar{Y}$, $N=\Hom(\bC^{\times},T) \simeq \bZ^2$ the group of one-parameter subgroups of $T$ and $M=\Hom(T,\bC^{\times})=\Hom(N,\bZ)$ the group of characters of $T$. (We'll avoid this notation whenever there's a risk of confusion with the mirror manifold.) Let $T^{\vee}$ denote the algebraic torus dual to $T$, so $T= N \otimes \bC^{\times}$ and $T^{\vee}=M \otimes \bC^{\times}$. These are trivially SYZ mirror to each other.

Adding the toric divisor back to $T$ corresponds to equipping $T^\vee$ with a Landau--Ginzburg superpotential; the intuition from physics \cite{Hori-Vafa} is that this should be given by $\bar{w}_{HV}  =\sum z^{v_i}$, where 
 the $v_i$ are the primitive integral generators of the rays of the fan of $Y$. 
On the other hand, the mirror superpotential should be a count of Maslov 2 discs in $Y$ with boundary on the exact Lagrangian torus in $T$; to each disc is associated a monomial, with exponents encoded by the intersection of the disc with $D$, and coefficient by its symplectic area.
When $Y$ is Fano, each of the $D_i$ has positive Chern number, and this implies that any Maslov 2 disc intersects exactly one $D_i$, transversally in a single point (see \cite{Cho-Oh} for details); in this case the Hori-Vafa potential gives the disc counting superpotential `on the nose'.  In general, $\bar{w}_{HV}$ isn't quite the `correct' disc counting potential on $T^\vee$, but rather its leading term, with higher order terms associated to Maslov index two discs intersecting $D$ at more than one point (see e.g.~the survey \cite{Chan} for details). 
One way to compensate for the higher order terms in the full superpotential is to work instead with  $\bar{w}_{HV}$ restricted to a strict open analytic subset $\bar{V}$ of $T^{\vee}$ which is a neighbourhood of the exact torus in $T^\vee$. (This is because adding the higher-order correction terms deforms some of the critical values of $\bar{w}_{HV}$ off to infinity.) This is what is done in both in \cite{AKO_weighted} in the case of a general Hirzebruch surface, and more generally in \cite{Abouzaid_toric1, Abouzaid_toric2}. We expect the restriction of  $\bar{w}_{HV}$ to $T^\vee$ to agree with the Lefschetz fibration $\bar{w}$ we construct. 

In this section, we will prove that the Lefschetz fibration we associated to a toric pair $(\bar{Y}, \bar{D})$ has total space Weinstein deformation equivalent to the disc bundle $D^\ast(T^2)$ (Proposition \ref{prop:Cstar2}); we also explain how to explicitly see the exact Lagrangian torus in the mirror Lefschetz fibration, by describing it as an iterated Polterovich surgery on a favoured collection of Lagrangian thimbles (Theorem \ref{thm:torus}). We match up our Lefschetz fibration with the known explicit examples \cite{AKO_weighted, Ueda}, and compare it with the work of Abouzaid \cite{Abouzaid_toric2} in the general case.

\subsection{$\bP^2$, toric del Pezzos, $\bP^1 \times \bP^1$ and Hirzebruch surfaces}\label{sec:AKO_comparison}

 The landmark article \cite{AKO_weighted} of Auroux, Katzarkov and Orlov includes a study of the Lefschetz fibrations on $(\bC^\times)^2$ given by the Hori-Vafa superpotentials for $\bP^2$ and $\bF_a$, $a \geq 0$.
They show that an exceptional collection of vanishing cycles on that Lefschetz fibration (which in general isn't full) generates a  category equivalent to the derived category of coherent sheaves on $\bP^2$ or $\bF_a$. 

 We will show that two  constructions agree. The cases of $\bP^2, \bP^1 \times \bP^1$ and $\bF_1$ are treated separately to the general case in \cite{AKO_weighted}. Let's check them first.

\textit{Case of $\bP^2$.} The mirror in \cite[Section 4]{AKO_weighted} is a Lefschetz fibration with central fibre a double cover of $\bC^\times$ branched at three points, i.e.~a thrice punctured elliptic curve, and ordered collection of vanishing cycles as given in their Figure 5. This is the well-known collection \cite[Figure 2]{Seidel_more}, which is mutation equivalent to ours \cite[Remark 3.3]{Keating}. (Indeed, we'll see shortly that these collections are essentially dual to each other.)

\textit{Case of $\bP^1 \times \bP^1$.}
The mirror in \cite[Section 5.1]{AKO_weighted} is a Lefschetz fibration with central fibre a double cover of $\bC^\times$ branched at four points, i.e.~a four punctured elliptic curve, and ordered collection of vanishing cycles $L_0, \ldots, L_3$ as given in their Figure 8. After a suitable identification of the fibre with our model four-punctured elliptic curve, we get $L_0 = \ell(0,0,0,0)$, $L_1 = \ell(0,1,0,1)$, $L_2 = \ell(1,0,1,0)$ and $L_3=\ell(1,1,1,1)$, which is consistent given that the mirror exceptional collection to the $L_i$ is $\cO$, $\cO(1,0)$, $\cO(0,1)$, $\cO(1,1)$ \cite[Proposition 5.1]{AKO_weighted}. To get the ordered collection of Definition \ref{def:construction_toric}, mutate $L_2$ over $L_3$.

\textit{Case of $\bF_1$.} This is also in \cite[Section 5.1]{AKO_weighted}. The central fibre of the mirror fibration is again a four punctured elliptic curve, with vanishing cycles in their Figure 9. We recognise the stabilisation operation of Proposition \ref{prop:stabilisation}: their Figure 9 is given by adding a branch point to the double cover $\Sigma_3 \to \bC^\times$ mirror to $\bP^2$, which, under the identification of their fibre with $\Sigma_3$ already made for $\bP^2$, is the same as stabilising along $c_E$, see Figure \ref{fig:stabilisation}. The cycles $L_0$, $L_1$ and $L_2$ are inherited from the fibration for $\bP^2$. $L_3$ is the image of $S_E$ under the inverse total monodromy of the fibration mirror to $\bP^2$. This simply acts on $S_E$ as an inverse Dehn twist in the boundary component to which the handle is attached --  in particular, it doesn't affect the intersections with any of the $L_i$. (This is mirror to mutating $i_\ast \cO_E(-1)$ to the end of the list of vanishing cycles, giving $i_\ast \cO_E$, as in \cite[Proposition 5.2]{AKO_weighted}. Ignoring shifts, this corresponds to applying the inverse of the Serre functor to $i_\ast \cO_E(-1)$, as $-K$ has degree 1 on a $(-1)$ curve.)

\textit{Case of $\bF_a$, $a \geq 2$.} This is in \cite[Section 5.2]{AKO_weighted}. First, they show that the Lefschetz fibrations over $(\bC^\times)^2$ defined by $W_1 = x + y + \frac{1}{x} + \frac{1}{x^ay}$, i.e.~the superpotential associated to $\bF_a$, and  $\tilde{W} = x + y + + \frac{1}{x^ay}$, i.e.~the superpotential associated to $\bP(a,1,1)$, are isotopic. The one given by  $\tilde{W}$ is described in their Section 4: it has smooth fibre the double cover of $\bC^\times$ branched over $a+2$ points, without loss of generality positioned at roots of unity, with cyclically symmetric vanishing cycles described in their Lemma 4.2 as matching paths in the case of the cover. They explain that when one deforms the superpotential  $W_b = x + y + \frac{1}{x} + \frac{b}{x^ay}$ by allowing $b \to 0$, $a-2$ of the critical values of $W_b$ go off to infinity, while the remaining four stay in a bounded region; and that to get the mirror to  $D^b \Coh (\bF_a)$, one must take the full subcategory of the directed Fukaya category of $W_b$ generated by  the four remaining vanishing cycles \cite[Proposition 5.5]{AKO_weighted}. 
These are obtained by performing mutations on the original collection for $\tilde{W}$ so as to get vanishing paths that are not crossed by the trajectories of the critical points that go off to infinity; the general case is described around their Figure 10, where the resulting collection of vanishing cycles is called $\tilde{L}_0$, $\tilde{L}_1$, ${L}'$, $L''$. (The case $n=3$, and implicitly $n=2$, gets treated separately, giving a collection $\tilde{L}_0$, $\tilde{L}_1$, $\tilde{L}_2$, $\tilde{L}_3$; observe that $\tilde{L}_2 = \tau_{L''}^{-1}L'$ and $\tilde{L}_3 = \tau_{\tilde{L}_2}^{-1} L''$, so we can consider the general case directly.)

Start with the collection $L_0, L_1, \ldots, L_{a+1}$ of their Lemma 4.2. The matching path  for $L_i$ in $\bC^\times$ (the base of the double branched cover), called $\delta_i$, is a straight line segment whose end points have argument $-2\pi \frac{i}{a+2} \pm \pi \frac{2}{a+2}$ (deformed in the obvious way if $a=2$).  We take $\tilde{L}_0 = L_0$, $\tilde{L_1} = L_{a+1}$; ${L}'$ and $L^{''}$ are obtained by iteratively applying Hurwitz moves to $L_{\lfloor a/2 \rfloor}$ and $L_{\lfloor a/2 \rfloor +1}$, respectively. Set $\tilde{\delta}_0 = \delta_0$, $\tilde{\delta}_1 = \delta_{a+1}$. 
Let $\delta'$ be the matching path for $L'$ , and $\delta''$ the one for $L''$. 
In order to get these paths, we perform the mutations to get to \cite[Figure 10]{AKO_weighted}. The first four cases are given in Figure \ref{fig:AKO}.

\begin{figure}[htb]
\begin{center}
\includegraphics[scale=0.25]{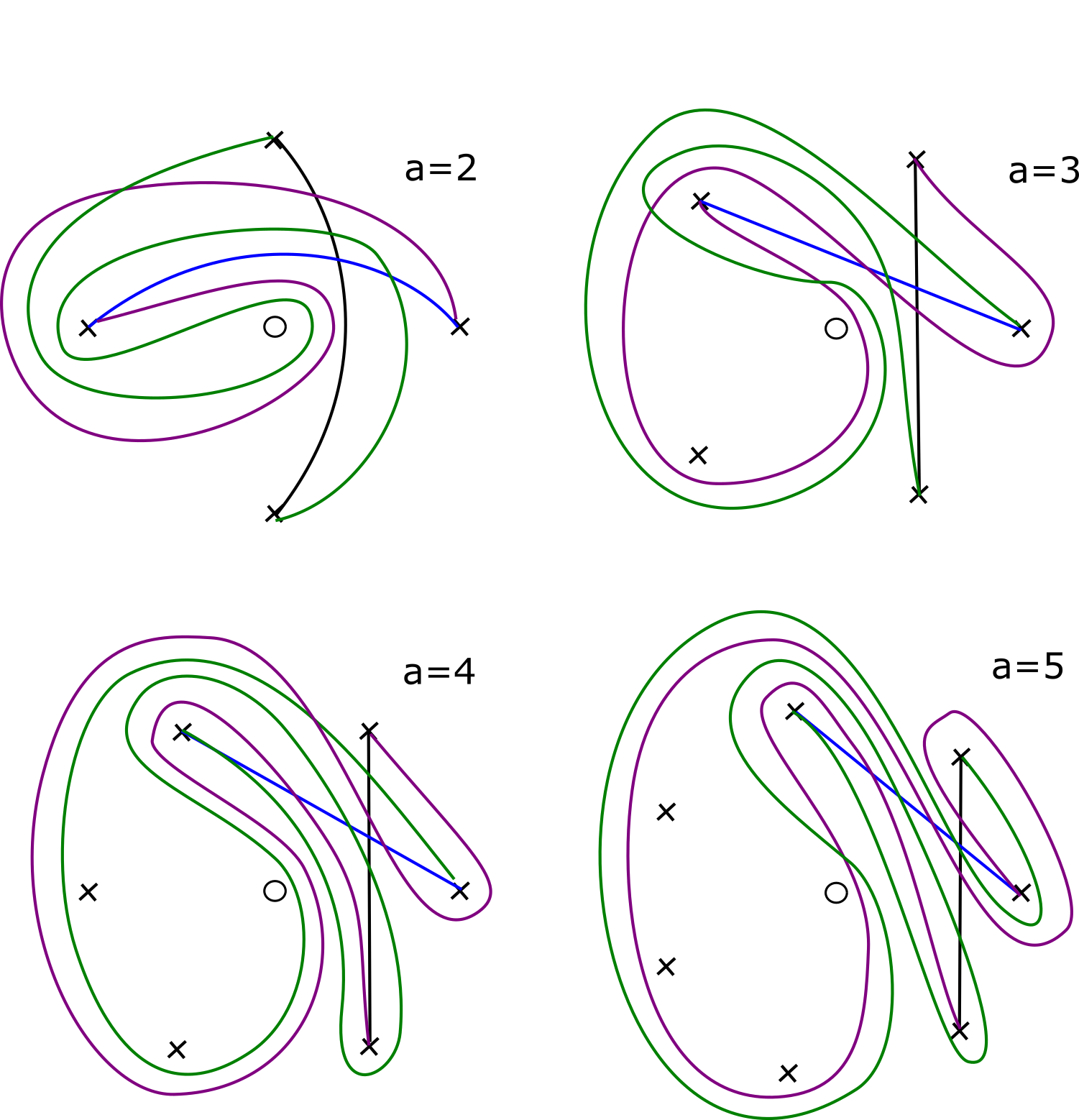}
\caption{The matching paths $\tilde{\delta}_0$ (black), $\tilde{\delta}_1$ (blue), $\delta'$ (purple, goes furthest left for $a=2$ and $4$) and $\delta''$ (green, goes furthest left for $a=3$ and 5).}
\label{fig:AKO}
\end{center}
\end{figure}
In general,  there is an inductive relation as follows. 
For $a \geq 5$, call $\delta_l$, resp. $\delta_r$, the straight line path from the branch point with argument $-\frac{2 \pi}{a+2}$ (resp.~$0$) to the one with argument $\frac{4 \pi}{a+2}$ (resp.~$\frac{2 \pi}{a+2}$); and let $S_l$, resp.~$S_r$, be the associated matching cycles in the double cover; we define them analogously for $a<5$ (one of course can't use straight line paths in those cases). 
In general, one passes from the configuration for $a$ to the one for ${a+1}$ by  adding a branch point in the obvious manner, and  performing a positive half-twist in $\delta_r$ if $a$ is even, or  performing a negative half-twist in $\delta_l$ if $a$ is odd. $L'$ and $L''$ both intersect $\tilde{L}_0$ and $\tilde{L}_1$ in $a$ points. 

None of the straight line segments between the central puncture and  $ \frac{2\pi j}{a+2}$, $j=3, \ldots, a$, cross $\delta'$ or $\delta''$. 
We now see that we can interpret the removal of the critical values which go to infinity as $b \to 0$ as a destabilisation process: we can destabilise the Lefschetz fibration on $L_1, \ldots, L_{\lfloor a/2 \rfloor -1},  L_{\lfloor a/2 \rfloor +2}, \ldots, L_{a-1}$, which for the fibre amounts to deleting the branch points $ \frac{2\pi j}{a+2}$, $j=3, \ldots, a$. This cuts the topology of the fibre down to a four-punctured elliptic curve. 

It remains to compare the collection $\tilde{L}_0$, $\tilde{L}_1$, ${L}'$, $L''$ with the one from Definition \ref{def:construction_toric}. 
We need to identify the branched double cover of $\bC^\times$ with our standardised four-punctured elliptic curve. Take the branch cuts to be $\delta_l$ and $\delta_r$; deforming the positions of the branch points, we can get an identification in which $\tilde{L}_0$ is the reference longitude $\ell(0,0,0,0)$,  $\tilde{L}_1 =\ell(0,1,0,1)$, $S_r = W_2$, $S_l = W_4$, and, in the case $a=3$, say, $L' = \ell(1, 3, 1, 0  )$ and $L'' = \ell(1, 2 , 1, -1)$. See Figure \ref{fig:AKO_deformed}.
\begin{figure}[htb]
\begin{center}
 \includegraphics[scale=0.25]{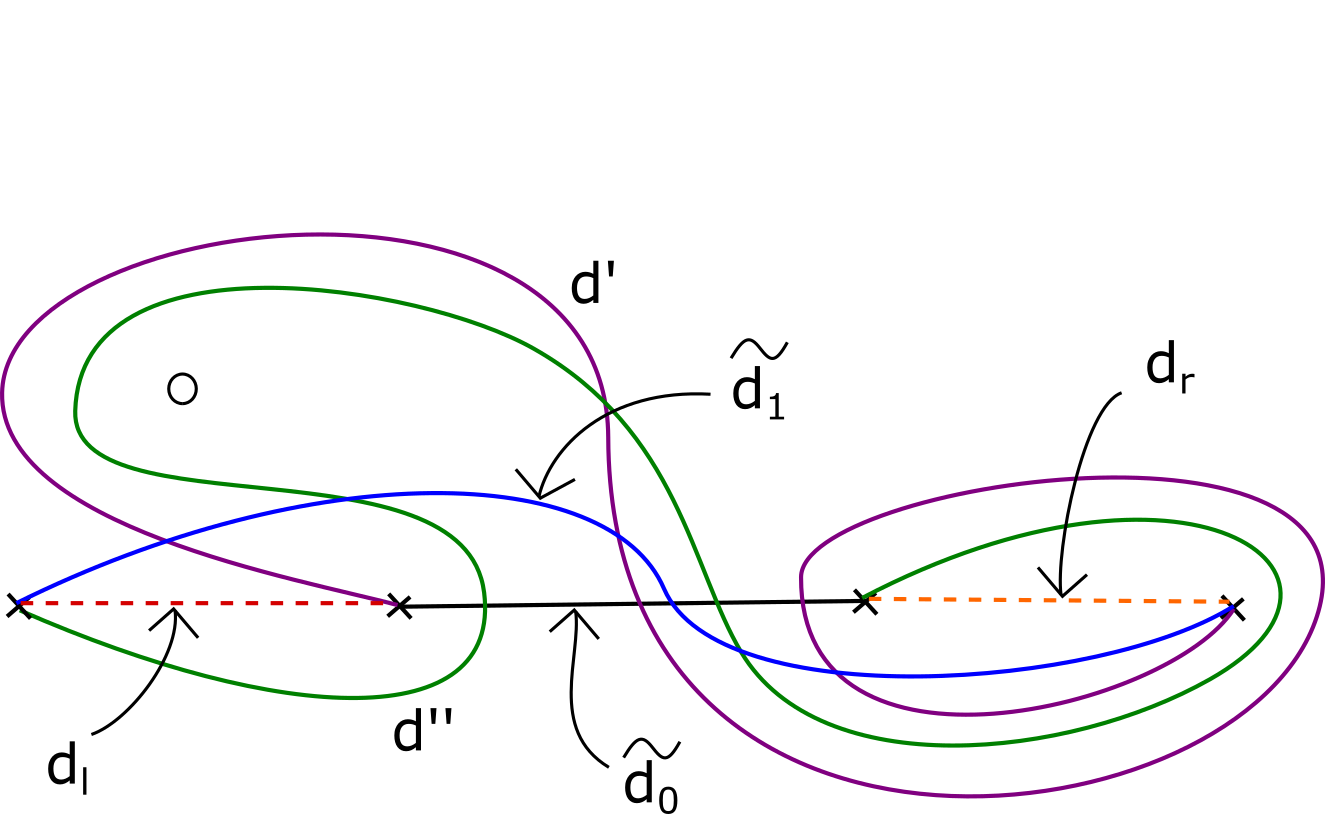}
\caption{Case $a=3$: the matching paths $\tilde{\delta}_0$, $\tilde{\delta}_1$, $\delta'$ and $\delta''$, and the branch cuts $\delta_l$ and $\delta_r$, after ambient isotopy (for technical convenience we replace $\delta$ with $d$ in the labels).}
\label{fig:AKO_deformed}
\end{center}
\end{figure}

Now notice that one can braid the final two vanishing cycles, $L'$ and $L''$, by iteratively mutating them (and their images) over each other by inverse Dehn twists to get to the pair $V_2, V_3$ in Definition \ref{def:construction_toric}; intuitively, we are transfering all of the half twists onto the $\delta_r$ side. For $a=3$, we need three mutations: $L' \mapsto \tau^{-1}_{L''} L' =: L^\dagger$; $L'' \mapsto \tau^{-1}_{L^\dagger} L'' = V_2$; and $L^\dagger \mapsto  \tau^{-1}_{V_2} L^\dagger = V_3$. For a general $a$, we proceed analogously with $\lfloor (a+3)/2 \rfloor$ mutations.

\begin{remark} Recall the list of possible exceptional collections of line bundles on $\bF_a$ in Remark \ref{rmk:exceptional_lines_minimal}, 
$$
\cO,\cO(A),\cO(nA+B),\cO((n+1)A+B)
$$
where $A$ is the class of the fiber and $B$ is the class of the negative section, and $n$ is an arbitrary integer. These are related by braiding the last two line bundles; the braiding used on $L'$ and $L''$ at the end of the preceeding discussion is the symplectic counterpart to this. 
\end{remark}

\subsection{Toric case: recognising $(\bC^\times)^2$ and its exact Lagrangian torus}\label{sec:torus}

\begin{proposition} \label{prop:Cstar2}
Let $(\bar{Y}, \D)$ be a toric pair. Then $\bar{M}$, the total space of the mirror Lefschetz fibration in Definition \ref{def:construction_toric}, is Weinstein deformation equivalent to the disc bundle $D^\ast T^2$. (In particular, after attaching cylindrical ends, we get $(\bC^\times)^2$.)
\end{proposition}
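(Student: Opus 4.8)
The plan is to reduce to the single case of $\bP^2$ by means of the minimal model program for smooth toric surfaces, exploiting that the total space of an abstract Weinstein Lefschetz fibration is unchanged, up to Weinstein deformation equivalence, under both stabilisation and Hurwitz moves. The point of working combinatorially in this way, rather than invoking the Hori--Vafa superpotential directly, is that for non-Fano toric surfaces the naive potential on $(\bC^\times)^2$ has critical points escaping to infinity and one would have to argue separately that the relevant subdomain is still $D^\ast T^2$; the reduction below sidesteps this entirely.

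First I would record the invariance statement. By Proposition \ref{prop:stabilisation}, if $(\bar{Y}', \bar{D}')$ is a toric blow-up of $(\bar{Y}, \bar{D})$, then its abstract Weinstein Lefschetz fibration is obtained from the one for $(\bar{Y}, \bar{D})$ by a single stabilisation together with Hurwitz moves. Neither operation changes the total space up to Weinstein deformation equivalence (as recalled just after the definition of stabilisation), so $\bar{M}' $ is Weinstein deformation equivalent to $\bar{M}$, and likewise for toric blow-downs. By the MMP for smooth toric surfaces, any $(\bar{Y}, \bar{D})$ is connected by a chain of toric blow-ups and blow-downs to a minimal model, namely $\bP^2$ or some $\bF_a$ (with $\bF_0 = \bP^1 \times \bP^1$). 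Thus it suffices to treat these base cases.

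Next I would link all the Hirzebruch surfaces back to $\bP^2$, since the $\bF_a$ with $a \geq 2$ are themselves minimal and so are not reached from $\bP^2$ by blow-downs alone. The key observation is that $\bF_a$ and $\bF_{a-1}$ share a common toric blow-up: blowing up the torus-fixed point where a fibre meets the self-intersection $(+a)$ section of $\bF_a$ produces a toric surface whose boundary cycle has self-intersection sequence $(-a,\,-1,\,-1,\,a-1,\,0)$, and the very same surface is produced by blowing up the torus-fixed point where a fibre meets the self-intersection $(-(a-1))$ section of $\bF_{a-1}$. By the invariance above, the total spaces for $\bF_a$ and $\bF_{a-1}$ are therefore Weinstein deformation equivalent. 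Since $\bF_1$ is itself the toric blow-up of $\bP^2$ at a torus-fixed point, induction on $a$ shows that $\bar{M}$ for every $\bF_a$ is Weinstein deformation equivalent to $\bar{M}$ for $\bP^2$. Finally, for the base case $\bP^2$, Example \ref{ex:CP2} identifies $\bar{w}$ with the standard Lefschetz fibration $(\bC^\times)^2 \to \bC$, $(x,y)\mapsto x + y + \tfrac{1}{xy}$ (cf.~\cite[Remark 3.3]{Keating}), whose total space as a Weinstein domain is $D^\ast T^2$, with completion $T^\ast T^2 \cong (\bC^\times)^2$.

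The main obstacle I expect is the base-case identification in the last step: everything else is bookkeeping once Proposition \ref{prop:stabilisation} and the stabilisation-invariance of the total space are in hand, but recognising that the three vanishing cycles of Example \ref{ex:CP2} assemble into the cotangent disc bundle of $T^2$, rather than some other Weinstein four-manifold, is the one genuinely geometric input and is where one must actually appeal to the standard elliptic-pencil model rather than manipulate fibrations combinatorially. The secondary point requiring care is the Hirzebruch-to-$\bP^2$ linking, since the minimal models legitimately include the $\bF_a$ with $a \geq 2$; the self-intersection computation above is what ensures these are not genuinely new cases.
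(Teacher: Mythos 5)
Your proof is correct, and its outer skeleton is the same as the paper's: the paper's proof of Proposition \ref{prop:Cstar2} is exactly the two-step argument ``(de)stabilisation and Hurwitz moves preserve the total space, so reduce via Proposition \ref{prop:stabilisation} and toric MMP to the minimal models.'' Where you genuinely diverge is in the base cases. The paper handles \emph{all} minimal models $\bP^2$ and $\bF_a$ at once by citing the comparison with the Auroux--Katzarkov--Orlov fibrations in Section \ref{sec:AKO_comparison} -- including the rather involved $a \geq 2$ analysis there, where one tracks the critical values escaping to infinity and interprets their removal as a destabilisation. You instead collapse every Hirzebruch case to $\bP^2$ by the observation that $\bF_a$ and $\bF_{a-1}$ admit a common toric blow-up (blowing up the corner of the $+a$-section of $\bF_a$ and the corner of the $-(a-1)$-section of $\bF_{a-1}$ both yield the five-ray surface; a fan computation confirms your self-intersection cycle $(-a,-1,-1,a-1,0)$), so the only genuinely geometric input is the single identification of Example \ref{ex:CP2} for $\bP^2$. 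This is more self-contained and more elementary: it makes Proposition \ref{prop:Cstar2} independent of the AKO comparison, which the paper carries out anyway for its own sake but which is logically heavier than what the proposition needs.

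One point you should make explicit: the two blow-down structures on the common five-ray surface induce \emph{opposite} cyclic orientations of its boundary -- your two self-intersection sequences are reverses of one another -- so the fibrations of Definition \ref{def:construction_toric} obtained from the two labellings are not literally the same abstract fibration. To conclude that both routes compute the same total space, invoke the independence-of-labelling results, Propositions \ref{prop:cyclic_reordering} and \ref{prop:switch_orientation}: their conclusion (equality up to Hurwitz moves and global fibre automorphisms) preserves the total space up to Weinstein deformation equivalence, which is all you need. This is not a gap in substance, since the paper establishes precisely these statements before Proposition \ref{prop:Cstar2}, but your phrase ``by the invariance above'' silently uses them.
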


\begin{proof} As (de)stabilisation does not change the Weinstein deformation equivalence class of the total space of a Lefschetz fibration, the cases of $\bP^2$ and $\bF_a$ with their standard toric divisors immediately follow from the discussion in Section \ref{sec:AKO_comparison}; and the general result is then a corollary of Proposition \ref{prop:stabilisation} together the MMP for smooth toric surfaces.
\end{proof}

We next explain how to get an explicit exact Lagrangian $T^2$ in our set-up. 

\begin{definition}\label{def:dual_collection} (See e.g \cite[Section~2.5]{Bridgeland-Stern} or \cite[Section~2.6]{Gorodentsev-Kuleshov}.)
Suppose $E_0, E_1, \ldots, E_n$ is an exceptional collection for a derived category. The \emph{dual exceptional collection} is $$E_n^\ast, E_{n-1}^\ast, \ldots, E_0^\ast,$$ where  $E_i^\ast$ is given by mutating $E_i$ over $E_{i-1}, E_{i-2}, \ldots, E_0$, for $i=0, \ldots, n$. Given an distinguished collection of vanishing paths for a Lefschetz fibration, we define the dual distinguished collection of vanishing paths analogously.
\end{definition}

\begin{definition}\label{def:surgery} 
Suppose that we have Lagrangians $V, V' \subset \Sigma$, intersecting transversally at a point. The \emph{Lagrangian surgery} of $V$ and $V'$, denoted $V \# V'$, is defined following the conventions of 
\cite[Section 6.1]{Biran-Cornea}. (This was originally introduced by Polterovich \cite{Polterovich}.) This construction depends on a positive parametre $\epsilon$, which is the symplectic area between the curve $H$ and $(-\infty, 0] \cup i[0,+\infty)$ in their Figure 12.
 If $V$ and $V'$ intersect transversally and minimally at several points, we will use $V \# V'$ to denote the result of Lagrangian surgery at all of those points.

The \emph{trace of Lagrangian surgery}, also as defined in \cite[Section 6.1]{Biran-Cornea}, is a Lagrangian cobordism in $\Sigma \times \bC$ from $(V, V')$ to $V \# V'$. Note the order of $V$ and $V'$ matters: the triple of cobordism ends $(V, V',V \# V')$ is ordered clockwise.
\end{definition}

By cycling the ends of the cobordism, we get, for instance, a cobordism from $(V \# V', V)$ to $V'$; note this is simply the trace of the Lagrangian surgery on $V \# V'$ and $V$, where we have deformed $V \# V'$ by a Hamiltonian perturbation so that it intersects $V$ minimally and transversally (this can be done by hand by using parallel pieces of $V$ and $V'$) -- and the result of Lagrangian surgery on $V \# V'$ and $V$ is (Hamiltonian isotopic to) $V'$.

\begin{theorem}\label{thm:torus}
Let $(\bar{Y}, \D)$ be a toric pair, and $\{\Sigma_k,  (V_0, \ldots, V_{k-1}) \}$ the abstract Weinstein Lefschetz fibration of Definition \ref{def:construction_toric}. Consider the dual collection of vanishing cycles, namely $(V^\ast_{k-1}, V^\ast_{k-2}, \ldots, V^\ast_0)$, where $V^\ast_i = \tau_{V_0} \tau_{V_i} \ldots \tau_{V_{i-1}} V_i$, $i=0, \ldots, k-1$.

Then there exists a Lagrangian cobordism in $\Sigma \times \bC$ with ends, ordered counterclockwise, $(V^\ast_{k-1}, V^\ast_{k-2}, \ldots, V^\ast_1)$ and $V^\ast_0$; further, this cobordism is built by iteratively performing Lagrangian surgery on the $V_i^\ast$, in a sequence respecting their cyclic ordering. (The tree encoding the sequence of surgeries will be visible in the base $\bC$.) 

This cobordism can be capped off in the base of the Lefschetz fibration $\bar{w}: D^\ast T^2 \to B$ by using the dual distinguished collection of thimbles. Moreover, the resulting Lagrangian is an exact  $T^2$. 
\end{theorem}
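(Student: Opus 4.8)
The plan is to decouple the construction of the cobordism in the fibre direction from the capping-off in the total space, and to deduce the topological type of the resulting closed Lagrangian at the very end. First I would work entirely inside $\Sigma$ and $\Sigma \times \bC$. The B-side input is that the iterated mapping cone (Beilinson convolution) of the full dual collection $E^\ast_{k-1},\ldots,E^\ast_0$ is the skyscraper $\cO_p$ of a point $p \in \bar Y \backslash \D \cong (\bC^\times)^2$, each $E_i^\ast$ occurring once since $\Hom^\bullet(E_i,\cO_p)\cong \bC$ in degree $0$. Restricting to $D$ and using that $p\notin D$, I get $L\iota^\ast \cO_p \simeq 0$, so in $\Perf(D)$ the iterated cone of the $E^\ast_i|_D$ vanishes; equivalently $\cO_D = E^\ast_0|_D$ is the iterated cone of $E^\ast_{k-1}|_D,\ldots,E^\ast_1|_D$ up to shift. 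Transporting this across the Lekili--Polishchuk equivalence $\Fuk(\Sigma)\simeq \Perf(D)$ of Theorem \ref{thm:Lekili-Polishchuk} (with restriction intertwining mutations and spherical twists, as in Lemma \ref{lem:restricting_exceptionals} and Lemma \ref{lem:isomorphic_vanishing_cycles}) yields the corresponding relation among the dual vanishing cycles. By the Biran--Cornea dictionary (Definition \ref{def:surgery}) an iterated cone is realised geometrically by iterated Polterovich surgery, whose trace is a Lagrangian cobordism in $\Sigma \times \bC$; performing the surgeries in the cyclic order forced by the directedness of the collection is exactly what makes their traces assemble along a tree visible in the $\bC$-direction, with ends $(V^\ast_{k-1},\ldots,V^\ast_1)$ and the final surgered curve. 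To pin down that this output end is $V^\ast_0$ on the nose rather than merely quasi-isomorphic to it, I would reuse the two-dimensional rigidity exploited in Lemma \ref{lem:mutations_mirror_mutations}: two exact curves in a punctured surface that are isomorphic in $\Fuk(\Sigma)$ and have matching geometric intersection numbers with a generating family of test curves are Hamiltonian isotopic, by Proposition 3.10 of \cite{FLP}.

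Second comes the capping. Each $V^\ast_i$ bounds the Lefschetz thimble $\theta^\ast_i$ over the corresponding dual vanishing path in $\bar w\colon \bar M \to B$, where I identify $\bar M \simeq D^\ast T^2$ by Proposition \ref{prop:Cstar2}. Gluing the fibre cobordism to these $k$ thimbles along the $V^\ast_i$ assembles the $k$ capping discs into a single closed Lagrangian $L \subset \bar M$, the tree of fibre surgeries now living over a tree in the base and the thimbles sitting over the dual vanishing paths. By construction $L$ is the iterated cone of the dual collection of thimbles, which under the wrapped equivalence $D^b\W(\bar M)\simeq D^b\Coh((\bC^\times)^2)$ of Theorem \ref{thm:wrapped_iso} corresponds to $\cO_p$.

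Finally I would show $L$ is an \emph{exact} $T^2$. Exactness is the delicate point, since Polterovich surgery of exact Lagrangians is exact only when the area parameter $\epsilon$ is tuned to the difference of primitives of $\lambda$ at the surgery point; I would carry the primitive through each elementary surgery and through the capping by the (exact) thimbles and verify $[\lambda|_L]=0\in H^1(L)$. Alternatively exactness is forced a posteriori: $\cO_p$ is a genuine compact object, so its mirror must be represented by an exact brane. Granting exactness, $HF^\bullet(L,L)\cong H^\bullet(L)$, whereas the mirror identification gives $HF^\bullet(L,L)\cong \Ext^\bullet_{(\bC^\times)^2}(\cO_p,\cO_p)\cong \textstyle\bigwedge^\bullet \bC^2$, of total rank $4$; as $L$ is connected and orientable this forces $L\cong T^2$. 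I expect the control of exactness through the iterated surgery and the capping — rather than the more formal cone-building, which follows mechanically from the results already in hand — to be the main obstacle.
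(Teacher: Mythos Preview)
Your approach has a genuine gap in the direction of the Biran--Cornea dictionary. What that theory gives you is: a Polterovich surgery produces a cobordism, and a cobordism induces a cone decomposition in the Fukaya category. It does \emph{not} say that every cone decomposition is realised by some surgery cobordism, which is the direction you are using. Concretely, your categorical input tells you there \emph{exists} an iterated cone of $V^\ast_{k-1},\ldots,V^\ast_1$ quasi-isomorphic to $V^\ast_0$; but when you actually perform surgeries, you produce a \emph{specific} iterated cone, and nothing in your argument forces it to be that one. When two curves meet in $n>1$ points --- already the case for $V^\ast_2, V^\ast_1$ on $\bP^2$, where $n=3$ --- surgery at all points with equal parameter realises the cone on one particular morphism in an $n$-dimensional hom space, and there is no categorical reason this is the morphism appearing in the Beilinson-type resolution of $\cO_p|_D$. (Indeed, different $p$'s give different morphisms, and the surgery parameter does not obviously parametrise this family.) Your rigidity step via Lemma~\ref{lem:mutations_mirror_mutations} only kicks in \emph{after} you know the surgery output is quasi-isomorphic to $V^\ast_0$, so it cannot close this gap. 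A related issue is connectedness: surgery at several points can produce an immersed or disconnected curve, and the iterated output need not be a single embedded circle without explicit control.

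The exactness-and-topology argument at the end is also circular as stated. To invoke the HMS equivalence of Theorem~\ref{thm:wrapped_iso} and compute $HF^\bullet(L,L)$ via $\Ext^\bullet(\cO_p,\cO_p)$, you need $L$ to already be an object of the wrapped category --- in particular exact and graded --- so ``$\cO_p$ is compact, hence its mirror is exact'' assumes what you are trying to prove. Your alternative (tracking the primitive through the surgeries) is the right idea, but it requires exactly the kind of explicit control of surgery parameters that the first gap forces you to avoid.

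The paper's proof is entirely different and hands-on: it proceeds by induction on the number of rays using toric MMP, verifies the base cases $\bP^2$ and $\bF_a$ by drawing the curves and checking directly that the surgery output is Hamiltonian isotopic to $V_0$ (with equal surgery parameters chosen precisely to arrange exactness), and handles the inductive step via the stabilisation move of Proposition~\ref{prop:stabilisation}. The genus and exactness are read off from this explicit picture rather than deduced categorically.
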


\begin{remark}\label{rmk:favoured_exc_coll}
One can't hope to built the Lagrangian torus from Polterovich surgery on a `random' distinguished collection of thimbles (associated to a `random' full exceptional collection of sheaves for $D^b \Coh(Y)$): this fails even in the basic case of $(V_0, V_1, V_2)$ on $\bP^2$. Intuitively speaking, this means that from an SYZ perspective the full exceptional collection used for Theorem \ref{thm:torus} is distinguished among such collections. 
\end{remark}

\begin{proof} of Theorem \ref{thm:torus}
We will once again use MMP for toric surfaces together with Proposition \ref{prop:stabilisation} on stabilisation.

\textit{Base case: $\bP^2$.}
Let $V_2' = \tau_{V_1} V_2$. Taking the parametres for all three surgeries to be equal, we see that $V_2' \# V_1$ is Hamiltonian isotopic to $V_0$; see Figure \ref{fig:P2_surgery}. This implies that $\tau_{V_0} (V_2' \# V_1) = (\tau_{V_0} V_2' ) \# (\tau_{V_0} V_1) = V^\ast_2 \# V^\ast_1 $ is Hamiltonian isotopic to $V_0 = V_0^\ast$. 
\begin{figure}[htb]
\begin{center}
\includegraphics[scale=0.25]{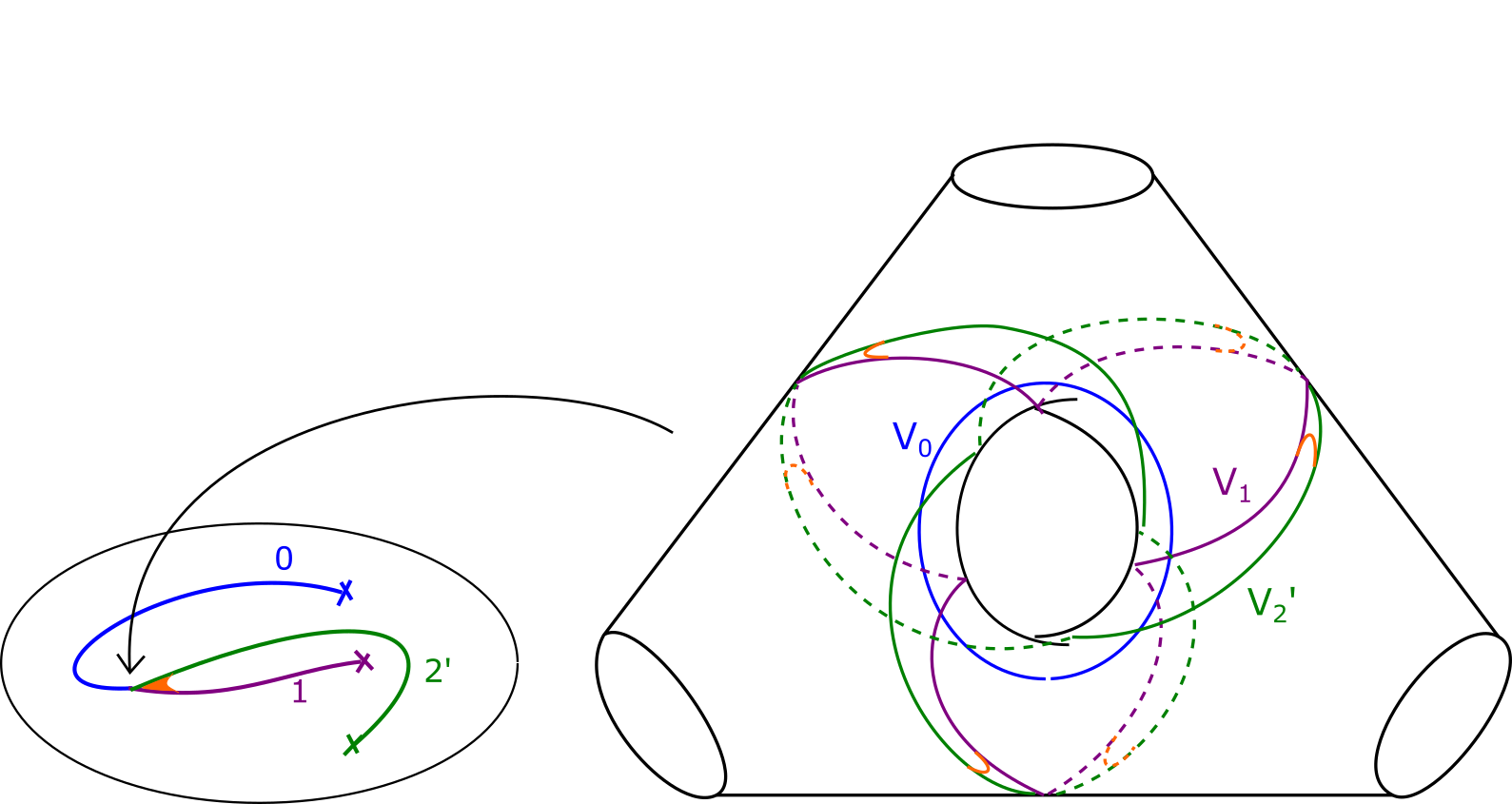}
\caption{Polterovich surgery to build an exact Lagrangian torus from a distinguished collection of thimbles for the Lefschetz fibration on $(\bC^\times)^2$ mirror to $\bP^2$.}
\label{fig:P2_surgery}
\end{center}
\end{figure}
Now consider the trace of the surgery $V^\ast_2 \# V^\ast_1$, and cap it off in $D^\ast(T^2)$ with the Lagrangian thimbles associated to $V^\ast_2, V^\ast_1$ and $V^\ast_0$, as in Figure \ref{fig:P2_surgery}. (In order to cap off with the $V^\ast_0$ thimble we first concatenate our cobordism with the trace of the Hamiltonian isotopy from $V^\ast_2 \# V^\ast_1$ to $V^\ast_0$; we'll suppress such details from here on.)
This gives a smooth closed Lagrangian surface. A cut-and-paste type exercise shows that it is a torus: each thimble gives a closed disc; the discs for $V^\ast_2 $ and $V^\ast_1$, say, are joined by three strips with half-twists, corresponding to the surgeries. (This picture may be familiar as the standard Seifert surface for a trefoil knot.) This has boundary a single $S^1$, which is capped off by the third Lagrangian thimble. We check that this Lagrangian $T^2$ is exact. A basis of $H_1(T^2, \bZ)$ is given by, for instance, a curve $\gamma_1$ traversing the first two strips; and a curve $\gamma_2$ traversing the second two strips. We can realise these on the fibre of Figure \ref{fig:P2_surgery} by traveling from the first to the second surgery point along $V_2'$, and back along $V_1$; and similarly for the second and third surgery points. As the parametres for the three surgeries are equal, both of these curves are exact.

\textit{Base case: $\bF_a$, $a \geq 0$.}
Order the components of $\bar{D}$ so that they have self-intersections $(0,a,0,-a)$. Let $V'_3 = \tau_{V_2} V_3$. $V'_3$ and $V_2$ intersect in two points; the Lagrangian surgery $V'_3 \# V_2$ is given in Figure \ref{fig:Fa_surgery} (right-hand side). Now note that $(\tau_{V_1} (V'_3 \# V_2)) \# V_1$ is Hamiltonian isotopic to $V_0$  (same figure, left-hand side). Thus 
$$
V_0 = \tau_{V_0} ( (\tau_{V_1} (V'_3 \# V_2)) \# V_1 ) = (V^\ast_3 \# V^\ast_2 ) \# V^\ast_1
$$
as required. 
\begin{figure}[htb]
\begin{center}
\includegraphics[scale=0.3]{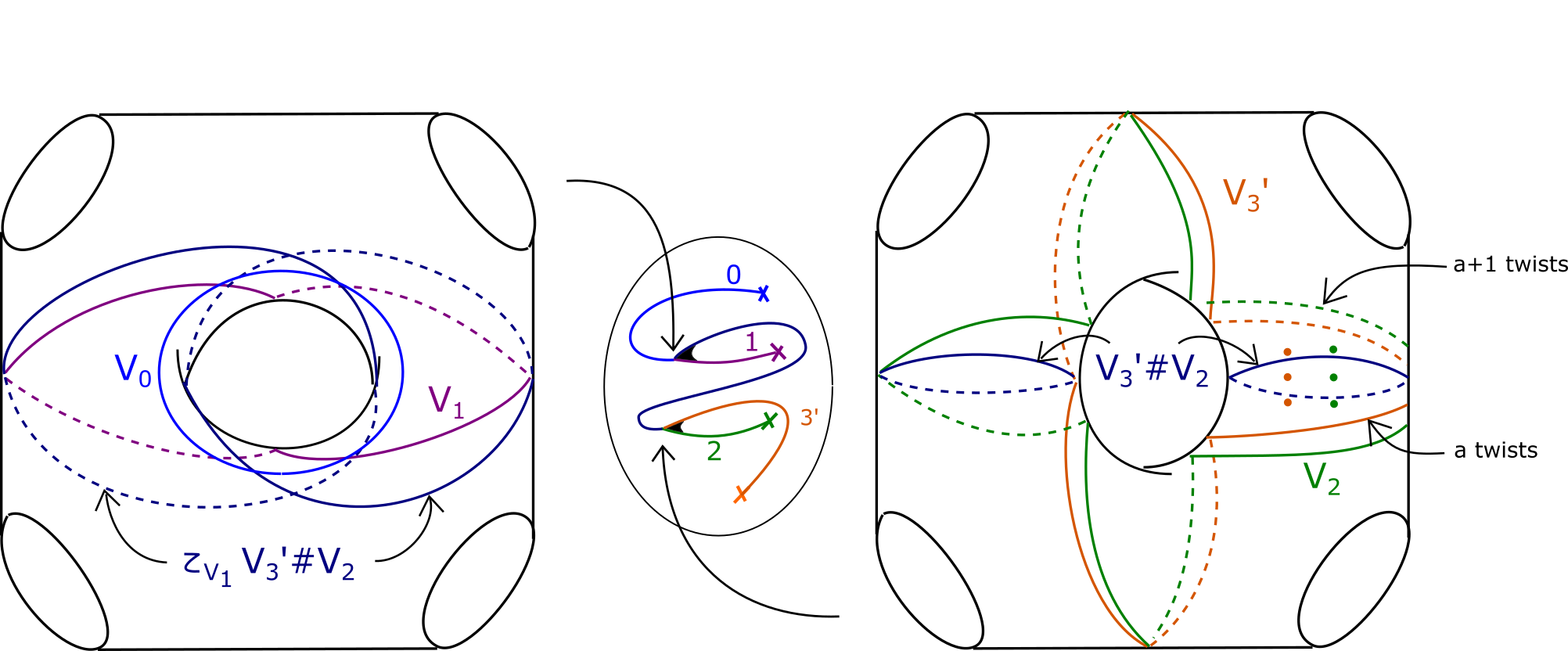}
\caption{Polterovich surgery to build an exact Lagrangian torus from a distinguished collection of thimbles for the Lefschetz fibration on $(\bC^\times)^2$ mirror to $\bF_a$.}
\label{fig:Fa_surgery}
\end{center}
\end{figure}
The iterated trace of these surgery can be capped off in $D^\ast(T^2)$ with the Lagrangian thimbles associated to $V^\ast_3, V^\ast_2, V^\ast_1$ and $V^\ast_0$ up to Hamiltonian isotopy. As before cut-and-paste considerations show that the resulting Lagrangian is a torus. This consists of two annuli, given by $I \times (V'_3 \# V_2)$, joined together by two other annuli; and as before, it is straightforward to check exactness.

\textit{Inductive step: blow up at first vertex.}
Assume that we know that there exists a Lagrangian cobordism $\mathcal{C}$, with ends $(V^\ast_{k-1}, V^\ast_{k-2} , \ldots, V^\ast_{1})$ and $V_0^\ast$, built from a tree of Polterovich surgeries on the ends, and such that the result of capping it off with the thimbles associated to $V^\ast_{k-1}, \ldots, V^\ast_0$ in the total space of the Lefschetz fibration is an exact Lagrangian torus. We first consider the case where we blow up the intersection point $\bar{D}_k \cap \bar{D}_1$. Let's use the notation of the proof of Proposition \ref{prop:stabilisation} for the vanishing cycles arising on the mirror side. 

  As $V_E = \tau_{V_0^s} S_E$, the dual distinguished collection to $V_0', V_E, V_1', \ldots, V_{k-1}'$ is simply $\tau_{S_E}V_{k-1}^\ast$, \ldots, $\tau_{S_E}V_{1}^\ast$, $S_E$, $V_0^\ast$. (We'll omit the superscripts $s$ to avoid a notational overload.)
Applying $\tau_{S_E}$ to $\mathcal{C}$ gives a cobordism with ends $(\tau_{S_E}V_{k-1}^\ast, \ldots, \tau_{S_E}V_{1}^\ast)$ and $V_E = \tau_{S_E} V_0$. As $V_E \# S_E = V_0$, we can concatenate $\mathcal{C}$ with the trace of $V_E \# S_E$ to get a Lagrangian cobordism with ends $(\tau_{S_E}V_{k-1}^\ast, \ldots, \tau_{S_E}V_{1}^\ast, S_E)$ and $V_0^\ast$, as required. The fact that capping off this cobordism with the dual distinguished collection of thimbles gives an exact Lagrangian torus is immediate from the induction hypothesis. 

\textit{Inductive step: cycling.}
What if we blow up the intersection point of $\bar{D}_i$ and $\bar{D}_{i+1}$, for a general $i$, instead? We want to use to algorithm of Proposition \ref{prop:cyclic_reordering} on changing the starting point for the labeling of the components of $\bar{D}$; inspecting the proof we see that without loss of generality we can take $i=1$. 

Consider the distinguished collection $V_1, \ldots, V_{k-1}, \tau^{-1}_{V_{k-1}} \ldots \tau^{-1}_{V_1} V_0$, as in Proposition \ref{prop:cyclic_reordering}; the dual exceptional collection is $V^\ast_0, \tau^{-1}_{V_0} V^\ast_{k-1}, \ldots, \tau^{-1}_{V_0} V^\ast_{1}$. 

Cycling the ends of $\mathcal{C}$ gives a cobordism with ends $(V^\ast_0, V^\ast_{k-1}, \ldots, V^\ast_{2}) $ and $V^\ast_{1}$. Applying $\tau^{-1}_{V_0}$ gives a cobordism with ends $(V^\ast_0, \tau^{-1}_{V_0}V^\ast_{k-1}, \ldots, \tau^{-1}_{V_0}V^\ast_{2}) $ and $\tau^{-1}_{V_0}V^\ast_{1}$. 
Now using Proposition \ref{prop:cyclic_reordering}, we see that we can apply $\tau^{-1}_{W_k} \tau^{-n_1}_{W_1} \tau^{-1}_{W_2}$ to that cobordism, and then proceed as in the previous case (blow up for $i=0$) to concatenate it with the trace of a Lagrangian surgery and get the cobordism we want. As before, capping off this cobordism in the total space of the Lefschetz fibration gives an exact Lagrangian torus. 
\end{proof}

\begin{remark}
We expect to be able to use an arbitrary tree of surgeries respecting the cyclic order; for instance, the second choice of tree in the $\bF_a$ case can checked by hand. 
\end{remark}

\begin{remark}
By comparing the steps in our proof to those for the one of Proposition \ref{prop:Cstar2}, we see that to establish that our torus is the standard $T^2$ in $D^\ast T^2$, it would suffice the check this in the $\bF_a$ case (the $\bP^2$ one is well known), which could be done carefully either directly or by using for instance Legendrian front techniques as developped in \cite{Casals-Murphy}. Aternatively, note that this follows from the nearby Lagrangian conjecture for $T^\ast T^2$ \cite[Theorem B]{DRGI}. 
\end{remark}

\subsubsection{Heuristics for the torus construction}\label{sec:SYZ}

We briefly motivate the particular exceptional collection used in Theorem \ref{thm:torus} with some broader mirror symmetry considerations.

By Proposition~\ref{prop:full_exc_coll}, we have a full exceptional collection of line bundles $E_0,\ldots,E_n$ on $\bar{Y}$, with $E_i = \cO(\D_1 + \ldots + \D_i)$ (so $n=i-1$ in this case). 
We consider the dual exceptional collection $F_n,F_{n-1},\ldots,F_0$: in the notation of Definition \ref{def:dual_collection}, $F_i = E_i^\ast$. 
Define the bilinear pairing $\chi$ on the Grothendieck group $K_0(\Coh \bar{Y})$ by 
$$\chi(E,F)= \sum_{i=0}^{\dim  \bar{Y}} (-1)^i \dim \Ext^i(E,F).$$
The dual exceptional collection satisfies
$$\chi(E_i,F_j)=\delta_{ij}$$
or, more precisely,
$$\Ext^k(E_i,F_j)=\begin{cases} \bC & \mbox{ if } i=j \mbox{ and } k=0\\ $0$ & \mbox{ otherwise.} \end{cases}.$$
Recall that if $E_0,\ldots,E_n$ is a full exceptional collection in a triangulated category $C$ then $[E_0],\ldots,[E_n]$ is a $\bZ$-module basis  of the Grothendieck group $K_0(C)$. Now, consider a point $p \in  \bar{Y} \backslash \D$ and the skyscraper sheaf $\cO_p$. Since $$\Ext^k(E_i,\cO_p) \simeq \begin{cases} \bC & \mbox{ if } k=0 \\ 0  & \mbox{ otherwise,} \end{cases}$$
we have
\begin{equation}\tag{$\star$}
[\cO_p]=[F_1]+\cdots+[F_n] \in K_0(\Coh  \bar{Y}).
\end{equation}

Under the homological mirror symmetry equivalence of Corollary~\ref{cor:directed_equivalence},
$D^b(\Coh  \bar{Y}) \simeq D^b \Fuk^\to (\bar{w}) $, 
the object $\cO_p \in D^b(\Coh  \bar{Y})$ is expected to correspond to an exact Lagrangian torus $T$ in $\bar{M}$, equipped with a rank one local system. (The choice of possible local systems gives a $(\bC^\times)^2$ family; formally, the torus is an object of the Fukaya--Seidel category of $\bar{w}$, which as discussed earlier is generated by thimbles, and equivalent to the directed Fukaya category at the derived level \cite[Corollary 1.14]{GPS_sectorial}.) Heuristically, this corresponds to an SYZ fiber equipped  with a unitary rank one local system. Now ($\star$) suggests that $T$ is obtained from the Lagrangian thimbles associated to the $F_i$ by Lagrangian surgery.

\subsection{Line bundles and tropical Lagrangians sections}\label{sec:Abouzaid} 

Abouzaid \cite{Abouzaid_toric2} proved a version of homological mirror symmetry for smooth toric varieties (we'll focus on the surface case) as follows. 
As before let $T^\vee$ be the dual algebraic torus,  and let 
$$q \colon T^{\vee} = M \otimes \bC^{\times} \rightarrow M \otimes \bR$$ 
be the moment map.
Work with a tropicalisation of the hypersurface $\bar{w}_{HV}^{-1}(0)$ determined by an amoeaba in $M \otimes \bR$, the combinatorics of which are encoded in the moment polytope for $\bar{Y}$; this cuts $M \otimes \bR$ into polygonal regions; the `primary' one, say $P$, contains the origin. Let $P^\circ$ be a small open analytic neighbourhood of $P$. 

Correcting for the higher-order terms in the superpotential for $(\bar{Y},\D)$ amounts to restricting ourselves to working in $q^{-1}(P^\circ) =:\bar{V}$. It's immediate  that $q^{-1}(P^\circ) \cong D^\ast q^{-1}(0)$, where $q^{-1}(0) \subset T^\vee$ is an exact Lagrangian torus, and that the restriction of $\bar{w}_{HV}^{-1}(0)$ to $\bar{V}$ is a $k$-punctured elliptic curve, say $S$ (see e.g.~\cite[Figure 2]{Abouzaid_toric2}); this will correspond to the fibre $\Sigma$ of our Lefschetz fibration. 

Abouzaid defined a category $\Fuk((\bC^\times)^2, S)$ with objects Lagrangian sections of $q$ over $P$ with boundary on $S$. These Lagrangian discs are expected to be thimbles for the Lefschetz fibration $\bar{w}_{HV}: \bar{V} \to B$, though he doesn't work explicitly with the superpotential. Instead, he proves directly that $\Fuk((\bC^\times)^2, S)$ is quasi-isomorphism to (a dg enrichment of) the category of line bundles on $\bar{Y}$ \cite[Theorem 1.2]{Abouzaid_toric2}. 

The key ingredient is a bijection between Hamiltonian isotopy classes of tropical Lagrangian sections of $((\bC^\times)^2, S)$ and isomorphism classes of line bundles on $\bar{Y}$ \cite[Corollary 3.21]{Abouzaid_toric2}.
Both collections are classified by piecewise linear integral ($\bZ$PL) functions on $\bR^2$ with domains of linearity the maximal cones of the fan of $\bar{Y}$, modulo global integral linear functions on $\bR^2$. (In the two-dimensional case, the combinatorics are particularly simple;  there is a short exact  sequence
$
0 \to (\Pic \bar{Y})^\ast \to \bZ^k \to N \to 0
$
and an  isomorphism $(\Pic \bar{Y})^\ast \cong \Pic \bar{Y}$. In particular every line bundle is determined by its restriction to $\D$.) Suppose $E$ is a line bundle on $\bar{Y}$; if $\phi_E$ is a $\bZ$PL function on $N$ that corresponds to it, then $d_i := E \cdot \D_i$ is the change of slope of $\phi_E$ along $v_i$. 

The structure sheaf $\cO$ corresponds to the zero function. This gives the constant section $\theta_0$ of $q$, whose boundary is a reference longitude on $S$, say $L_0 = \partial \theta_0$. Let $\theta_E$ be the Lagrangian section associated to $E$, and $L_E = \partial \theta_E$. 
Inspecting the proof of \cite[Proposition 3.20]{Abouzaid_toric2} and the results leading up to it, we see that $L_E$ differs from $L_0$ by wrapping it $d_i$ times around the cylinder $q^{-1}(B_\epsilon(I_{v_i})) \cap S$, where $I_{v_i}$ is a large open subset of the segment of $\partial P$ dual to $v_i$. (The proof passes to the universal cover of $\bar{V}$.) On the other hand, under the obvious identification $S \simeq \Sigma$, the waist curve of this cylinder is $W_i$. This means that with our notation, $L_E = \ell(d_1, \ldots, d_k)$, as desired.

\section{Relations with other constructions: interior blow ups}\label{sec:relations_interior}

\subsection{Almost-toric expectations}\label{sec:Symington}
Fix $(\tilde{Y}, \tilde{D}) \in \tilde{\mathcal{T}}_e$, and a toric model $\{ (\tilde{Y}, \tilde{D}) \to (\bar{Y}, \D)\}$. 
We saw in Definition \ref{def:construction_tilde} that for each interior blow up of a component of $\bar{D}$, we should modify the mirror Lefschetz fibration by adding a critical point with vanishing cycle the meridien corresponding to that component. We would like to relate this with the expectation of what this operation should look like in the almost-toric framework / SYZ fibration picture, largely based on ideas in \cite{Symington, AAK, GHK1}. 

Symington \cite{Symington} introduced the concept of an almost-toric four-manifold, and explained how to modify the fibration in the case where you blow up a point on a one-dimensional (i.e.~$S^1$) toric fibre, by introducing a nodal fibre  \cite[Section 5.4]{Symington}, and modifying accordingly the base of the almost-toric fibration as an integral affine manifold, by introducing a cut. 

An almost-toric fibration can be viewed as  an SYZ torus fibration with singular fibres. Using SYZ mirror symmetry, the folk expectation is that our mirror manifold $\tilde{M}$ should be an almost-toric manifold, which we will now describe. Start with the fan  for $(\bar{Y},\D)$, i.e.~$\bR^2$, as the integral affine base (product torus fibration, no singularites). Now  introduce singularities  as follows: for each of the interior blow-ups on $\D_i$, add a node along the ray associated to $\D_i$, with invariant direction the ray itself (alternatively, one could add a single critical point on the $i$th ray with monodromy $(1, m_i; 0,1)$ in the obvious basis).  The branch cuts emanate from the singularities and go off to infinity.

In the case of a single blow-up, the SYZ mirror symmetry story underpining this expectation is carefully proved in \cite[Example 3.1.2]{Auroux_survey} and \cite{AAK}, which expand on \cite{Symington}; iterated blow ups on a single component should be accessible using their techniques as there is no scattering in that case (because the monodromy matrices commute). For a general $(\tilde{Y}, \tilde{D})$, what is missing is patching charts in the case where there is scattering -- though at a topological level the picture should be that one can essentially treat each boundary divisor independently. 

In general, the total space of this almost toric fibration is expected to be symplectomorphic to the general fiber of the mirror family constructed in \cite{GHK1}.
When $\tilde{D}$ is negative definite this is a smoothing of the dual cusp singularity. (The \cite{GHK1} family is only formal in general, and not known to be the restriction of an analytic family, but it should still make sense to speak about the symplectic topology of the general fiber by considering an analytic family over a disc which approximates the restriction of the  family of \cite{GHK1} to a generic formal arc $\Spec \bC[[t]]$ to sufficiently high order.) Note that the base integral affine manifold we've described appears in \cite{GHK1}; most of that article works with a smoothing of the dual cusp singularity which has an almost toric structure with a single very singular fibre over the origin (think of all of our critical points as having been bunched together), but in `Step IV' in Section 3.2 (p.~107) they consider a deformation where that singularity gets broken into nodal singularities which travel up along the rays.

Take a linear $S^1$ on the central $T^2$; its conormal is a Lagrangian submanifold of $T^\ast T^2$ which projects to a ray in $\bR^2$ with dual slope. In particular, if $m_i >0$, this conormal gives a Lagrangian disc with boundary on $T^2$, fibred over the segment between 0 and the first critical value on $\bR_{\geq 0} v_i$ (as the invariant direction for that critical point is $v_i$, the $S^1$ which collapses has slope $v_i^{\perp}$). Let $\theta_i$ be this Lagrangian disc, and $\varrho_i \subset T^2$ its boundary. Note that $\theta_i$ also picks out a co-orientation of $\varrho_i$, i.e.~one of the two orientations of its normal bundle. More generally, if $m_i \geq 2$, there are Lagrangian spheres fibred above the segments joining subsequent critical values on the same ray.  All told, this means that we have the following expectation:

\begin{expectation}\label{exp:weinstein_handles}
Let $\Upsilon$ be the collection of $m_i$ copies of $\varrho_i$, $i=1, \ldots, k$, with the co-orientations given above. For each element $\varrho \in \Upsilon$, attach a Weinstein 2-handle to $D^\ast T^2$ along the Legendrian lift of $\varrho$ to $S^\ast T^2$ determined by the co-orientation \cite{Weinstein}. For repeated $S^1$s, i.e.~if $m_i \geq 2$, one uses parallel (or simply transverse) copies of the curve. (This is equivalent to adding the first two-handle as decribed, and then iteratively adding two-handles by using the boundary of the co-core of the previous handle to attach the next one. This gives a collection of $m_i-1$ Lagrangian spheres plumbed in a chain and attached at one end to the first handle.)
Then up to Weinstein deformation equivalence, 
$\tilde{M}$ is the result of this sequence of handle attachments.
\end{expectation}

We will confirm this expectation in Proposition \ref{prop:thimble_gluing}. Let's first make a few extra remarks.

\subsubsection{Cluster structures}\label{sec:STW}
Expectation \ref{exp:weinstein_handles} implies that we precisely have the set-up considered in \cite[Definition 1.3]{STW}. They associate to such a Weinstein handlebody a cluster variety whose charts are indexed by  exact Lagrangian tori in $\tilde{M}$ given by starting with the zero-section $T^2$ and performing geometric mutations (i.e.~handle slides) using the available Lagrangian discs. (Each geometric mutation corresponds to a cluster mutation.) Note also that all of the Weinstein handlebodies of \cite[Definition 1.3]{STW} are mirror to log CY pairs $(\tilde{Y}, \tilde{D})$: given any collection of co-oriented curves on $T^2$, there is a smooth toric variety the rays of whose fan include all of the half-lines  orthogonal to these curves. 
On the other hand, Gross--Hacking--Keel showed that $\tilde{Y} \backslash \tilde{D}$ is a cluster variety  \cite{GHK_cluster}; comparing the two articles one readily sees that the two cluster structures are the same, which gives a correspondence between the algebraic tori in the atlas of charts for $\tilde{Y} \backslash \tilde{D}$ and exact Lagrangian tori in $\tilde{M}$, obtained from geometric mutations, together with their $(\bC^\times)^2$'s worth of flat local systems.

\subsubsection{Non-exact deformations}\label{sec:non_exact_deformations}
The moduli space of complex structures on $\tilde{Y}$ is given by deforming the points on $\bar{D}_i$ which get blown up. In the SYZ picture, this corresponds to deforming the nodal points with invariant direction $v_i$ so that they no longer lie exactly on the ray $\bR v_i$ (in particular, the Lagrangian thimble which comes out of a critical point and travels along the invariant direction will not typically pass through the origin). 

The deformation of the symplectic structure on $\tilde{M}$ to a non-exact one is now apparent through Symington's notion of `visible surfaces' \cite[Section 7.1]{Symington}. Suppose that two points on the same component $\D_i$ are blown up; this gives two singularities of the SYZ fibration with the same invariant direction; there is always a topological sphere fibred over the segment joining them. If the singularities are on the same invariant line, the sphere is Lagrangian; more generally, integrating the symplectic form $\omega$ over that sphere gives the integral affine distance between those lines. In particular, $\omega$ cannot be exact if the two points on $\D_i$ are distinct. More generally, the condition for $\omega$ to be exact is precisely that all of the invariant lines be concurrent \cite[Section 2.2]{Engel-Friedman}. (Of course, the full mirror symmetry picture matches up $[\Omega] \in H^2_{dR}(U, \bC)$, where $U = \tilde{Y} \backslash \tilde{D}$ and $\Omega$ is a holomorphic form,  with $[B+i\omega]\in H^2_{dR}(\tilde{M}, \bC)$, the class of the complexified Kaehler form; here we are only seeing variations in its imaginary part.)

\begin{remark}\label{rmk:non_exact_LGmodel}
Assume that $(\tilde{Y}, \tilde{D})\in \tilde{\mathcal{T}}$ is given by deforming the complex structure on $(\tilde{Y}_e, \tilde{D}) \in \tilde{\mathcal{T}}_e$; the above tells us how to deform the Lefschetz fibration mirror to $(\tilde{Y}_e, \tilde{D})$ to get the Landau--Ginzburg model mirror to $(\tilde{Y}, \tilde{D})$. At the topological level, keep the same fibration. We can leave $\Sigma$ and $V_0, \ldots, V_{k-1}$ unchanged. The $m_1 + \ldots + m_k$ additional vanishing cycles on $\Sigma$  (which should still be boundaries of Lagrangian thimbles in the total space)  should be mirror to $\cO_{\Gamma_{ij}}$, where $\Gamma_{ij}$ still denotes the pullback of the $j$th exceptional curve over $\D_i$; this should be arranged by displacing $W_{ij}$ by a symplectic isotopy, with flux given by the integral affine distance between the invariant line through the node mirror to that blow up (in direction $v_i$) and the line $\bR v_i$ through the origin. 
\end{remark}

\subsubsection{Elementary transformations as nodal slides} The expected almost-toric structure on $\tilde{M}$  described above was constructed from the data of a toric model $\{ (\tilde{Y}, \tilde{D}) \to (\bar{Y}, \D)\}$. Suppose two such models (for a fixed $(\tilde{Y}, \tilde{D})$) are related by an elementary transformation. It readily follows from the definitions that an elementary transformation of a toric model corresponds to a nodal slide and cut transfer on the associated almost-toric fibration, see \cite[Definition 6.1]{Symington}; if the transformation is being performed using a ray $\bR_{\geq 0} v_i$, this corresponds to pushing the first critical fibre on this ray over the origin and onto the ray $\bR_{\leq 0} v_i$.  (In particular, the symplectomorphism type of the total space stays constant.)

The reader may be interested to compare this with Vianna's use of nodal slides and cut transfers \cite{Vianna_CP2, Vianna_delPezzo}. Note  Section \ref{sec:simple_elliptic}: if we were to take $(Y,D) \in \mathcal{T}_e$ such that $D$ is a cycle of $(-2)$ curves, then its mirror is $M = X \backslash E$, where $X$ is del Pezzo and $E$ a smooth elliptic curve, with an exact symplectic form.

\subsection{Weinstein handle attachements for $\tilde{w}$}

Given an abstract Weinstein Lefschetz fibration $\{ F, (L_0, \ldots, L_n) \}$, its total space $N$ is built by attaching Weinstein 2-handles to $F \times B$ along Legendrian lifts of the $L_i$, say $\tilde{L}_i$, to the contact manifold $F \times S^1 \subset \partial (F \times B)$ ($B$ denotes the unit disc) -- see \cite[Section 6]{Giroux-Pardon} for details.  Given a smooth value $\star$ of the (geometric) Lefschetz fibration and a distinguished collection of thimbles $\theta_0, \ldots, \theta_n$ associated to the $L_i$,  one should think of the Legendrian $\tilde{L}_i$ as a small deformation of $L_i = \theta_i \cap \partial(F \times B_\epsilon (\star))$, and of the two-handle as being given by $\theta_i \backslash ( F \times B_\epsilon (\star))$. 

Consider our mirror manifold $\tilde{M}$. It's built by attaching $m_1+ \ldots+ m_k + k$ Weinstein two-handles to $\Sigma \times B_\epsilon (\star)$. By  Theorem \ref{thm:torus}, we know that attaching the $k$ two-handles associated to $V_0, \ldots, V_{k-1}$ to $\Sigma \times B_\epsilon (\star)$ gives a domain deformation equivalent to $D^\ast(T^2)$. Thus $\tilde{M}$ is given by  attaching $m_1 + \ldots + m_k$ Weinstein 2-handles to $D^\ast T^2$. We want to identify the attaching Legendrians for these handles in terms of $D^\ast(T^2)$ to confirm Expectation \ref{exp:weinstein_handles}.

\begin{proposition}\label{prop:thimble_gluing}

Consider the distinguished collection of vanishing cycles $\{ W_{i,j} \}_{i=1, \ldots, k, j=1, \ldots, m_i}$, $V^\ast_{k-1}$, \ldots, $V^\ast_{0}$ for $\tilde{w}$, where we're keeping the notation of Theorem \ref{thm:torus} for the $V^\ast_i$. Let $\{ \vartheta_{i,j} \}_{i, j}$, $\varsigma_{k-1}$, \ldots, $\varsigma_0$ be the corresponding collection of thimbles, with boundaries on a smooth fibre of $\tilde{w}$, say $\tilde{w}^{-1} (\star)$. 

Construct the torus $T^2$ of Theorem \ref{thm:torus} so that outside of $\tilde{w}^{-1}(B_\epsilon(\star))$, $T^2$ agrees with the union of the $\varsigma_i$, $i=0, \ldots, k-1$. 
Then after Hamiltonian isotopies with support in $\tilde{w}^{-1}(B_\epsilon(\star))$, we can arrange for $\partial \vartheta_{i,j}$ to lie on $T^2$, and to be isotopic to  $S^1_{v_i^\perp}$, where $v_i^\perp$ is an orthogonal vector to $v_i$ in $\bR^2$, and $S^1_{v_i^\perp}$ the image of $\bR v_i^\perp$ in $T^2 = \bR^2 / \bZ^2$. Further, for a sufficiently small $\delta$, we can arrange for $\vartheta_{i,j} \cap D_\delta^\ast T^2$ to be the half-conormal to $S^1_{v_i^\perp}$ with  co-orientation given by $v_i$; and for different indices, for
the thimbles $\vartheta_{i,j}$ to only intersect on $T^2$, where they either agree or intersect transversally. 
\end{proposition}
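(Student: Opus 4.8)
The plan is to realise $\tilde{M}$ as $\bar{M}\cong D^\ast T^2$ (Proposition \ref{prop:Cstar2}) together with the $m_1+\cdots+m_k$ meridional two-handles, and to recognise each attaching region as the half-conormal to a linear circle on the zero section. First I would fix the torus of Theorem \ref{thm:torus}, arranged --- as that theorem permits --- to coincide with $\varsigma_{k-1}\cup\cdots\cup\varsigma_0$ outside $\tilde{w}^{-1}(B_\epsilon(\star))$, and take it to be the zero section $T^2\subset D^\ast T^2$. The main input is the toric description of Section \ref{sec:Abouzaid}: under the identification $\bar{M}\cong D^\ast T^2$ coming from Abouzaid's fibration $q\colon \bar{V}\to P^\circ$, the fibre $\Sigma$ is the restricted hypersurface $\bar{w}_{HV}^{-1}(0)\cap\bar{V}$, and by \cite[Proposition 3.20]{Abouzaid_toric2} the meridian $W_i$ is the waist of the cylinder $q^{-1}(B_\epsilon(I_{v_i}))\cap\Sigma$ lying over the edge $I_{v_i}$ of $\partial P$ dual to $v_i$.

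Next I would use this to perform the Hamiltonian isotopies. The cylinder over $I_{v_i}$ retracts onto the linear circle of $q^{-1}(0)=T^2$ parallel to $I_{v_i}$, namely $S^1_{v_i^\perp}$; running this retraction as a Hamiltonian isotopy supported in $\tilde{w}^{-1}(B_\epsilon(\star))$ pushes $\partial\vartheta_{i,j}=W_i$ onto $T^2$ as $S^1_{v_i^\perp}$. Since $\vartheta_{i,j}$ is $W_i$ swept along its vanishing path, and the corresponding node sits on the ray $\bR_{\geq 0}v_i$ with invariant direction $v_i$ (Section \ref{sec:Symington}), the body of the thimble descends into the cotangent fibre along $+v_i$, so that $\vartheta_{i,j}\cap D^\ast_\delta T^2$ becomes the half-conormal of $S^1_{v_i^\perp}$; the co-orientation is $v_i$ rather than $-v_i$ precisely because the node lies on $\bR_{\geq 0}v_i$ and not $\bR_{\leq 0}v_i$. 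The disjointness statement then follows from the fibre geometry: for $i\neq i'$ the meridians $W_i,W_{i'}$ are disjoint on $\Sigma$ and their thimbles project to distinct rays, so $\vartheta_{i,j}$ and $\vartheta_{i',j'}$ meet only on $T^2$, where $S^1_{v_i^\perp}$ and $S^1_{v_{i'}^\perp}$ intersect transversally because $v_i\neq\pm v_{i'}$; parallel copies $W_{i,j}$ with a fixed $i$ are handled by transverse pushoffs giving parallel half-conormals meeting $T^2$ in parallel circles.

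What remains, and what I expect to be the hard part, is to reconcile this moment-map picture of $T^2$ with the surgically built torus of Theorem \ref{thm:torus}, so that $W_i$ lands on the linear circle and $\vartheta_{i,j}$ on the half-conormal simultaneously. I would do this by induction on the toric model along the MMP, running in parallel with the proof of Theorem \ref{thm:torus}. The decisive simplification is that each $W_{i,j}$ is disjoint from the stabilisation handle $c_E$ of Proposition \ref{prop:stabilisation} and from the Polterovich surgery loci (both of which are concentrated near $\star$ and along $V_0$); hence the meridional thimbles are untouched by the corner blow-ups and cyclings used to assemble $T^2$, and it suffices to verify the conormal description in the base cases $\bP^2$ and $\bF_a$, where $T^2$ is the explicit torus of Figures \ref{fig:P2_surgery}--\ref{fig:Fa_surgery} and the claim can be checked by hand. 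Making this induction fully rigorous in the scattering case (a general $(\tilde{Y},\tilde{D})$, rather than interior blow-ups on a single component) is where the genuine work lies, since there one must control how the various cylinders over the edges $I_{v_i}$ interact after the patching underlying Expectation \ref{exp:weinstein_handles}.
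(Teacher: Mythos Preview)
Your overall strategy --- use an external identification $\bar{M}\cong D^\ast T^2$ in which the meridian $W_i$ becomes the waist of a cylinder over an edge of the moment polytope, and then slide it down onto a linear circle --- is plausible but rests on an identification the paper never supplies. Section~\ref{sec:Abouzaid} only matches \emph{boundary curves} $L_E$ with longitudes; it does not prove that Abouzaid's zero-section torus is Hamiltonian isotopic to the surgery torus of Theorem~\ref{thm:torus}, nor that the Lefschetz thimbles correspond to Abouzaid's tropical sections. You recognise this as ``the hard part'', but your proposed resolution (induction via MMP, with the claim that meridional thimbles are ``untouched'') does not close the gap: after a corner blow-up the torus itself is rebuilt (Theorem~\ref{thm:torus} applies $\tau_{S_E}$ to the old cobordism and concatenates with a new surgery), and there is a \emph{new} meridian $W_E$ whose thimble $\vartheta_E$ must be placed on the new torus in the class $v_E^\perp=(v_i+v_{i+1})^\perp$. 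Your induction gives no mechanism for this. The final paragraph about ``scattering'' and ``the patching underlying Expectation~\ref{exp:weinstein_handles}'' is also off target: the proposition is what \emph{proves} that expectation, and it works entirely in the Lefschetz picture where no scattering arises.

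The paper's proof takes the opposite route: it never leaves the surgery description of $T^2$. For each meridian $W_i$ it exhibits, by hand, a Hamiltonian-isotopic curve $W_i'$ lying on the surgery trace --- built from arcs of the vanishing cycles joined through a subset of the surgery points --- with area-zero projection to $B_\epsilon(\star)$. The homology class of $\partial\vartheta_i$ on $T^2$ is then read off from linear relations among the $[\partial\varsigma_l]$ (e.g.\ $\sum[\partial\vartheta_i]=0$ for $\bP^2$), which recovers $v_i^\perp$; this is checked directly in the base cases $\bP^2$ and $\bF_a$. The inductive step constructs $\partial\vartheta_E$ on the stabilised torus by travelling along $S_E$, through the new surgery with $V_0$, and around the already-placed $\partial\vartheta_{i-1}$ and $\partial\vartheta_i$; the relation $[\partial\vartheta_E]=[\partial\vartheta_{i-1}]+[\partial\vartheta_i]$ then matches the toric blow-up formula for rays. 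This is more hands-on than your approach, but it avoids importing any unproved compatibility between the two models of $D^\ast T^2$.
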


\begin{proof}
We take the convention to pick $v_i^\perp$ so that $\{ v_i, v_i^\perp \}$ is positively oriented.
It's clearly enough to work with $m_i = 1$ for each $i$; set $\vartheta_i := \vartheta_{i,1}$. We'll again use toric MMP and induction on the number of components of $\bar{D}$.

\textit{Case of $\bP^2$.}
In order to keep curves comparatively simple, let's use Figure \ref{fig:P2_surgery}. (To strictly work with a dual exceptional collection, we should apply some meridional twists to the whole picture to shuffle the indices for the dual collection -- as in the final case in the proof of Theorem \ref{thm:torus} -- though as these twists leave the $W_i$ fixed we ignore them.) Note that we can realise each of the three meridiens of the central fibre `on' our $T^2$ by taking pieces of $V_1$ and $V_2'$ and joining them using two of the three surgery points, but not the third; we call these $W_i'$. See Figure \ref{fig:P2_surgery_handle}.  

\begin{figure}[htb]
\begin{center}
\includegraphics[scale=0.25]{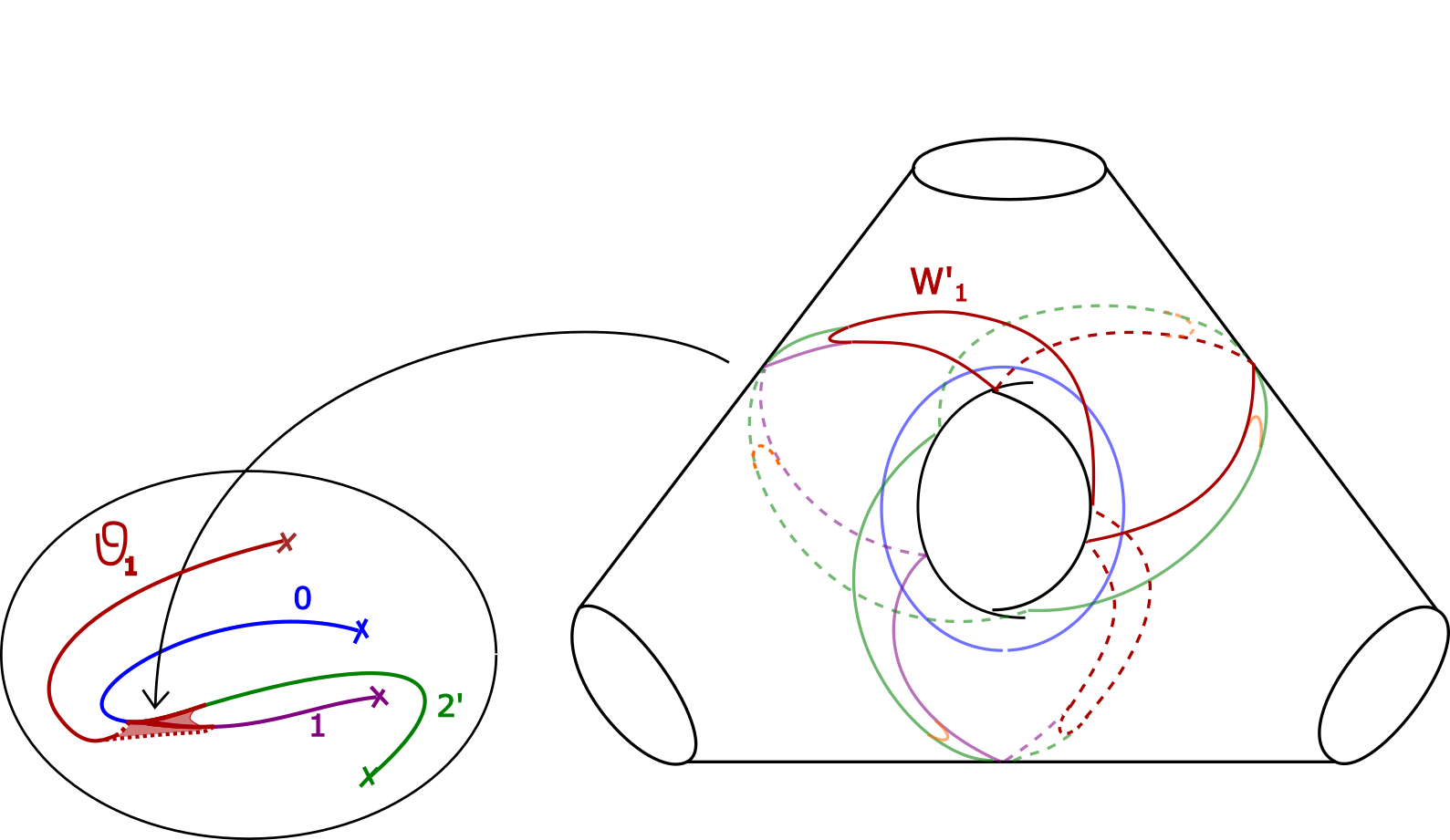}
\caption{Visualising $\vartheta_1$ in the $\bP^2$ case.}
\label{fig:P2_surgery_handle}
\end{center}
\end{figure}

Note $W_i'$ is Hamiltonian isotopic to $W_i$; further, we can deform it to lie on the trace of  $V_2' \# V_1$ in such a way that its projection to the base $B_\epsilon(\star)$ has signed area zero. It follows that we can find a compactly supported Hamiltonian isotopy of $\tilde{w}^{-1}(B_\epsilon (\star))$ such that the image of $\vartheta_i$ will be exactly as wanted. The claim about conormals holds infinitessimally, and so follows from a Moser-type argument on a Weinstein neighbourhood of $T^2$. 
 Note also that we naturally get  a small segment of $\partial \vartheta_i$ travelling a short distance down each of the three `legs' of the cobordism.

We need to check that the $\partial \vartheta_i$ have the  correct homology class in $H_1(T^2, \bZ)$. Let's orient them in a cyclically symmetric way. After a small Hamiltonian deformation so that they intersect transversally, $\partial \vartheta_1$ and $\partial \vartheta_2$ has signed intersection number one, and so be taken to correspond to  $(0,1)$ and $(-1,0)$ in some basis. Now note that $[\partial \vartheta_1] + [\partial \vartheta_2] + [\partial \vartheta_3]  = [\partial \varsigma_1] - [\partial \varsigma_0] = 0$, and so $[\partial \vartheta_3]  = (1,-1)$, as wanted. (This also confirms that we have the correct choice of conormal.)

\textit{Case of $\bF_a$.}
See Figure \ref{fig:Fa_surgery_handle}. It's helpful to change viewpoints a little from Figure \ref{fig:Fa_surgery}, to consider surgery $a$ as the trace of $V_0 \# \tau_{V_1} (V'_3 \# V_2 )$ and surgery $b$ as the trace of $(\tau_{V_1}  V'_3 \# \tau_{V_1} V_2 )\# \tau_{V_1} V'_3 $. 
 For $\vartheta_2$ and $\vartheta_4$, we just need to work near the surgery $a$. For each of these, use one of the surgery points between $\tau_{V_1} (V'_3 \# V_2)$ and $V_1$ but not the other. 
The picture  is analogous to Figure \ref{fig:P2_surgery_handle}, and we omit it from our diagram.
 For $\vartheta_1$ and $\vartheta_3$, we need to use both surgeries. 
For $\vartheta_1$, start with a segment on $V_0$, and use the two Polterovich surgeries with $\tau_{V_1} ( V'_3 \# V_2)$, at $a$, to travel onto the two components of $\tau_{V_1} ( V'_3 \# V_2)$; now have both ends travel along the `middle' segment of the surgery tree (joining $a$ and $b$ in our figure), and use two of the surgeries between $\tau_{V_1}( V'_3 \# V_2)$ and $\tau_{V_1} V_3'$, at $b$, to go onto a segment of $\tau_{V_1} V_3'$ looping about meridien $W_1$. Similarly for $\vartheta_3$. 

\begin{figure}[htb]
\begin{center}
\includegraphics[scale=0.25]{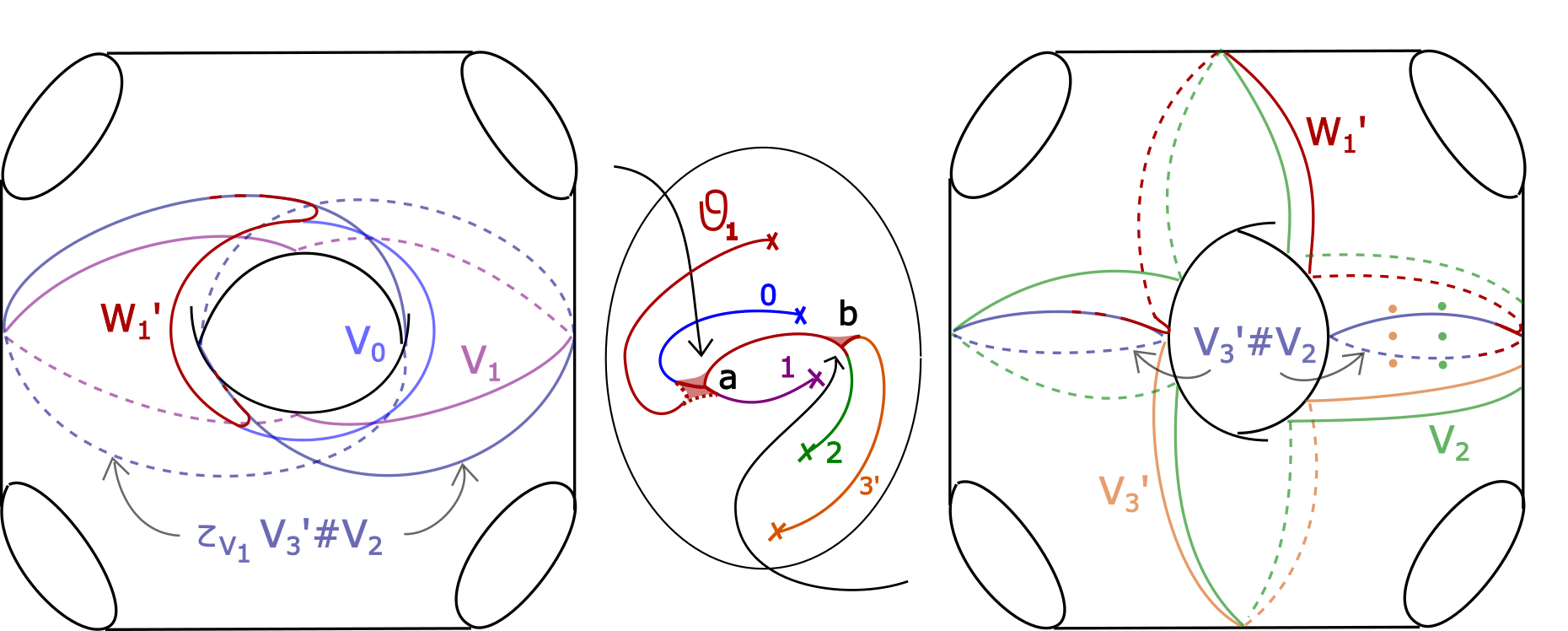}
\caption{Visualising  $\vartheta_1$ in the $\bF_a$ case.}
\label{fig:Fa_surgery_handle}
\end{center}
\end{figure}

We check homology classes in $H_1(T^2, \bZ)$. Let's again orient the $W_i$ (and thus $\partial \vartheta_i$) in a cyclically symmetric way. First, note that $[\partial \vartheta_2 ] + [\partial \vartheta_4] = [\partial \varsigma_0] - [\partial \varsigma_1] = 0$, so  $[\partial \vartheta_2 ] =- [\partial \vartheta_4]$. We also have $0 = [\partial \varsigma_3'] = [\partial \vartheta_1 ] + a[\partial \vartheta_2]+ [\partial \vartheta_3]$, so $ [\partial \vartheta_3] = -  [\partial \vartheta_1 ] - a[\partial \vartheta_2]$; observing that $[\partial \vartheta_1]$ and $[\partial \vartheta_2]$ have signed intersection $\pm 1$ then establishes the homology claim.

Note that all of the $\partial \vartheta_i$ can be chosen to project to curves of signed area zero in $B_\epsilon (\star)$, and we proceed as in the $\bP^2$ case.
Also, by construction, for $i=1, 3$, there are already  small segments of $\partial \vartheta_i$ travelling a short distance down each of the four `legs' of the cobordism; for $i=2, 4$ we could apply further isotopies for this to be the case.  This will be a useful additional technical assumption to have for the general case; ditto the `projects to signed area zero' feature.

\textit{Inductive step.}
Assume we have Hamiltonian deformations of the $\vartheta_i$ as in the statement of our Proposition, also satisfying our technical assumptions above. 

Now assume we stabilise $\tilde{w}$ along $c_E$ as in Proposition \ref{prop:corner_blow_up} (mirror to blowing up the intersection point between $\bar{D}_{i-1}$ and $\bar{D}_i$). We know how to get a Lagrangian torus in the stabilised Lefschetz fibration from the proof of Theorem \ref{thm:torus}. 

Let $\vartheta_E$ be the thimble associated to $W_E$. To realise $\partial \vartheta_E$ on the Lagrangian $T^2$, one can travel around $S_E$, use the Lagrangian surgery with $V_0$ to go from $S_E$ onto $V_0$, and then have one end travel around $\partial \vartheta_{i-1}$, the other around $\partial \vartheta_i$, before having them come back to meet along the same segment of $V_0$, now without using the surgery with $S_E$ (by varying the choice of fibre, this can be realised as an embedding). Again, we're locally working in a product symplectic manifold $\Sigma' \times B_\epsilon (\star)$; the curve we have described can be chosen so that it projects to an $S^1$ on $\Sigma'$ that is Hamiltonian isotopic to $W_E$, and to a curve of signed area zero in the base (using our additional technical assumption this readily follows). Thus we can find a small compactly supported Hamiltonian isotopy that realises our deformation (and it is immediate to see in turn that the resulting $\partial \vartheta_E$ will satisfy our additional technical assumptions).

To check homology classes, notice that that $0 = [\partial \varsigma_E ] = [\partial \vartheta_{i-1}] - [\partial \vartheta_E] + [\partial \vartheta_i]$, and so $ [\partial \vartheta_E] = [\partial \vartheta_{i-1}] + [\partial \vartheta_i]$ -- which precisely corresponds to the blow-up formula for rays in a fan.
\end{proof}

 \subsection{Milnor fibres of simple elliptic singularities}\label{sec:simple_elliptic}
Suppose that we start with a  log CY pair $(Y,D) \in \mathcal{T}_e$ where the intersection form for $D$ is strictly negative semi-definite, i.e.~a cycle of $k$ $(-2)$ curves with $k \leq 9$. For a fixed $D$, the possibilities for $Y$ are classified in \cite[Section 9]{Friedman}; there is one for $k \neq 8$, and two when $k=8$; it is easy to exhibit toric models by hand.
 What is the total space of the mirror Lefschetz fibration?  Proposition \ref{prop:thimble_gluing} provides us with a description of it as Weinstein domain given by gluing two-handles to $D^\ast T^2$. On the other hand, using the operations introduced in \cite{Symington}, one gets an almost toric fibration on a degree $k$ del Pezzo surface with a smooth anticanonical elliptic curve removed; this has been carefully done in \cite[Section 3]{Vianna_delPezzo}. Forgetting about the almost-toric structure, we get presentations of these spaces as the result of attaching Weinstein two-handles to $D^\ast T^2$. 
The two descriptions readily match up to give:

\begin{proposition}
Suppose  $(Y,D) \in \mathcal{T}_e$ is such that $D$ is strictly negative semi-definite,  i.e.~a cycle of $k$ self-intersection $-2$ curves with $k \leq 9$. Then the total space $M$ of the Lefschetz fibration mirror to $(Y,D)$ is Weinstein deformation equivalent to a degree $k$ del Pezzo $X_k$ with a smooth anticanonical elliptic $E$ curve deleted. 
\end{proposition}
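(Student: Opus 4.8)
The plan is to place two explicit Weinstein handle presentations side by side---one for our mirror $M$ coming from Proposition~\ref{prop:thimble_gluing}, and one for $X_k\setminus E$ coming from an almost-toric fibration on the del Pezzo---and to check that their attaching data agree. First I would fix a toric model $\{(\tilde Y,\tilde D)\to(\bar Y,\D)\}$ for $(Y,D)$, with rays $v_1,\dots,v_k$, self-intersections $n_i=\D_i\cdot\D_i$, and interior blow-up numbers $m_i$; such models exist and can be written down by hand for each $k\le 9$. Since the strict transform satisfies $D_i\cdot D_i=n_i-m_i=-2$, we are forced to have $m_i=n_i+2$. By Proposition~\ref{prop:thimble_gluing}, $M$ is then obtained from $D^\ast T^2$ by attaching, for each $i$ and each $j=1,\dots,m_i$, a Weinstein $2$-handle along a parallel copy of the curve $S^1_{v_i^\perp}\subset T^2=\bR^2/\bZ^2$, with co-orientation specified by $v_i$; equivalently, $M$ is the total space of the almost-toric fibration over $\bR^2$ with $m_i$ nodes on the ray $\bR_{\ge 0}v_i$, all invariant lines concurrent at the origin because we are in the exact case $\mathcal{T}_e$ (Expectation~\ref{exp:weinstein_handles}). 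As a first consistency check, the toric identity $\sum_i n_i=12-3k$ yields $\sum_i m_i=\sum_i(n_i+2)=(12-3k)+2k=12-k$, so $\chi(M)=\sum_i m_i=12-k=\chi(X_k)-\chi(E)=\chi(X_k\setminus E)$.

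Next I would recall, from \cite[Section~3]{Vianna_delPezzo}, the almost-toric fibration on the degree-$k$ del Pezzo $X_k$ built using Symington's operations \cite{Symington}, in which the smooth anticanonical elliptic curve $E$ is the divisor at the boundary of the base and the $12-k$ nodal fibres sit on rays through the interior. Deleting $E$ and forgetting the almost-toric structure exhibits $X_k\setminus E$---which is an affine variety carrying an exact Kähler form, since $E$ is ample---as $D^\ast T^2$ with $2$-handles attached along a collection of co-oriented circles $S^1_{u^\perp}$, one for each nodal fibre, grouped by invariant direction $u$. The comparison then reduces to the \emph{combinatorial} statement that, after an overall $\SL_2(\bZ)$ change of lattice and a reordering, the multiset of co-oriented directions with multiplicities read off from Vianna's base coincides with $\{(v_i,m_i)\}_{i=1}^{k}$ read off from the toric model. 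Once this holds, both Weinstein domains are built from $D^\ast T^2$ by attaching $2$-handles along the same co-oriented Legendrians, hence are Weinstein deformation equivalent, which is the assertion.

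I expect the main obstacle to be precisely this final matching of node data, the delicate point being that both presentations are canonical only up to $\SL_2(\bZ)$ and up to \emph{nodal slides} (cut transfers) in the sense of \cite[Definition~6.1]{Symington}. The key observation that makes this tractable is that, as discussed in the treatment of elementary transformations as nodal slides, these nodal slides are exactly the symplectic shadow of the elementary transformations of toric models (Lemma~\ref{lem:change_n_m}, Proposition~\ref{prop:elem_trans_invariance}), under which our handle presentation is already known to be invariant. I would therefore carry out the comparison by bringing both almost-toric bases to a common normal form via nodal slides and an $\SL_2(\bZ)$ transformation: since the total space of an exact almost-toric fibration is determined up to Weinstein deformation equivalence by its integral-affine base with the $12-k$ concurrent nodes, and nodal slides preserve this class, it suffices to match the unordered collection of invariant directions with multiplicities. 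For the finitely many $k\le 9$ this can always be settled by inspection, exhibiting an explicit toric model of $(Y,D)$ realising the cycle of $(-2)$-curves and matching it against Vianna's explicit base.
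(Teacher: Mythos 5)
Your proposal is correct and takes essentially the same route as the paper: the paper's proof likewise places side by side the handle presentation of $M$ over $D^\ast T^2$ furnished by Proposition \ref{prop:thimble_gluing} and the presentation of $X_k \setminus E$ extracted from the almost-toric fibrations on degree-$k$ del Pezzo surfaces in \cite[Section 3]{Vianna_delPezzo}, invoking the classification of the possibilities for $Y$ (one for $k \neq 8$, two for $k=8$) and asserting that the two descriptions readily match up. Your additional details --- the count $m_i = n_i + 2$ with $\sum m_i = 12-k$, the Euler characteristic consistency check, and the reduction of the matching to node data up to $\SL_2(\bZ)$ and nodal slides (which, as the paper notes, are mirror to elementary transformations) --- are a faithful elaboration of exactly the step the paper leaves to inspection.
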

For $k\neq 8$, the unique possibility for $Y$ gives a single smoothing component of the cusp singularity dual to $D$: the Milnor fibre $X_k \backslash E$, where $X_k$ is the unique degree $k$ del Pezzo; for $k=8$ the two possibilities for $Y$ give two smoothing components of the cusp singularity dual to $D$, which in turn give the degree $8$ del Pezzo surfaces, $\bF_1$ and $\bP^1 \times \bP^1$.

\subsection{Comparison: del Pezzo surfaces with smooth anti-canonical divisors}\label{sec:AKO_delPezzo}

Auroux, Katzarkov and Orlov also studied  homological mirror symmetry when the B-side is given by a pair $(X_k,D)$ where $X_k$ is a del Pezzo surface, obtained by blowing up $k \leq 8$ generic points on $\bP^2$, and $D$ a  \emph{smooth} anticanonical divisor on it \cite{AKO_delPezzo}. (The borderline case $k=9$ is also covered. The toric del Pezzo case was also studied by Ueda \cite{Ueda}.)
They show that this is mirror to a rational elliptic fibration $\underline{w}_k: \underline{M} \to \bP^1$  with an $I_{9-k}$ fibre above infinity. This fibration is given by starting with the rational elliptic fibration $\underline{w}_0: \underline{M} \to \bP^1$ compactifying $w_0 = x + y + \frac{1}{xy} : (\bC^\times)^2 \to \bC$, and deforming it so that $k$ of the 9 critical points in the fibre above infinity go to finite values of the superpotential, while remaining isolated and non-degenerate. Setting  $M_k = \underline{M} \backslash \underline{w}_k^{-1}(\infty)$ and $w_k = \underline{w}_k|_{M_k}$, they prove that there exists a complexified symplectic form $B+i\omega$ on $M_k$ for which $D^b \Coh(X_k) \cong D^b \Fuk^\to (w_k)$  \cite[Theorem 1.4]{AKO_delPezzo}. 
(They also carefully study the mirror map, i.e.~the relation between the class $[B+i \omega] \in H^2(M_k, \bC)$ and the choice of $k$ points to blow up, and extend this to include non-commutative deformations of the del Pezzos.)

For $k \geq 4$, the complex structure on  $X_k$ is never distinguished in the sense of Section \ref{sec:complex_structure} -- recall that intuitively the distinguished complex structure has `as many $(-2)$ curves as possible' within its deformation class. (\cite{AKO_delPezzo} does consider the case of a simple degeneration of a del Pezzo, with a single $(-2)$ curve -- see their Theorem 1.5); and even in the case $k \leq 3$, they  considered a smooth divisor.  However, there is still an expectation for how the two stories should match up, as follows.

First, degenerate the pair $(X_k, D)$  to a pair $(X_k, D')$, where the anticanicanical divisor $D'$ is a nodal elliptic curve.  This should be mirror to blowing down a $(-1)$ curve on the rational elliptic surface, necessarily a section of the elliptic fibration; deleting this section (and the fibre at infinity), one gets a fibration over $\bC$ with smooth fibre a once punctured elliptic curve, and $k+3$ nodal singular fibres, say $w_k^\circ: M_k^\circ \to \bC$. $M_k^\circ$ inherits a symplectic form from $M_k$; inspecting \cite[Section 3]{AKO_delPezzo}, we see that this is exact precisely when $k \leq 3$, which of course are also the only cases in which $(X_k, D')$ has the distinguished complex structure.
In all other cases, we can deform $(X_k, D')$ to get to the distinguished complex structure, say $(Y, D_Y)$ (with $D_Y \cong D'$). We know how to construct a Lefschetz fibration mirror to $(Y, D_Y)$; it has fibre a once-punctured elliptic curve. Further, we expect $(X_k, D')$ to be mirror to the same fibration equipped with a non-exact symplectic form; and we then expect the fibration mirror to $(X_k, D)$ to be given by capping off the fibre, to get an elliptic fibration. (This is the reverse operation to the section deletion made at the start of the paragraph; note it also extends the ideas of  Section \ref{sec:degenerations_capping}; and it is classical that the elliptic curve is self-mirror \cite{Polishchuk-Zaslow}.) 

We verify this expectation at the topological level. Start with the Lefschetz fibration mirror to $(\bP^2, D_\m)$ for  $D_\m$ any of the three nodal anticanonical divisors. Notice that if we ultimately cap off all boundary components of $\Sigma$ in the mirror fibration, both interior and corner blow ups, as considered in Propositions \ref{prop:interior_blow_up} and \ref{prop:corner_blow_up}, correspond to adding a copy of a meridien at the start of the list of vanishing cycles for the fibration. In particular, if $(Y, D_Y) \in\mathcal{T}$ is given by blowing up $k$ points on $\bP^2$ (starting with $(\bP^2, \D_\m)$, either interior or corner ones), capping off all boundary components of $\Sigma$ gives a fibration with fibre an elliptic curve, and ordered collection of vanishing cycles $W^1, \ldots, W^k, V_0, V_1, V_2$, where each $W^i$ is a meridien, and $V_i = \ell(i,i,i)$. (Topologically, we now have a unique meridien; there's no notion of exact representative, and different copies differ by the symplectic flux between them, in turn related to the position of the $k$ points on $\bP^2$.) 

The vanishing cycles of $w_k: M_k \to \bC$ are described in  \cite[Section 3]{AKO_delPezzo}, where they are labelled as $L_0, L_1, L_2, L_{3+i}$, $i=0, \ldots, k-1$; see Figure 8 therein. $(L_0, L_1, L_2)$ is inherited from the vanishing cycles for $w_0$, where they are mirror to the  collection  $(\cO, \mathcal{T}_{\bP^2}(-1), \cO(1))$, itself one mutation away from $(\cO(-1), \cO, \cO(1))$, and we're back to the discussion   e.g.~at the start of Section \ref{sec:AKO_comparison}. The $L_{3+i}$ are mirror to $\cO_{E_{i-1}}$, where $E_1, \ldots, E_k \subset X_k$ are the exceptional divisors; they are all copies of the meridien (e.g.~as they intersect $L_0$ transversally in one point \cite[Figure 8]{AKO_delPezzo}); by Proposition \ref{prop:monodromy}, they are fixed under mutation over all three of $L_0, L_1$ and $L_2$, which finishes matching up the two collections.


\section{Destabilisations mirror to non-toric blow downs}\label{app:destabilisations}

Suppose we are given $(Y,D) \in \mathcal{T}\backslash \tilde{\mathcal{T}}$. For each of the toric models of Proposition \ref{prop:blow_downs}, we explain how to destabilise the Lefschetz fibration for $(\tilde{Y}, \tilde{D})$ to get the one for $(Y,D)$, using the strategy of Corollary \ref{cor:non-toric_fibr_indep}.

\textit{Destabilising $\tilde{D}_1$.} In all cases in Proposition \ref{prop:blow_downs}, $m_1=1$ or $2$, but it helps to work with a general $m_1$. Start with $\{W_{i,j}\}_{i=1,\ldots, k; j=1, \ldots, m_k}$, $V_0$, \ldots, $V_k$, and proceed as follows:

-- Mutate all the $W_{i,j}$ over all the $V_l$ so that they are at the end of the list. By Proposition \ref{prop:monodromy}, they're unchanged.

-- Mutate $V_k$, then $V_{k-1}$, \ldots, $V_1$ over $W_{1,j}$, $j=1, \ldots, m_1$. As $n_1-m_1=-1$, $V_1$ gets replaced with $\ell(-1,1,0,\ldots, 0, 1)$, $V_2$ with $\ell(1,1+n_2, 1, \ldots, 0, 1)$, etc. Also, note that $\tau_{\ell(-1,1,0,\ldots, 0, 1)} W_1 = \ell(0,1,0, \ldots, 0, 1)$. Mutating all of the $W_{1,j}$ over $ \ell(-1,1,0,\ldots, 0, 1)$ gives the ordered collection of vanishing cycles:
\begin{center}
\begin{tabular}{c c c c c c c}
$\ell( 0,$     & 0 ,    & 0, & 0, & \ldots, & 0, & 0 ) \\
$\ell( -1$,     & 1 ,    & 0, & 0, & \ldots, & 0, & 1 ) \\
$\ell( 0,$      & 1 ,    & 0, & 0, & \ldots, & 0, & 1 ) \\
&&& \ldots &&& 
\end{tabular} \\
\begin{tabular}{c c c c c c c}
$\ell( 0,$      & 1 ,    & 0, & 0, & \ldots, & 0, & 1 ) \\
$\ell( 0,$  & $1+n_2$, & 1, & 0, & \ldots, & 0, & 1 ) \\
$\ell( 0,$  & $2+n_2$, & $ 1+n_3$, & 1, & \ldots, & 0, & 1 ) \\
&&& \ldots &&&\\
$\ell( 0,$  & $2+n_2$, & $2+n_3$, & $2+n_4$, & \ldots, & $1+n_{k-1}$, & 2 ) 
\end{tabular}
 \\
$\{ W_{i,j} \}_{i=2, \ldots, k; j=1, \ldots, m_i} $
\end{center}
We now recognise (part of) the configuration of Proposition \ref{prop:stabilisation}: $\ell(-1,1,0,\ldots, 1) = V_E$, and $\tau_{V_0} V_E = S_E$. We're now free to destabilise along $S_E =: S_{E_1}$. (We will call $E_i$ the $i$th $-1$-curve to get blown down.) As well as deleting $S_{E_1}$, this has the effect of deleting the first entry for each longitude. 

\textit{Further destabilisations: chain case.} 
Assume we're blowing down a chain $\tilde{D}_1$, $\tilde{D}_2$, \ldots, $\tilde{D}_i$, where $\tilde{D}_1$ has self-intersection $-1$ and the subsequent $\tilde{D}_l$s have self-intersection $-2$. (In particular, this covers any of cases (1) or (2.a), though it is more instructive to write out the general case.) We have $n_l-m_l=-2$ for $l=2, \ldots, i$. We  proceed as follows, mimicking the case of the first blow down:

-- First mutate all $m_1$ copies of $\ell(1,0, \ldots, 0, 1)$ over $\ell(0,\ldots,0)$; each becomes a copy of $\ell(-1, 0, \ldots, 0, -1)$. 

-- Now mutate the images of $V_{k-1}, \ldots, V_2$ over all the $W_{2,j}$, and then mutate each $W_{2,j}$ back over the image of $V_2$, to get the collection:
\begin{center}
\begin{tabular}{ c c c c c c}
$\ell( -1$ ,    & 0, & 0, & \ldots, & 0, & -1 ) \\
&& \ldots &&&
\end{tabular} \\
\begin{tabular}{c  c c c c c}
$\ell( -1$ ,    & 0, & 0, & \ldots, & 0, & -1 ) \\
$\ell($     0 ,    & 0, & 0, & \ldots, & 0, & 0 ) \\
$\ell( -1$, & 1, & 0, & \ldots, & 0, & 1 ) \\
$\ell( 0,$      & 1 ,    & 0, & \ldots, & 0, & 1 ) \\
&& \ldots &&&
\end{tabular} \\
\begin{tabular}{c  c c c c c}
$\ell( 0,$      & 1 ,     & 0, & \ldots, & 0, & 1 ) \\
$\ell(0$, & $ 1+n_3$, & 1, & \ldots, & 0, & 1 ) \\
&& \ldots &&&
\end{tabular} \\
\begin{tabular}{c  c c c c c}
$\ell(0$, & $2+n_3$, & $2+n_4$, & \ldots, & $1+n_{k-1}$, & 2 ) 
\end{tabular} \\
$\{ W_{i,j} \}_{i=3, \ldots, k; j=1, \ldots, m_i} $
\end{center}

-- Mutate $\ell(-1, 1, 0, \ldots, 0, 1)$ over $\ell(0, \ldots, 0)$ to get $S_{E_2}$; and mutate all of the $\ell(-1, 0, \ldots, 0, -1)$ over $S_{E_2}$ to get $\ell(0, -1, 0, \ldots, 0, -2)$. 

-- We can now destabilise on $S_{E_2}$; this has the effect of deleting $S_{E_2}$, and the first entry of each longitude. This gives the following collection, where we have mutated the images of the $m_2$ copies of $\ell(0,1,0,\ldots, 1)$ (i.e.~$\ell(1,0,\ldots, 1)$ after destabilising) over $\ell(0,\ldots, 0)$ to get ready for the next step:
\begin{center}
\begin{tabular}{  c c c c c}
$\ell(    -1$, & 0, & \ldots, &0 , & -2 ) \\
&& \ldots &&
\end{tabular} \\
\begin{tabular}{  c c c c c}
$\ell(    -1$, & 0, & \ldots, & 0, & -2 ) \\ 
$\ell(    -1$, & 0, & \ldots, & 0, & -1 ) \\
&& \ldots &&\\
$\ell(    -1$, & 0, & \ldots, & 0, & -1) \\ 
$\ell(     0$ , & 0, & \ldots, & 0, & 0 ) \\
$\ell(  1+n_3$, & 1, & \ldots, & 0, & 1 ) \\
&& \ldots &&
\end{tabular} \\
\begin{tabular}{  c c c c c}
$\ell(2+n_3$, & $2+n_4$, & \ldots, & $1+n_{k-1}$, & 2 ) 
\end{tabular} \\
$\{ W_{i,j} \}_{i=3, \ldots, k; j=1, \ldots, m_i} $
\end{center}
(There are $m_1$ copies of the first longitude on this list, and $m_2$ copies in the second.)

This process can now be iterated to get to destabilise on $S_{E_3}, S_{E_4}, \ldots, S_{E_i}$.

\textit{Further destabilisations: non-chain case.} Assume that the second divisor to be blown down was not originally a component of $\tilde{D}$ adjacent to $\tilde{D}_1$; it should have self-intersection $-1$. Instead of giving a general algorithm for the corresponding destabilisation, we just describe the (simpler) moves required for case (2.b) in Proposition \ref{prop:blow_downs}. We start with:
\begin{center}
\begin{tabular}{c c c}
$\ell( 0$,     & 0 ,    & 0) \\
$\ell(  1$,   & 0,  & 1 ) \\
$\ell(1-a$, & 1, & 1 ) \\
$\ell( 2-a$, & 1, &2 ) \\
\end{tabular} \\
$\{ W_{2,j} \}_{j=1, \ldots, m_2}, W_3$
\end{center}
We mutate $\ell(2-a,1,2)$, $\ell(1-a, 1,1)$ and $\ell(1,0,1)$ over $W_3$ to get $\ell(2-a,0,2)$, $\ell(1-a,0,1)$, $\ell(1,-1,1)$; and then mutate $W_3$ back over $\ell(1,-1,1)$, to get the collection
\begin{center}
\begin{tabular}{c c c}
$\ell( 0$,     & 0 ,    & 0) \\
$\ell(  1$,   &$ -1$,  & 1 ) \\
$\ell(  1$,   & 0,  & 1 ) \\
$\ell(1-a$, & 0, & 1 ) \\
$\ell( 2-a$, & 0, &2 ) \\
\end{tabular} \\
$\{ W_{2,j} \}_{j=1, \ldots, m_2}$
\end{center}
and now $\tau_{\ell(0,\ldots, 0)} \ell(1,-1,1) = S_{E_3}$ ($E_3$ is the image of $\tilde{D}_3$), and we can destabilise on $S_{E_3}$.

Finally, in the case $a=3$ (2.b.i), we may want to further destabilise, on $S_{E_2}$, where $E_2$ is the image of $\tilde{D}_2$. Starting with the above list, we have:
$$
\ell(0,0), \ell(1,1), \ell(-2,1), \ell(-1,2)
$$
 Now $\tau^{-1}_{ \ell(-1,2)} \ell(-2,1) = \ell(0,3)$; $\tau_{\ell(0,0)} \ell(1,1) = \ell(-1-1)$, and $\tau_{\ell(0,0)} \ell(-1,2) = S_{E_2}$; these mutations give the list:
$$
\ell(-1-1), S_{E_2}, \ell(0,0), \ell(0,3)
$$
 Finally, mutate $\ell(-1,-1)$ over $S_{E_2}$ to get 
$$ S_{E_2}, \ell(0,-3), \ell(0,0), \ell(0,3) 
$$
and we can now destabilise on $S_{E_2}$.

\bibliography{bib}{}
\bibliographystyle{alpha}

\includepdf[pages={1-7}]{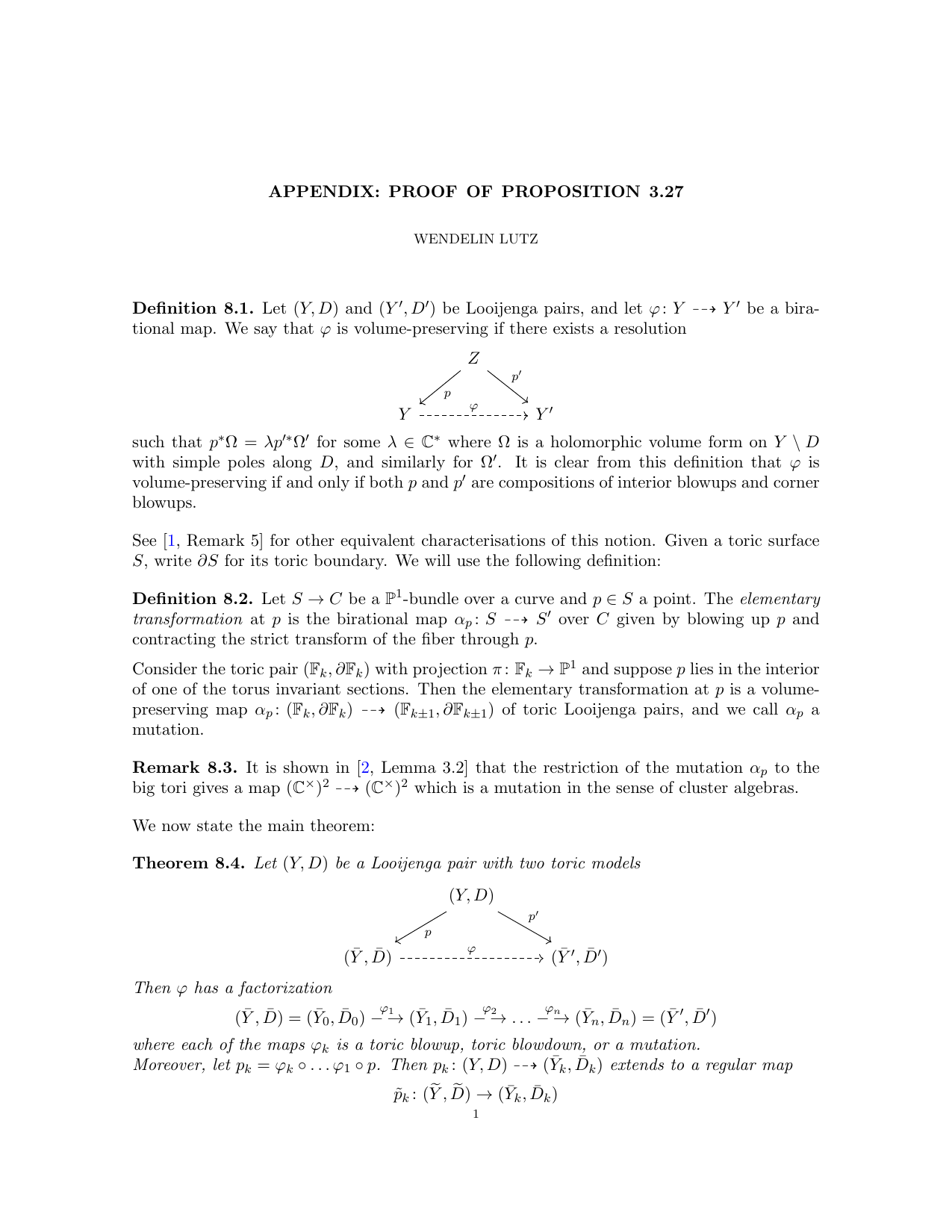}

\end{document}